\def\input@path{{figures/}}\makeatother
\newtheorem{thmUniv}{Theorem}
\newtheorem{theorem}{Theorem}[subsection]
\newtheorem{corollary}[theorem]{Corollary}
\newtheorem{proposition}[theorem]{Proposition}
\newtheorem{lemma}[theorem]{Lemma}
\newtheorem{conjecture}[theorem]{Conjecture}
\newtheorem*{theorem*}{Theorem}%[section]
\theoremstyle{definition}
\newtheorem{definition}[theorem]{Definition}
\newtheorem{example}[theorem]{Example}
\newtheorem{remark}[theorem]{Remark}
\newtheorem{question}[theorem]{Question}
\newcommand{\R}{\mathbb{R}} % reals
\renewcommand{\c}[1]{{\mathcal{#1}}} % call letters
\renewcommand{\b}[1]{{\boldsymbol{#1}}} % bold letter
\newcommand{\scr}[1]{{\mathscr{#1}}} % curly letter
\renewcommand{\emptyset}{\varnothing} % prettier emptyset
\renewcommand{\epsilon}{\varepsilon} % prettier epsilon
\newcommand{\set}[2]{\left\{ #1 ~;~ #2 \right\}} % set notation
\newcommand{\ssm}{\smallsetminus} % small set minus
\newcommand{\one}{{1\!\!1}} % the all one vector
\newcommandx{\ones}[1][1=n]{\one_{#1}} % the all one vector of length n
\newcommand{\eqdef}{\coloneqq}%{\mbox{\,\raisebox{0.2ex}{\scriptsize\ensuremath{\mathrm:}}\ensuremath{=}\,}} % :=
\newcommand{\defeq}{\mbox{~\ensuremath{=}\raisebox{0.2ex}{\scriptsize\ensuremath{\mathrm:}} }} % =:
\DeclareMathOperator{\conv}{conv} % convex hull
\DeclareMathOperator{\cone}{cone} % cone hull
\DeclareMathOperator{\interior}{int} % interior
\DeclareMathOperator{\lin}{lin} % Linear span
\DeclareMathOperator{\aff}{aff} % Affine span
\newcommandx{\di}[2][1=\fw,2=F]{\mathrm{dir}_{#1}({#2})}
\newcommandx{\cv}[1][1=X]{\mathbbm{1}_{#1}}%characteristic vector
\newcommandx{\cve}[1][1=X]{(\mathbbm{1}_{#1})}%characteristic vector entry
\newcommandx{\restr}[2][1=\fw,2=X]{{#1}_{#2}}%restriction framework
\newcommandx{\contr}[2][1=\fw,2=X]{{#1}_{/#2}}%contraction framework
\newcommand{\ie}{\textit{i.e.}~} % id est
\newcommand{\eg}{\textit{e.g.}~} % exempli gratia
\newcommand{\cf}{\textit{cf.}~} % confer
\newcommand{\aka}{\textit{a.k.a.}~} % also known as
\definecolor{darkblue}{rgb}{0,0,0.7} % darkblue color
\definecolor{green}{RGB}{57,181,74} % green color
\definecolor{violet}{RGB}{147,39,143} % violet color
\newcommand{\darkblue}{\color{darkblue}} % darkblue command
\newcommand{\defn}[1]{\textsl{\darkblue #1}} % emphasis of a definition
\newcommand{\mathdefn}[1]{{\darkblue #1}} % emphasis of a definition
\newcommand{\yes}{\textcolor{green}{\ding{52}}}
\newcommand{\no}{\textcolor{red}{\ding{55}}}
\def\part{\@startsection{part}{1}%
	\z@{.7\linespacing\@plus\linespacing}{.8\linespacing}%
	{\LARGE\sffamily\centering}}
\newcommand{\pol}[1][P]{\mathsf{#1}}%polytope
\newcommand{\polF}{\pol[F]}
\newcommand{\polZ}{\pol[Z]}
\newcommand{\polQ}{\pol[Q]}
\newcommand{\p}[1][\phi]{\b {#1}}%pointconfiguration function
\newcommand{\lam}{{\b\lambda}}
\newcommandx{\bl}[1][1=\lam]{\b {#1}}%edge length
\newcommandx{\dv}[1][1=\lam]{\bl[#1]}%deformation vector
\newcommand{\fw}[1][F]{\mathcal{#1}}%framework
\newcommand{\Efw}[1][F]{\overline{\mathcal{#1}}}%framework closure
\newcommand{\DefoCone}[1][\pol]{\mathbb{DC}(#1)}% Deformation cone
\newcommand{\EDefoCone}[1][\fw]{\mathbb{DC}(#1)}% Edge length deformation cone
\newcommand{\permuto}[1][n]{\pol[\Pi]_{#1}}%Permutahedron
\newcommand{\verts}{V}
\newcommand{\facets}{F}
\newcommand{\edges}{E}
\newcommand{\nde}{{E_{nd}}}
\newcommand{\de}{{E_{deg}}}
\newcommand{\Iedges}{\overline E}
\newcommand{\ndie}{{\overline E_{nd}}}
\newcommand{\die}{{\overline E_{deg}}}
\newcommand{\G}{\mathcal{G}}
\newcommand{\der}{{\sim_{deg}}}
\newcommand{\dec}[1]{{[{#1}]_{deg}}}
\newcommandx{\subr}[1][1=W]{{\sim_{#1}}}
\newcommandx{\subc}[2][2=W]{{[{#1}]_{#2}}}
\newcommand{\pcl}[1][S]{\mathrm{cl}_{\mathrm{pin}}({#1})}%pinning closure
\newcommand{\inc}{\partial}
\newcommand{\mat}[1][M]{#1}%matroid
\newcommand{\matpol}[1][M]{\pol_{{#1}}}%matroid polytope
\newcommand{\ground}{S}%ground set
\newcommand{\bases}[1][B]{\mathcal{#1}}%set of bases of a matroid
\newcommand{\nodes}{N}
\newcommand{\arcs}{A}
\DeclareMathOperator{\id}{id}
\newcommand{\ELD}[1][\fw]{ED(#1)}% Graph of edge dependencies
\newcommand{\EELD}[1][\fw]{{ED}(\overline{#1})}% extended Graph of edge dependencies
\newcommand{\zELD}[1][ij]{ED_{#1}}% Subraph of edge dependencies restricted to a edge-class
\newcommandx{\ZG}[1][1=G]{\polZ_{#1}} %Graphical zonotope
\newcommandx{\ZGKnm}[2][1=n,2=m]{\polZ_{#1, #2}}
\newcommand{\dashcircle}{
\begin{tikzpicture}
    % \draw[white] (0.075, 0.075) node{.};
    \draw (0, 0) -- (0.2, 0);
    \draw[black, fill=white] (0.2, 0) circle (1.25pt);
\end{tikzpicture}
}
\newcommand{\circledashcircle}{
\begin{tikzpicture}
    % \draw[white] (0.075, 0.075) node{.};
    \draw (0, 0) -- (0.2, 0);
    \draw[black, fill=white] (0, 0) circle (1.25pt);
    \draw[black, fill=white] (0.2, 0) circle (1.25pt);
\end{tikzpicture}
}
\newcommandx{\Pnm}[2][1=n,2=m]{\polZ^{\dashcircle}_{#1, #2}}%truncated
\newcommandx{\Qnm}[2][1=n,2=m]{\polZ^{\circledashcircle}_{#1, #2}}%bitruncated
\newcommand{\simplex}[1][n]{\mathsf{\Delta}_{#1}}%simplex
\newcommandx{\restrG}[2][1=\mu,2=G]{{#2}\left|_{#1}\right.}
\newcommandx{\contrG}[2][1=\mu,2=G]{{#2}\left/_{#1}\right.}
\newcommandx{\Knm}[2][1=n,2=m]{K_{#1, #2}}
\newcommand{\AO}{\c A} % Acyclic orientations
\title{The graph of implicit edge dependencies for indecomposability and beyond}
\author{Arnau Padrol}
\address[Arnau Padrol]{Universitat de Barcelona \& Centre de Recerca Matemàtica}
\email{arnau.padrol@ub.edu}
\author{Germain Poullot}
\address[Germain Poullot]{Universität Osnabrück}
\email{germain.poullot@uni-osnabrueck.de} 
\thanks{The research of A.P. is partially supported by the Spanish project PID2022-137283NB-C21 of MICIU/AEI/10.13039/501100011033 / FEDER, UE, the Spanish--German project COMPOTE (AEI PCI2024-155081-2 \& DFG 541393733), the Severo Ochoa and María de Maeztu Program for Centers and Units of Excellence in R\&D (CEX2020-001084-M), the Departament de Recerca i Universitats de la Generalitat de Catalunya (2021 SGR 00697), and the French--Austrian project PAGCAP (ANR-21-CE48-0020 \& FWF I 5788). }
\begin{document}

\begin{abstract}
	A polytope is called indecomposable if it cannot be expressed nontrivially as a Minkowski sum of other polytopes.
    Since Gale introduced the concept in 1954, several increasingly strong criteria have been developed to characterize indecomposability.
    In this paper, we introduce a new approach to indecomposability for frameworks and polytopes based on the graph of implicit edge dependencies, which records proportionalities between edge lengths across all deformations.
    This yields a new indecomposability criterion that unifies and generalizes most previous approaches, and has additional consequences in the study of deformation cones.
		
	As a main application, we construct new indecomposable deformed permutahedra that are not matroid polytopes.
    In 1970, Edmonds already noted the difficulty of characterizing the extreme rays of the submodular cone, equivalently, indecomposable deformed permutahedra.
    Matroid polytopes of connected matroids form a well-known family of such examples.
    We exhibit a new infinite family of indecomposable deformations of the permutahedron, not arising from matroid polytopes, obtained by suitable truncations of certain graphical zonotopes.
    
    We further demonstrate the scope of our methods through several additional applications. In particular, we refute a conjecture of Smilansky (1987) on the relation between the numbers of vertices and facets of indecomposable polytopes. Moreover, we obtain new bounds on the dimensions of deformation cones and we construct and analyze uniquely decomposable polytopes.
\end{abstract}

\maketitle

\tableofcontents

% \listoffigures

\section{Introduction}

The \defn{Minkowski sum} of two polytopes $\pol[Q],\pol[R]\subset\R^d$ is the polytope 
$\pol=\pol[Q]+\pol[R] := \set{\b q +\b r}{\b q\in\pol[Q], \b r\in\pol[R]}$.
In this case, $\pol[Q]$ and $\pol[R]$ are called \defn{Minkowski summands} of $\pol$. 
They are called \defn{weak Minkowski summands} or \defn{deformations} of $\pol$ if they are Minkowski summands of $\b\lambda \pol$ for some $\b\lambda > 0$. If all deformations of $\pol$ are translated dilates of $\pol$, \ie if~$\pol$ cannot be written as a nontrivial Minkowski sum, we say that $\pol$ is \defn{indecomposable}.
The set of deformations of a polytope $\pol$ is closed under Minkowski addition and dilation, and therefore forms a convex cone, called the \defn{deformation cone} of $\pol$. 
Its rays correspond precisely to the indecomposable deformations of $\pol$.

Minkowski addition is a fundamental operation on convex polytopes, and the study of polytope
deformations and decomposability has become a central topic in polyhedral geometry, with applications across a wide range of areas. Introduced by Hermann Minkowski~\cite{Minkowski1911}, it led to the \emph{Brunn-Minkowski theory}, which studies the interplay of (mixed)-volumes and Minkowski addition~\cite{SchneiderConvexBodies}. These ideas are central in McMullen's proof of the $g$-theorem via the \emph{polytope algebra}~\cite{McMullen1,McMullen1989}, where multiplication corresponds to Minkowski addition, and have more recently played a role in combinatorial Hodge theory~\cite{AHK2018}.
The deformation cone of a rational polytope coincides with its \emph{nef cone}, encoding projective embeddings of the associated toric variety~\cite[Section~6]{CoxLittleSchenck}. Further applications include sparse elimination and polynomial factorization~\cite{GaoLauder2001,SturmfelsSolving,GelfandKapranovZelevinsky}, tropical geometry \cite{MaclaganSturmfelsTropical}, toric deformations~\cite{Altmann1995}, integer programming and combinatorial optimization~\cite{Edmonds,Schrijver1986}, game theory~\cite{Shapley:1971}, and, more recently, neural networks~\cite{MontufarRenZhang2022,HertrichPhD}.

Indecomposable polytopes are the building blocks of Minkowski addition: they correspond to the rays of the deformation cones, and every polytope is a finite sum of indecomposable ones. Since Gale gave the first criteria to certify that a polytope is indecomposable in 1954~\cite{Gale1954}, 
many successively stronger criteria have been found \cite{Shephard1963,Kallay1982,McMullen1987,PrzeslawskiYost2008,PrzeslawskiYost2016} (see \cite[Chapter~6]{PinedaVillavicencio} for a recent survey). However, none of these apply to the truncated graphical zonotopes that we will study.

%In \cref{thm:mainthm}, we provide a new indecomposability criterion which implies most of the aforementioned methods.
%Most of these criteria rely on finding suitable ``\emph{strong chains}'' of ``\emph{indecomposable subgraphs}'' in the $1$-skeleton of $\pol$ that meet all facets. Our method is more flexible, allowing for deducing dependencies between edges of the $1$-skeleton through non-indecomposable subgraphs (especially by using parallelisms and projections).
%Our criteria are articulated via the \defn{graph of (implicit) edge dependencies} of a polytope~$\pol$, which is a clique if and only if $\pol$ is indecomposable.
%Besides indecomposability, these graphs enable us to deduce other properties of deformation cones. We showcase this in \cref{sec:newrays,sec:Application}.

%Deciding indecomposability amounts to bounding the dimension of a deformation cone.
% Deciding indecomposability amounts to determining whether the deformation cone is one-dimensional.
Deciding indecomposability amounts to computing the dimension of the deformation cone.
Shephard described deformations of a polytope in terms of its edges~\cite{Shephard1963}, and this characterization allowed Kallay to extend the notions of deformation and indecomposability to arbitrary \defn{frameworks} (also known as \defn{geometric graphs}) in \cite{Kallay1982}. This extension not only broadens the scope, but it also simplifies many intermediate computations even for the case of polytopes, providing the right level of generality for our purposes. For example, in \cref{cor:polytopeto2dimframework} we show that the deformation cone of any polytope coincides with the deformation cone of a $2$-dimensional framework.

Our main new tool is the \defn{graph of (implicit) edge dependencies} of a framework~$\fw$.
Its nodes are the edges of~$\fw$, and two edges are adjacent if the ratio of their lengths remains constant across all deformations of $\fw$. In this case they are called \defn{dependent}. A framework is indecomposable exactly when this graph is complete. 
Using this graph, we provide a new indecomposability criterion for frameworks that encompasses most previously known criteria.
Besides indecomposability, these graphs enable us to study other properties of deformation cones, such as its simpliciality. We showcase this in \cref{sec:newrays,sec:edgedecomposable,sec:constructions}. %\cref{sec:newrays,sec:Application}.

% To study deformations of frameworks, we introduce the \defn{graph of (implicit) edge dependencies} of a framework~$\fw$, which is a clique if and only if $\fw$ is indecomposable.
% The nodes of this graph are the edges of the framework~$\fw$, and two edges are called \defn{dependent} if the ratio of their lengths remains constant across all deformations of $\fw$. 
% Using this graph, we provide a new indecomposability criterion for frameworks that encompasses most previously known criteria.
% Besides indecomposability, these graphs enable us to deduce other properties of deformation cones. We showcase this in \cref{sec:newrays,sec:edgedecomposable,sec:constructions}. %\cref{sec:newrays,sec:Application}.

% We provide a new indecomposability criterion for frameworks which implies and encompasses most of the aforementioned methods.
% Our work is articulated via the novel concept of the \defn{graph of (implicit) edge dependencies} of a framework~$\fw$, which is a clique if and only if $\fw$ is indecomposable.
% The nodes of this graph are the edges of the framework~$\fw$, and two edges are \defn{dependent} if the ratio of their lengths is constant over all deformations of $\fw$. 
% Besides indecomposability, these graphs enable us to deduce other properties of deformation cones. We showcase this in \cref{sec:newrays,sec:Application}.

% Most previous criteria proved indecomposability by finding enough indecomposable subframeworks, often arranged in strong chains. 
Most previous indecomposability criteria relied on finding suitable ``\emph{strong chains}'' of ``\emph{indecomposable subgraphs}'' in a framework or in the $1$-skeleton of a polytope. 
Our approach isolates the core properties of these structures that are needed to certify indecomposability, and in doing so gains the flexibility to deduce relations between edges even through non-indecomposable subgraphs, in particular via parallelisms and projections.
A key new concept is that of \defn{implicit edges}, which 
capture all pairs of vertices that behave as edges under deformation, even if not present combinatorially. They allow us to treat projections and degenerate cases uniformly, and to deduce hidden dependencies.
Moreover, they unveil the power of \defn{pinning closures} of vertex sets, which allow to iteratively discover new dependent implicit edges.
%
%Another area of improvement is that we might require a lesser density of dependent edges, instead of needing a family touching all vertices or all facets, it is enough to touch a \defn{covering family of flats} (of which all vertices or all facets are special cases).

As we develop our tools, we present indecomposability criteria of increasing strength and complexity, culminating in the following main indecomposability theorem:
% These ideas culminate in the following indecomposability criterion:

\begin{thmUniv}[\Cref{thm:mainthm}]\label{thm:mainthmintro}
	Let $\fw=(\verts,\edges,\p)$ be a framework. If there is a dependent subset of vertices $S\subseteq \verts$ whose pinning closure covers all vertices, $\pcl=\verts$, then $\fw$ is indecomposable.
\end{thmUniv}
The terms and concepts appearing in this statement will be properly defined and explained in the upcoming sections, where we also show how this criterion subsumes several classical ones.
For the sake of this introduction, we give some examples to illustrate the keywords:
\begin{compactitem}
	\item A \defn{framework} $\fw$ can be given by the vertices and edges of a polytope.
	\item Edges of an indecomposable framework are pairwise \defn{dependent}. For example, the edges of a triangle are pairwise dependent. Another paradigmatic example of dependent edges are parallel edges in a parallelogram.
	%	In particular, two edges are dependent in the following cases:
	%	\begin{compactenum}
		%		\item[$\bullet$] they belong to a cycle of $d+1$ edges whose directions span a $d$-dimensional space (\eg belong to a triangle), see \Cref{cor:RigidCycle0}; or
		%		\item[$\bullet$] they belong to an indecomposable framework, see \Cref{ssec:GraphEdgeDependencies}; or
		%		\item[$\bullet$] there exists a projection onto non-degenerate edges of an indecomposable framework (\eg they are opposite in a parallelogram), see \Cref{ssec:DegeneracyAndProjection}.
		%	\end{compactenum}
	\item If there is a collection of pairwise dependent edges $X$ connecting every pair of vertices of $S\subseteq \verts$, then $S$ is \defn{dependent}.
	%If all pairs of vertices of $S$ are joined by a path of pairwise dependent edges, then this subset is \defn{dependent}.
	\item The pinning closure $\pcl$ is defined iteratively, and requires some notation explained in \Cref{ssec:CoveringFamilyOfFlats}. But for example, if every facet of a polytope contains a vertex in $S$, then $\pcl=\verts$.	
%	An example of a \defn{covering collection of flats} of the framework of a polytope is given by its facets.
\end{compactitem}

\vspace{0.15cm}

While edge dependencies may appear very specialized, because they capture only a restricted class of linear relations among edge lengths, we know that these are present in indecomposable frameworks, and the challenge resides in unveiling and manipulating them. In practice, our methods suffice to recover most known indecomposability proofs. 

In particular, \Cref{thm:mainthmintro} encapsulates several previously established indecomposability criteria in a single statement (\eg \Cref{thm:Shephard,cor:McMullen1,cor:McMullen}), and demonstrates its versatility through simple new proofs of indecomposability for classical and recent examples, such as Kallay's conditionally decomposable polytope~\cite{Kallay1982} in~\Cref{exm:KallayConditionallyDecomposable}, connected matroid polytopes~\cite{Nguyen1986-SemimodularFunctions} in~\Cref{thm:ConnectedMatroidPolytopesAreIndecomposable}, or extremal weight dominant polytopes~\cite{BurrullGuiHu2024StronglyDominantWeightPolytopesAreCubes} in~\cref{ex:hyperorder}.
Although \Cref{ex:iterativeclosure} shows that the full strength of \Cref{thm:mainthmintro} can be needed, the simpler criteria developed along the way already suffice for many of these applications.

Beyond indecomposability, we extend \Cref{thm:mainthmintro} to a general upper bound on the dimension of deformation cones in \Cref{thm:DCdimBoundedByNumberOfDependentSetsCovering}.
% Beyond indecomposability, we also give upper bounds on the dimension of the deformation cone of a framework (or of a polytope), see \Cref{thm:dim1,thm:DCdimBoundedByNumberOfDependentSets,thm:DCdimBoundedByNumberOfDependentSetsCovering}; and we show that the deformation cone of a product is the product of the deformation cones of the factors, see \Cref{thm:products}. 

In subsequent papers, the tools developed here have been applied in the proof that all $0/1$-polytopes that are not a Cartesian product are indecomposable in \cite{HigashitaniPadrolSanyal}, and to compute the dimension of the deformation cones of Platonic, Archimedean, and Johnson solids~\cite{Poullot-JohnsonSolids}.

We now turn to applications of these results to concrete families of polytopes.  \Cref{sec:newrays} is devoted to one key application: producing new rays of the submodular cone.
The {$n$-dimensional} \defn{permutahedron}~$\permuto$ is the convex hull of the $n!$ permutations of the vector $(1, 2, \dots, n)\in \R^n$. 
%It was first studied by Schoute in 1911~\cite{Schoute1911}, and has become one of the most studied polytopes in geometric and algebraic combinatorics.
A \defn{deformed permutahedron} (\aka \defn{generalized permutahedron}) is a deformation of~$\permuto$. Originally introduced by Edmonds in 1970 under the name of \defn{polymatroids} in linear optimization~\cite{Edmonds}, they have been popularized by Postnikov~\cite{Postnikov2009} within the algebraic combinatorics community. Nowadays, they are one of the most studied families of polytopes, 
in areas such as algebraic combinatorics, statistics, optimization, and game theory. 
%in areas such as algebraic combinatorics~\cite{AguiarArdila, ArdilaBenedettiDoker, PostnikovReinerWilliams}, optimization~\cite{SubmodularFunctionsOptimization},  game theory~\cite{DanilovKoshevoy2000}, statistics~\cite{MortonPachterShiuSturmfelsWienand2009,MohammadiUhlerWangYu2018}, and economic theory~\cite{JoswigKlimmSpitz2023}. 
The deformation cone of the $\permuto$ is parametrized by the cone of \defn{submodular functions} (\ie set function $f : 2^{[n]} \to \R$ satisfying $f(A) + f(B) \geq f(A\cup B) + f(A\cap B)$ for all subsets $A, B \subseteq[n]$). Edmonds ended his seminal paper~\cite{Edmonds} by observing the difficulty of characterizing its extreme rays (\ie indecomposable deformed permutahedra). This has been studied from several disciplines \cite{Shapley:1971,RosenmullerWeidner,Nguyen1986-SemimodularFunctions,GirlichHodingSchneidereitZaporozhets:1995,Kashiwabara:2000,studeny2000extreme,StudenyKroupa2016,studeny2016basicfactsconcerningsupermodular,GrabischKroupa:2019,HaimanYao:2023,LohoPadrolPoullot2025RaysSubmodularCone}, but the question is still wide open. 
Although counting all the rays has been deemed ``impossible'' \cite{CsirmazCsirmaz-AttemptingTheImpossible}, there remains a strong need for explicit and structurally distinct examples. An infinite super-exponential family of rays is given by matroid polytopes of connected matroids~\cite[Section 10]{Nguyen1986-SemimodularFunctions} (see also~\cite[Section~7.2]{StudenyKroupa2016} and \Cref{sec:matroidpolytopes}), which until recently gave the best lower bound on the number of rays of the submodular cone. 

The starting point for the construction of the present article are graphical zonotopes: Minkowski sums of some edges of the standard simplex indexed by the arcs of an associated graph~$G$. They are deformed permutahedra, but decomposable. % as Minkowski sums of segments.
We show that, when~$G$ is a complete bipartite graph~$\Knm$, we can (deeply) truncate one or two (particular) vertices of its graphical zonotope to obtain polytopes that are simultaneously: deformed permutahedra (\cref{thm:PandQareGP}), indecomposable (\cref{thm:PandQareIndecomposable}), and not matroid polytopes (\cref{thm:PandQnotMatroidPolytopes}).
This way, we obtain $2\lfloor\frac{n-1}{2}\rfloor$ new indecomposable deformations of the $n$-permutahedron $\permuto\subseteq \R^n$ (\cref{cor:NewMIGP}). Moreover, we use some of these examples to refute a conjecture of Smilansky from 1987 {\cite[Conjecture~6.12]{Smilansky1987}}.
%
%\begin{thmUniv}[\Cref{thm:mainthm}]
%Let $\pol$ be a polytope (reciprocally a \emph{framework} $\fw$), and $S$ a subset of vertices of $\pol$ such that every facet of $\pol$ (reciprocally every \emph{flat} in a \emph{covering collection} of $\fw$) contains a vertex in $S$.
%If, between any two vertices in $S$, there is a path of pairwise \emph{dependent} edges, then $\pol$ (reciprocally $\fw$) is indecomposable.
%
%Two edges are dependent if the ratio of their lengths is constant over all deformations of $\pol$.
%In particular, two edges are dependent in the following cases:
%\begin{compactenum}
%\item[$\bullet$] they belong to a cycle of $d+1$ edges whose directions span a $d$-dimensional space (\eg belong to a triangle), see \Cref{cor:RigidCycle0}; or
%\item[$\bullet$] they belong to a sub-skeleton of the 1-skeleton of $\pol$ which is the 1-skeleton of an indecomposable polytope, see \Cref{ssec:GraphEdgeDependencies}; or
%\item[$\bullet$] there exists a projection of $\pol$ onto a indecomposable polytope such that they are not projected to points (\eg they are opposite in a parallelogram), see \Cref{ssec:DegeneracyAndProjection}.
%\end{compactenum}
%\end{thmUniv}

Taking \defn{permutahedral wedges} over these truncated graphical zonotopes, in \cref{sec:wedges} we enlarge this to a bigger family of indecomposable deformed permutahedra that are not matroid polytopes.

\begin{thmUniv}[\Cref{cor:BetterLowerBound}]
	For $N\geq 4$, there are at least $\frac{1}{3!}(N-1)!$ non-normally-equivalent indecomposable deformed $N$-permutahedra which are not normally-equivalent to matroid polytopes.
\end{thmUniv}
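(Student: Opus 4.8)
The plan is to bootstrap from the two-dimensional family of examples already promised in \cref{sec:newrays} (the $2\lfloor\frac{n-1}{2}\rfloor$ indecomposable deformed $n$-permutahedra obtained by deeply truncating one or two vertices of a graphical zonotope of $\Knm$) and then amplify it through iterated \emph{permutahedral wedges}. First I would recall (from \cref{sec:wedges}) that taking a permutahedral wedge over a deformed $n$-permutahedron produces a deformed $(n+1)$-permutahedron, and that this operation preserves both indecomposability and the property of not being normally-equivalent to a matroid polytope; one also needs that non-normally-equivalent inputs give non-normally-equivalent outputs, so that the count does not collapse. Granting these facts, each of the base examples in dimension $n$ spawns a whole tree of higher-dimensional examples indexed by the sequence of wedge operations performed to climb from dimension $n$ up to dimension $N$.

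The combinatorial heart of the argument is then a counting estimate: starting from the $2\lfloor\frac{n-1}{2}\rfloor$ base polytopes in each small dimension $n$, and observing that at each step from dimension $k$ to dimension $k+1$ there are on the order of $k$ inequivalent choices of which vertex (or coordinate direction) to wedge over, the number of distinct normal-equivalence classes produced by the time we reach dimension $N$ multiplies out to something comparable to a product $\prod_{k} k$, i.e.\ a factorial. Being careful to discard repetitions and to keep only a cleanly indexed sub-family, one extracts at least $\frac{1}{3!}(N-1)!$ pairwise non-normally-equivalent examples, all of which are indecomposable and none of which is normally equivalent to a matroid polytope. The division by $3!$ is the slack we allow ourselves for the small-dimensional base cases and for the overcounting inherent in the wedge tree; the hypothesis $N\geq 4$ is exactly what is needed for the base family to be nonempty and for the recursion to get started.

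The main obstacle I anticipate is not the counting but the bookkeeping needed to certify that distinct wedge histories really do yield distinct normal-equivalence classes — equivalently, that the wedge construction is ``injective enough'' on normal fans. This requires identifying an invariant of a deformed permutahedron that records where and how often wedges were taken (for instance reading off, from the normal fan, which rays are ``wedge rays'' and over which facet each wedge sits), and checking that a normal equivalence between two such polytopes forces their wedge histories to match. A secondary subtlety is verifying that permutahedral wedges of our truncated graphical zonotopes never accidentally become matroid polytopes; here I would use the structural obstruction established in \cref{thm:PandQnotMatroidPolytopes} (e.g.\ the presence of a face or edge-length configuration incompatible with $0/1$ submodular data) and argue it is inherited, or even strengthened, under wedging. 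Once these two points are in place, the estimate $\frac{1}{3!}(N-1)!$ follows by the elementary product count sketched above.
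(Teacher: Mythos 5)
Your outline is the paper's route: start from the truncated graphical zonotopes of \cref{sec:newrays}, iterate permutahedral wedges, and count wedge histories; the preservation facts you invoke are exactly \cref{thm:PermutahedralWedgesIndecomposable}, \cref{thm:PermutahedralWedgesGP} and, for non-matroidality, \cref{lem:NotMatroidPolytope} --- this last point is easier than you anticipate, since an indecomposable polytope normally equivalent to a matroid polytope takes at most two values on each coordinate, and wedging can only increase the number of values taken, so no structural obstruction needs to be ``inherited''. Two framing corrections: the paper's lower bound uses only the single base polytope $\Pnm[1][3]$ (not the whole two-parameter family in every dimension), and the factor $\frac{1}{3!}$ is not slack for overcounting --- it is the exact count $\prod_{k=4}^{N-1}k=(N-1)!/3!$ of index sequences $(j_4,\dots,j_{N-1})$ with $1\le j_k\le k$.

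The genuine gap is the step you flag as the main obstacle but do not resolve: that distinct wedge histories yield non-normally-equivalent polytopes. This is not automatic, and the paper's mechanism is quite specific. By \cref{lem:WedgesAreRecognazable}, the wedges $\pol_{\wedge i}$, $\pol_{\wedge j}$, $\pol^{\wedge j}$ can be told apart for $i\ne j$ only under the hypothesis that the face of $\pol$ maximizing (resp.\ minimizing) $\b e_i$ is a co-facet; without it, the distinguishing face $(\pol_{\wedge i})^{\b e_{n+1}}$ may drop dimension and no longer determine $i$. Maintaining this co-facet condition through the iteration is what forces the paper's prescribed choice of signs $\varepsilon_k$ (whether to take the minimal or the maximal wedge at each step), via \cref{lem:WedgesKeepsFacets}, starting from the observation that the faces of the strawberry minimizing each coordinate are co-facets; only then does induction on the length of the sequence give pairwise non-equivalence. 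Your proposed ``wedge-history invariant read off the normal fan'' would in effect have to reprove this, and your freer scheme (all base polytopes $\Pnm$, $\Qnm$ in all dimensions, arbitrary wedge choices at each step) creates additional cross-family coincidences to exclude, which is precisely why the paper restricts to one base and one sign pattern. So the plan is the right one, but the counting only becomes a proof once this co-facet bookkeeping (or an equivalent rigidity statement for wedges) is actually carried out.
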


In a follow-up separate article~\cite{LohoPadrolPoullot2025RaysSubmodularCone}, together with Georg Loho we devised alternative methods to construct an even larger infinite family, gathering more than $2^{2^{n-2}}$ rays.
While the first indecomposability proofs from \cite{LohoPadrolPoullot2025RaysSubmodularCone} relied on the methods developed here, the final version instead exploits structural properties of the submodular cone.
As we discuss in \cref{rmk:NotGeneratedByLPPmethod}, the family constructed in \cite{LohoPadrolPoullot2025RaysSubmodularCone} does not contain the polytopes that we construct here.

%\begin{thmUniv}[\Cref{thm:NewRAYS,cor:BetterLowerBound}]
%For $N\geq 4$, there exist $\frac{1}{3!}(N-1)!$ %$2\left\lfloor\frac{N-1}{2}\right\rfloor$
%deformations of the standard permutahedron $\permuto[N] = \bigl\{(\sigma(1), \dots, \sigma(N)) ~;~ \sigma\in \c S_n\bigr\}$, which are indecomposable polytopes and are pairwise non-combinatorially-isomorphic.
%%These deformations are obtained by taking \emph{permutahedral wedges} (\Cref{def:PermutahedralWedge}) over graphical zonotopes of complete bipartite graphs of which one or two vertices have been deeply truncated (\Cref{ssec:TruncatedGraphicalZonotope}).
%\end{thmUniv}

We then display more applications of our setup. In \Cref{sec:edgedecomposable} we study \defn{uniquely decomposable frameworks}, those that have a unique nontrivial description as a Minkowski sum of indecomposables (up to translation), which we characterize by the fact that they have a simplicial deformation cone. 
This is a very remarkable property that has been observed for some important families of polytopes. For example, all the polytopes \emph{combinatorially equivalent} to a product of simplices have this property~\cite{CDGRY2020}. Another remarkable example is Loday's associahedron, for which this was implicitly observed in the realizations of kinematic associahedra~\cite{ArkaniHamedBaiHeYan} 
used to study the behavior of scattering particles in mathematical physics. This was further developed in~\cite{PadrolPaluPilaudPlamondon}, where it was shown that unique decomposability leads to a simple description of all polytopal realizations in the kinematic space~\cite{ArkaniHamedBaiHeYan}. This inspired the search for other uniquely decomposable families, such as 
associahedra for finite type cluster algebras and brick 2-acyclic gentle bound quivers~\cite{PadrolPaluPilaudPlamondon}, interval nestohedra~\cite{PPP2023Nesto}, triangle-free graphical zonotopes~\cite{PPP2023gZono}, and certain permutreehedra~\cite{AlbertinPilaudRitter2021}.

We use the graph of edge dependencies to define and study \defn{edge decomposable} frameworks, an important structural property that implies unique decomposability. For example, we show that all polytopes that have only triangle and parallelogram $2$-faces have this property in \Cref{cor:triangleparallelogram}. 
The deformation cone of a product is the product of the deformation cones of the factors, see \Cref{thm:products}, and in \Cref{ssec:parallelogramicMinkowskisums} we show that this property extends to certain Minkowski sums that are ``generic enough'', which we call \defn{parallelogramic} (\eg parallelogramic zonotopes). Parallelogramic Minkowski sums of indecomposable polytopes provide a large family of edge decomposable polytopes.

Finally, in \Cref{sec:constructions}, we extend our previous results to provide further constructions of indecomposable and uniquely decomposable polytopes. First, in \Cref{ssec:DeepTruncationsOfParallelogramicZonotopes}, we generalize our construction of truncated graphical zonotopes to other parallelogramic zonotopes; and in  \Cref{ssec:StackingOnParallelogramicZonotopes}, we study how the dimension of the deformation cone changes when stacking on certain facets of a parallelogramic Minkowski sum.

While finalizing this manuscript, we learned that Spencer Backman and Federico Castillo independently proved that certain truncated zonotopes are indecomposable, also exploiting their parallelogram faces \cite{BackmanCastillo}.

\section{New indecomposability criteria}\label{sec:criteria}

\subsection{Preliminaries}\label{ssec:Prelim}

\subsubsection{Frameworks}

A \defn{framework} in $\R^d$ is a triple $\fw=(\verts,\edges,\p)$, where $\verts$ is a finite set (the \defn{vertices}), $\edges$ is a subset of $\binom{\verts}{2}$ (the \defn{edges}), and $\p$ is a function $\p:\verts\to \R^d$ (the \defn{realization} of $\fw$).
For $v\in \verts$ we will use $\p_v$ to denote the image of $v$ under~$\p$, and think of $\p$ as a configuration $\p(\verts)~=~\{\p_v\}_{v\in \verts}$ of points in~$\R^d$ labeled by the set~$\verts$. The \defn{dimension} of $\fw$ is the dimension of the affine span of~$\p(\verts)$.
We abbreviate $uv$ for the edge $\{u,v\}$ for vertices $u,v\in \verts$. We will say that $uv\in \edges$ is a \defn{degenerate} edge if $\p_u=\p_v$, and \defn{non-degenerate} otherwise. 
%The sets of degenerate and non-degenerate edges of $\fw$ are respectively denoted~$\de(\fw)$ and $\nde(\fw)$ (or just $\de$ and $\nde$ if the framework is clear from the context).
The set of non-degenerate edges of $\fw$ is called its \defn{support} and denoted $\nde(\fw)$. Its complement, the set of degenerate edges, is denoted $\de(\fw)$. These are simplified to just $\nde$ and $\de$ if the framework is clear from the context.

%A \defn{graph} is a pair $G=(\nodes,\arcs)$ where $\nodes$ is a finite set (the \defn{nodes}), and $\arcs$ is a subset of $\binom{\nodes}{2}$ (the \defn{arcs}). The pair $(\verts,\edges)$ forms the \defn{underlying graph} of~$\fw$. 
A \defn{graph} is a pair $G=(\nodes,\arcs)$ where $\nodes$ is a finite set (the \defn{nodes}), and $\arcs$ is a subset of $\binom{\nodes}{2}\cup \binom{\nodes}{1}$ (the \defn{arcs}), which can include \defn{loops} (but we will not need multi-edges).
%When there is no ambiguity, we will simplify the presentation and say that the pair $(\p,\edges)$ is a framework, identifying $\verts$ with $\p$. 
Frameworks are also known as \emph{geometric graphs}, in particular in the literature on polytope deformations \cite{Kallay1982,PrzeslawskiYost2008,PrzeslawskiYost2016}, but we preferred to adopt the notation from rigidity theory to minimize confusions with other graphs appearing in the paper. For the same reason, we reserve the names \emph{vertex} and \emph{edge} for frameworks (and later also for polytopes), and use \emph{node} and \emph{arc} for other abstract graphs.   

Let $\fw=(\verts,\edges,\p)$ and $\fw[G]=(\verts,\edges,\p[\psi])$ be frameworks with the same underlying graph $G=(\verts,\edges)$. We say that $\fw[G]$ is a \defn{deformation} of $\fw$ if for every $uv\in \edges$ there is some $\lam_{uv}\in \R_+\eqdef [0,+\infty)$ such that
\begin{equation}\label{eq:ELDF}\lam_{uv}(\p_{v}-\p_{u})=\p[\psi]_{v}-\p[\psi]_{u}\end{equation}
Since all the deformations of $\fw$ have the same underlying graph, we can define the operations of point-wise addition and scalar multiplication induced by those in the space of functions $\verts\to \R^d$;  explicitly $\fw + \c G \eqdef (V, E, v\mapsto \b\phi_v + \b \psi_v)$, and $\alpha\fw \eqdef (V, E, v\mapsto \alpha \b \phi_v)$. With these operations, the set of deformations of~$\fw$ forms a convex cone. We will use the coefficients~$\lam_{uv}$ from \eqref{eq:ELDF} to parametrize this cone modulo translations.

Given a fixed framework $\fw=(\verts,\edges,\p)$, we associate to each of its deformations $\fw[G]=(\verts,\edges,\p[\psi])$ an \defn{edge-deformation vector} $\mathdefn{\dv_{\fw}(\fw[G])}\in\R_+^{\edges}$, 
where for each edge~$uv\in \edges$ we set
\begin{equation}\label{eq:defovector}
\dv_{\fw}(\fw[G])_{uv}=\begin{cases}
\frac{\p[\psi]_{v}-\p[\psi]_{u}}{\p_{v}-\p_{u}} & \text{ if }uv\in \nde\\
0 & \text{ if }uv\in \de
\end{cases}
\end{equation}
which is the coefficient from \eqref{eq:ELDF}, see \Cref{sfig:EdgeLengthVector}. When $\fw$ is clear from the context, we write $\mathdefn{\dv(\fw[G])}$ for the edge-deformation vector of~$\fw[G]$.
We first observe that $\mathdefn{\dv(\fw[G])}$ determines $\fw[G]$ up to translation of its connected components.

\begin{lemma}\label{lem:connectedcomponents}
For deformations $\fw[G]=(\verts,\edges,\p[\psi])$ and $\fw[H]=(\verts,\edges,\p[\omega])$ of a framework $\fw=(\verts,\edges,\p)$, we have that ${\dv_{\fw}(\fw[G])}={\dv_{\fw}(\fw[H])}$ if and only if 
there are vectors $\b t_1\dots \b t_s\in\R^d$ such that $\p[\omega]_v=\p[\psi]_v+\b t_i$ for all $v\in \verts_i$, where $\verts_1, \dots, \verts_s$ are the vertex sets of the connected components of the graph $G = (\verts,\edges)$.
\end{lemma}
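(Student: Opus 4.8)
The plan is to prove both implications directly from the defining equation \eqref{eq:ELDF} and the definition \eqref{eq:defovector} of the edge-deformation vector. The content is essentially that knowing, for every edge $uv$, the ratio $\lambda_{uv}$ by which the edge vector $\b\phi_v - \b\phi_u$ is rescaled, is the same as knowing the difference vectors $\b\psi_v - \b\psi_u$ along all edges, and such "discrete gradient" data determines a point configuration up to an additive constant on each connected component.

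First I would treat the easy direction ($\Leftarrow$). Suppose $\b\omega_v = \b\psi_v + \b t_i$ for all $v$ in the component $\verts_i$. For a non-degenerate edge $uv \in \nde$, both endpoints lie in the same component $\verts_i$, so $\b\omega_v - \b\omega_u = (\b\psi_v + \b t_i) - (\b\psi_u + \b t_i) = \b\psi_v - \b\psi_u$; dividing by $\b\phi_v - \b\phi_u$ (using \eqref{eq:defovector}, which makes sense since these vectors are parallel by \eqref{eq:ELDF}) gives ${\dv_\fw(\fw[G])}_{uv} = {\dv_\fw(\fw[H])}_{uv}$. For a degenerate edge the two coordinates are both $0$ by definition. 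Hence the edge-deformation vectors agree. Note this direction does not even use that $\fw[G],\fw[H]$ are deformations beyond having the same underlying graph.

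For the main direction ($\Rightarrow$), assume ${\dv_\fw(\fw[G])} = {\dv_\fw(\fw[H])}$; call the common value $\lam = (\lam_{uv})_{uv \in \edges}$. By \eqref{eq:ELDF}/\eqref{eq:defovector}, for every edge $uv$ we have $\b\psi_v - \b\psi_u = \lam_{uv}(\b\phi_v - \b\phi_u) = \b\omega_v - \b\omega_u$ (the case $uv \in \de$ being trivial, $0 = 0$). So the function $\b\delta \eqdef \b\omega - \b\psi : \verts \to \R^d$ satisfies $\b\delta_v = \b\delta_u$ for every edge $uv \in \edges$. It follows that $\b\delta$ is constant on each connected component $\verts_i$ of the graph $G = (\verts,\edges)$: any two vertices $u, v \in \verts_i$ are joined by a path $u = w_0, w_1, \dots, w_k = v$ with each $w_{j-1}w_j \in \edges$, and applying the relation along the path gives $\b\delta_u = \b\delta_{w_1} = \dots = \b\delta_v$. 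Setting $\b t_i$ to be this common value yields $\b\omega_v = \b\psi_v + \b t_i$ for all $v \in \verts_i$, as desired.

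There is no real obstacle here; the only point requiring a word of care is that the quotient in \eqref{eq:defovector} is well-defined, i.e.\ that $\b\psi_v - \b\psi_u$ and $\b\omega_v - \b\omega_u$ are genuinely scalar multiples of $\b\phi_v - \b\phi_u$ — but this is exactly the hypothesis that $\fw[G]$ and $\fw[H]$ are deformations of $\fw$ (equation \eqref{eq:ELDF}), so it is available for free. One should also observe that a degenerate edge of $\fw$ forces $\b\phi_u = \b\phi_v$, hence $\b\psi_u = \b\psi_v$ and $\b\omega_u = \b\omega_v$ in any deformation, so degenerate edges still constrain $\b\delta$ to be constant across them, which is consistent with (and subsumed by) the connectivity argument above.
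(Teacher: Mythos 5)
Your proof is correct and follows essentially the same route as the paper: propagating the per-edge identity $\p[\psi]_v-\p[\psi]_u=\lam_{uv}(\p_v-\p_u)=\p[\omega]_v-\p[\omega]_u$ along paths within each connected component, which is exactly the paper's telescoping-sum argument phrased via the difference function $\b\omega-\b\psi$. You additionally write out the easy converse direction (and the degenerate-edge case), which the paper leaves implicit; this is fine and changes nothing of substance.
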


\begin{proof}
Assume that for every edge $e\in\edges$, we have that ${\dv_{\fw}(\fw[G])}_e={\dv_{\fw}(\fw[H])}_e=\lam_e$. Let $u, v\in \verts_i$ be vertices in the same connected component, and let $u=u_0,u_1,\dots,u_k=v$ be a path joining $u$ and $v$. Then 
\[\p[\psi]_v-\p[\psi]_u= \sum_{j=1}^k (\p[\psi]_{u_j}-\p[\psi]_{u_{j-1}})= \sum_{j=1}^k \lam_{u_{j-1}u_j}(\p_{u_j}-\p_{u_{j-1}})= \sum_{j=1}^k (\p[\omega]_{u_j}-\p[\omega]_{u_{j-1}})=
\p[\omega]_v-\p[\omega]_u\]
which proves our claim with $\b t_i=\p[\omega]_{u}-\p[\psi]_u$ for any $u\in C_i$.
\end{proof}	

% \arnau{We had edge-deformation cone here, but we were not consistent with the terminology. I have left deformation cone everywhere. Decide if we want to go back}

The \defn{deformation cone}\footnote{To be precise, the \defn{deformation cone} is the cone of deformations in the space of functions $\verts \to \R^d$, and this should be called the \defn{edge-deformation cone} instead. Since both cones are isomorphic up to lineality, we decided to adopt this notation to simplify the presentation. } $\EDefoCone[\fw]\subset \R^{\edges}$ of $\fw$ is the set 
containing the edge-deformation vectors~${\dv(\fw[G])}$ of all the deformations $\fw[G]$ of $\fw$.
The following lemma gives an alternative characterization, that makes it clear that $\EDefoCone$ is a polyhedral cone.

\begin{lemma}\label{lem:edge-deformations-frameworks}
The deformation cone $\EDefoCone[\fw]$ of a framework~$\fw=(\verts,\edges,\p)$ is the set of nonnegative vectors $\dv\in\R_+^{\edges}$ such that 
\begin{equation}\label{eq:degenerate_equation}
\dv_e=0 \text{ for every degenerate edge $e\in\de$,}
\end{equation} and such that for every 
cycle $C$ of the graph $G = (\verts,\edges)$ with cyclically ordered vertices  $u_1 u_2\dots u_k$, the following \defn{cycle equation} is satisfied:
\begin{equation}\label{eq:cycle_equation}
\dv_{u_1u_2}(\p_{u_2}-\p_{u_1}) +\dv_{u_2u_3}(\p_{u_3}-\p_{u_2}) +\dots+\dv_{u_ku_1}(\p_{u_1}-\p_{u_k}) = \b 0
\end{equation}
\end{lemma}
\begin{proof}
	
	If $\fw[G]=(\verts,\edges,\p[\psi])$ is a deformation of $\fw$, then for every edge $uv\in \edges$ there is some $\lam_{uv}\geq 0$ such that
	\[\p[\psi]_v = \p[\psi]_u+(\p[\psi]_v-\p[\psi]_u)= \p[\psi]_u+\lam_{uv}(\p_v-\p_u)\]
	Therefore, for a cycle of $G = (V, E)$ with cyclically ordered vertices  $u_1 u_2\dots u_k$, we have 
	\[\p[\psi]_{u_1} = \p[\psi]_{u_1}+ \Bigl(\lam_{u_1u_2}(\p_{u_2}-\p_{u_1}) +\lam_{u_2u_3}(\p_{u_3}-\p_{u_2}) +\dots+\lam_{u_ku_1}(\p_{u_1}-\p_{u_k})\Bigr)\]
	showing that its edge-deformation vector $\b\lambda_{\fw}(\fw[G])$ satisfies the cycle equations. The fact that $\dv_e=0$ for $e\in\de$ is by definition.
	
Conversely, fix $\dv \in \R_+^\edges$ satisfying the cycle equations. Fix a node $u_i\in C_i$ for each connected component of $G$, and for every $v\in C_i$ consider a path $u_i=w_1\dots w_k=v$ from $u_i$ to $v$ in $G$ to define 
	\[\p[\psi]_v = \p_{u_i}+
	\Bigl(\dv_{w_1w_2}(\p_{w_2}-\p_{w_1}) +\dv_{w_2w_3}(\p_{w_3}-\p_{w_2}) +\dots+\dv_{w_kw_{k+1}}(\p_{w_{k}}-\p_{w_{k-1}})\Bigr).
	\]
	The cycle equations guarantee that $\p[\psi]_v$ is independent of the choice of the path, because the difference between two paths from $u_i$ to $v$ is a linear combination of cycles, and therefore it is well defined.
	In particular, if $vw\in \edges$, we can concatenate a path from $u_i$ to $v$ with the edge $vw$ to get: 
	$\p[\psi]_w =\p[\psi]_v +\dv_{vw}(\p_{w}-\p_{v})$.
	This shows that the framework $(\verts,\edges,\p[\psi])$ is a deformation of $\fw$ whose edge-deformation vector is~$\dv$.
\end{proof}

\begin{remark}
\Cref{eq:cycle_equation} is a linear equation between vectors in $\R^d$, and each cycle equation gives rise to $d$ equations between the real numbers $\dv_e$.
Therefore, the cycle equations provide up to $d$ times the dimension of the cycle space of $G = (V, E)$ many equations that a vector $\b\lambda$ has to satisfy
to be a deformation of $\fw = (\verts,\edges,\p)$.
In practice, this system of equations can be highly redundant.
\end{remark}

We call the \defn{deformation space} of $\fw$ the linear subspace of $\R^\edges$ defined by the equations~\eqref{eq:degenerate_equation} and \eqref{eq:cycle_equation}. The deformation cone $\DefoCone[\fw]$ is the intersection of the non-negative orthant $\R_+^\edges$ with the deformation space.

%\begin{remark}\label{rmk:cyclespace}
Fix an arbitrary orientation of the graph $G=(\verts,\edges)$, and let $\inc:\R^\edges\to\R^\verts$ be the associated incidence map that sends the oriented edge $\vec{uv}$ to $\b e_v-\b e_u$, where $\{\b e_v\}_{v\in\verts}$ is the standard basis of~$\R^\verts$. The \defn{cycle space} of $G$ over~$\R$ is $\ker(\inc)$. While this definition depends on the orientation, notice that changing the orientation only affects by an isomorphism that flips the signs of the corresponding variables. The signed incidence vectors of (oriented) cycles generate $\ker(\inc)$, and a basis is given by the fundamental cycles with respect to any spanning forest (see for example \cite[Chapter~4]{Biggs}). 
%\end{remark}

In particular, it suffices to consider a \defn{cycle basis} of the cycle space to define deformation spaces and deformation cones. The following presentation of this statement will be useful afterwards.

\begin{lemma}\label{lem:cyclebasis}
Let $\fw=(\verts,\edges,\p)$ be a framework, and $C_1,\dots,C_k$ be a collection of cycles generating the cycle space of $G=(\verts,\edges)$. Define $\fw_i=(\verts,\edges(C_i),\p)$ to be the restriction of $\fw$ to the edges in $C_i$. Then 
\[\EDefoCone[\fw]=\bigcap_{1\leq i\leq k}\EDefoCone[\fw_i].\]	
\end{lemma}

\begin{proof}
% Extend $\p$ linearly to a linear map $\varphi:\R^\verts\to \R^d$ via $\varphi(\b e_v)=\p_v$, and for $\dv\in\R^\edges$, 
% consider the diagonal map $\Delta_{\dv}:\R^\edges\to\R^\edges$ that scales each coordinate by the entries of $\dv$: $(\b x_e)_{e\in\edges}\mapsto (\dv_e\b x_e)_{e\in\edges}$. Let
% $f_{\dv}\coloneqq\varphi\circ\partial\circ\Delta_{\dv}:\R^\edges\to \R^d$ be their composition: $\b x\mapsto \sum_{uv\in\edges}(\p_v-\p_u)\b\lambda_{uv}\b x_{uv}$. 
For $\dv\in\R^\edges$, consider the linear map $f_{\dv}\colon \R^\edges\to \R^d$ defined by $\b x\mapsto \sum_{uv\in\edges}(\p_v-\p_u)\b\lambda_{uv}\b x_{uv}$. 
Then $\dv$ satisfies the cycle equations if and only if every incidence vector of an oriented cycle belongs to the kernel of $f_{\dv}$. Of course, since incidence vectors of oriented cycles belong to the cycle space $\ker(\inc)$, it is enough that this is true for a generating set of $\ker(\inc)$, by linearity. The incidence vector of $C_i$ belongs to $\ker(f_{\dv})$ precisely when the cycle equations of $\fw_i$ are fulfilled.
\end{proof}

A connected framework $\fw=(\verts,\edges,\p)$ is \defn{indecomposable} if for every deformation $\fw[G]=(\verts,\edges,\p[\psi])$ of $\fw$ there are $\alpha\in \R_+$ and $\b t\in \R^d$ such that $\p[\psi]_v=\alpha \p_v+\b t$ for all $v\in \verts$. In the labeled configuration notation, this means that $\p[\psi](\verts)$ is (a translation of) a dilation of $\p(\verts)$.
Equivalently, $\fw$ is indecomposable if it is either empty ($\edges=\emptyset$) or its deformation cone $\EDefoCone[\fw]$ is one-dimensional.
By convention, a disconnected framework is decomposable.

\begin{remark}\label{rmk:dimension}
	If we restrict to non-degenerate edges, then the edge-deformation vector ${\dv(\fw[G])}$ belongs to the interior of the non-negative orthant $\R_+^\nde$ and to the deformation space. Since the deformation cone can be described as the intersection of $\R_+^\nde$ with the deformation space, this means that the dimension of the deformation cone coincides with the dimension of the deformation space. In particular, a nonempty framework is indecomposable if and only if its deformation space is $1$-dimensional.
    Note that, in practice, it is usually easier to determine the dimension of a vector space than the dimension of a cone.
\end{remark}

Motivated by the result below, we say that a framework $\fw$ is \defn{uniquely decomposable} if its deformation cone $\EDefoCone[\fw]$ is simplicial (\ie it has linearly independent rays). Note that if $\fw$ is uniquely decomposable, so are all its deformations.

\begin{lemma}\label{lem:uniquelydecomposable}
A framework has a simplicial deformation cone if and only if it has a unique decomposition as sum of indecomposable frameworks with distinct support, up to translation. 
\end{lemma}
\begin{proof}
Let $\fw_1,\dots,\fw_k$ be frameworks representing the rays of $\EDefoCone[\fw]$; that is, representatives (up to scaling and translation) of the indecomposable deformations of $\fw$. Note that the support of a deformation of $\fw$ indexes in which face of the deformation cone it lies (its facets are supported by equations of the form $\dv_e=0$), thus $\fw_1,\dots,\fw_k$ have distinct support.
Since $\fw$ belongs to the interior of $\EDefoCone[\fw]$ (as it does not verify any facet defining-equation), there are $\alpha_i>0$ such that $\fw=\alpha_1\fw_1+\cdots+\alpha_k\fw_k$.

Assume that $\EDefoCone[\fw]$ is simplicial. Setting $\fw_i'=\alpha_i\fw_i$, we have the decomposition $\fw=\fw_1'+\cdots+\fw_k'$.
Since the $\fw_i$ are linearly independent such a linear combination is unique.

For the converse, suppose that $\fw$ has a non-simplicial deformation cone. Then there is a nontrivial linear relation $\beta_1\fw_1+\cdots+\beta_k\fw_k=0$. Then for some $\varepsilon>0$ sufficiently small, \(\fw = \sum_{i=1}^k(\alpha_i+\varepsilon \beta_i)\fw_i\) also has positive coefficients, producing two distinct decompositions of $\fw$ into indecomposable frameworks.
\end{proof}

\begin{remark}
If $\fw$ has a unique decomposition, then any deformation $\fw[G]\in \DefoCone[\fw]$ has too.
Indeed, the face of a simplicial cone is also a simplicial cone, and $\DefoCone[{\fw[G]}]$ is a face of $\DefoCone[\fw]$.
\end{remark}

\subsubsection{Polytopes}

A \defn{polytope} $\pol$ in~$\R^d$ is the convex hull of finitely many points. Its \defn{faces} are the zero-sets of non-negative affine functions on~$\pol$.  For a linear functional $ \b c\in (\R^d)^*$, let \defn{$\pol^{\b c}$} be the face of $\pol$ containing the points maximizing ${\b c}$; that is to say $\pol^{\b c} \coloneqq \{\b x\in \pol ~;~ \b c(\b x) = \max_{\b y\in \pol} \b c(\b y)\}$. For a face $\pol[F]$, the set of $\b c$ such that $\pol^{\b c}=\pol[F]$ is the \defn{normal cone} of $\pol[F]$, and the union of the normal cones forms the \defn{normal fan}. Two polytopes are \defn{normally equivalent} if they share the same normal fan.
Faces of dimension~$0$, dimension~$1$, and codimension~$1$ are called \defn{vertices}, \defn{edges} and \defn{facets}, respectively. 
The sets of vertices, edges and facets of $\pol$ are denoted $\verts(\pol)$, $\edges(\pol)$, and $\facets(\pol)$. 

%The \defn{$1$-skeleton} of~$\pol$ is the graph $\G(\pol)$ with node set $\verts(\pol)$, and where to vertices are joined by an arc if they belong to a common edge in $\edges(\pol)$.
%Since several notions of vertex and edge appear in our paper, we reserve the names \emph{vertex} and \emph{edge} for the faces of polytopes, and use \emph{node} and \emph{arc} for graphs. Similarly, we will use the word $1$-skeleton instead of graph when referring to $\G(\pol)$.
%The \defn{framework} of $\pol$ is the framework \defn{$\fw(\pol)=(\G(\pol),\verts(\pol))$}, where we interpret $\verts(\pol)$ as a labeled point configuration. Formally, each vertex of $\pol$ is labeled by itself, but simplify and indistinctly treat $\verts(\pol)$ as the nodes of $\G(\pol)$ and as the associated point configuration in~$\R^d$ when treating the framework of a polytope.

The \defn{framework} of~$\pol$ is \defn{$\fw(\pol)\eqdef(\verts(\pol),\edges(\pol),\id)$}. To avoid notational clutter, we allow for slight abuses of notation in this definition. Firstly, we identify an edge $\pol[e]\in \edges(\pol)$ of the polytope (which formally is a face of $\pol$, and hence a segment in~$\R^d$) with the pair $\{\b p,\b q\}$ formed by the two vertices of $\pol$ contained in $\pol[e]$. Secondly, we use the identity map $\id: \verts(\pol)\hookrightarrow \R^d$ to denote the canonical inclusion of $\verts(\pol)$ inside $\R^d$.
% \arnau{In the interpretation of $\verts(\pol)$ as a labeled point configuration, formally each vertex of $\pol$ is labeled by itself, but we simplify and indistinctly treat $\verts(\pol)$ as the vertices of the framework $\fw(\pol)$ and as the associated point configuration in~$\R^d$.}\arnau{Remove the explicative text? I am not sure it helps}

The \defn{Minkowski sum} of the polytopes $\pol$ and $\pol[Q]$ is the polytope $\pol + \pol[Q] \eqdef \bigl\{\b p + \b q ~;~ \b p \in \pol, ~ \b q\in \pol[Q]\bigr\}$.
Note that Minkowski sums of polytopes are always defined, whereas Minkowski sums of frameworks are defined only if the two frameworks share a common underlying graph.
We say $\pol[Q]$ is a \defn{deformation} or a \defn{weak Minkowski summand} of $\pol$ if there exists a polytope $\pol[R]$ and a real number $\b\lambda > 0$ such that  $\pol = \b\lambda \pol[Q] + \pol[R]$.
%The set of deformations of a polytope~$\pol \subset \R^d$ forms a polyhedral cone under dilation and Minkowski addition, called the \defn{deformation cone} and denoted by~$\DefoCone[\pol]$.\arnau{remove this definition? we could use the notation for the edge parametrization.}
A polytope $\pol$ is \defn{(Minkowski) indecomposable} if its only deformations are of the form $\b\lambda \pol+\b t$ with $\b\lambda \geq 0$ and $\b t\in\R^d$ (the polytope $0 \,\pol$ is the point $\b 0$), see \Cref{fig:MinkowskiSumAndEdgeLengthVector} (left).

\begin{figure}
\centering
\begin{subfigure}[b]{0.6\linewidth}
     \centering
     \includegraphics[width=\linewidth]{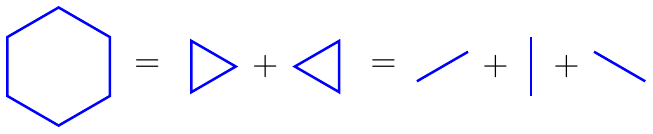}
     \caption{Two different sums giving a regular hexagon}
     \label{sfig:MinkowskiSum}
\end{subfigure}\hfill
\begin{subfigure}[b]{0.28\linewidth}
     \centering
     \includegraphics[width=\textwidth]{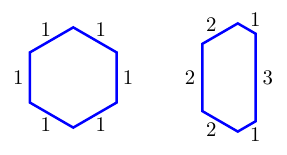}
     \caption{Edge-deformation vectors}
     \label{sfig:EdgeLengthVector}
\end{subfigure}
\caption[(Minkowski) sums and edge-deformation vector]{(Left) The regular hexagon can be written as the Minkowski sum of indecomposable polytopes in several ways.
(Right) Two deformations of the regular hexagon, where each edge $\pol[e]$ is labeled by the coordinate of the associated edge-deformation vector $\dv(\pol[Q])_{\pol[e]}$.}
\label{fig:MinkowskiSumAndEdgeLengthVector}
\end{figure}

%\subsubsection{Edge-length parametrization}

For two polytopes $\pol, \pol[Q]\subseteq\R^d$ and $\b c\in (\R^d)^*$, we have $(\pol+\pol[Q])^{\b c}=\pol^{\b c}+\pol[Q]^{\b c}$. 
Hence, if $\pol[Q]$ is a deformation of $\pol$, then, $\dim(\pol[Q]^{\b c})\leq \dim(\pol^{\b c})$ for all ${\b c}\in (\R^d)^*$. In particular, there is a (not necessarily injective but surjective) correspondence $f_{\pol\pol[Q]}:\verts(\pol)\to \verts(\pol[Q])$ between the vertices of $\pol$ and the vertices of $\pol[Q]$. 
Moreover, consider a pair $\b p, \b p'$ of vertices of $\pol$ forming an edge. If $\b q, \b q'$ are the associated vertices of $\pol[Q]$, then $\conv(\b q, \b q')$ is either a point or an edge parallel to $\conv(\b p,\b p')$.
It must have the same direction as $\conv(\b p,\b p')$ since the normal vectors to the vertices of $\pol$ are normal vectors to the associated vertices of $\pol[Q]$. Therefore, there is some $\lam\in \R_+$ such that 
\begin{equation}\label{eq:ELDpol}\lam(\b p-\b p')=\b q-\b q'.\end{equation}

In \cite{Shephard1963}, Shephard proved that the previous property characterizes deformations of $\pol$.
In particular, this proves directly the following lemma, which reduces polytope deformations to framework deformations. This correspondence was first formalized by 
Kallay~\cite{Kallay1982}, who initiated the study of framework deformations in full generality.

\begin{lemma}[{\cite{Shephard1963}, \cite{Kallay1982}}]
Let $\pol\subset\R^d$ be a polytope.
	\begin{compactenum}
		\item If $\fw[G]=(\verts(\pol), \edges(\pol),\p )$ is a deformation of $\fw(\pol)$, then $\conv(\{\p_{\b p}\}_{\b p\in \verts(\pol)})$ is a deformation of $\pol$.
		\item If $\pol[Q]$ is a deformation of $\pol$, then $\fw[G]=(\verts(\pol),\edges(\pol),f_{\pol\pol[Q]})$ is a deformation of $\fw(\pol)$.
	\end{compactenum}
\end{lemma}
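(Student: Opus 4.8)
\emph{Proof strategy.} Part~(2) is already essentially established in the discussion leading to \eqref{eq:ELDpol}: if $\polQ$ is a deformation of $\pol$, the induced surjection $f_{\pol\polQ}\colon\verts(\pol)\to\verts(\polQ)$ sends each edge $\b p\b p'$ of $\pol$ either to a point or to an edge of $\polQ$ parallel to and equally oriented as $\b p-\b p'$, so $f_{\pol\polQ}(\b p)-f_{\pol\polQ}(\b p')=\lam_{\b p\b p'}(\b p-\b p')$ for some $\lam_{\b p\b p'}\in\R_+$. This is exactly condition \eqref{eq:ELDF}, so $(\verts(\pol),\edges(\pol),f_{\pol\polQ})$ is a deformation of $\fw(\pol)$, and nothing more is needed. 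The content is therefore in part~(1), which I would prove via support functions over the normal fan of $\pol$. Set $\polQ\eqdef\conv\big(\{\p_{\b p}\}_{\b p\in\verts(\pol)}\big)$; after restricting to its affine hull assume $\pol$ is full-dimensional, let $\c N(\pol)$ be its normal fan, whose maximal cones are the normal cones $N_\pol(\b p)=\{\b c ~;~ \b p\in\pol^{\b c}\}$ of the vertices $\b p$ and whose walls correspond to the edges of $\pol$, and for a polytope $\polR$ write $h_{\polR}(\b c)=\max_{\b x\in\polR}\b c(\b x)$ for its support function (recall $h_{\polR+\polR'}=h_\polR+h_{\polR'}$). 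The goal is to produce $\mu>0$ and a polytope $\polR$ with $\pol=\mu\polQ+\polR$, which exhibits $\polQ$ as a deformation of $\pol$.

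The key step is to show that $h_\polQ$ is \emph{piecewise linear with respect to $\c N(\pol)$}, namely that on $N_\pol(\b p)$ it coincides with the linear form $\b c\mapsto\b c(\p_{\b p})$; equivalently, $\c N(\polQ)$ is refined by $\c N(\pol)$. I would fix $\b c\in(\R^d)^*$, pick a vertex $\b p$ maximizing it on $\pol$, and upgrade the edgewise relations \eqref{eq:ELDF} to a global monotonicity statement: from any vertex $\b q$, take a $\b c$-nondecreasing edge path $\b q=\b q_0,\b q_1,\dots,\b q_k$ in the graph of $\pol$ ending at a vertex of $\pol^{\b c}$ (at a vertex outside $\pol^{\b c}$ some incident edge strictly increases $\b c$, since a local maximum of a linear functional on a polytope is a global one), and sum \eqref{eq:ELDF} along it to get $\b c(\p_{\b q_k})-\b c(\p_{\b q})=\sum_{i}\lam_{\b q_{i-1}\b q_i}\,\b c(\b q_i-\b q_{i-1})\ge 0$, every term being nonnegative. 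Since edges inside $\pol^{\b c}$ are $\b c$-constant, the value $\b c(\p_{\b r})$ is the same for all $\b r\in\verts(\pol^{\b c})$; hence $h_\polQ(\b c)=\b c(\p_{\b p})$. Letting $\b c$ range over $N_\pol(\b p)$ (where $\pol^{\b c}=\{\b p\}$ in the interior, the rest by continuity) gives the claim.

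To conclude, both $h_\pol$ and $h_\polQ$ are now piecewise linear over the complete fan $\c N(\pol)$. Across a wall $N_\pol(\b p\b p')$, with $\b n$ the normal direction pointing from $N_\pol(\b p')$ into $N_\pol(\b p)$, the ``convexity gap'' of $h_\pol$ is $\b n(\b p-\b p')>0$ (strictly positive because $\b p\ne\b p'$ forces $\b c(\b p)>\b c(\b p')$ on that side), while that of $h_\polQ$ is $\b n(\p_{\b p}-\p_{\b p'})=\lam_{\b p\b p'}\,\b n(\b p-\b p')$, so the gap of $h_\pol-\mu h_\polQ$ is $(1-\mu\lam_{\b p\b p'})\,\b n(\b p-\b p')$. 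Choosing $\mu\in(0,1]$ with $\mu\le 1/\lam_e$ for every edge $e$ with $\lam_e>0$ (finitely many conditions; any $\mu$ works, and the statement is trivial, if all $\lam_e=0$), the function $h_\pol-\mu h_\polQ$ has nonnegative convexity gap across every wall, hence is convex; being in addition positively homogeneous and piecewise linear over a finite complete fan, it is $h_\polR$ for a polytope $\polR$. Then $h_\pol=\mu h_\polQ+h_\polR=h_{\mu\polQ+\polR}$, so $\pol=\mu\polQ+\polR$, as wanted.

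I expect the main obstacle to be the key step, and within it the existence of $\b c$-monotone edge paths in the graph of $\pol$: this is the one genuinely global input that converts the local edge conditions \eqref{eq:ELDF} into control of the normal fan of $\polQ$. It is a classical fact (provable, e.g., by the local-maximum argument sketched above, or citable from standard references on polytope graphs); once it is in hand, the remainder is a routine convexity computation on piecewise-linear functions.
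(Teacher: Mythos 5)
Your proposal is correct, but it is worth noting that the paper does not actually prove this lemma: part~(2) is exactly the argument given in the text immediately before the statement (the derivation of \eqref{eq:ELDpol} from $(\pol+\polQ)^{\b c}=\pol^{\b c}+\polQ^{\b c}$), and part~(1) is attributed to Shephard~\cite{Shephard1963} (as formalized by Kallay~\cite{Kallay1982}) and cited without proof. What you supply for part~(1) is a complete, essentially Shephard-style argument via support functions: the monotone edge-path step shows that $h_{\polQ}$ agrees with $\b c\mapsto \b c(\p_{\b p})$ on each normal cone $\c N_{\pol}(\b p)$, i.e.\ that $\c N(\pol)$ refines $\c N(\polQ)$, and then the wall-crossing computation shows that $h_{\pol}-\mu h_{\polQ}$ is locally convex across every wall for $\mu>0$ with $\mu\lam_e\le 1$ for all edges $e$, hence is the support function of a polytope $\polR$ with $\pol=\mu\polQ+\polR$. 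Two small points you rely on implicitly and should keep in mind: (i) the fact that a positively homogeneous piecewise-linear function on a complete fan which is convex across every wall is globally convex (standard, provable by restricting to line segments crossing finitely many walls); and (ii) the reduction to the full-dimensional case is legitimate because the edge relations \eqref{eq:ELDF} together with the connectivity of the graph of $\pol$ force all differences $\p_{\b p}-\p_{\b q}$ to lie in the direction space of $\aff(\pol)$, so $\polQ$ sits in a translate of $\aff(\pol)$ and its support function may be computed there. With those two remarks made explicit, your argument is a valid self-contained proof of the statement that the paper only cites; an equivalent shortcut for the last step is to define $\polR\eqdef\conv\{\b p-\mu\p_{\b p}~;~\b p\in\verts(\pol)\}$ directly, but verifying $\pol=\mu\polQ+\polR$ then requires the same fan-refinement input, so nothing is gained.
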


This allows one to define the \defn{edge-deformation vector} 
of a deformation $\pol[Q]$ of $\pol$ as $\mathdefn{\dv_{\pol}(\pol[Q])}\eqdef{\dv_{\fw(\pol)}(\fw[G])}\in\R_+^{\edges(\pol)}$, where $\fw[G]=(\verts(\pol),\edges(\pol),f_{\pol\pol[Q]})$. We will denote it simply $\dv(\pol[Q])$ when ${\pol}$ is clear from the context, see \Cref{fig:MinkowskiSumAndEdgeLengthVector} (right). Note that, up to translation, $\pol[Q]$ is completely determined by its edge-deformation vector (by \cref{lem:edge-deformations-frameworks}, since the graph of a polytope is always connected).
Similarly, we define the \defn{deformation cone} of a polytope $\pol$ as $\mathdefn{\EDefoCone[\pol]} \eqdef \EDefoCone[\fw(\pol)]$. Note that a polytope is indecomposable if and only if its framework is, which is equivalent to the deformation cone being of dimension~$1$. We say that a polytope is \defn{uniquely decomposable} if its deformation cone is simplicial, and we have the following direct corollary of \Cref{lem:uniquelydecomposable}, where we call two polytopes \defn{normally distinct} if they do not have the same normal fan.

\begin{corollary}\label{cor:uniquelydecomposablepolytope}
	A polytope has a simplicial deformation cone if and only if it has a unique decomposition as sum of indecomposable polytopes that are pairwise normally distinct, up to translation. 
\end{corollary}

\begin{proof}
Direct application of \Cref{lem:uniquelydecomposable} to the case of polytopes.
\end{proof}

\begin{remark}\label{rmk:cyclebasispol}
For a polytope $\pol$, each $2$-face $\polF$ supports a cycle in the graph $G = \bigl(\verts(\pol), \edges(\pol)\bigr)$.
The associated cycle equations are called the \defn{polygonal face equations} of $\pol$ associated to $\polF$.
The collection of cycles supported by the $2$-faces of $\pol$ generates the cycle space of $G$ \cite{PinedaVillavicencio2022}.
Hence, by \Cref{lem:cyclebasis}, the deformation cone of a polytope $\pol$ can equivalently be defined as the set of edge-deformation vectors satisfying all polygonal face equations of $\pol$.
This is the description given in \cite[Definition 15.1 (2)]{PostnikovReinerWilliams}.
\end{remark}

It is worth noting that while the point of view of frameworks leads to this parametrization of the cone of deformations of $\pol$ (up to translation) by the edge-deformation vectors, there exist other alternative parametrizations that are also commonly used (\eg via support functions and wall-crossing inequalities, or via Gale duality).
We refer to \cite[Appendix 15]{PostnikovReinerWilliams} for more details.

\subsection{The graph of edge dependencies}\label{ssec:GraphEdgeDependencies}

Our main tool to certify polytope indecomposability will be the graph of edge dependencies $\ELD$ associated to a framework~$\fw$, which will be used to certify that the deformation cone $\EDefoCone$ is one-dimensional. 
The graph $\ELD$ also has other interesting applications, and we will showcase some examples afterwards. 
%The cycle equations induce a matroid on the set of edges of $\fw$, and the graph of edge dependencies tracks parallelisms in this matroid\germain{Pas clair}.
%A matroid is of rank~$1$ if all its elements are pairwise dependent (or loops, which correspond to the degenerate edges), and thus this will provide us certificates that $\EDefoCone$ is one-dimensional; \ie that $\fw$ is indecomposable.

\subsubsection{Definition and examples}

\begin{definition}
The \defn{graph of edge dependencies} of a framework $\fw=(\verts,E,\p)$, denoted \defn{$\ELD$} is the graph whose node set is $\nde(\fw)$, the set of non-degenerate edges of $\fw$, and where two edges $e,f\in \nde(\fw)$ are linked with an arc if for every $\dv\in \EDefoCone[\fw]$ we have $\dv_{e}=\dv_{f}$.
In this case, we say that ${e}$ and ${f}$ are \defn{dependent}. 
%We say that a subset of edges $X\subseteq \edges$ is \defn{dependent} if they are pairwise dependent.
%\arnau{Do we need this notion? Can we just say pairwise dependent everywhere?}\germain{Yes}
In particular, $\{e,e\}$ is a loop of $\ELD$ for every $e\in \nde$. %\arnau{Should we add loops on all the nodes? A reason for is that we could treat the results on projections as a graph homomorphism result. I added it here and in the definition of graph. But I didn't go further. Decide what to do}\germain{\faThumbsOUp}
A set of edges is said to be dependent if all the edges it contains are pairwise dependent.
For a polytope $\pol$, we abbreviate $\ELD[\pol]$ for the graph of edge dependencies of its framework $\fw(\pol)$. See \Cref{fig:Cube_and_SmallExamples} for some examples.
\end{definition}

Before listing properties of these graphs, we give some examples of graphs of edge dependencies of polytopes. In particular, triangles and parallelograms will become building blocks for our constructions.

\begin{example}\label{ex:triangle}
If $\pol$ is a $2$-dimensional triangle, then there is only 1 cycle equation, which gives 2 linearly independent relations between 3 variables, so the space of solutions is of dimension $3 - 2 = 1$. This implies that the three edges of $\pol$ are pairwise dependent, that $\ELD[\pol]$ is the complete graph (with loops) on $\edges(\pol)$, and that $\pol$ is indecomposable, see \Cref{sfig:Triangle}. This example goes back to Gale's seminal abstract \cite{Gale1954}.
\end{example}

\begin{example}\label{ex:parallelogram}
If $\pol$ is a $2$-dimensional quadrilateral with edges $\pol[e], \pol[f], \pol[e]', \pol[f]'$ in cyclic order, then $\ELD[\pol]$ depends on the parallelisms between the edges of $\pol$.
In addition to the loops on the nodes of $\ELD[\pol]$, we have:
\begin{compactitem}
\item If no two edges of $\pol$ are parallel, then $\ELD[\pol]$ consists of~$4$ isolated nodes on $\pol[e], \pol[f], \pol[e]', \pol[f]'$, \Cref{sfig:ScaleneQuadrilateral}. 
\item If $\pol[e]$ and $\pol[e]'$ are parallel, \ie if $\pol$ is a trapezoid on bases $\pol[e]$ and $\pol[e]'$, then $\pol[f]\pol[f]'$ is an arc of $\ELD[\pol]$. If  $\pol[f]$ and $\pol[f]'$ are not parallel, then this is the only arc of~$\ELD[\pol]$, \Cref{sfig:Trapezoid}.
\item Finally, if $\pol$ is a parallelogram, then $\ELD[\pol]$ consists of two disjoint arcs $\pol[e]\pol[e]'$ and $\pol[f]\pol[f]'$, \Cref{sfig:Parallelogram}.
\end{compactitem}	
\end{example}

\begin{example}
In \Cref{fig:Cube_and_SmallExamples} (Bottom) we display some $3$-dimensional polytopes: the $3$-cube, a triangular prism, the hemicube, and a hexagonal pyramid. To present their graphs of edge dependencies, we only depict some arcs, the remaining can be recovered by replacing all connected components by cliques (\cf \cref{lem:WclusterGraph}), and adding loops.
%	To see a trapezoid, the intrigued reader can look at the hemicube in \Cref{fig:Cube_and_SmallExamples} (Third): he or she can add one \textcolor{OrangeRed}{red edge} and one \textcolor{green}{green edge} to the subgraphs of $\ELD$ drawn there.
%	We will more often use the case of the parallelogram: for instance, 
The graph of edge dependencies of the $3$-dimensional cube (\Cref{sfig:Cube}) consists of $3$ cliques of size $4$, one per class of parallel edges of the cube. 
Thanks to \Cref{ex:parallelogram}, we see that in the standard cube $[0, 1]^d$, two edges are dependent if and only if they are parallel.

This example, along with the triangular prism and the hemicube (\Cref{sfig:Prism,sfig:Hemicube}), shows that there can be a subset of edges which induces a connected subgraph in $\ELD$, but a disconnected subframework of~$\fw$ (recall that in $\ELD$ the edges of $\fw$ play the role of nodes).

\begin{figure}[htpb]
\centering
\begin{subfigure}[b]{0.21\linewidth}
     \centering
     \includegraphics[width=\textwidth]{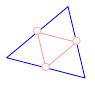}
     \caption{Triangle}
     \label{sfig:Triangle}
\end{subfigure}
\begin{subfigure}[b]{0.24\linewidth}
     \centering
     \includegraphics[width=\textwidth]{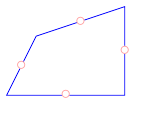}
     \caption{Scalene quadrilateral}
     \label{sfig:ScaleneQuadrilateral}
\end{subfigure}
\begin{subfigure}[b]{0.29\linewidth}
     \centering
     \includegraphics[width=\textwidth]{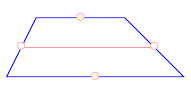}
     \caption{Trapezoid}
     \label{sfig:Trapezoid}
\end{subfigure}
\begin{subfigure}[b]{0.21\linewidth}
     \centering
     \includegraphics[width=\textwidth]{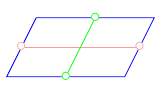}
     \caption{Parallelogram}
     \label{sfig:Parallelogram}
\end{subfigure}
\begin{subfigure}[b]{0.21\linewidth}
     \centering
     \includegraphics[width=\textwidth]{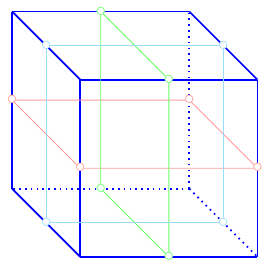}
     \caption{Cube}
     \label{sfig:Cube}
\end{subfigure}
\begin{subfigure}[b]{0.24\linewidth}
     \centering
     \includegraphics[width=\textwidth]{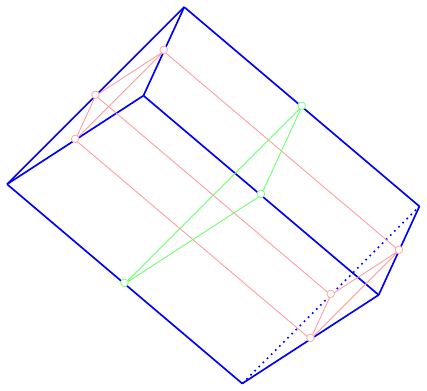}
     \caption{Prism over a triangle}
     \label{sfig:Prism}
\end{subfigure}
\begin{subfigure}[b]{0.29\linewidth}
     \centering
     \includegraphics[width=\textwidth]{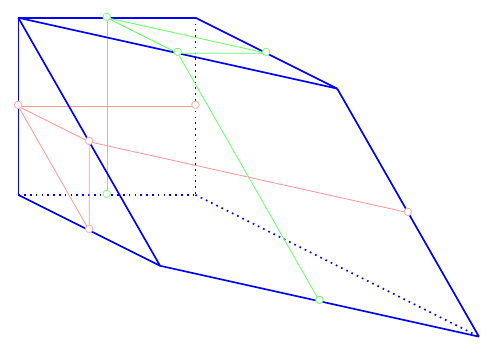}
     \caption{Hemicube}
     \label{sfig:Hemicube}
\end{subfigure}
\begin{subfigure}[b]{0.21\linewidth}
     \centering
     \includegraphics[width=\textwidth]{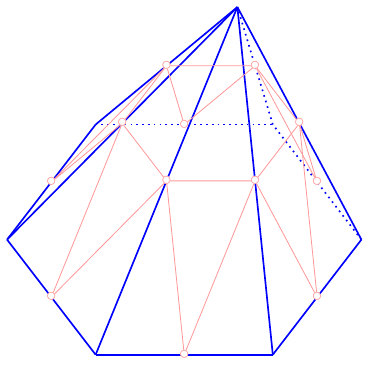}
     \caption{Pyramid}
     \label{sfig:Pyramid}
\end{subfigure}
\caption[Graph of edge dependencies of a cube, a prism, a hemicube and a pyramid]{(Left to right) The $3$-cube, a prism over a triangle, a hemicube, a pyramid on a hexagon.
    Each polytope is in \textcolor{blue}{blue}. In \textcolor{OrangeRed}{red}, \textcolor{green}{green} and \textcolor{cyan}{cyan}, the subgraphs of $\ELD[\pol]$ obtained by linking opposite edges of parallelograms and edges of triangles.
    In each case, the graphs $\ELD[\pol]$ is the (union of) cliques on the subgraphs drawn.
    Note that the $3$-cube is the Minkowski sum of 3 linearly independent segments; the prism over a triangle is the sum of a triangle and a (linearly independent) segment; the hemicube is the sum of 2 orthogonal triangles.
    The pyramid is indecomposable.
    }
\label{fig:Cube_and_SmallExamples}
\end{figure}
\end{example}	

\subsubsection{Relation to indecomposability}

We start with the straightforward but central observation that indecomposability can be read from the graph of edge dependencies.
Note that if $\fw$ is a disconnected framework, then $\ELD$ is also disconnected.

\begin{lemma}\label{lem:ELDindecomposable}
A framework $\fw$ is indecomposable if and only if $\ELD$ is a complete graph.
\end{lemma}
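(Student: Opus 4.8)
The statement to prove is \Cref{lem:ELDindecomposable}: a framework $\fw$ is indecomposable if and only if $\ELD$ is a complete graph. The plan is to unwind the definitions and connect the combinatorial condition (completeness of $\ELD$) with the geometric condition (one-dimensionality of $\EDefoCone[\fw]$), which by definition is what indecomposability means for frameworks.

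First I would handle the easy direction. Suppose $\fw$ is indecomposable, so by definition $\EDefoCone[\fw]$ is one-dimensional, i.e. a ray $\R_+\b\mu$ for some $\b\mu\in\R_+^{\edges}$. Note that $\fw$ itself is a deformation of $\fw$ with edge-deformation vector the all-ones vector on $\nde$ and zero on $\de$ (taking $\lam_{uv}=1$ in \eqref{eq:ELDF}), so this vector lies on the ray; hence $\b\mu_e$ is nonzero precisely on $\nde$, and any $\dv\in\EDefoCone[\fw]$ is a nonnegative multiple $\alpha\b\mu$. For two non-degenerate edges $e,f$, the ratio $\dv_e/\dv_f=\b\mu_e/\b\mu_f$ is constant, but that is not literally $\dv_e=\dv_f$; however, since $\fw$ itself realizes $\dv_e=\dv_f=1$, we get $\b\mu_e=\b\mu_f$, and therefore $\dv_e=\alpha\b\mu_e=\alpha\b\mu_f=\dv_f$ for every deformation. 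Thus every pair $e,f\in\nde$ is dependent, so $\ELD$ is complete (with all loops present, as always).

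For the converse, suppose $\ELD$ is complete, so all non-degenerate edges are pairwise dependent: there is a single function $\EDefoCone[\fw]\to\R_+$, call it $\dv\mapsto t(\dv)$, with $\dv_e=t(\dv)$ for all $e\in\nde$ and $\dv_e=0$ for all $e\in\de$ (the latter by \Cref{lem:edge-deformations-frameworks}). Hence $\dv$ is completely determined by the single scalar $t(\dv)$, so $\EDefoCone[\fw]$ is contained in a one-dimensional subspace of $\R^{\edges}$; since it is a cone containing the edge-deformation vector of $\fw$ itself (with $t=1$), it is exactly the ray $\R_+\cdot(\one_{\nde},\b 0_{\de})$, which is one-dimensional. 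By definition this means $\fw$ is indecomposable. (One should also note the degenerate corner case where $\nde=\emptyset$, i.e.\ all edges degenerate and all vertices coincide; here $\EDefoCone[\fw]=\{\b 0\}$ is the trivial cone and $\ELD$ is the empty graph, which is vacuously complete, and $\fw$ is a single point, trivially indecomposable — this is consistent, though the authors may simply exclude this or fold it into the convention that disconnected frameworks are decomposable.)

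I do not expect any real obstacle here: the lemma is essentially a restatement of the definitions plus the elementary observation that "$\fw$ is a deformation of itself" pins down the proportionality constant. The only subtlety worth a sentence is the distinction between "the ratio $\dv_e/\dv_f$ is constant" (which is what indecomposability a priori gives via one-dimensionality) and "$\dv_e=\dv_f$" (the literal definition of dependent), bridged by evaluating at the canonical deformation $\fw$ where all ratios equal $1$. The statement just before the lemma ("if $\fw$ is disconnected then $\ELD$ is disconnected") combined with the convention that disconnected frameworks are decomposable shows the biconditional is also consistent in the disconnected case, so no separate argument is needed there.
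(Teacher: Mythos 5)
Your proof is correct and follows essentially the same route as the paper: both directions are a direct unwinding of the definitions, identifying indecomposability with all edge-deformation vectors being constant on $\nde$, and both dispose of the disconnected case by the remark preceding the lemma together with the convention. The only cosmetic difference is that you work from the one-dimensionality of $\EDefoCone[\fw]$ and pin down the ray by evaluating at the identity deformation of $\fw$, whereas the paper argues from the dilated-translate formulation via \cref{lem:connectedcomponents}; the content is the same.
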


\begin{proof}
A connected framework $\fw=(\verts,\edges,\p)$ is indecomposable if and only if for every deformation $\fw[G]=(\verts,\edges,\p[\psi])$ of $\fw$ there are $\b\lambda\in \R_+$ and $\b t\in \R^d$ such that $\p[\psi]_v=\b\lambda \p_v+\b t$ for all $v\in \verts$. Since $\dv(\fw[G])$ determines $\fw[G]$ up to translation, this is equivalent to the existence, for any deformation $\fw[G]$, of a $\b\lambda\in\R_+$ such that $\dv(\fw[G])_e=\b\lambda$ for any non-degenerate edge $e\in\nde$, which is equivalent to $\ELD$ being complete.

A disconnected framework is decomposable, and its graph of edge dependencies is not connected.
\end{proof}

Dependency is an equivalence relation on $\edges$.
Thus, $\ELD$ is what is sometimes called a \defn{cluster graph}:

\begin{lemma}\label{lem:WclusterGraph}
All the connected components of the graph of edge dependencies $\ELD$ are cliques.
\end{lemma}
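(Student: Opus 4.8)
The plan is to deduce this from the fact that edge dependency is an equivalence relation on the node set $\nde(\fw)$ of $\ELD$, together with the elementary observation that a graph whose adjacency relation (with loops) is an equivalence relation must have cliques as its connected components.

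First I would verify the three defining properties of an equivalence relation for the dependency relation (``$e$ and $f$ are dependent'' means $\dv_e = \dv_f$ for every $\dv \in \EDefoCone[\fw]$). Reflexivity is exactly the observation, recorded in the definition, that $\{e,e\}$ is a loop of $\ELD$ for each $e \in \nde$, since $\dv_e = \dv_e$ trivially. Symmetry is immediate because the condition $\dv_e = \dv_f$ is symmetric in $e$ and $f$. For transitivity, if $e$ is dependent with $f$ and $f$ is dependent with $g$, then for every $\dv \in \EDefoCone[\fw]$ we have $\dv_e = \dv_f$ and $\dv_f = \dv_g$, hence $\dv_e = \dv_g$, so $e$ is dependent with $g$.

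Then I would conclude as follows. Let $e$ and $f$ be two nodes lying in the same connected component of $\ELD$, and choose a path $e = e_0, e_1, \dots, e_k = f$ in $\ELD$, so that $e_{i-1}$ and $e_i$ are dependent for each $i \in \{1,\dots,k\}$. Applying transitivity along the path gives that $e = e_0$ and $e_k = f$ are dependent, i.e.\ $ef$ is an arc of $\ELD$. Since $e$ and $f$ were arbitrary vertices of the component (and loops are present at every node by reflexivity), the component is a clique.

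I do not expect any genuine obstacle here: the statement is a direct formal consequence of the definition of $\ELD$, and is recorded mainly because the cluster-graph structure is used repeatedly later — for instance to recover all arcs of $\ELD$ from a drawn spanning subgraph, as in \cref{fig:Cube_and_SmallExamples}. The only point requiring a modicum of care is to state the relation on the correct ground set, namely the non-degenerate edges $\nde(\fw)$, which are precisely the nodes of $\ELD$.
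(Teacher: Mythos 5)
Your proof is correct and uses essentially the same argument as the paper: the paper's proof is exactly the transitivity observation ($\dv_e=\dv_f$ and $\dv_f=\dv_g$ imply $\dv_e=\dv_g$), from which cliques follow; you merely spell out reflexivity, symmetry, and the induction along a path more explicitly.
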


\begin{proof}
Consider three non-degenerate edges $e,f,g\in\nde$. 
If for every $\dv\in\EDefoCone[\fw]$ we have $\dv_{e}=\dv_{f}$ and $\dv_{f}=\dv_{g}$, then we have $\dv_{e}=\dv_{g}$. Hence, adjacency is transitive: all connected components are cliques.
\end{proof}

% Combining this with \cref{ex:ELDindecomposable} we get:
%that indecomposability is reflected by the connectedness of $\ELD$.
This observation will be useful in order to relax the condition from \cref{lem:ELDindecomposable}.

\begin{corollary}\label{cor:WconnectedPMinkowskiIndecomposable}
The framework $\fw$ is indecomposable if and only if $\ELD$ is connected (in this case it is a complete graph).
\end{corollary}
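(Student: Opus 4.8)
The plan is to deduce this corollary directly from the two preceding results. By \Cref{lem:WclusterGraph}, the graph $\ELD$ is a cluster graph, meaning each of its connected components is a clique. Hence $\ELD$ is connected if and only if it has a single connected component, and since that component is a clique, this happens if and only if $\ELD$ is itself a complete graph. Combining this equivalence with \Cref{lem:ELDindecomposable} (which states that $\fw$ is indecomposable iff $\ELD$ is complete) yields the claim, including the parenthetical remark that in the indecomposable case $\ELD$ is complete.

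Concretely, I would argue as follows. Suppose $\ELD$ is connected. Since every connected component of $\ELD$ is a clique by \Cref{lem:WclusterGraph}, and a connected graph has exactly one connected component, $\ELD$ equals that single clique, so it is a complete graph; by \Cref{lem:ELDindecomposable}, $\fw$ is indecomposable. Conversely, if $\fw$ is indecomposable, then by \Cref{lem:ELDindecomposable} the graph $\ELD$ is complete, and a complete graph (on a nonempty node set, which holds whenever $\fw$ has at least one non-degenerate edge) is in particular connected. One should perhaps note the degenerate edge cases: if $\fw$ has no non-degenerate edges then $\ELD$ has empty node set; but an indecomposable framework of dimension at least $1$ has at least one non-degenerate edge, and conversely the convention that disconnected frameworks are decomposable already handles consistency, so no genuine difficulty arises.

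I do not anticipate a real obstacle here: the statement is essentially a repackaging, the only subtlety being the (trivial) observation that "connected" plus "every component is a clique" forces "complete". The proof is a two-line deduction from \Cref{lem:ELDindecomposable,lem:WclusterGraph}.

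\begin{proof}
By \Cref{lem:WclusterGraph}, every connected component of $\ELD$ is a clique. If $\ELD$ is connected, it has a single connected component, hence $\ELD$ is itself a complete graph, and $\fw$ is indecomposable by \Cref{lem:ELDindecomposable}. Conversely, if $\fw$ is indecomposable, then $\ELD$ is complete by \Cref{lem:ELDindecomposable}, and in particular connected.
\end{proof}
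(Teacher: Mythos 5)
Your proof is correct and follows exactly the paper's argument: combine \Cref{lem:ELDindecomposable} (indecomposable iff $\ELD$ is complete) with \Cref{lem:WclusterGraph} (every component is a clique) to identify "connected" with "complete". The extra remark about frameworks with no non-degenerate edges is harmless but not needed.
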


\begin{proof}
By \cref{lem:ELDindecomposable}, the framework $\fw$ is indecomposable if and only if $\ELD$ is a complete graph, and by \cref{lem:WclusterGraph}, this is equivalent to $\ELD$ being connected.
\end{proof}

This is a particular case of the following more general result.
We will not study it in detail here because we concentrate on indecomposability, but see \cref{sec:edgedecomposable,sec:constructions} for applications of such theorems, and more general results on deformation cones obtained via the indecomposability graph. Our stronger indecomposability results will also translate into similar bounds for the dimension of the deformation space. 

\begin{theorem}\label{thm:dim1}
For a framework $\fw$, the dimension of $\DefoCone[\fw]$ is bounded above by the number of connected components of $\ELD$.
\end{theorem}

\begin{proof}
If $\c C = (C_1, \dots, C_r)$ are the connected components of $\ELD$, then $\DefoCone[\fw]$ belongs to the linear subspace $\bigcap_{C_i\in \c C}\{\b\lambda ~;~ \b\lambda_{e} = \b\lambda_{f} \,\text{ for }\, e,f \in C_i\}$, whose dimension is $|\c C|$.
\end{proof}

\subsubsection{Subframeworks}
Our strategy for showing indecomposability consists in successively revealing arcs of $\ELD$ until proving its connectedness. This is done in a local to global approach, via subframeworks.

If $\fw=(\verts,\edges, \p)$ is a framework, we say that $\fw'=(\verts',\edges', \p')$ is a \defn{subframework} if $\verts'\subseteq \verts$, $\edges'\subseteq \edges$, and $\p'=\p_{|\verts'}$.
While previous works do not use this language explicitly, many exploited indecomposable subframeworks (specially arising from faces, in the polytopal case) to prove the connectedness of $\ELD$, and hence indecomposability.

\begin{proposition}\label{prop:subframework}
	If $\fw'=(\verts',\edges', \p')$ is a subframework of $\fw=(\verts,\edges, \p)$, then the canonical projection $\pi:\R^\edges\to\R^{\edges'}$ that restricts to the coordinates indexed by elements in $\edges'$ induces a linear map \linebreak $\pi:\DefoCone[\fw]\to\DefoCone[\fw']$.
	In particular, $\ELD[\fw']$ is a subgraph of $\ELD$.
\end{proposition}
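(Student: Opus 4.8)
The statement has two parts: first that the coordinate projection $\pi:\R^\edges\to\R^{\edges'}$ sends $\DefoCone[\fw]$ into $\DefoCone[\fw']$, and second that consequently $\ELD[\fw']$ is a subgraph of $\ELD$. The plan is to prove the first part directly from the cycle-equation characterization of \cref{lem:edge-deformations-frameworks}, and then deduce the second part as a short formal consequence.

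For the first part, I would take $\dv\in\DefoCone[\fw]$ and verify that $\pi(\dv)\in\R_+^{\edges'}$ satisfies the defining conditions of \cref{lem:edge-deformations-frameworks} for $\fw'$. Nonnegativity is immediate since $\pi$ just forgets coordinates. For degenerate edges: an edge $e\in\edges'$ with $\p'_u=\p'_v$ has $\p_u=\p_v$ as well (because $\p'=\p_{|\verts'}$), so $e\in\de(\fw)$ and hence $\dv_e=0$. For the cycle equations: any cycle $C$ of the graph $(\verts',\edges')$ is also a cycle of $(\verts,\edges)$, and since the realization $\p'$ agrees with $\p$ on $\verts'$, the cycle equation \eqref{eq:cycle_equation} for $C$ with respect to $\fw'$ is literally the same equation as the one with respect to $\fw$, which $\dv$ satisfies. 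Hence $\pi(\dv)\in\DefoCone[\fw']$, and $\pi$ restricted to $\DefoCone[\fw]$ is linear because it is the restriction of a linear map. (One should note that the lemma was stated for $\EDefoCone$, the edge-deformation cone, but $\DefoCone$ and $\EDefoCone$ are identified via the edge-deformation vector parametrization, so this causes no issue.)

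For the second part: if $e,f\in\nde(\fw')$ are dependent in $\fw'$, meaning $\mu_e=\mu_f$ for all $\mu\in\DefoCone[\fw']$, then in particular for any $\dv\in\DefoCone[\fw]$ we have $\dv_e=\pi(\dv)_e=\pi(\dv)_f=\dv_f$, so $e,f$ are dependent in $\fw$. Also $e,f\in\nde(\fw')\subseteq\nde(\fw)$, so both are nodes of $\ELD$. Thus every node and every arc of $\ELD[\fw']$ is a node, respectively arc, of $\ELD$, i.e.\ $\ELD[\fw']$ is a subgraph of $\ELD$.

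**Main obstacle.** There is no serious obstacle — this is essentially a bookkeeping argument — but the one point requiring a little care is the direction of the implication when passing to the dependency graphs. Dependency of $e,f$ is a statement quantified over \emph{all} deformation vectors, and since $\DefoCone[\fw]$ maps \emph{into} (not necessarily onto) $\DefoCone[\fw']$, one must check that a relation holding on the larger cone $\DefoCone[\fw']$ pulls back to a relation on $\pi(\DefoCone[\fw])$, hence on $\DefoCone[\fw]$; this is exactly the direction that works, so dependency can only be \emph{gained}, never lost, when passing to a subframework. Equivalently: $\ELD[\fw']$ has possibly more arcs than the subgraph of $\ELD$ induced on $\nde(\fw')$ — which is precisely why subframeworks are useful for \emph{certifying} indecomposability, since connectedness of $\ELD[\fw']$ forces adjacencies in $\ELD$ but not conversely.
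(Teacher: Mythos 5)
Your proof is correct and follows essentially the same route as the paper's: nonnegativity is preserved by the coordinate projection, every cycle (and degenerate edge) of $\fw'$ is one of $\fw$, so $\pi$ maps $\DefoCone[\fw]$ into $\DefoCone[\fw']$, and dependency in $\fw'$ then pulls back to dependency in $\fw$. Your explicit check of the degenerate-edge condition and the careful remark about the direction of the dependency implication are just slightly more detailed versions of the paper's terse argument.
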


\begin{proof}
	Note that $\pi$ maps the non-negative orthant $\R_+^\edges$ onto the non-negative orthant $\R_+^{\edges'}$. It only remains to check that the cycle equations of $\fw'$ are also valid for $\fw$.
    This is immediate, as every cycle of $\fw'$ is a cycle of $\fw$.
    This also directly gives that every pair of dependent edges in $\fw'$ is also dependent in $\fw$.
\end{proof}

%\begin{proposition}\label{prop:subframework}
%	If $\fw'$ is a subframework of $\fw$, then $\ELD[\fw']$ is a subgraph of $\ELD$.
%\end{proposition}
%
%\begin{proof}
%This follows from the fact that the every cycle of $\fw'$ is a cycle of $\fw$ and hence the cycle equations of $\fw'$ are also valid for $\fw$.	
%\end{proof}

\begin{corollary}\label{cor:indecsubframeclique}
If $\fw'$ is an indecomposable subframework of $\fw$, then the edges of $\pol[\fw']$ form a clique in~$\ELD$.
\end{corollary}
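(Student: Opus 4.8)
The plan is to chain together the two results just established: \cref{cor:WconnectedPMinkowskiIndecomposable}, which says that an indecomposable framework has a \emph{connected} (equivalently complete) graph of edge dependencies, and \cref{prop:subframework}, which says that the graph of edge dependencies of a subframework embeds as a subgraph of the ambient one. Applying the first result to the indecomposable subframework $\fw'$ gives that $\ELD[\fw']$ is a complete graph on the node set $\nde(\fw')$; applying the second gives that this complete graph sits inside $\ELD$ as an induced-on-$\nde(\fw')$ subgraph (at least, a subgraph spanning those nodes). Hence any two non-degenerate edges of $\fw'$ are joined by an arc in $\ELD$, i.e.\ they are pairwise dependent in $\fw$, which is exactly the assertion that they form a clique.

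Concretely, I would write: let $\fw'=(\verts',\edges',\p')$ be the indecomposable subframework. Since $\fw'$ is indecomposable it is in particular connected, so by \cref{cor:WconnectedPMinkowskiIndecomposable} the graph $\ELD[\fw']$ is complete on $\nde(\fw')$; that is, every pair of non-degenerate edges $e,f\in\nde(\fw')$ satisfies $\dv_e = \dv_f$ for all $\dv\in\EDefoCone[\fw']$. By \cref{prop:subframework}, the projection $\pi\colon \EDefoCone[\fw]\to\EDefoCone[\fw']$ pulls each such relation back: for $\dv\in\EDefoCone[\fw]$ we have $\pi(\dv)\in\EDefoCone[\fw']$, so $\dv_e = \pi(\dv)_e = \pi(\dv)_f = \dv_f$. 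Therefore $e$ and $f$ are dependent in $\fw$, i.e.\ they are linked by an arc in $\ELD$. As this holds for every pair in $\nde(\fw')=\edges(\fw')\setminus\de(\fw')$, these edges form a clique in $\ELD$.

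One small point to address is the wording ``the edges of $\fw'$'': strictly speaking only the \emph{non-degenerate} edges of $\fw'$ are nodes of $\ELD$ (degenerate edges are excluded by the definition of the graph of edge dependencies), so the statement should be read as asserting that $\nde(\fw')$ forms a clique. Since $\fw'$ is indecomposable and of positive dimension, its connectedness already forces its degenerate edges to be largely irrelevant here; in any case no difficulty arises. There is essentially no obstacle in this proof — it is a direct corollary, and the only thing to be careful about is not to overclaim about degenerate edges and to invoke \cref{prop:subframework} in the direction it is stated (subframework relations lift to the ambient framework, which is exactly what we need).
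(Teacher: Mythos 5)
Your proof is correct and follows essentially the same route as the paper, which simply combines \cref{lem:ELDindecomposable} (indecomposability of $\fw'$ makes $\ELD[\fw']$ complete) with \cref{prop:subframework} (this complete graph is a subgraph of $\ELD$); your use of \cref{cor:WconnectedPMinkowskiIndecomposable} in place of \cref{lem:ELDindecomposable} is an equivalent reformulation, and your remark about restricting to the non-degenerate edges $\nde(\fw')$ is a fair clarification rather than a deviation.
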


\begin{proof}
Immediate from \Cref{lem:ELDindecomposable,prop:subframework}.
\end{proof}
%
%\begin{proof}
%\Germain{Ai ajouté cette preuve.}Polygonal face equations of $\pol[F]$ are polygonal face equations of $\pol$ as 2-faces of $\pol[F]$ are 2-faces of $\pol$.
%As $\pol[F]$ is indecomposable, if $\dv\in\R^{E(\pol[F])}$ satisfies the polygonal face equations of $\pol[F]$, then, by \Cref{cor:WconnectedPMinkowskiIndecomposable}, $\dv_{\pol[e]} = \dv_{\pol[f]}$ for all $\pol[e], \pol[f] \in E(\pol[F])$.
%Hence, for all $\dv\in \DefoCone$, we have $\dv_{\pol[e]} = \dv_{\pol[f]}$ for all $\pol[e], \pol[f] \in E(\pol[F])$: equivalently, $\pol[e]\pol[f]$ is an arc in $\ELD$ for all $\pol[e], \pol[f] \in E(\pol[F])$.
%\end{proof}

\begin{example}\label{ex:triangles}
One of the earliest indecomposability criteria states that any polytope whose $2$-faces are all triangles (\eg the simplicial polytopes, the polytopes whose 1-skeleton is complete, or the 24-cells) is indecomposable \cite[Statement (13)]{Shephard1963}.
This can be easily proven using the graph of edge dependencies.

Let $\pol$ be a polytope whose $2$-faces are triangles. Triangles are indecomposable by \cref{ex:triangle}. Therefore, if two edges of $\pol$ belong to a common triangle, they are dependent.
This implies that $\ELD[\pol]$ contains as a subgraph the 1-incidence graph of $\pol$ (the graph whose nodes are the edges of $\pol$, and where two edges are linked by an arc if they belong to a common $2$-face). The latter is connected (it is even $2(\dim\pol-1)$-connected if $\dim\pol\geq 4$ and $3$-connected if $\dim\pol=3$, see \cite[Thm. 1.1]{Athanasiadis-IncidenceGraphConnectivity}).
By \Cref{cor:WconnectedPMinkowskiIndecomposable}, the polytope $\pol$ is indecomposable. 
%In particular, all simplices are indecomposable~\cite{Gale1954}.
%
%The reader sees that we can weaken the hypothesis: if all the edges of $\pol$ belong to a triangle and the adjacency graph of these triangles is connected, then $\pol$ is indecomposable.
%We do not state this criterion as a standalone, because it will be encapsulated in other criteria, like \Cref{thm:mainthmframeworks}.
\end{example}

\begin{example}\label{ex:pyramid}
Similarly, another seminal result by Gale~\cite{Gale1954} states that any pyramid is indecomposable.
To showcase the graph of edge dependencies, we use dependent edges to re-prove it, see \Cref{sfig:Pyramid}.
	
Consider the pyramid $\pol[pyr](\pol)$ over a polytope $\pol$ with apex $\b q$.
All the $2$-faces containing the apex of the pyramid are triangles $\conv(\pol[e]\cup\{\b q\})$ for an edge $\pol[e]\in \pol$.
As triangles are indecomposable, their three edges are dependent.
If two edges $\pol[e]$ and $\pol[f]$ of $\pol$ share an endpoint, then the two associated triangles $\conv(\pol[e]\cup\{\b q\})$ and $\conv(\pol[f]\cup\{\b q\})$ share an edge, hence their edge sets are pairwise dependent.
Finally, as the 1-skeleton of $\pol$ is connected, so is its line graph. Consequently $\ELD[{\pol[pyr](\pol)}]$ is connected, and $\pol[pyr](\pol)$ is indecomposable according to \Cref{cor:WconnectedPMinkowskiIndecomposable}.
\end{example}

\subsection{Implicit edges and the graph of implicit edge dependencies}

\subsubsection{Implicit edges}
For many applications, it is useful to extend a framework with additional edges consisting of pairs of vertices that always satisfy an equation like \eqref{eq:ELDF} even if they do not originally form an edge. This allows for a much simpler and uniform treatment. Moreover, they become very useful in combination with the results of the previous section, as they can be used to uncover subframeworks with controlled graph of edge-dependencies, and are a pillar for the techniques in the upcoming sections.

\begin{definition}
	An \defn{implicit edge} of a framework~$\fw=(\verts,\edges,\p)$ is a pair of vertices $u,v\in V$ such that for any deformation $\fw[G]=(\verts,\edges,\p[\psi])$ of $\fw$, there exists some $\b\lambda_{uv}\in\R_+$ such that
	\begin{equation}\label{eq:implicitedge}\p[\psi]_v-\p[\psi]_u=\b\lambda_{uv}(\p_v-\p_u).\end{equation}

	The \defn{closure} of $\fw$ is the framework $\Efw=(\verts,\Iedges,\p)$, where $\Iedges$ is the set of implicit edges of $\fw$ (note that $\edges\subseteq \Iedges$).
	An implicit edge $uv\in \Iedges$ is \defn{degenerate} if $\p_u=\p_v$, and \defn{non-degenerate} otherwise. The sets of degenerate and non-degenerate implicit edges of $\fw$ are respectively denoted~$\die(\fw)$ and $\ndie(\fw)$ (or just $\die$ and $\ndie$ if the framework is clear from the context).
	
	The \defn{graph of implicit edge dependencies} of $\fw$, is the graph of edge dependencies of the closure~\defn{$\EELD$}.
	% \arnau{If $\fw$ is the framework of a polytope, this notation is less convenient}
	% If $\pol$ is a polytope, we denote by $\Efw(\pol)$ the closure of its framework, and abbreviate $\EELD[\pol]$ to denote $\ELD[\Efw(\pol)]$.
	
	% \arnau{Think carefully what to do if there are problems of injectivity. (I think this was solved, but I didn't dare to remove the comment)}
	% \arnau{Currently, the notation $\EELD[\pol]$ barely appears. I propose that we try to eliminate it if possible.}
\end{definition}
% \arnau{Maybe define put $*$ on ED and define it to be $\ELD$}
Note that with our definition, all edges are also implicit edges (but in general not the other way round). 
For our purposes, the most important property of implicit edges is that adding them does not alter the space of deformations, but many properties of $\ELD$ are explained in a much simpler and uniform way if implicit edges are treated as any other edge. The following result motivates this choice.

\begin{theorem}\label{thm:characterizationimplicit}
Let $\fw=(\verts,\edges,\p)$ be a framework, $e=uv\notin \edges$ for some vertices $u,v\in \verts$, and $\fw'=(\verts,\edges\cup\{e\},\p)$. 
Then the canonical projection $\pi:\EDefoCone[\fw']\to \EDefoCone[\fw]$ that forgets about the coordinate indexed by $e$ is a linear isomorphism if and only if $e$ is an implicit edge of $\fw$.
\end{theorem}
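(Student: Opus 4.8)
The plan is to prove both implications by understanding how the cycle equations of $\fw'$ relate to those of $\fw$. Recall from \Cref{lem:edge-deformations-frameworks} that $\EDefoCone[\fw]$ consists of the nonnegative vectors $\dv\in\R_+^\edges$ vanishing on degenerate edges and satisfying the cycle equation \eqref{eq:cycle_equation} for every cycle of the graph $(\verts,\edges)$, and similarly for $\fw'$ with the enlarged edge set $\edges\cup\{e\}$.

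First I would prove the ``only if'' direction. Suppose $\pi:\EDefoCone[\fw']\to\EDefoCone[\fw]$ is a linear isomorphism. Given a deformation $\fw[G]=(\verts,\edges,\p[\psi])$ of $\fw$, its edge-deformation vector $\dv_\fw(\fw[G])$ lies in $\EDefoCone[\fw]$, hence equals $\pi(\dv')$ for a unique $\dv'\in\EDefoCone[\fw']$. The value $\lambda_{uv}\eqdef\dv'_e\in\R_+$ is then the extra coordinate. Using \Cref{lem:edge-deformations-frameworks} applied to $\fw'$ (or directly \Cref{lem:connectedcomponents}), the vector $\dv'$ corresponds to a deformation of $\fw'$, whose restriction to $\edges$ is $\fw[G]$ up to translation of connected components; on this deformation the defining relation of the edge $e$ reads $\p[\psi]_v-\p[\psi]_u=\lambda_{uv}(\p_v-\p_u)$ (after translating so the component containing $u,v$ is aligned, which does not affect differences $\p[\psi]_v-\p[\psi]_u$). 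Thus $e$ satisfies \eqref{eq:implicitedge} for every deformation, i.e.\ $e$ is an implicit edge. The only subtlety here is bookkeeping with the translations of connected components, handled exactly as in \Cref{lem:connectedcomponents}; if $u,v$ lie in different components of $(\verts,\edges)$ then in fact adding $e$ would merge them and one checks directly that $\pi$ fails to be injective unless this does not happen — but this case is automatically excluded since a pair in distinct components can never be implicit (the two components can be translated independently, violating \eqref{eq:implicitedge} for generic translates), so both sides of the equivalence fail together.

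Next, the ``if'' direction. Suppose $e=uv$ is an implicit edge. I would exhibit an explicit inverse to $\pi$. Given $\dv\in\EDefoCone[\fw]$, let $\fw[G]=(\verts,\edges,\p[\psi])$ be a deformation realizing it (it exists by \Cref{lem:edge-deformations-frameworks}); since $e$ is implicit there is $\lambda_{uv}\in\R_+$ with $\p[\psi]_v-\p[\psi]_u=\lambda_{uv}(\p_v-\p_u)$. First I must check $\lambda_{uv}$ depends only on $\dv$, not on the chosen realization: two realizations of $\dv$ differ by translations of connected components (\Cref{lem:connectedcomponents}), which leave $\p[\psi]_v-\p[\psi]_u$ unchanged when $u,v$ are in the same component — and $u,v$ \emph{are} in the same component, since an implicit edge forces $\p[\psi]_v-\p[\psi]_u$ to be determined, which is impossible across independently-translatable components. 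So $\sigma(\dv)\eqdef(\dv,\lambda_{uv})\in\R_+^{\edges\cup\{e\}}$ is well defined. One then checks $\sigma(\dv)\in\EDefoCone[\fw']$: it is nonnegative, vanishes on degenerate edges (if $e$ is degenerate, $\p_u=\p_v$ and one sets $\lambda_{uv}=0$ consistently with \eqref{eq:defovector}), and satisfies every cycle equation of $\fw'$. A cycle of $(\verts,\edges\cup\{e\})$ either avoids $e$ — then it is a cycle of $\fw$ and the equation holds — or uses $e$ exactly once, in which case replacing the step across $e$ by the chain $\lambda_{uv}(\p_v-\p_u)=\p[\psi]_v-\p[\psi]_u$ turns it into a relation that telescopes using $\dv$ along a $u$--$v$ path closing the cycle (this uses that the path-sum is path-independent, which is the cycle equations of $\fw$). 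Finally, $\sigma$ is linear: linearity in $\dv$ of the extra coordinate follows because one may add realizations, and $\pi\circ\sigma=\id$, $\sigma\circ\pi=\id$ are immediate since $\pi$ just drops the last coordinate and $\sigma$ reconstructs the unique admissible value for it.

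**Main obstacle.**

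The genuinely delicate point is the well-definedness of the extra coordinate $\lambda_{uv}$ as a function of $\dv$ alone, and relatedly the handling of disconnected frameworks: one must argue that an implicit edge necessarily joins two vertices in the same connected component, so that translations of components (the ambiguity in recovering $\fw[G]$ from $\dv$, per \Cref{lem:connectedcomponents}) do not affect the reconstructed value. Once that is pinned down, verifying the cycle equations of $\fw'$ and the linearity of $\sigma$ are routine telescoping arguments of the same flavor as the proof of \Cref{lem:edge-deformations-frameworks}.
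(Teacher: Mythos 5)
Your proof is correct and in essence coincides with the paper's: both rest on the cycle-equation description of the cone from \cref{lem:edge-deformations-frameworks}, the translation ambiguity of \cref{lem:connectedcomponents}, the observation that an implicit edge must join vertices in the same connected component, and a telescoping argument along a $u$--$v$ path to pin down the coordinate of $e$. The differences are purely organizational — the paper handles ``only if'' by a two-case contrapositive (not injective when $u,v$ lie in different components, not surjective otherwise) and gets injectivity in the ``if'' direction from the path formula, whereas you argue directly and package the ``if'' direction as an explicit inverse map $\sigma$ — so this is a repackaging of the same argument rather than a genuinely different route.
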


\begin{proof}
As $\fw$ is a subframework of $\fw'$, every cycle equation for $\fw$ is also a cycle equation for $\fw'$, which shows that $\pi$ is well defined. 

If $u$ and $v$ are in different connected components of $\fw$, then $e$ is not implicit because we can freely translate each connected component by \Cref{lem:connectedcomponents}. The map $\pi$ is not injective, because translating the connected component of $v$ by a positive multiple of $\p_v-\p_u$ yields a valid deformation of $\fw'$ where all the edge-lengths are unchanged except for that corresponding to $e$.

If $u$ and $v$ are in the same connected component, but $e$ is not implicit, this means that there is some deformation $\fw[G]=(\verts,\edges,\p[\psi])$ of $\fw$ in which $\p[\psi]_v-\p[\psi]_u\neq \b\lambda (\p_v-\p_u)$ for all $\b\lambda\geq 0$, which means that its deformation vector does not belong to the image of $\pi$, and $\pi$ is not surjective.

If $e$ is implicit, in contrast, then any deformation of $\fw$ is also a deformation of $\fw'$ (by definition of $e$ being implicit), which shows that the map is surjective. Moreover, if $u=u_0,u_1,\dots,u_r=v$ is a path in $\fw$, then for any $\dv'\in\EDefoCone[\fw']$ we have 
\[\dv'_e(\p_v - \p_{u})= \sum\nolimits_{1\leq i\leq r} \dv'_{u_{i-1}u_i}(\p_{u_i}-\p_{u_{i-1}}),\]
which shows that $\dv'_e$ can be uniquely determined by $\pi(\dv')$, and thus that the map is injective.
\end{proof}

This directly gives the following properties, which imply that to study the graph of edge dependencies~$\ELD$, we can always study the graph of implicit edge dependencies~$\EELD$ instead, and later restrict to its subgraph $\ELD$

\begin{corollary}\label{cor:extendedgraph}
	Let $\fw=(\verts,\edges,\p)$ be a framework, and $\Efw=(\verts,\Iedges,\p)$ its closure. 
	\begin{compactenum}[(i)]
	\item The deformation cones $\EDefoCone[\fw]\subset\R^\edges$ and $\EDefoCone[\Efw]\subset\R^{\Iedges}$ are linearly isomorphic.
	\item $\ELD$ is the subgraph of $\EELD$ induced by the node set $\edges$.
	\item $\fw$ is indecomposable if and only if $\Efw$ is.
	\end{compactenum}
\end{corollary}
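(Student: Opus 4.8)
The plan is to derive all three statements directly from \Cref{thm:characterizationimplicit}, building up the closure one implicit edge at a time. Since $\Iedges \supseteq \edges$ is a finite set, I would enumerate the implicit edges in $\Iedges \ssm \edges$ as $e_1, \dots, e_k$ and consider the chain of frameworks $\fw = \fw_0 \subseteq \fw_1 \subseteq \dots \subseteq \fw_k = \Efw$, where $\fw_j = (\verts, \edges \cup \{e_1, \dots, e_j\}, \p)$. The key observation that makes this work is that an implicit edge of $\fw$ remains an implicit edge of any intermediate $\fw_j$: by \Cref{thm:characterizationimplicit} the projection $\EDefoCone[\fw_j] \to \EDefoCone[\fw]$ is a linear isomorphism, so the defining relation \eqref{eq:implicitedge} for $e_{j+1}$, which holds for every deformation of $\fw$, is transported to every deformation of $\fw_j$. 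Hence at each step I may apply \Cref{thm:characterizationimplicit} to conclude that the projection $\pi_{j+1}\colon \EDefoCone[\fw_{j+1}] \to \EDefoCone[\fw_j]$ forgetting the coordinate $e_{j+1}$ is a linear isomorphism.

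For part (i), I would compose these isomorphisms: $\EDefoCone[\Efw] = \EDefoCone[\fw_k] \xrightarrow{\pi_k} \EDefoCone[\fw_{k-1}] \to \dots \xrightarrow{\pi_1} \EDefoCone[\fw_0] = \EDefoCone[\fw]$, and the composite is exactly the canonical projection $\pi\colon \R^{\Iedges} \to \R^{\edges}$ restricted to the cones. This gives the desired linear isomorphism. For part (ii), I note that $\ELD$ has node set $\nde(\fw)$ and $\EELD$ has node set $\ndie(\fw)$; an edge $e \in \edges$ is degenerate in $\fw$ if and only if it is degenerate in $\Efw$ (degeneracy only depends on $\p$, which is unchanged), so $\nde(\fw) = \ndie(\fw) \cap \edges$. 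Two non-degenerate edges $e, f \in \edges$ are adjacent in $\EELD$ iff $\bl_e = \bl_f$ for all $\bl \in \EDefoCone[\Efw]$; under the isomorphism $\pi$ from part (i), which preserves the coordinates indexed by $\edges$, this is equivalent to $\bl'_e = \bl'_f$ for all $\bl' \in \EDefoCone[\fw]$, i.e.\ to $e, f$ being adjacent in $\ELD$. Thus $\ELD$ is precisely the induced subgraph of $\EELD$ on node set $\edges \cap \ndie = \nde$. For part (iii), by \Cref{cor:WconnectedPMinkowskiIndecomposable} (or \Cref{lem:ELDindecomposable}) indecomposability is equivalent to the edge-deformation cone being one-dimensional; since part (i) gives a linear isomorphism $\EDefoCone[\fw] \cong \EDefoCone[\Efw]$, the two cones have the same dimension, so $\fw$ is indecomposable iff $\Efw$ is.

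I don't expect a serious obstacle here: the real content has already been packed into \Cref{thm:characterizationimplicit}, and the corollary is an inductive unwinding. The one point that needs a small argument — and is the only place one could slip — is the claim that implicit edges of $\fw$ stay implicit when passing to $\fw_j$, so that \Cref{thm:characterizationimplicit} is applicable at every step of the chain. As sketched above, this follows because the isomorphism $\EDefoCone[\fw_j] \cong \EDefoCone[\fw]$ identifies deformations of $\fw_j$ with deformations of $\fw$ compatibly with the realization, so equation \eqref{eq:implicitedge} persists. Everything else is bookkeeping about which coordinates the projections touch. A shorter alternative for part (i) would be to invoke \Cref{thm:characterizationimplicit} only to see that $\EDefoCone[\fw] \hookrightarrow \R^{\Iedges}$ (extending each deformation vector by the forced values $\bl_{e_j}$) lands in $\EDefoCone[\Efw]$ and is inverse to $\pi$, but the inductive formulation is cleaner for establishing that the intermediate maps are well-defined.
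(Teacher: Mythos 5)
Your proposal is correct and follows essentially the same route as the paper, whose proof of (i) is exactly ``multiple applications of \Cref{thm:characterizationimplicit}'' with (ii) and (iii) deduced from (i); your inductive chain $\fw_0\subseteq\dots\subseteq\fw_k$ just spells out the details the paper leaves implicit. The one point you flag (implicit edges of $\fw$ stay implicit in each $\fw_j$) is genuine but can be settled even more directly: any deformation of $\fw_j$ restricts, by forgetting the added edges, to a deformation of $\fw$ with the same realization, so relation \eqref{eq:implicitedge} transfers immediately.
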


\begin{proof}
$(i)$ follows from multiple applications of \Cref{thm:characterizationimplicit}.
$(ii)$ and $(iii)$ result from the application of $(i)$ to the context of graphs of edge dependencies and to indecomposability.
\end{proof}

In particular, we have the following characterization in dimensions $\geq 2$. (The statement is false for frameworks of dimension $1$, where every non-degenerate pair gives an implicit edge, but if there are at least three distinct points then they can be 
moved freely in the line without breaking collinearity, which shows that these edges are not pairwise dependent and the framework is decomposable.)

\begin{lemma}\label{lem:Dim2AllImplicitImpliesDependent}
Let $\fw$ be a framework, and $S$ a subset of vertices such that $\p(S)$ is of dimension at least $2$.
If every pair of vertices of $S$ is an implicit edge, then all these implicit edges are pairwise dependent (when they are non-degenerate).
\end{lemma}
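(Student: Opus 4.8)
The plan is to fix a deformation $\fw[G]=(\verts,\edges,\p[\psi])$ of $\fw$ and show that the edge-deformation coefficients $\lambda_{uv}$ attached to the implicit edges among vertices of $S$ are all equal (whenever the edges are non-degenerate). Since $\p(S)$ is at least $2$-dimensional, I can pick three vertices $a,b,c\in S$ whose images $\p_a,\p_b,\p_c$ are affinely independent, so that $\p_b-\p_a$ and $\p_c-\p_a$ are linearly independent vectors in $\R^d$. By hypothesis each of the three pairs among $\{a,b,c\}$ is an implicit edge, so there are $\lambda_{ab},\lambda_{ac},\lambda_{bc}\in\R_+$ with $\p[\psi]_b-\p[\psi]_a=\lambda_{ab}(\p_b-\p_a)$, and similarly for the other two. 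Writing $\p_c-\p_b=(\p_c-\p_a)-(\p_b-\p_a)$ and comparing with $\p[\psi]_c-\p[\psi]_b=(\p[\psi]_c-\p[\psi]_a)-(\p[\psi]_b-\p[\psi]_a)$, I get
\[
\lambda_{bc}\bigl((\p_c-\p_a)-(\p_b-\p_a)\bigr)=\lambda_{ac}(\p_c-\p_a)-\lambda_{ab}(\p_b-\p_a).
\]
By linear independence of $\p_b-\p_a$ and $\p_c-\p_a$, matching coefficients forces $\lambda_{ac}=\lambda_{bc}$ and $\lambda_{ab}=\lambda_{bc}$, hence $\lambda_{ab}=\lambda_{ac}=\lambda_{bc}$. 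Thus any triangle of affinely independent vertices in $S$ has a common coefficient on all three of its (implicit) edges.

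Next I would propagate this equality to all of $S$. Name the common value above $\mu=\mu(\fw[G])$. Take any non-degenerate implicit edge $uv$ with $u,v\in S$. If $\{u,v,a\}$ (or $\{u,v,b\}$, or $\{u,v,c\}$) is affinely independent, the triangle argument applied to that triple already shows $\lambda_{uv}=\mu$. The only remaining case is when $\p_u,\p_v$ are both collinear with each of $\p_a,\p_b,\p_c$ — but $\p_a,\p_b,\p_c$ are not collinear, so there is at most one line through two of them, and a point collinear with $\p_u$ and $\p_v$ for all three choices would force $\p_u=\p_v$ (the line $\aff(\p_u,\p_v)$ would have to pass through all of $\p_a,\p_b,\p_c$, impossible unless it is not well-defined, i.e. $\p_u=\p_v$), contradicting non-degeneracy of $uv$. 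So in fact for every non-degenerate implicit edge $uv$ in $S$ there is a choice of third vertex in $\{a,b,c\}$ making an affinely independent triple, and $\lambda_{uv}=\mu$.

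Since the deformation $\fw[G]$ was arbitrary, every non-degenerate implicit edge among vertices of $S$ carries the same coordinate on $\EDefoCone[\fw]$ (equal to the scalar $\mu$ that depends only on the deformation), which is exactly the statement that these implicit edges are pairwise dependent in the sense of the definition of $\ELD$ (equivalently $\EELD$). I expect the main subtlety to be the bookkeeping in the second paragraph: one must be careful that, although $S$ may contain many collinear points, the fixed affinely independent triple $\{a,b,c\}$ always supplies a usable third vertex, and that degenerate implicit edges are correctly excluded throughout (the definition of dependency in $\ELD$ only concerns non-degenerate edges, and $\lambda_{uv}=0$ is set by convention for degenerate ones, so no conflict arises). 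Everything else is the elementary linear algebra of the triangle relation.
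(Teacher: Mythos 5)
Your base-triangle computation is correct, and so is your observation that for every non-degenerate pair $uv$ in $S$ some $x\in\{a,b,c\}$ makes $\p_u,\p_v,\p_x$ affinely independent. The gap is in the propagation step: applying the triangle identity to the triple $\{u,v,x\}$ only yields $\lambda_{uv}=\lambda_{ux}=\lambda_{vx}$, and when $u,v\notin\{a,b,c\}$ none of these three implicit edges is an edge of the base triangle, so nothing in that single application ties this common value to $\mu$. The assertion that ``the triangle argument applied to that triple already shows $\lambda_{uv}=\mu$'' is therefore unjustified exactly in the generic case, which is the case the lemma is about.

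The repair needs one more chaining step, and this is where the bookkeeping you flagged actually lives: say $x=a$; since $\p_u\neq\p_a$ (forced by the affine independence of $\{u,v,a\}$), the point $\p_u$ cannot lie on both lines $\aff(\p_a,\p_b)$ and $\aff(\p_a,\p_c)$, which meet only at $\p_a$, so one of the triples $\{u,a,b\}$, $\{u,a,c\}$ is affinely independent and the same computation gives $\lambda_{ua}=\lambda_{ab}=\mu$ or $\lambda_{ua}=\lambda_{ac}=\mu$; combined with $\lambda_{uv}=\lambda_{ua}$ this finally gives $\lambda_{uv}=\mu$. With that link added, your argument is essentially an unrolled version of the paper's proof, which instead proceeds by induction on $|S|$: the base case is the indecomposable triangle of \cref{ex:triangle} applied to the closure $\Efw$, and the inductive step attaches a new vertex through a triangle and uses transitivity of dependency (\cref{lem:WclusterGraph}) to connect it to the edges already known to be pairwise dependent --- precisely the chaining your write-up skips.
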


\begin{proof}
	  We prove it by induction on $|S|$. If $|S|=3$, and $S=\{u,v,w\}$, then $\p_u,\p_v,\p_w$ are affinely independent because the dimension is $\geq 2$, \ie the vertices of a triangle. By \Cref{ex:triangle}, we have that the restriction of $\Efw$ to $S$ is indecomposable, and hence these implicit edges are pairwise dependent. 
	
	If $|S|>3$, there must be some $u$ such that $S\ssm\{u\}$ is of dimension $\geq 2$. By induction hypothesis, the non-degenerate implicit edges among vertices of $S\ssm\{u\}$ are pairwise dependent. Now let $v\in S\ssm\{u\}$ such that $uv$ is a non-degenerate implicit edge. We want to prove that $uv$ is dependent with the non-degenerate implicit edges of $S\ssm\{u\}$. To this end, choose some $w$ such that $\p_u,\p_v,\p_w$ are affinely independent (which exists because $S$ is of dimension $\geq 2$). By \Cref{ex:triangle} again, we have that the restriction of $\Efw$ to $\{u,v,w\}$ is indecomposable, and thus $uv$ and $uw$ are dependent. We conclude that all the non-degenerate implicit edges among vertices in $S$ are pairwise dependent.
\end{proof}

\begin{corollary}\label{cor:indeccompleteclosure}
	A framework $\fw=(\verts,\edges,\p)$ of dimension $\geq 2$ is indecomposable if and only if every pair of vertices is an implicit edge. 
\end{corollary}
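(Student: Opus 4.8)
The plan is to derive both implications directly from the structural results already in hand, using the closure $\Efw$ as the main bridge. The dimension hypothesis will only be needed for one direction, and the whole argument is essentially bookkeeping once \Cref{lem:Dim2AllImplicitImpliesDependent} and \Cref{cor:extendedgraph} are available.

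For the forward direction I would unpack the definition of indecomposability. If $\fw$ is indecomposable, then for every deformation $\fw[G]=(\verts,\edges,\p[\psi])$ there are $\alpha\in\R_+$ and $\b t\in\R^d$ with $\p[\psi]_v=\alpha\p_v+\b t$ for all $v\in\verts$. Subtracting this relation at two vertices $u,v\in\verts$ gives $\p[\psi]_v-\p[\psi]_u=\alpha(\p_v-\p_u)$, which is exactly \eqref{eq:implicitedge} with $\lambda_{uv}=\alpha$. Hence every pair of vertices is an implicit edge. This step uses nothing beyond the definitions and, in particular, not the hypothesis $\dim\fw\geq 2$.

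For the converse I would argue as follows. Assume every pair of vertices of $\fw$ is an implicit edge, so the closure is $\Efw=(\verts,\Iedges,\p)$ with $\Iedges=\binom{\verts}{2}$; in particular the underlying graph of $\Efw$ is complete, hence connected. Since $\dim\fw\geq 2$, the configuration $\p(\verts)$ has affine dimension at least $2$, so there exist (at least three) distinct points and thus $\ndie(\fw)\neq\emptyset$; moreover we may apply \Cref{lem:Dim2AllImplicitImpliesDependent} with $S=\verts$ to conclude that all non-degenerate implicit edges of $\fw$ are pairwise dependent. Equivalently, $\EELD$ is a complete graph on its (nonempty) node set $\ndie(\fw)$, so by \Cref{lem:ELDindecomposable} applied to $\Efw$ the framework $\Efw$ is indecomposable, and then \Cref{cor:extendedgraph}(iii) gives that $\fw$ is indecomposable.

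I do not expect a serious obstacle here; the only point that requires a little care is the precise role of the dimension hypothesis, which enters the converse twice: it licenses the invocation of \Cref{lem:Dim2AllImplicitImpliesDependent} (a triangle forces dependency only when it is non-degenerate) and it guarantees $\ndie(\fw)\neq\emptyset$ so that ``$\EELD$ complete'' is a non-vacuous certificate of indecomposability. This also matches the warning in the excerpt that the statement genuinely fails in dimension $1$: three or more collinear points make every pair an implicit edge, yet such a framework is decomposable.
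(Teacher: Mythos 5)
Your proof is correct and follows essentially the same route as the paper: the forward direction unpacks the definition of indecomposability, and the converse combines \Cref{lem:Dim2AllImplicitImpliesDependent} (with $S=\verts$) and \Cref{cor:extendedgraph}(iii). The only difference is that you spell out the intermediate step of applying \Cref{lem:ELDindecomposable} to $\Efw$, which the paper leaves implicit.
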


\begin{proof}
If $\fw$ is indecomposable, then for any deformation $\fw[G]=(\verts,\edges,\p[\psi])$ we have $\psi=\b\lambda \p +\b t$ for some $\b\lambda\in \R$ and $\b t\in \R^d$, and every pair of vertices fulfills \eqref{eq:implicitedge} with this $\b\lambda$.

Conversely, if every pair of vertices of $\fw$ is an implicit edge, then as $\fw$ is of dimension at least 2, by \Cref{lem:Dim2AllImplicitImpliesDependent}, the graph $\EELD$ is a complete graph.
By \Cref{cor:extendedgraph} (iii), $\fw$ is indecomposable.
\end{proof}

% \begin{proof}
% 	If $\fw$ is indecomposable, then for any deformation $\fw[G]=(\verts,\edges,\p[\psi])$ we have $\psi=\b\lambda \p +\b t$ for some $\b\lambda\in \R$ and $\b t\in \R^d$, and every pair of vertices fulfills \eqref{eq:implicitedge} with this $\b\lambda$.

% 	The proof of the reciprocal is by induction on $|\verts|$. If $|\verts|=3$, and $V=\{u,v,w\}$, then $\p_u,\p_v,\p_w$ are affinely independent because the dimension is $\geq 2$, \ie the vertices of a triangle. By \Cref{ex:triangle}, we have that $\Efw$ is indecomposable, and thus so is $\fw$. 
	
% 	If $|\verts|>3$, there must be some $u$ such that $\fw\ssm\{u\}$ is of dimension $\geq 2$. By induction hypothesis, $\fw\ssm\{u\}$ is indecomposable. Now let $v\in \verts\ssm\{u\}$ such that $uv$ is a non-degenerate implicit edge. We want to prove that $uv$ is dependent with the non-degenerate implicit edges of $\fw\ssm\{u\}$. To this end, choose some $w$ such that $\p_u,\p_v,\p_w$ are affinely independent (which exists because $\fw$ is of dimension $\geq 2$). By \Cref{ex:triangle} again, we have that $\Efw$ is indecomposable, and thus $uv$ and $uw$ are dependent. This shows that $\EELD$ is a complete graph, and thus $\Efw$ and $\fw$ are indecomposable.
% \end{proof}

\subsubsection{Unveiling implicit edges}
While it is not straightforward to compute all implicit edges (\ie the nodes of $\EELD$), this is usually not needed.
The next result shows a simple tool to derive implicit edges. More involved methods will be presented in \Cref{ssec:CoveringFamilyOfFlats}.

\begin{proposition}\label{prop:connectedimplicit}
	If $u$ and $v$ are vertices of a framework $\fw=(\verts,\edges,\p)$ that are connected through a path of pairwise dependent edges. Then $uv$ is an implicit edge, and it is dependent in $\EELD$ with all the edges in this path.
\end{proposition}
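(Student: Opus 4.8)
The plan is to verify the defining property \eqref{eq:implicitedge} of an implicit edge directly, by chaining the dependency relations along the given path, and then to upgrade this to dependency in $\EELD$ using \Cref{lem:Dim2AllImplicitImpliesDependent} (or rather its proof technique) together with \Cref{prop:subframework}. Let $u = u_0, u_1, \dots, u_r = v$ be the path in $\fw$ whose consecutive edges $u_{i-1}u_i$ are pairwise dependent; denote by $e$ a generic edge of this path. The key point is that, by \Cref{lem:connectedcomponents} (or by the cycle-equation description of \Cref{lem:edge-deformations-frameworks}), any deformation $\fw[G] = (\verts, \edges, \p[\psi])$ satisfies $\p[\psi]_{u_i} - \p[\psi]_{u_{i-1}} = \dv_{u_{i-1}u_i}(\p_{u_i} - \p_{u_{i-1}})$ with $\dv = \dv_{\fw}(\fw[G])$; since all the $u_{i-1}u_i$ are dependent, all these coefficients equal a common value $\mu \eqdef \dv_{u_0u_1} = \dots = \dv_{u_{r-1}u_r} \in \R_+$.

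Summing along the path then gives
\[
\p[\psi]_v - \p[\psi]_u = \sum_{i=1}^{r}\bigl(\p[\psi]_{u_i} - \p[\psi]_{u_{i-1}}\bigr) = \sum_{i=1}^{r}\mu\,(\p_{u_i} - \p_{u_{i-1}}) = \mu\,(\p_v - \p_u),
\]
which is precisely \eqref{eq:implicitedge} with $\lambda_{uv} = \mu \geq 0$. Hence $uv$ is an implicit edge of $\fw$. (If $\p_u = \p_v$ this is the degenerate case, consistent with the statement; otherwise $uv$ is a non-degenerate implicit edge.)

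It remains to see that $uv$ is dependent in $\EELD$ with every edge $u_{i-1}u_i$ of the path. For this, pass to the closure $\Efw = (\verts, \Iedges, \p)$, which by \Cref{thm:characterizationimplicit} and \Cref{cor:extendedgraph}(i) has edge-deformation cone linearly isomorphic to that of $\fw$ via the coordinate-forgetting projection. Under this isomorphism, the computation above shows that for every $\dv' \in \EDefoCone[\Efw]$ the coordinate $\dv'_{uv}$ equals the common coordinate $\dv'_{u_{i-1}u_i}$ (each of which is forced to be the single value $\mu$ determined by $\pi(\dv')$). By the definition of the graph of edge dependencies applied to $\Efw$, this says exactly that $uv$ and each $u_{i-1}u_i$ are adjacent in $\ELD[\Efw] = \EELD$; since all the $u_{i-1}u_i$ were assumed pairwise dependent, $\{uv\} \cup \{u_{i-1}u_i : 1 \le i \le r\}$ is a dependent set, i.e.\ a clique in $\EELD$.

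I do not anticipate a genuine obstacle here: the statement is essentially a telescoping-sum bookkeeping argument, and the only thing to be careful about is the logical order — one must first establish that $uv$ is implicit (so that it is a legitimate node of $\EELD$) before one can even speak of its adjacencies in $\EELD$, and one must invoke \Cref{thm:characterizationimplicit}/\Cref{cor:extendedgraph} to know that adding $uv$ as an explicit edge does not shrink the deformation cone. A minor subtlety worth a sentence: the coefficient $\mu$ is the same for all deformations only in the sense that it is a coordinate function on the cone, not a fixed constant — the point is that the function $\dv' \mapsto \dv'_{uv}$ agrees with $\dv' \mapsto \dv'_{u_{i-1}u_i}$ on all of $\EDefoCone[\Efw]$, which is precisely the adjacency condition.
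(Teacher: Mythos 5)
Your proof is correct and takes essentially the same route as the paper's: the telescoping sum along the path, using that all path edges share a common coefficient in any deformation, simultaneously shows that $uv$ is an implicit edge and that its coefficient agrees with that of each path edge, which is exactly dependency in $\EELD$. The additional appeal to \cref{thm:characterizationimplicit} and \cref{cor:extendedgraph} is harmless extra scaffolding; the paper reads the dependency directly off the equality $\dv_{uv}(\fw[G])=\dv_{u_{i-1}u_i}(\fw[G])$ for every deformation $\fw[G]$.
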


\begin{proof}
	Suppose the path is $u=u_0,u_1,\dots, u_k=v$. Let $\fw[G]=(\verts,\edges,\p[\psi])$ be a deformation of $\fw$, and $\dv$ be its deformation vector. The edges $u_{i-1}u_i$ are pairwise dependent, and hence fulfill 
	$\dv_{u_{0}u_1}=\dots=\dv_{u_{k-1}u_k}\defeq \b\lambda$ for some $\b\lambda\geq 0$. We have therefore :
	\[\p[\psi]_v-\p[\psi]_u=\sum_{1\leq i\leq k} \p[\psi]_{u_i} - \p[\psi]_{u_{i-1}}=\b\lambda \sum_{1\leq i\leq k} (\p_{u_i} - \p_{u_{i-1}})=\b\lambda(\p_v-\p_u),\]
	which shows that $uv$ is an implicit edge with $\dv_{uv}(\fw[G])=\dv_{u_{i-1}u_i}(\fw[G])$ for any deformation $\fw[G]$.
\end{proof}

\begin{example}\label{exm:KallayConditionallyDecomposable}
	
	To showcase the use of implicit edges, we reproduce in \Cref{fig:KallayExample} a clever construction by Kallay \cite[Paragraph 5 \& Fig. 1]{Kallay1982} of 
	two combinatorially equivalent polytopes $\pol$ and ${\pol[Q]}$ such that $\pol$ is decomposable but ${\pol[Q]}$ is indecomposable, and analyze them via the graph of implicit edge dependencies.
    These polytopes are combinatorially equivalent to the Johnson solid $J_{15}$, the elongated square bipyramid, see \cite{Johnson1966-Solids}.
    Here, for $\pol$ a polytope, we denote $\EELD[\pol]$ its graph of implicit-edge dependencies $\ELD[\Efw(\pol)]$.

	\begin{figure}[htpb]
		\centering
		\includegraphics[width=0.85\linewidth]{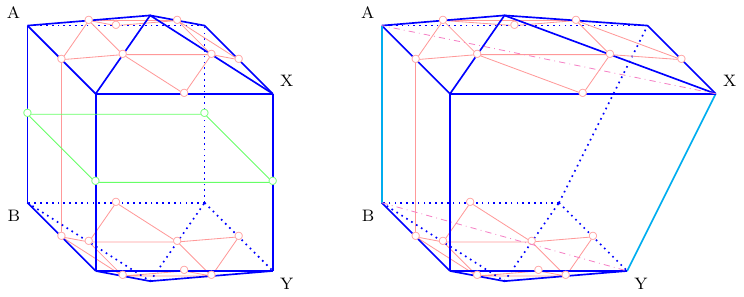}
		\caption[A conditionally-decomposable polytope \& its graph of implicit edge dependencies]{\cite[Fig. 1]{Kallay1982} (left) A decomposable realization of a twice-stacked cube.
			(Right) An indecomposable realization of a twice-stacked cube, where the edges $AB$ and $XY$ are not coplanar.
			In each, we draw some edges of $\ELD$: edges of the same color are dependent.
			On the right, using the rigid cycle (whose convex hull is a tetrahedron) we prove that $AB$ and $XY$ are also part of the \textcolor{OrangeRed}{red} component.}
		\label{fig:KallayExample}
	\end{figure}
	% \arnau{I changed 2-stacked by twice stacked. We can discuss it if you don't like it.}\germain{\faThumbsOUp}

	The polytope $\pol$ is obtained by taking the standard cube and stacking one vertex on top of two opposite 2-faces, see \Cref{fig:KallayExample} (left).
	The resulting polytope $\pol$ is the sum of an octahedron and the segment $[A, B]$.
	Its graph of edge dependencies $\ELD[\pol]$ is a clique on the union of the two pyramids (\textcolor{OrangeRed}{red} in \Cref{fig:KallayExample} left), and a clique on the edges parallel to $AB$ (\textcolor{green}{green} in \Cref{fig:KallayExample} left).
	Its graph of implicit edge dependencies $\EELD[\pol]$ is a clique on the union of \textbf{pairs of vertices} of each pyramid, and a clique on the edges parallel to~$AB$. One can easily see that it is decomposable because doing a translation on one of the pyramids parallel to $AB$ gives a valid deformation.
	
	The polytope ${\pol[Q]}$ is obtained by taking a non-standard cube, where the edges $AB$ and $XY$ are not coplanar, and stacking vertices on opposite 2-faces.
	We cannot prove that ${\pol[Q]}$ is indecomposable using our tools on the graph of edge dependencies $\ELD[{\pol[Q]}]$, but it is straightforward using the graph of implicit edge dependencies $\EELD[{\pol[Q]}]$:
	As the two opposite pyramids are indecomposable by \Cref{ex:pyramid}, their edges induce cliques on $\ELD[\polQ]$ by \Cref{cor:indecsubframeclique}.
    By \Cref{prop:connectedimplicit}, $AX$ and $BY$ are implicit edges and are dependent with the edges of each pyramid;
	hence, $A, B, Y, X$ is a cycle of implicit edges whose convex hull is a 3-simplex. We will see in the upcoming \Cref{cor:RigidCycle0}, that this implies that the pairs $AB$, $AX$, $XY$ and $BY$ form a clique in $\EELD[{\pol[Q]}]$.
    Finally, as the vertical edges of $\polQ$ are dependent with $AB$ or with $XY$, all edges of $\polQ$ are dependent with $AB$: hence $\polQ$ is indecomposable by \Cref{cor:WconnectedPMinkowskiIndecomposable}.
    % Finally, the set of edges formed by $AB$ and all the edges of both pyramids is dependent, and consequently ${\pol[Q]}$ is indecomposable by \Cref{lem:connecteddependentrigid}.
\end{example}

\subsection{Degenerate edges and projections of frameworks}\label{ssec:DegeneracyAndProjection}

The choice of working with frameworks instead of polytopes introduced the possibility of having degenerate edges, which needed a separate treatment in the definition of deformation vectors in~\eqref{eq:defovector}. Nevertheless, we consider degenerate edges to be a feature instead of a bug, and we will exploit them to obtain simple proofs.

\subsubsection{Degenerate edges, quotients, and lifts}

\begin{lemma}\label{lem:degenerate}
	Let $uv\in\de$ and $uw\in \nde$ be respectively a degenerate and a non-degenerate edge of a framework $\fw=(\verts,\edges,\p)$. Then $vw$ is an implicit edge of $\fw$ that is dependent with $uw$ in $\EELD$.
\end{lemma}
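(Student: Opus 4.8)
The statement says that if $uv$ is a degenerate edge (so $\p_u=\p_v$) and $uw$ is a non-degenerate edge, then $vw$ is an implicit edge dependent with $uw$ in $\EELD$. The natural approach is to verify the defining equation \eqref{eq:implicitedge} for the pair $vw$ directly from the cycle equations, exactly as in the proof of \Cref{prop:connectedimplicit}. First I would take an arbitrary deformation $\fw[G]=(\verts,\edges,\p[\psi])$ of $\fw$, with edge-deformation vector $\dv$. Since $uv\in\de$ we have $\dv_{uv}=0$ by \eqref{eq:defovector}, and the defining relation \eqref{eq:ELDF} gives $\p[\psi]_v-\p[\psi]_u = \dv_{uv}(\p_v-\p_u) = \b 0$, so $\p[\psi]_v=\p[\psi]_u$; that is, a degenerate edge stays degenerate in every deformation.

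With that in hand, for the non-degenerate edge $uw$ we have $\p[\psi]_w-\p[\psi]_u=\dv_{uw}(\p_w-\p_u)$. Now I would compute
\[
\p[\psi]_w-\p[\psi]_v = (\p[\psi]_w-\p[\psi]_u) + (\p[\psi]_u-\p[\psi]_v) = \dv_{uw}(\p_w-\p_u) + \b 0 = \dv_{uw}(\p_w-\p_u) = \dv_{uw}(\p_w-\p_v),
\]
where the last equality uses $\p_u=\p_v$. Setting $\lambda_{vw}\eqdef\dv_{uw}\in\R_+$, this is precisely \eqref{eq:implicitedge} for the pair $vw$, so $vw$ is an implicit edge. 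Moreover $vw$ is non-degenerate (since $\p_v=\p_u\neq\p_w$), and the identity just established shows that its edge-deformation coordinate in the closure satisfies $\dv_{vw}(\fw[G]) = \dv_{uw}(\fw[G])$ for every deformation $\fw[G]$; by the definition of $\EELD$ this means $vw$ and $uw$ are dependent.

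I do not anticipate a genuine obstacle here: the lemma is essentially an unwinding of definitions, the only subtlety being the bookkeeping that a degenerate edge remains degenerate under deformation (which is immediate from $\dv_{uv}=0$). One minor point worth stating explicitly is that $vw$ is indeed non-degenerate, so that it legitimately appears as a node of $\EELD$ and the word ``dependent'' applies; this follows from $\p_v = \p_u \ne \p_w$. The result will be used later in tandem with \Cref{prop:connectedimplicit} to propagate dependencies through projected or collapsed frameworks, which is why it is phrased in terms of the closure $\Efw$ rather than $\fw$ itself.
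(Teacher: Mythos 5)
Your proof is correct and follows essentially the same route as the paper: both use the fact that the degenerate edge $uv\in\edges$ forces $\p[\psi]_u=\p[\psi]_v$ in every deformation via \eqref{eq:ELDF}, then compute $\p[\psi]_w-\p[\psi]_v=\dv_{uw}(\p_w-\p_u)=\dv_{uw}(\p_w-\p_v)$ to conclude that $vw$ is an implicit edge dependent with $uw$. Your extra remark that $vw$ is non-degenerate (since $\p_v=\p_u\neq\p_w$) is a harmless clarification the paper leaves implicit.
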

\begin{proof}
Let $\fw[G]=(\verts,\edges,\p[\psi])$ be a deformation of $\fw$, and $\Efw[G]$ the associated deformation of $\Efw$. 
Since $uv\in \edges$ and $\p_{u}=\p_{v}$ (by degeneracy), we have that $\p[\psi]_u=\p[\psi]_v$ by \eqref{eq:ELDF}; and therefore that
\[
\p[\psi]_w-\p[\psi]_v= \p[\psi]_w-\p[\psi]_u =\dv(\fw[G])_{uw} (\p_w-\p_u)=\dv(\fw[G])_{uw} (\p_w-\p_v).
\]
It shows that $vw$ is an implicit edge and $\dv(\Efw[G])_{uw}=\dv(\fw[G])_{uw}=\dv(\Efw[G])_{vw}$ proving that it is dependent with $uw$.
\end{proof}

We can reinterpret this result via the equivalence relation $\der$ on $\verts$ defined as the transitive closure of $u\der v$ whenever $uv\in \de$ (it is very important to restrict to the case $uv\in \de$ and not only $\p_u=\p_v$, because if $\p_u=\p_v$ but $uv\notin E$ then we do not necessarily have $\p[\psi]_u=\p[\psi]_v$ in every deformation\linebreak $\fw[G]=(\verts,\edges,\p[\psi])$ of~$\fw$).
We denote by $\dec{u}$ the equivalence class of $u$ under~$\der$. Note that $\der$ induces an equivalence relation on the set of edges too, by setting $uv\der u'v'$ whenever $u\der u'$ and $v\der v'$.
We define the framework $\mathdefn{\fw_{/\der}}\eqdef(\verts_{/\der}, \edges_{/\der}, \p_{/\der})$, where $\p_{/\der}:\verts_{/\der}\to \R^d$ is the map induced on the quotient, which is well defined because $u\der v$ implies that $\p_u = \p_v$. Note that $\ELD[\fw_{/\der}]=\ELD_{/\der}$, where we interpret $\der$ as an equivalence relation on the nodes of $\ELD$ (that is, the edges of $\fw$).
%We say that an equivalence relation $\sim$ on the set of nodes $\verts$ of a graph $G=(\verts,\edges)$ is a \defn{clique congruence} if (i) $u\sim v$ implies $uv\in \edges$ and (ii) $uv\in \edges$ implies that $u'v'\in \edges$ for any $u\der u'$ and $v\der v'$. 

If $\sim$ is an equivalence relation on a set $\nodes$, and $G=(\nodes_{/\sim},\arcs)$ is a graph, we say that the \defn{lift} of $G$ is the graph with nodes $\nodes$ and arcs $xy$ whenever there is an arc between $[x]$ and $[y]$ in $\arcs$.
% \arnau{(If we add loops in the nodes of $\ELD$, then we do not need the first condition).}\germain{Implemented}

%By \Cref{lem:degenerate} we directly have that (recall that $\ELD$ contains loops on all its nodes):
\begin{proposition}\label{prop:degenerate}
The lift of $\ELD[\fw_{/\der}]$ is a subgraph of $\ELD$.
\end{proposition}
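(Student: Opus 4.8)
The statement to prove is Proposition~\ref{prop:degenerate}: the lift of $\ELD[\fw_{/\der}]$ is a subgraph of $\ELD$. The plan is to unfold what "lift" and "subgraph" mean here and reduce everything to \Cref{lem:degenerate} together with \Cref{prop:connectedimplicit}. Concretely, let $e = uv$ and $f = u'v'$ be two non-degenerate edges of $\fw$ whose classes $[uv]_{\der}$ and $[u'v']_{\der}$ are joined by an arc in $\ELD[\fw_{/\der}]$; by definition of the lift I must show that $uv$ and $u'v'$ are dependent in $\ELD$ (hence linked by an arc there). That the classes are adjacent in $\ELD[\fw_{/\der}]$ means precisely that in the quotient framework $\fw_{/\der}$, the edges $\dec{u}\dec{v}$ and $\dec{u'}\dec{v'}$ are dependent: for every $\dv$ in $\EDefoCone[\fw_{/\der}]$ one has $\dv_{\dec u\dec v} = \dv_{\dec{u'}\dec{v'}}$.

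The first step is to relate deformations of $\fw$ to deformations of $\fw_{/\der}$. I would observe that any deformation $\fw[G] = (\verts,\edges,\p[\psi])$ of $\fw$ induces a deformation of $\fw_{/\der}$: because $uv\in\de$ forces $\p[\psi]_u=\p[\psi]_v$ by \eqref{eq:ELDF}, the map $\p[\psi]$ is constant on each $\der$-class, so it descends to $\p[\psi]_{/\der}:\verts_{/\der}\to\R^d$, and the resulting framework is a deformation of $\fw_{/\der}$ with the same edge-deformation coefficients on corresponding non-degenerate edges. In fact the remark $\ELD[\fw_{/\der}]=\ELD_{/\der}$ already recorded in the text packages exactly this correspondence; but for a self-contained argument I would spell out that $\dv_\fw(\fw[G])_{e} = \dv_{\fw_{/\der}}(\fw[G]_{/\der})_{[e]_{\der}}$ whenever $e$ is non-degenerate, which is immediate from \eqref{eq:defovector} since $\p$ and $\p[\psi]$ agree with their quotients on the representatives.

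With this in hand, the core deduction is short. Fix a non-degenerate edge $e=uv$ of $\fw$; I claim $e$ is dependent in $\EELD$ with \emph{every} non-degenerate edge of $\fw$ whose $\der$-class equals $[uv]_{\der}$. Indeed, if $u'v'\in\nde$ with $u'\der u$ and $v'\der v$, then $u'$ and $u$ are joined by a path of degenerate edges, and likewise $v'$ and $v$; chaining \Cref{lem:degenerate} along these paths (each application swaps one endpoint of the current edge across a degenerate edge, producing an implicit edge dependent with the previous one) shows that $u'v'$ and $uv$ are connected by a path of pairwise dependent (possibly implicit) edges, hence dependent by \Cref{prop:connectedimplicit}. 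Now suppose $[uv]_{\der}$ and $[u'v']_{\der}$ are adjacent in $\ELD[\fw_{/\der}]$. Using the deformation correspondence of the previous paragraph, for any deformation $\fw[G]$ of $\fw$ we get $\dv_\fw(\fw[G])_{uv} = \dv_{\fw_{/\der}}(\fw[G]_{/\der})_{[uv]_\der} = \dv_{\fw_{/\der}}(\fw[G]_{/\der})_{[u'v']_\der} = \dv_\fw(\fw[G])_{u'v'}$, so $uv$ and $u'v'$ are dependent in $\ELD$. Combining with the intra-class dependencies above handles all arcs of the lift, finishing the proof.

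**Main obstacle.** There is no deep obstacle; the only delicate point is bookkeeping: being careful that the quotient $\fw_{/\der}$ is well defined (which needs $uv\in\de$, not merely $\p_u=\p_v$ — precisely the caveat the authors flag before the statement), and that a pair $u'v'$ of $\der$-equivalent endpoints of a non-degenerate edge really gives an implicit edge dependent with $uv$, which is where the iterated use of \Cref{lem:degenerate} (rather than a single application) is needed, since the class may be built from several degenerate edges.
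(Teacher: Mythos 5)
Your proof is correct, but on the arcs between distinct classes it takes a genuinely different route from the paper. The paper unpacks adjacency in $\ELD[\fw_{/\der}]$ via the identification $\ELD[\fw_{/\der}]=\ELD_{/\der}$ (asserted, not proved, in the paragraph preceding the proposition): an arc between $[uv]$ and $[wx]$ yields a pair of dependent representatives $u'v'\in[uv]$, $w'x'\in[wx]$, and the conclusion follows from the loop case plus transitivity of dependency. You instead work directly with the definition of $\ELD[\fw_{/\der}]$ as the dependency graph of the quotient framework, and prove a descent statement: every deformation $\fw[G]$ of $\fw$ is constant on $\der$-classes (by \eqref{eq:ELDF} applied to degenerate edges), hence induces a deformation $\fw[G]_{/\der}$ of $\fw_{/\der}$ with $\dv_{\fw}(\fw[G])_{e}=\dv_{\fw_{/\der}}(\fw[G]_{/\der})_{[e]_{\der}}$ for non-degenerate $e$; equalities forced over $\EDefoCone[\fw_{/\der}]$ therefore pull back to $\EDefoCone[\fw]$. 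This is more self-contained than the paper's argument, since it supplies exactly the direction of the identity $\ELD[\fw_{/\der}]=\ELD_{/\der}$ that the paper leans on without proof; note also that your descent identity already handles the intra-class (loop) arcs by itself, so the \Cref{lem:degenerate} chaining is not strictly needed. For that chaining (which mirrors the paper's Case~1 induction) two small remarks: the final step should invoke transitivity of dependency (\Cref{lem:WclusterGraph}) rather than \Cref{prop:connectedimplicit}, which produces implicit edges between \emph{vertices} joined by dependent paths; and, like the paper, you apply \Cref{lem:degenerate} with a non-degenerate \emph{implicit} edge in the role of $uw$, which is legitimate because the closure $\Efw$ has the same deformations (\Cref{cor:extendedgraph}), but is worth saying explicitly.
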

\begin{proof}

Let $uv$ and $wx$ be non-degenerate edges such that $[uv][wx]$ is an arc in $\ELD[\fw_{/\der}]$, we will prove that they are dependent and form an arc in $\ELD$. We distinguish two cases:
\begin{compactitem}
	\item If $[uv]=[wx]$, \ie it is a loop . That means that $uv\der wx'$. Recall that we have $u\der v$ if there is a path of degenerate edges joining $u$ and $v$, and that  $uv\der wx'$ if there are respective paths of degenerate edges $w=u_0u_1\dots u_n=u$ and $x=v_0v_1\dots v_m=v$.
	We will prove that all the $u_iv_j$ are pairwise dependent non-degenerate implicit edges for all $i,j$ by induction on $i+j$. It is true when $i=j=0$ by hypothesis. We prove without loss of generality that the property is preserved when passing from $i$ to $i+1$.	
    Indeed, $u_iu_{i+1}$ is a degenerate edge and $u_iv_j$ a non-degenerate implicit edge. Then, by \Cref{lem:degenerate}, we have that $u_{i+1}v_j$ is a non-degenerate implicit edge dependent with $u_iv_j$. 

	\item If $[uv]\neq[wx]$, this means that there exists a pair of dependent edges $u'v'$ and $w'x'$ with $u'v'\in[uv]$ and $w'x'\in[wx]$. Recall that there is a loop on every node of $\ELD[\fw_{/\der}]$ by definition of the edge dependency graph. This means that there is a loop on $[uv]$ and by the previous point $u'v'$ and $uv$ are dependent. Similarly, we see that $wx$ and $w'x'$ are dependent. Since $w'x'$ and $u'v'$ are dependent, we conclude that $wx$ and $uv$ are dependent by transitivity.     \hfill\qedhere
\end{compactitem}
\end{proof}

%\begin{corollary}\label{prop:degenerate}
%The equivalence classes of $\ndie$ under the equivalence relation $\der$ induce cliques in~$\EELD$.
%\end{corollary}
% \arnau{If we do not want to introduce the notion of lift, we can say: The equivalence classes of $\ndie$ under the equivalence relation $\der$ induce cliques in~$\EELD$.}

\begin{corollary}\label{cor:degeneratecontraction}
$\EDefoCone$ is linearly isomorphic to $\EDefoCone[\fw_{/\der}]$.
\end{corollary}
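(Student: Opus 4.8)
The plan is to deduce this from the two preceding results, \Cref{prop:degenerate} and \Cref{cor:extendedgraph}, together with the quotient map $\fw\to\fw_{/\der}$ on the level of deformation cones. First I would observe that the equivalence relation $\der$ on $\verts$ partitions the vertices into classes, and that $\p_{/\der}$ is well defined precisely because $u\der v$ forces $\p_u=\p_v$. The natural candidate for the linear isomorphism is the map $q:\R^{\edges}\to\R^{\edges_{/\der}}$ induced by the quotient map on edges: since the deformation vector $\dv$ satisfies $\dv_e=0$ on every degenerate edge, and since $\der$-equivalent edges are glued together, a deformation vector of $\fw$ descends to a vector indexed by $\edges_{/\der}$, and one checks this lands in $\EDefoCone[\fw_{/\der}]$ because every cycle equation of $\fw_{/\der}$ pulls back to a (valid) cycle equation of $\fw$ (the realizations agree under $\p_{/\der}$).

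The key steps, in order, are: (1) Define $q:\EDefoCone[\fw]\to\EDefoCone[\fw_{/\der}]$ by $q(\dv)_{[e]}=\dv_e$ for a non-degenerate representative $e$, and check well-definedness — that all representatives of a class give the same coordinate — which is exactly what \Cref{lem:degenerate} (or rather its consequence in the proof of \Cref{prop:degenerate}) gives: $\der$-equivalent non-degenerate edges are dependent, hence have equal $\dv$-coordinate on every deformation. (2) Check $q$ is well defined as a map \emph{into} $\EDefoCone[\fw_{/\der}]$: nonnegativity is automatic, and the cycle equations of $\fw_{/\der}$ lift to cycle equations of $\fw$ because $\p_{/\der}([u])=\p_u$; a cycle in $G_{/\der}$ lifts to a walk in $G$ (possibly passing through degenerate edges, which contribute $0$), so the equation holds. (3) Check surjectivity: given $\bar{\dv}\in\EDefoCone[\fw_{/\der}]$, set $\dv_e=0$ for $e\in\de$ and $\dv_e=\bar{\dv}_{[e]}$ for $e\in\nde$; verify via \Cref{lem:edge-deformations-frameworks} that this satisfies the cycle equations of $\fw$ — again because any cycle of $G$ projects to a closed walk in $G_{/\der}$, and along degenerate edges both endpoints have the same realization so those terms vanish on both sides. (4) Check injectivity: if $q(\dv)=0$ then $\dv_e=0$ for every non-degenerate $e$ and $\dv_e=0$ for every degenerate $e$ by definition, so $\dv=0$; more precisely, the kernel of $q$ is trivial because the non-degenerate coordinates determine $\dv$ entirely. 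Together (3) and (4) give the linear isomorphism.

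Alternatively — and perhaps cleaner to write — I would phrase the whole thing via \Cref{cor:extendedgraph}(i): the closures $\Efw$ and $\Efw[{\fw_{/\der}}]$ have linearly isomorphic edge-deformation cones to $\fw$ and $\fw_{/\der}$ respectively, and since $\der$-equivalent edges of $\fw$ are pairwise dependent in $\EELD$ (this is established inside the proof of \Cref{prop:degenerate}, in the loop-case analysis), the quotient map on the closures is literally the identification of dependent classes, which is manifestly a coordinate-forgetting isomorphism onto its image. The main obstacle I anticipate is bookkeeping in step (3): making sure that \emph{every} cycle equation of $\fw$ (not just those coming from cycles of $\fw_{/\der}$) is automatically satisfied by the lifted vector $\dv$. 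The point is that a cycle in $G$ projects to a closed walk in $G_{/\der}$ whose non-degenerate edges carry the same $\dv$-values, and whose degenerate edges contribute the zero vector to \eqref{eq:cycle_equation} because $\p_u-\p_v=\b 0$ for a degenerate edge $uv$; so the cycle equation for $\fw$ reduces to the corresponding closed-walk relation in $\fw_{/\der}$, which is a consequence of the cycle equations there. Once that reduction is spelled out, the isomorphism follows immediately, and in fact this also re-proves \Cref{prop:degenerate} as a byproduct. I would likely keep the proof to two or three sentences, citing \Cref{prop:degenerate,thm:characterizationimplicit,lem:edge-deformations-frameworks}.
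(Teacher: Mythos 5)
Your proof is correct and takes essentially the same route as the paper: the paper simply works with the inverse of your map $q$ (lifting a vector from $\R^{\edges_{/\der}}$ to $\R^{\edges}$ by giving each edge the value of its class), and both arguments rest on the same two facts, namely that $\der$-equivalent non-degenerate edges are pairwise dependent (\cref{prop:degenerate}) and that degenerate edges contribute trivially to the cycle equations, with \cref{lem:edge-deformations-frameworks} handling the cone descriptions. Your explicit check in step (3) that the lifted vector satisfies every cycle equation of $\fw$ (via projecting cycles to closed walks in $\fw_{/\der}$) is a detail the paper's proof leaves implicit, so it is added care rather than a different approach.
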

\begin{proof}
The linear map $\R^{\edges_{/\der}}\to \R^\edges$ that maps $\dv\in \EDefoCone[\fw_{/\der}]$ to the vector $\dv[\mu]\in\R^\edges$ with $\dv[\mu]_e=\dv_{[e]}$ (where $[e]$ is the equivalence class of $e\in \edges$ under $\der$) maps $\EDefoCone[\fw_{/\der}]$ to $\EDefoCone$ because the edges in an equivalence class are pairwise dependent. It is injective and induces a linear isomorphism between the linear spans of both cones.
\end{proof}

\begin{example}
Consider three distinct points $\b x, \b y, \b z\in \R^d$ and construct the framework $\fw = (\verts, \edges, \p)$ with $\verts = \{1, \dots, 6\}$, $\edges = \{12, 23, 34, 45, 56, 16\}$ and $\p_1 = \p_2 = \b x$, and $\p_3 = \p_4 = \b y$, and $\p_5 = \p_6 = \b z$. See the triangle in \Cref{sfig:LemmaProjectionLine} (bottom right), where doubly circled points are those that are labeled twice in $V$.
Then, $\fw_{/\der}$ is the triangle with $V_{/\der} = \bigl\{[1,2], [4,5], [5,6]\bigr\}$, and $E_{/\der} = \bigl\{[1,2][3,4],\, [3,4][5,6],\, [1,2][5,6]\bigr\}$, and $\p_{/\der}([1,2]) = \b x$, and $\p_{/\der}([3,4]) = \b y$, and $\p_{/\der}([5,6]) = \b z$.
As $\fw_{/\der}$ is a triangle, it is indecomposable and $\ELD[\fw_{/\der}]$ is the complete graph (with loops).
Thus, $\ELD$ is a complete graph (with loops), and $\fw$ is indecomposable, according to \Cref{prop:degenerate,cor:degeneratecontraction}.
\end{example}

\subsubsection{Projections}

The power (and naturality) of degenerate edges becomes apparent when we consider projections of frameworks.
The following geometric property is at the core of many properties that will serve as the main building blocks of our future constructions.

\begin{proposition}\label{prop:projection1}
	Let $\fw=(\verts,\edges,\p)$ be a framework in $\R^d$, and $\pi:\R^d \to \R^k$ an affine map. Then $\mathdefn{\pi(\fw)}\eqdef (\verts,\edges,\pi\circ \p)$ is a framework that fulfills
	\begin{compactenum}[(i)]
		\item\label{it:picone} $\EDefoCone[\fw]\subseteq \EDefoCone[\pi(\fw)] + \R_+^{\nde(\fw)\ssm \nde(\pi(\fw))}$, and 
		\item\label{it:pigraph} $\ELD[\pi(\fw)]$ is a subgraph of $\ELD$.
	\end{compactenum}
\end{proposition}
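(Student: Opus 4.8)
The plan is to lift deformations along $\pi$. Writing $\pi = L + \b c$ with $L\colon\R^d\to\R^k$ linear, only $L$ matters, since $\pi(\b y)-\pi(\b x) = L(\b y-\b x)$ for all $\b x,\b y\in\R^d$. I would first record the relevant combinatorial fact: a non-degenerate edge $uv$ of $\fw$ becomes degenerate in $\pi(\fw)$ exactly when $\p_v-\p_u\in\ker L$, so that $\nde(\pi(\fw))\subseteq\nde(\fw)$ and $\edges$ is the disjoint union of $\nde(\pi(\fw))$, $\nde(\fw)\setminus\nde(\pi(\fw))$, and $\de(\fw)$.

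For the containment $(i)$, I would pick an arbitrary deformation $\fw[G]=(\verts,\edges,\p[\psi])$ of $\fw$ with coefficients $\lam_{uv}$ as in \eqref{eq:ELDF}, and set $\pi(\fw[G])\eqdef(\verts,\edges,\pi\circ\p[\psi])$. Applying $L$ to \eqref{eq:ELDF} yields $\pi(\p[\psi]_v)-\pi(\p[\psi]_u)=\lam_{uv}\bigl(\pi(\p_v)-\pi(\p_u)\bigr)$ for every edge $uv$, so $\pi(\fw[G])$ is a deformation of $\pi(\fw)$ with the same coefficients $\lam_{uv}$ (for a degenerate edge of $\fw$ one already has $\p[\psi]_u=\p[\psi]_v$ by \eqref{eq:ELDF}, hence it stays degenerate in $\pi(\fw)$ and any nonnegative coefficient works there). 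I would then compare the edge-deformation vectors $\dv\eqdef\dv_{\fw}(\fw[G])$ and $\dv'\eqdef\dv_{\pi(\fw)}(\pi(\fw[G]))$ on the three blocks above: on $\nde(\pi(\fw))$ both equal $\lam_{uv}$; on $\de(\fw)$ both vanish; and on $\nde(\fw)\setminus\nde(\pi(\fw))$ one has $\dv'_{uv}=0$ while $\dv_{uv}=\lam_{uv}\ge 0$. Hence $\dv-\dv'$ is a nonnegative vector supported on $\nde(\fw)\setminus\nde(\pi(\fw))$, and $\dv=\dv'+(\dv-\dv')$ exhibits $\dv$ as a point of $\EDefoCone[\pi(\fw)]+\R_+^{\nde(\fw)\setminus\nde(\pi(\fw))}$. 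Since every point of $\EDefoCone[\fw]$ has the form $\dv_{\fw}(\fw[G])$, this proves $(i)$.

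Statement $(ii)$ then follows formally. The node set of $\ELD[\pi(\fw)]$ is $\nde(\pi(\fw))$, a subset of the node set $\nde(\fw)$ of $\ELD$, so it suffices to check that a pair $e,f\in\nde(\pi(\fw))$ dependent in $\pi(\fw)$ stays dependent in $\fw$. For $\dv\in\EDefoCone[\fw]$ arbitrary, I would use $(i)$ to write $\dv=\dv'+w$ with $\dv'\in\EDefoCone[\pi(\fw)]$ and $w$ supported on $\nde(\fw)\setminus\nde(\pi(\fw))$; as $e,f$ lie outside that support, $w_e=w_f=0$, so $\dv_e=\dv'_e=\dv'_f=\dv_f$. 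Thus $e$ and $f$ are dependent in $\fw$, \ie $\ELD[\pi(\fw)]$ is a subgraph of $\ELD$.

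The argument is short; the only point that needs care (rather than being a genuine obstacle) is the degeneracy bookkeeping. An edge of $\fw$ that is non-degenerate but whose direction lies in $\ker L$ turns degenerate under $\pi$, which is precisely why the correction term $\R_+^{\nde(\fw)\setminus\nde(\pi(\fw))}$ is unavoidable in $(i)$; the argument goes through because the coordinate of $\dv$ that is zeroed out in passing to $\dv'$ equals the coefficient $\lam_{uv}\ge 0$, hence is nonnegative.
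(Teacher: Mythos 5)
Your proof is correct, and for part (i) it takes a genuinely different route than the paper. The paper never touches individual deformations: it uses the cycle-equation description of edge-deformation cones (\cref{lem:edge-deformations-frameworks}) to observe that $\EDefoCone[\pi(\fw)] + \R_+^{\nde(\fw)\setminus \nde(\pi(\fw))}$ is exactly the cone cut out by the cycle equations of $\pi(\fw)$ alone (the zero constraints on newly degenerate edges being absorbed by the orthant), and then notes that, by linearity of $\pi$, every cycle equation of $\fw$ implies the corresponding cycle equation of $\pi(\fw)$, so $\EDefoCone[\fw]$ sits inside that cone. You instead argue at the level of realizations: you push a deformation $\fw[G]$ forward to $\pi(\fw[G])$, check it is a deformation of $\pi(\fw)$ with the same coefficients, and compare the two edge-deformation vectors blockwise, which is legitimate because every point of $\EDefoCone[\fw]$ is by definition the edge-deformation vector of some deformation. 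Your bookkeeping is sound (in particular, an edge of $\fw$ whose direction lies in $\ker L$ stays collapsed in $\pi(\fw[G])$ since $L(\p[\psi]_v-\p[\psi]_u)=\lam_{uv}L(\p_v-\p_u)=\b 0$, and its coordinate $\lam_{uv}\ge 0$ is exactly what the orthant absorbs). The trade-off: the paper's argument makes the structure of the Minkowski sum transparent (a product of $\EDefoCone[\pi(\fw)]$ with an orthant, defined purely by cycle equations) and avoids reconstructing deformations, whereas your argument is more elementary and makes explicit the useful covariance fact that affine images of deformations of $\fw$ are deformations of $\pi(\fw)$ with unchanged coefficients. Part (ii) is, in both treatments, the same formal consequence of (i).
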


\begin{proof}
For \eqref{it:picone}, recall that $\EDefoCone[\pi(\fw)] $ is the cone cut by the cycle equations of $\pi(\fw)$ together with the equations $\dv_e=0$ for all degenerate edges $e\in\de(\pi(\fw))$. 
The Minkowski sum $\EDefoCone[\pi(\fw)] + \R_+^{\nde(\fw)\ssm \nde(\pi(\fw))}$ 
contains the non-negative vectors $\dv$ for which there is some $\dv'\in \EDefoCone[\pi(\fw)]$ with $\dv_e=\dv'_e$ for $e\in \nde(\pi(\fw))$.
Since the variables corresponding to degenerate edges do not appear in cycle equations (they have a $0$ coefficient), we have that $\EDefoCone[\pi(\fw)] + \R_+^{\nde(\fw)\ssm \nde(\pi(\fw))}$ is the cone defined by the cycle equations of $\pi(\fw)$ (geometrically, it is isomorphic to the Cartesian product of $\EDefoCone$ with a positive orthant). 

Now, by linearity, each cycle equation \eqref{eq:cycle_equation} for $\fw$
\[\dv_{u_1u_2}(\p_{u_2}-\p_{u_1}) +\dv_{u_2u_3}(\p_{u_3}-\p_{u_2}) +\dots+\dv_{u_ku_1}(\p_{u_1}-\p_{u_k})=\b 0\]
implies the corresponding cycle equation for $\pi(\fw)$
\[\dv_{u_1u_2}(\pi(\p_{u_2})-\pi(\p_{u_1})) +\dv_{u_2u_3}(\pi(\p_{u_3})-\pi(\p_{u_2})) +\dots+\dv_{u_ku_1}(\pi(\p_{u_1})-\pi(\p_{u_k}))=\b 0\]
Therefore, any vector in $\EDefoCone$ verifies the cycle equations of $\pi(\fw)$, giving the desired inclusion.

For \eqref{it:pigraph}, note that if $e,f\in \nde(\pi(\fw))$, then if $\dv_e=\dv_f$ for any $\dv\in \EDefoCone[\pi(\fw)]$, then the same is true for any $\dv\in \EDefoCone$, by the inclusion above. Our claim follows.
\end{proof}

\begin{remark}
	Note that while $\ELD[\pi(\fw)]$ is a subgraph of $\ELD$, it is in general not true that  $\EELD[\pi(\fw)]$ is a subgraph of $\EELD$. There could be implicit edges of $\pi(\fw)$ that are not implicit edges of $\fw$.
\end{remark}

%\begin{remark}\label{lem:affineisomorphism}
%If $\pi:\R^d\to \R^d$ is an affine isomorphism, then $\EDefoCone \simeq \EDefoCone[\pi(\fw)]$ (and hence we get $\ELD=\ELD[\pi(\fw)]$); because the cycle equations give equivalent systems of equations.
%\end{remark}

Of course, deformation cones are invariant under affine isomorphisms.

\begin{lemma}\label{lem:affineisomorphism}
	If $\pi:\R^d\to \R^d$ is an affine isomorphism, then $\EDefoCone = \EDefoCone[\pi(\fw)]$ (and hence also $\ELD=\ELD[\pi(\fw)]$).
\end{lemma}
\begin{proof}
The affine change of coordinates replaces the system of cycle equations by an equivalent one, obtained via left-multiplication by an invertible matrix, leaving the solution space unchanged.
\end{proof}

\subsubsection{Computing the deformation cone of a polytope is a 2-dimensional problem}
\Cref{lem:affineisomorphism} allows us to reduce the dimension of the frameworks we consider.

\begin{proposition}\label{lem:cycle_preserving_projection}
	Let $\fw=(\verts,\edges,\p)$ be a framework, $C_1,\dots,C_k$ be a collection of cycles generating the cycle space of $G=(\verts,\edges)$, and $\pi:\R^d\to \R^e$ an affine map. If for every $1\leq i\leq k$, the restriction of $\pi$ to the affine span of $\{\p_v\}_{v\in C_i}$ is injective, then $\EDefoCone = \EDefoCone[\pi(\fw)]$ (and hence also $\ELD=\ELD[\pi(\fw)]$).
\end{proposition}
\begin{proof}
Follows directly from combining \Cref{lem:cyclebasis,lem:affineisomorphism}.
\end{proof}

In particular, this shows that the deformation cone of any polytope reduces to the deformation cone of a $2$-dimensional framework.

\begin{corollary}\label{cor:polytopeto2dimframework}
	If $\pol\subseteq \R^d$ is a polytope and $\pi:\R^d\to \R^e$ an affine map that is injective on any $2$-face of~$\pol$, then $\EDefoCone[\pol] = \EDefoCone[\pi(\fw(\pol))]$ (and hence also $\ELD=\ELD[\pi(\fw(\pol))]$).
\end{corollary}
\begin{proof}
Follows from the fact that the cycle space of $\fw(\pol)$ is generated by the cycles induced by the $2$-faces of~$\pol$~\cite{PinedaVillavicencio2022}, c.f.~\Cref{rmk:cyclebasispol}.
\end{proof}

\begin{remark}
For any polytope $\pol\subseteq \R^d$, a generic projection $\pi : \R^d \to \R^2$ is injective on any 2-face of~$\pol$.
Hence, $\DefoCone[\pol] = \DefoCone[\pi(\fw(\pol))]$ where $\pi(\fw(\pol))$ is 2-dimensional, justifying the title of this subsection.
Note that this is not the case in general for frameworks:
we will see in \Cref{cor:RigidCycle0} that a framework composed of a cycle on affinely independent points is indecomposable, yet its 2-dimensional projections are decomposable.

In the case of a $3$-dimensional polytope $\pol$, we can go a step further: the dimension of $\DefoCone[\pol]$, and thus the indecomposability of $\pol$, can be computed by looking at a \emph{planar} framework, \ie 2-dimensional frameworks with no crossing edges.
Indeed, according to  Kallay \cite{Kallay-DimDCprojectiveInvariant}, the dimension of the deformation cone of a polytope is invariant under projective transformation.
Hence, one can mimic the Schlegel projection: find a 3-dimensional projective transformation such that the image of $\pol$ projects linearly and  without crossing onto the plane spanned by one of its 2-face.
\end{remark}

\begin{remark}
It is sometimes beneficial, in particular for implementation purposes, to embed deformation cones of polytopes in larger spaces. Namely, consider a fixed polytope $\pol$ with  $2$-dimensional faces $\polF_1, \dots, \polF_r$. For each $i\in [r]$, define variables 
$\ell_{\polF, \pol[e]}$ for each $2$-face of $\polF$ of $\pol$ and each edge $\pol[e]\in\edges(\polF)$. Compute the cycle equations (\ie polygonal face equations) for each $2$-face $\polF$ on the variables $\ell_{\polF, \pol[e]}$, and finally add the equation $\ell_{\polF, \pol[e]} = \ell_{\pol[G], \pol[e]}$ whenever $\polF,\pol[G]$ are two $2$-faces intersecting on $\pol[e]$, and intersect with the positive orthant. This is the approach used in \cite{Poullot-JohnsonSolids} to compute the dimension of the deformation cones of Platonic, Archimedean and Johnson solids.
\end{remark}

\subsubsection{Affinely independent cycles of vertices}
One of the strengths of \Cref{prop:projection1} is when it is used in combination with \Cref{prop:degenerate}.
Indeed, by choosing appropriate projection maps $\pi$, we can control the degenerate edges of $\pi(\fw)$.
For a vector subspace $W\subseteq \R^d$, let $\mathdefn{\edges_W}\eqdef\set{uv\in\edges}{\p_v-\p_u\in W}$.
We define an equivalence relation \defn{$\subr$} on $\verts$ as the transitive closure of $u\subr v$ if $uv\in \edges_W$.
Denote by \defn{$\subc{u}$} the equivalence class of $u$ under $\subr$.
The relation $\subr$ induces an equivalence relation on $\edges\ssm \edges_W$ too by setting $uv\subr u'v'$ if $u\subr u'$ and $v\subr v'$.
Let \defn{$\pi_W$} be a linear map whose kernel is $W$, and $\mathdefn{\fw_{/W}}\eqdef \pi_W(\fw)$ (different choices of $\pi_W$ give affinely equivalent frameworks, which have the isomorphic deformation cones by \Cref{lem:affineisomorphism}).
Then the edges in $\edges_W$ are precisely the degenerate edges of $\fw_{/W}$, and $\subr$ is the equivalence relation $\der$ on this framework.
Thus:

\begin{proposition}\label{prop:projection}
The lift of $\ELD[\fw_{/W}]$ is a subgraph of $\ELD$.
\end{proposition}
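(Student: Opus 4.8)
The plan is to deduce this proposition directly from \cref{prop:projection1}~\eqref{it:pigraph} and \cref{prop:degenerate}, by identifying $\fw_{/W}$ with an iterated construction of the type already handled. First I would note that, by definition, $\fw_{/W} = \pi_W(\fw)$ is a projection of $\fw$ along a linear map $\pi_W$ with kernel $W$; hence \cref{prop:projection1}~\eqref{it:pigraph} applies and tells us that $\ELD[\fw_{/W}]$ is a subgraph of $\ELD$. However, this alone is not enough: the statement is about the \emph{lift} of $\ELD[\fw_{/W}]$, which has node set $\edges\setminus\edges_W$ expanded via the equivalence relation $\subr$, not the graph $\ELD[\fw_{/W}]$ itself whose nodes are equivalence classes of edges.

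The key observation, already flagged in the paragraph preceding the statement, is that the degenerate edges of the framework $\fw_{/W}$ are exactly the edges in $\edges_W$: an edge $uv$ satisfies $\pi_W(\p_v)-\pi_W(\p_u)=\b 0$ if and only if $\p_v-\p_u\in\ker\pi_W = W$, i.e.\ $uv\in\edges_W$. Consequently, the equivalence relation $\der$ \emph{on the framework $\fw_{/W}$} coincides with the relation $\subr$ defined on $\verts$, and the induced relation on edges matches as well. So I would now apply \cref{prop:degenerate} to the framework $\fw' \eqdef \fw_{/W}$ (which has degenerate edge set $\de(\fw') = \edges_W$): it yields that the lift of $\ELD[(\fw_{/W})_{/\der}] = \ELD[\fw'_{/\der}]$ is a subgraph of $\ELD[\fw_{/W}] = \ELD[\fw']$. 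Here the lift is taken with respect to the relation $\der$ on the nodes of $\ELD[\fw']$, which as just noted is precisely $\subr$.

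Finally I would chain the two inclusions. By \cref{prop:projection1}~\eqref{it:pigraph}, $\ELD[\fw_{/W}]$ is a subgraph of $\ELD$; and by the application of \cref{prop:degenerate} above, the lift of $\ELD[\fw'_{/\der}]$ is a subgraph of $\ELD[\fw_{/W}]$. Taking the lift operation into account (lifting a subgraph yields a subgraph of the lift, since the $\subr$-classes are the same in both ambient graphs), the lift of $\ELD[\fw_{/W}]$ contains the lift of $\ELD[\fw'_{/\der}]$, and both inject into $\ELD$: an arc between classes $\subc{uv}$ and $\subc{wx}$ in $\ELD[\fw_{/W}]$ witnesses dependency of the representatives in $\fw_{/W}$, hence (by the projection inclusion on deformation cones) dependency in $\fw$, hence an arc in $\ELD$, and the same holds for all lifted pairs of representatives by the transitivity already exploited in the proof of \cref{prop:degenerate}. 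This gives the claimed containment.

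The main obstacle I anticipate is purely bookkeeping: being careful that ``lift'' is defined relative to a fixed equivalence relation on nodes, that this relation is genuinely the same ($\subr = \der$ on $\fw_{/W}$) in both places it is invoked, and that the chain of ``is a subgraph of'' relations composes correctly once the lift operator is interposed. There is no new geometric content beyond \cref{prop:projection1} and \cref{prop:degenerate}; the proof should be a short paragraph verifying that $\fw_{/W}$ is the right framework to feed into \cref{prop:degenerate}, possibly together with \cref{cor:degeneratecontraction} applied to $\fw_{/W}$ if one also wants the companion statement that $\EDefoCone[\fw]$ surjects onto (or relates to) $\EDefoCone[\fw_{/W}]$.
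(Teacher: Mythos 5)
Your proof is correct and is essentially the paper's own argument: the paper proves \cref{prop:projection} exactly as a direct application of \cref{prop:degenerate,prop:projection1}, relying on the observation (stated just before the proposition) that the degenerate edges of $\fw_{/W}=\pi_W(\fw)$ are precisely $\edges_W$, so that $\der$ on $\fw_{/W}$ coincides with $\subr$ and the two inclusions can be chained. Your additional bookkeeping about how the lift interacts with this identification only makes explicit what the paper leaves implicit.
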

% \arnau{does this need a proof? Is the statement clear enough?}
% \germain{Yes and yes: I added a proof, for the sake of completion}

\begin{proof}
Direct application of \Cref{prop:degenerate,prop:projection1}.
\end{proof}

We want to highlight the case when the projection is indecomposable, because it shows how one can exploit parallelisms and other coplanarities to deduce edge dependencies, see \Cref{fig:ProjectionLemma}. This tool was not present in previous approaches, and will become fundamental for proving indecomposability of the constructions in \Cref{sec:newrays,sec:edgedecomposable,sec:constructions}.

\begin{corollary}\label{cor:subspaceprojection}
	Let $W$ be a vector subspace of $\R^d$. If $\fw_{/W}$ is indecomposable, then the edges in $\nde\ssm \edges_{W}$ are pairwise dependent.
\end{corollary}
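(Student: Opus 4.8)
The plan is to deduce this immediately from the machinery just set up, specifically from \Cref{prop:projection} together with \Cref{lem:ELDindecomposable}. The statement to prove is that if $\fw_{/W}$ is indecomposable then the edges in $\nde \setminus \edges_W$ are pairwise dependent in $\fw$.

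First I would recall the dictionary established just before the corollary: when we pass to $\fw_{/W} = \pi_W(\fw)$, the degenerate edges of $\fw_{/W}$ are precisely the edges in $\edges_W$, and hence the non-degenerate edges of $\fw_{/W}$ are exactly those in $\edges \setminus \edges_W$. So $\nde(\fw_{/W})$ — the node set of $\ELD[\fw_{/W}]$ — is $\nde \setminus \edges_W$ (note that $\nde \setminus \edges_W = \nde(\fw) \setminus \edges_W$ since any edge in $\edges_W$ that is non-degenerate in $\fw$ becomes degenerate in $\fw_{/W}$, but as a node of $\ELD[\fw_{/W}]$ it is excluded; the set $\nde \setminus \edges_W$ is exactly what we want to talk about).

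Next, since $\fw_{/W}$ is indecomposable, by \Cref{lem:ELDindecomposable} its graph of edge dependencies $\ELD[\fw_{/W}]$ is a complete graph on the node set $\nde(\fw_{/W}) = \nde \setminus \edges_W$; in particular any two distinct edges $e, f \in \nde \setminus \edges_W$ are linked by an arc in $\ELD[\fw_{/W}]$. By \Cref{prop:projection}, the lift of $\ELD[\fw_{/W}]$ is a subgraph of $\ELD$. Unwinding the definition of the lift: the lift has node set $\edges$ (or rather the relevant non-degenerate edges of $\fw$) and an arc $ef$ whenever there is an arc between the $\subr$-classes $[e]$ and $[f]$ in $\ELD[\fw_{/W}]$. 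Since $e$ and $f$ are themselves non-degenerate edges of $\fw$ lying in distinct (or in the relevant loop case, possibly equal) classes, and since $[e][f]$ is an arc of $\ELD[\fw_{/W}]$, the pair $ef$ is an arc of the lift, hence an arc of $\ELD$. Therefore $e$ and $f$ are dependent in $\fw$, which is exactly the claim.

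I do not expect any real obstacle here — the corollary is a bookkeeping consequence of \Cref{prop:projection} and \Cref{lem:ELDindecomposable}. The only mild subtlety to get right in the writing is the identification of the node set of $\ELD[\fw_{/W}]$ with $\nde \setminus \edges_W$ and the correct handling of the lift (making sure that every pair of edges in $\nde \setminus \edges_W$, not merely every pair of $\subr$-classes, becomes an arc), but this is already packaged in the discussion preceding the corollary, so the proof can legitimately be a one-line "Direct application of \Cref{prop:projection} and \Cref{lem:ELDindecomposable}" style argument, perhaps with the identification of node sets spelled out for the reader.
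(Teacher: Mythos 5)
Your proof is correct and follows essentially the same route as the paper's: the paper likewise notes that indecomposability of $\fw_{/W}$ forces its edge-dependency graph to be complete, so that its lift---a subgraph of $\ELD$ by \Cref{prop:projection}---is a complete graph on $\nde\setminus\edges_W$, which is exactly the claimed pairwise dependence. Your bookkeeping of the node set of $\ELD[\fw_{/W}]$ and of the lift matches the intended reading, so no further detail is needed.
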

\begin{proof}	
Here, $\ELD[\fw_{/W}]$ is a complete graph on $(\nde\ssm \edges_{W})_{/\subr}$, and its lift is a complete graph on $\nde\ssm \edges_{W}$.\end{proof}

%\begin{proof}	
%	Note that the non-degenerate edges of $\pi_W(\fw)$ are precisely those in $\edges_{\overline W}$. By \Cref{lem:ELDindecomposable}, if $\pi_W(\fw)$ is indecomposable, then $\edges_{\overline W}$ form a clique in its graph of edge deformations, and by \Cref{prop:projection} this is a subgraph of $\ELD$.\end{proof}

\begin{figure}
	\centering
	\begin{subfigure}[b]{0.45\linewidth}
		\centering
		\includegraphics[width=\textwidth]{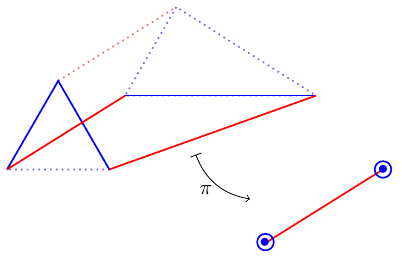}
		\caption{Projection parallel to a plane}
		\label{sfig:LemmaProjectionPlane}
	\end{subfigure}\hfill
	\begin{subfigure}[b]{0.45\linewidth}
		\centering
		\includegraphics[width=\textwidth]{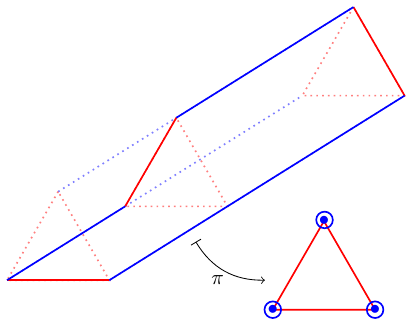}
		\caption{Projection parallel to a line}
		\label{sfig:LemmaProjectionLine}
	\end{subfigure}
	\caption[Projections help finding dependent edges]{
    (Left) Projecting the framework on top, parallel to the plane spanned by its \textcolor{blue}{blue} edges, gives rise to a segment with 2 degenerate edges. Consequently, the 2 \textcolor{red}{red} edges are dependent because they are projected to the same edge (while the \textcolor{blue}{blue} edges are not dependent).
    (Right) Projecting the framework on top, parallel to the direction spanned by its \textcolor{blue}{blue} edges, give rise to a triangle with 3 degenerate edges and degenerate \textcolor{blue}{blue} vertices. Consequently, the 3 \textcolor{red}{red} edges are dependent (while the \textcolor{blue}{blue} edges are not).}
	\label{fig:ProjectionLemma}
\end{figure}

\Cref{cor:subspaceprojection} can be used to show that the edges in a cycle whose vertices are affinely independent form a clique in $\ELD$, generalizing \Cref{ex:triangle} to higher dimensions. The following is a reinterpretation in our language of \cite[Prop.~3]{PrzeslawskiYost2016} (which generalizes the ideas of the proofs of \cite[Thm.~9~\&~10]{Kallay1982} to higher dimensions). 

\begin{corollary}[{\cite[Prop.~3]{PrzeslawskiYost2016}}]\label{cor:RigidCycle0}
	Let $\fw=(\verts,\edges,\p)$ be a framework containing a cycle with vertices $u_1,\dots, u_k\in \verts$ such that $u_iu_{i+1}\in \edges$ for all $1\leq i \leq k$ (with indices taken modulo $k$, implying $u_{k+1}=u_1$). If the points $\p_{u_{1}},\dots \p_{u_{k}}$ are affinely independent, then the edges $\set{u_iu_{i+1}}{1\leq i\leq k}$ are pairwise dependent (\ie the nodes $u_iu_{i+1}$ of $\ELD$ with $1\leq i\leq k$ form a clique).
\end{corollary}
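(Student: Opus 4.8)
The plan is to reduce the statement to \Cref{cor:subspaceprojection} by choosing the right subspace to project along. Let $\fw$ be the given framework containing the cycle $u_1, \dots, u_k$ with $\p_{u_1}, \dots, \p_{u_k}$ affinely independent. Consider the subframework $\fw'$ whose vertices are $u_1,\dots,u_k$, whose edges are precisely the $k$ cycle edges $u_iu_{i+1}$, and whose realization is the restriction of $\p$; by \Cref{prop:subframework} it suffices to show $\fw'$ is indecomposable, since then the cycle edges form a clique in $\ELD[\fw']$ which is a subgraph of $\ELD[\fw]$. After an affine isomorphism (which does not change the deformation cone by \Cref{rmk:affineisomorphism}), I may assume $\p_{u_1},\dots,\p_{u_k}$ are the vertices of a standard simplex spanning a $(k-1)$-dimensional affine subspace.

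First I would handle the case $\dim \fw' \geq 2$, i.e.\ $k \geq 3$. The key observation is that for a cycle on affinely independent points, the cycle equation \eqref{eq:cycle_equation} forces all the coefficients $\dv_{u_iu_{i+1}}$ to be equal: writing the single cycle equation
\[
\dv_{u_1u_2}(\p_{u_2}-\p_{u_1}) + \dv_{u_2u_3}(\p_{u_3}-\p_{u_2}) + \dots + \dv_{u_ku_1}(\p_{u_1}-\p_{u_k}) = \b 0,
\]
and expanding, the coefficient of $\p_{u_i}$ is $\dv_{u_{i-1}u_i} - \dv_{u_iu_{i+1}}$, and since the $\p_{u_i}$ are affinely independent all these coefficients must vanish, giving $\dv_{u_1u_2} = \dv_{u_2u_3} = \dots = \dv_{u_ku_1}$. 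Hence the edge-deformation cone $\EDefoCone[\fw']$ is one-dimensional (it is $\{(\lambda,\dots,\lambda) : \lambda \geq 0\}$, noting that the affine independence also guarantees no edge is degenerate), so $\fw'$ is indecomposable by \Cref{cor:WconnectedPMinkowskiIndecomposable}, and by \Cref{prop:subframework} the edges $u_iu_{i+1}$ form a clique in $\ELD[\fw]$.

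Alternatively, if one prefers to route through \Cref{cor:subspaceprojection} as the corollary's placement in the text suggests: take $W$ to be a complement, inside the linear span of the edge directions, of the hyperplane... actually the cleanest projection argument is to project $\fw'$ generically onto a line in a direction making no cycle edge degenerate, but that loses the needed structure; so the direct computation above via the single cycle equation is the most transparent. The remaining degenerate case $k \leq 2$ is vacuous or trivial: a cycle needs $k\geq 3$ for the points to be affinely independent and distinct (for $k=2$ affine independence of two points just means they differ, and there is a single multi-free edge, which is trivially a ``clique''). I do not anticipate a serious obstacle here — the only point requiring a little care is confirming that affine independence rules out degenerate edges (so that the nodes $u_iu_{i+1}$ genuinely appear in $\ELD$) and that the lone cycle equation is the \emph{only} relation, which follows because a $k$-cycle has a one-dimensional cycle space.
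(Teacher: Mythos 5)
Your proof is correct, but it takes a genuinely different route from the paper. You restrict to the cycle subframework, observe that its cycle space is one-dimensional, and compute directly: expanding the single cycle equation, the coefficient of each $\p_{u_i}$ is $\dv_{u_{i-1}u_i}-\dv_{u_iu_{i+1}}$, and since these coefficients sum to zero the relation is affine, so affine independence forces all of them to vanish; hence $\EDefoCone[\fw']$ is the diagonal ray, the cycle edges are pairwise dependent in $\fw'$, and \Cref{prop:subframework} transports this to $\ELD[\fw]$ (your side remarks — non-degeneracy of the edges from affine independence, and the triviality of $k\leq 2$ — are exactly the right points to check). The paper instead argues by induction on $k$: it quotients by the span $W$ of one edge direction, notes that $\fw_{/W}$ is a shorter cycle on affinely independent points, applies the induction hypothesis, and lifts the dependencies via \Cref{cor:subspaceprojection}, repeating with a second edge direction to catch the remaining edge. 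Your argument is shorter and more elementary, essentially the classical computation behind \cite[Prop.~3]{PrzeslawskiYost2016}, and it does not need the projection/degenerate-edge machinery at all; the paper's proof is deliberately routed through that machinery because the same projection pattern is what powers the generalization in \Cref{cor:RigidCycle} and the applications in \Cref{sec:newrays}. The normalization to a standard simplex via \Cref{rmk:affineisomorphism} in your write-up is harmless but unnecessary.
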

\begin{proof}
	The proof is by induction on $k$. If $k=1$, then there is a single edge and nothing to prove.
	Assume now that $k\geq 2$, and consider the subspace $W$ spanned by $\p_{u_2}-\p_{u_1}$. We have that $\fw_{/W}$ is a cycle of affinely independent vertices. By induction hypothesis, it is indecomposable, and by \Cref{cor:subspaceprojection}, the edges $\set{u_iu_{i+1}}{2\leq i\leq k}$ are pairwise dependent in $\ELD$. We repeat the same argument with the subspace spanned by $\p_{u_k}-\p_{u_1}$ to conclude that $\set{u_iu_{i+1}}{1\leq i\leq k}$ are pairwise dependent.		
%	Assume now that $k\geq 2$, and consider an affine surjection $\pi:\R^d\to \R^{d-1}$ that identifies $\p_{u_2}$ and $\p_{u_1}$ (\ie the kernel of the associated linear map is spanned by $\p_{u_2}-\p_{u_1}$). In particular, the edge $u_1u_{2}$ is degenerate for $\pi(\fw)$. By \Cref{lem:degenerate}, $u_{k}u_{2}$ is an implicit edge of $\pi(\fw)$ dependent with $u_{k}u_1$. Thus, $u_2,\dots, u_{k}\in \verts$ form a cycle in $\Efw[\pi(\fw)]$ with affinely independent vertices, and we can apply our induction hypothesis. We deduce that the edges $\set{u_iu_{i+1}}{2\leq i\leq k}$ are pairwise dependent in $\Efw[\pi(\fw)]$. By \Cref{cor:extendedgraph} they are also pairwise dependent in $\pi(\fw)$, and by \Cref{prop:projection} also in $\fw$. We repeat the same argument with the projection that identifies $\p_{u_k}$ and $\p_{u_1}$ to conclude that $\set{u_iu_{i+1}}{1\leq i\leq k}$ are pairwise dependent.	
\end{proof}

Kallay proved the following result that we reformulate below  (and used it for constructing \Cref{exm:KallayConditionallyDecomposable}). Using implicit edges, it becomes a simple corollary of the result above. As the reader will easily see from the proof, these statements can be easily generalized to arbitrarily large cycles (or with stronger results as the upcoming \Cref{cor:RigidCycle}). We do not explicit them because we see them as consequences of applying our more powerful theorems as \Cref{thm:mainthmframeworks} on the graph of implicit edge dependencies.
\begin{corollary}[{\cite[Thm. 9 \& 10]{Kallay1982}}]
	Let $\fw=(\verts,\edges,\p)$ be a framework. If one of the following holds, then $\fw$ is indecomposable:
	\begin{compactenum}
		\item[(i)] $\fw$ is the union of three indecomposable frameworks $\fw_1=(\verts_1,\edges_1,\p_{|\verts_1})$, $\fw_2=(\verts_2,\edges_2,\p_{|\verts_2})$ and $\fw_3=(\verts_3,\edges_3,\p_{|\verts_3})$ such that there are vertices $u_{12}, u_{13}, u_{23}$ with $u_{ij}\in \verts_i\cap \verts_j$ such that $\p_{u_{12}}\p_{u_{13}}\p_{u_{23}}$ are not collinear.
		\item[(ii)]
		$\fw$ is the union of two indecomposable frameworks $\fw_1=(\verts_1,\edges_1,\p_{|\verts_1})$, $\fw_2=(\verts_2,\edges_2,\p_{|\verts_2})$ and two disjoint edges $u_1u_2$ and $v_1v_2$ with $u_1,v_1\in\verts_1$, $u_2,v_2\in\verts_2$ such that $\p_{u_{1}}\p_{u_{2}}\p_{v_{1}}\p_{v_{2}}$ are not coplanar.
	\end{compactenum}
\end{corollary}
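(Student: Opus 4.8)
The plan is to derive both statements as consequences of \Cref{cor:subspaceprojection} (via projections) together with \Cref{cor:RigidCycle0} (the rigid-cycle criterion) and \Cref{prop:connectedimplicit} (unveiling implicit edges). In both cases the strategy is the same: the hypotheses already force each $\fw_i$ to contribute a clique to $\ELD$ by \Cref{cor:indecsubframeclique}, so it only remains to show that these cliques merge into a single connected component, and then invoke \Cref{cor:WconnectedPMinkowskiIndecomposable}.

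For part (i), I would first note that $\fw_1$ and $\fw_2$ share the vertex $u_{12}$, $\fw_1$ and $\fw_3$ share $u_{13}$, and $\fw_2$ and $\fw_3$ share $u_{23}$, with $\p_{u_{12}},\p_{u_{13}},\p_{u_{23}}$ not collinear. Since each $\fw_i$ is indecomposable of dimension $\geq 1$, by \Cref{cor:indeccompleteclosure} (or directly since any two vertices of an indecomposable framework are joined by a path of pairwise dependent edges) every pair of its vertices is an implicit edge; in particular $u_{12}u_{13}$, $u_{12}u_{23}$ and $u_{13}u_{23}$ are implicit edges of $\fw$ that are pairwise non-degenerate (by the non-collinearity hypothesis they span a triangle, so none of them can degenerate). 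These three implicit edges form a cycle on three affinely independent points, so by \Cref{cor:RigidCycle0} (applied in $\EELD$) they are pairwise dependent. But $u_{12}u_{13}$ is dependent with all edges of $\fw_1$ (by \Cref{prop:connectedimplicit}, since it is obtained by concatenating a path of pairwise dependent edges inside $\fw_1$), $u_{12}u_{23}$ is dependent with all edges of $\fw_2$, and $u_{13}u_{23}$ with all edges of $\fw_3$. Hence all edges of $\fw_1\cup\fw_2\cup\fw_3=\fw$ lie in a single connected component of $\EELD$, so $\Efw$ is indecomposable, and therefore so is $\fw$ by \Cref{cor:extendedgraph}(iii).

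For part (ii), the argument is parallel but uses a 4-cycle instead of a triangle. The vertices $u_1,v_1$ belong to $\fw_1$, so $u_1v_1$ is an implicit edge dependent with all edges of $\fw_1$; likewise $u_2v_2$ is implicit and dependent with all edges of $\fw_2$. Together with the genuine edges $u_1u_2$ and $v_1v_2$ we obtain a 4-cycle $u_1,u_2,v_2,v_1$ of (implicit) edges in $\Efw$, and by the non-coplanarity hypothesis its four vertices are affinely independent. Applying \Cref{cor:RigidCycle0} to this cycle shows that $u_1u_2$, $u_2v_2$, $v_2v_1$, $v_1u_1$ are pairwise dependent; since $v_1u_1$ chains to all of $\fw_1$ and $u_2v_2$ chains to all of $\fw_2$, again everything collapses into one component of $\EELD$ and we conclude indecomposability via \Cref{cor:WconnectedPMinkowskiIndecomposable} and \Cref{cor:extendedgraph}(iii).

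The only delicate point, and the one I would be most careful about, is checking that the relevant implicit edges are genuinely non-degenerate so that \Cref{cor:RigidCycle0} applies: this is exactly where the ``not collinear'' / ``not coplanar'' hypotheses are used, and one should verify that affine independence of the cycle vertices is not just needed for the rigid-cycle criterion but also automatically rules out degeneracy of the edges in that cycle. Everything else is a routine assembly of results already proved in this section, so I do not expect any real obstacle beyond bookkeeping.
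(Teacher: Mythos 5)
Your proposal is correct and follows essentially the same route as the paper: use indecomposability of each $\fw_i$ to get that all pairs of its vertices are pairwise dependent implicit edges, assemble the triangle $u_{12}u_{13}u_{23}$ (resp. the 4-cycle $u_1u_2v_2v_1$) as a cycle of implicit edges on affinely independent points, apply \cref{cor:RigidCycle0} in the closure, and conclude via connectedness of the dependency graph. The non-degeneracy concern you flag is handled exactly as you suspect, since non-collinearity (resp. non-coplanarity) forces the cycle vertices to be distinct.
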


\begin{proof}
	$(i)$ As for $i=1,2,3$ we have that $\fw_i$ is indecomposable, we know that any pair of vertices of $\fw_i$ is an implicit edge, and that these are all pairwise dependent. In particular, $u_{12}u_{13}u_{23}$ form a cycle of implicit edges. Since they are not collinear, we can apply \Cref{cor:RigidCycle0} and we derive that the three implicit edges of the cycle are pairwise dependent. Since each of the $\fw_i$ contains one of these implicit edges, we conclude that all the implicit edges are pairwise dependent, and hence that $\fw$ is indecomposable.
	
	$(ii)$ As for $i=1,2$ we have that $\fw_i$ is indecomposable, all the pairs of vertices of $\verts_i$ are implicit edges that are pairwise dependent.
	Hence $u_1, u_2, v_2, v_1$ is a cycle of implicit edges whose vertices are affinely independent.
	By \Cref{cor:RigidCycle0}, the four implicit edges of this cycle form a clique in $\EELD$. Since each $\fw_i$ contains one of the implicit edges of the cycle, we conclude that all the implicit edges are pairwise dependent, and hence that $\fw$ is indecomposable.
\end{proof}

Note that \Cref{cor:subspaceprojection} provides another proof of the dependencies in trapezoids, see \Cref{ex:parallelogram}. 
% If $u,v,w,t\in \verts$ form a cycle ($uv,vw,wt,tu\in \edges$) and the vectors ${\p_{u}-\p_v}$ and ${\p_{w}-\p_t}$ are parallel but the points $\p_{u}\p_v\p_{w}\p_t$ are not collinear, then we can project along the subspace spanned by ${\p_{w}-\p_t}$ to see that $\{vw,tu\}$ is an arc in $\ELD$. \arnau{Should we make it a lemma? At the end of the day, we already have parallelograms in an example}\germain{I don't think a lemma is useful : we will also repeat the idea in the indecomposability game part.}
%\begin{corollary}\label{cor:parallelogram}
%	Let $\fw=(V,E,\p)$ be a framework with vertices $u,v,w,t\in V$ forming a cycle ($uv,vw,wt,tu\in E$). If the vectors ${\p_{u}-\p_v}$ and ${\p_{w}-\p_t}$ are parallel but the points $\p_{u}\p_v\p_{w}\p_t$ are not collinear, then $\{vw,tu\}$ is an arc in $\ELD$. In particular, if $\p_{u}\p_v\p_{w}\p_t$ is a $2$-dimensional parallelogram, then both $\{uv,wt\}$ and $\{vw,tu\}$ are arcs in $\ELD$.
%	\arnau{With the new formulation is less clear}
%\end{corollary}
This can be generalized as a combination of \Cref{cor:subspaceprojection,cor:RigidCycle0}. While the statement is quite verbose in contrast with the simpler (and stronger) \Cref{cor:subspaceprojection}, it has the advantage of being an explicit property straightforward to check.
We denote here $\mathdefn{[k]} \coloneqq \{1, \dots, k\}$.

\begin{corollary}\label{cor:RigidCycle}
	Let $\fw=(\verts,\edges,\p)$ be a framework containing a cycle with vertices $u_1,\dots, u_k\in \verts$ such that $u_iu_{i+1}\in \edges$ for all $1\leq i \leq k$ (with indices taken modulo $k$, implying $u_{k+1}=u_1$). For a subset $S\subseteq [k]$, define the linear span $W_S\eqdef\lin\set{\p_{u_{i+1}}-\p_{u_{i}}}{i\in S}$. 
	
	If there is a subset $S\subseteq [k]$ such that $k-|S|=\dim(W_{[k]}) - \dim(W_{S})+1$, then the edges $\set{u_iu_{i+1}}{i\notin S}$ are pairwise dependent (\ie the nodes $u_iu_{i+1}$ of $\ELD$ with $i\in [k]\ssm S$ form a clique).
\end{corollary}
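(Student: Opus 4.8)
The plan is to reduce to the cycle itself, project along the subspace $W_S$, recognize the resulting framework as indecomposable, and then invoke \cref{cor:subspaceprojection}. So, first I would pass to the subframework $\fw'$ of $\fw$ supported on $u_1,\dots,u_k$ together with the edges $u_iu_{i+1}$: by \cref{prop:subframework} it is enough to prove the dependency in $\fw'$. I would immediately dispose of the case $k-|S|\leq 1$, where there is at most one edge $u_iu_{i+1}$ with $i\notin S$, so the statement is vacuous (note that $S=[k]$ cannot occur, as it would force $\dim W_{[k]}-\dim W_S+1=0$). From now on assume $k-|S|\geq 2$, write $\b e_i\eqdef\p_{u_{i+1}}-\p_{u_i}$, $r\eqdef\dim W_S$ and $R\eqdef\dim W_{[k]}$; so $\sum_{i\in[k]}\b e_i=\b 0$, and the hypothesis says $k-|S|=R-r+1$.

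Next I would examine $\fw'_{/W_S}=\pi_{W_S}(\fw')$ and its projected edge vectors $\bar{\b e}_i\eqdef\pi_{W_S}(\b e_i)$, which vanish exactly on the edges of $\edges_{W_S}$; in particular $\bar{\b e}_i=\b 0$ for all $i\in S$. The crucial point to establish is that $\bar{\b e}_i\neq\b 0$ \emph{also} for every $i\notin S$. Since $W_S\subseteq W_{[k]}$, the span $\lin\{\bar{\b e}_i:i\notin S\}=\pi_{W_S}(W_{[k]})$ has dimension $R-r$, so the linear space of $\dv\in\R^{[k]\setminus S}$ solving $\sum_{i\notin S}\dv_i\bar{\b e}_i=\b 0$ has dimension $(k-|S|)-(R-r)=1$, and it contains the all-ones vector because $\sum_{i\notin S}\bar{\b e}_i=\pi_{W_S}\bigl(\sum_{i\in[k]}\b e_i-\sum_{i\in S}\b e_i\bigr)=\b 0$. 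If some $\bar{\b e}_{i_0}=\b 0$ with $i_0\notin S$, I would set $T\eqdef\{i\notin S:\bar{\b e}_i\neq\b 0\}$; then $\lin\{\bar{\b e}_i:i\in T\}$ still equals $\pi_{W_S}(W_{[k]})$, so the solutions $\dv\in\R^{T}$ of $\sum_{i\in T}\dv_i\bar{\b e}_i=\b 0$ form a space of dimension $|T|-(R-r)\leq(k-|S|-1)-(R-r)=0$; but $\sum_{i\in T}\bar{\b e}_i=\b 0$ and $T\neq\emptyset$ (otherwise $R=r$ and $k-|S|=1$), a contradiction.

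Granting the claim, the degenerate edges of $\fw'_{/W_S}$ are exactly those indexed by $S$, so \cref{lem:edge-deformations-frameworks} (there being a single cycle equation, as the underlying graph is a cycle) identifies $\EDefoCone[\fw'_{/W_S}]$ with $\{\dv\in\R_+^{[k]\setminus S}:\sum_{i\notin S}\dv_i\bar{\b e}_i=\b 0\}$, which by the dimension count is the ray $\R_+\one$. Thus $\fw'_{/W_S}$ — connected, since its underlying graph is a cycle — is indecomposable, and \cref{cor:subspaceprojection} applied to $\fw'$ and $W=W_S$ gives that the edges in $\nde(\fw')\setminus\edges_{W_S}=\{u_iu_{i+1}:i\notin S\}$ are pairwise dependent in $\fw'$, hence in $\fw$ by \cref{prop:subframework}.

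I expect the main obstacle to be precisely the claim that all projected edge vectors $\bar{\b e}_i$ with $i\notin S$ are nonzero: this is where the \emph{equality} $k-|S|=\dim W_{[k]}-\dim W_S+1$ (rather than a mere inequality) is used in an essential way, and it is what forces $\EDefoCone[\fw'_{/W_S}]$ to be exactly one-dimensional. One could alternatively try to apply \cref{cor:RigidCycle0} to the cycle on $k-|S|$ (affinely independent) vertices obtained by contracting the $S$-edges of $\fw'_{/W_S}$, but then one has to handle separately the small cases in which that contracted object is not a simple cycle (\eg $k-|S|=2$).
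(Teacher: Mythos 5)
Your proof is correct, and its skeleton coincides with the paper's: project the cycle along $W_S$, show that the quotient framework is indecomposable, and lift the dependencies back via \cref{cor:subspaceprojection} (with \cref{prop:subframework} to pass from the subframework $\fw'$ to $\fw$). Where you differ is in how indecomposability of $\fw'_{/W_S}$ is certified: the paper observes that the dimension count makes the quotient a cycle supported on affinely independent vertices and invokes \cref{cor:RigidCycle0}, whereas you compute the edge-deformation cone directly from the single cycle equation of \cref{lem:edge-deformations-frameworks} and show it is the ray $\R_+\one$. Your version is slightly longer but supplies a point the paper's one-line proof takes for granted: the verification that $\bar{\b e}_i=\pi_{W_S}(\p_{u_{i+1}}-\p_{u_i})\neq\b 0$ for every $i\notin S$, i.e.\ that no edge outside $S$ collapses under the projection, which is exactly where the \emph{equality} $k-|S|=\dim W_{[k]}-\dim W_S+1$ enters and what justifies the paper's assertion that $\fw_{/W_S}$ ``is a cycle supported on affinely independent vertices''; you also dispose explicitly of the vacuous case $k-|S|\le 1$. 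In short, the paper's route buys brevity and reuse of \cref{cor:RigidCycle0}, while yours buys self-containedness and makes the role of the equality hypothesis transparent.
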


\begin{proof}
It derives from \Cref{cor:subspaceprojection,cor:RigidCycle0}. Indeed, $\fw_{/W_{S}}$ is a $(\dim(W_{[k]}) - \dim(W_{S}))$-dimensional framework. If $k-|S|=\dim(W_{[k]}) - \dim(W_{S})+1$, this means that $\fw_{/W_{S}}$ is a cycle supported on affinely independent vertices, and hence indecomposable. The lift of $\ELD[\fw_{/W_{S}}]$ gives the desired clique.
\end{proof}

\subsubsection{Cartesian products}

Using projections, we can express the deformation cone of a product of frameworks (or of polytopes) as the product of the deformation cones of each factor.
This result seems to be absent from the current literature. %, and tricky to deduce from it:
We give here a direct proof.

The \defn{Cartesian product} of two frameworks $\fw = (\verts,\edges,\p)$ with $\p_v\in\R^d$ and $\fw[G] = (W, F, \p[\psi])$ with $\p[\psi]_{v'}\in\R^e$ if the framework $\mathdefn{\fw \times \fw[G]}$ whose vertex set is $\verts\times W$, edge set is $\bigl\{(u, x)(v, y) ~;~ uv\in \edges \text{ or } xy\in F\bigr\}$, and realization is $\p_{(v, x)} = (\p_v, \p[\psi]_x) \in \R^{d + e}$.
We abuse notation and denote $\fw\times\p[\psi]_x$ for the Cartesian product of $\fw$ with the framework consisting of a unique vertex $x$ of realization $\p[\psi]_x$.

For instance, the Cartesian product of two polytopes $\pol\subseteq \R^d$ and $\pol[Q]\subseteq \R^e$ is the polytope denoted $\pol\times\pol[Q]=\set{(\b p, \b q)\in\R^{d+e}}{\b p\in \pol, \b q\in \pol[Q]}\subseteq \R^{d+e}$. Its faces are of the form $\pol[F]\times\pol[G]$ where $\pol[F], \pol[G]$ are faces of $\pol, \pol[Q]$.
%	More generally, to make the definition invariant under affine transformations, we will say that the Minkowski sum $\pol+\pol[Q]$ is a \defn{Cartesian product} if $\pol$ and $\pol[Q]$ belong to affine subspaces whose directions are complementary linear subspaces.
	
\begin{theorem}\label{thm:products}
For two connected frameworks $\fw$ and $\fw[G]$,
all deformations of a Cartesian product $\fw\times\fw[G]$ are of the form $\fw'\times\fw[G]'$, where $\fw'$ and $\fw[G]'$ are deformations of $\fw$ and $\fw[G]$ respectively. 
Consequently, the deformation cone of $\fw\times\fw[G]$ is the Cartesian product of the deformation cones of $\fw$ and $\fw[G]$:
$$\EDefoCone[\fw\times\fw[G]] = \EDefoCone[\fw]\times \EDefoCone[\fw[G]]$$
\end{theorem}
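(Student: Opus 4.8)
The plan is to prove the product structure of deformations first, and then derive the cone identity as an immediate corollary. The core observation is that both coordinate projections $p\colon \R^{d+e}\to \R^d$ and $q\colon \R^{d+e}\to \R^e$ are affine maps, so by \cref{prop:projection1} we can push any deformation of $\fw\times\fw[G]$ forward along them. Applying $p$ to $\fw\times\fw[G]$ collapses the ``$\fw[G]$-type'' edges (those $(u,x)(v,y)$ with $u=v$) to degenerate edges, and $p(\fw\times\fw[G])$ is, after the quotient by $\der$ described in \cref{prop:degenerate}, simply $\fw$ (with each edge $uv$ of $\fw$ appearing with multiplicity $|W|$, all copies forced to be dependent); symmetrically for $q$ and $\fw[G]$.

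The key steps, in order, are as follows. First I would fix a deformation $\fw[H]=(\verts\times W, F', \p[\omega])$ of $\fw\times\fw[G]$, where $F'$ is the edge set of the product, and let $\dv=\dv_{\fw\times\fw[G]}(\fw[H])\in\EDefoCone[\fw\times\fw[G]]$ be its edge-deformation vector. Second, I would show that $\dv$ assigns a common value $\dv_{uv}^{\fw}$ to all product edges $(u,x)(v,y)$ with $uv\in\edges$, and a common value $\dv_{xy}^{\fw[G]}$ to all product edges $(u,x)(v,y)$ with $xy\in F$: this follows from \cref{cor:subspaceprojection} (or \cref{prop:projection}) applied with $W_1=\R^d\times\{0\}$ and $W_2=\{0\}\times\R^e$, since $p(\fw\times\fw[G])$ quotients to $\fw$ (which need not be indecomposable, so strictly I should invoke \cref{prop:projection} rather than \cref{cor:subspaceprojection}: the lift of $\ELD[\fw]$ being a subgraph of $\ELD[\fw\times\fw[G]]$ forces the multiple copies of each $\fw$-edge to be dependent). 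Third, I would verify that $(\dv_{uv}^{\fw})_{uv\in\edges}$ satisfies the cycle equations of $\fw$ — this is again \cref{prop:projection1}(i), because the cycle equations of $\fw\cong p(\fw\times\fw[G])_{/\der}$ are implied — and likewise $(\dv_{xy}^{\fw[G]})_{xy\in F}$ satisfies those of $\fw[G]$; hence by \cref{lem:edge-deformations-frameworks} there are deformations $\fw'$ of $\fw$ and $\fw[G]'$ of $\fw[G]$ realizing these vectors. Fourth, I would check that $\fw'\times\fw[G]'$ is a deformation of $\fw\times\fw[G]$ with the same edge-deformation vector $\dv$: for a product edge $(u,x)(v,y)$ with $uv\in\edges$ and $xy\in F$ one computes $\p[\omega]_{(v,y)}-\p[\omega]_{(u,x)} = (\p[\omega]_{(v,y)}-\p[\omega]_{(v,x)}) + (\p[\omega]_{(v,x)}-\p[\omega]_{(u,x)})$ and uses the two scalar values (noting that the ``mixed'' edge is not present in $F'$ unless $uv\in\edges$ and $xy\in F$, and in that case the product realization splits accordingly). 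Since the product graph is connected, by \cref{lem:connectedcomponents} $\fw[H]$ equals $\fw'\times\fw[G]'$ up to a global translation, proving the structural claim. Finally, the cone identity: the map $(\dv^{\fw},\dv^{\fw[G]})\mapsto\dv$ just constructed is visibly linear and injective, its image is contained in $\EDefoCone[\fw\times\fw[G]]$, and the argument above shows every element of $\EDefoCone[\fw\times\fw[G]]$ arises this way; one checks it respects the cone structure, giving $\EDefoCone[\fw\times\fw[G]]=\EDefoCone[\fw]\times\EDefoCone[\fw[G]]$.

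I expect the main obstacle to be the bookkeeping in the second step — rigorously justifying that all copies of a given $\fw$-edge in the product receive the same deformation coefficient, and that these coefficients together with the $\fw[G]$-coefficients are the \emph{only} data in $\dv$. The cleanest route is probably to argue directly rather than through the quotient machinery: given any two product edges $(u,x)(v,y)$ and $(u,x')(v,y')$ both lying over $uv\in\edges$, the four vertices $(u,x),(v,x),(u,x'),(v,x')$ (using that $(u,x)(u,x')$ and $(v,x)(v,x')$ are product edges when $xx'\in F$, or a path of such) together with connectivity of $\fw[G]$ produce, via \cref{prop:connectedimplicit} and a parallelogram/projection argument, the required equality $\dv_{(u,x)(v,y)}=\dv_{(u,x')(v,y')}$. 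Once this normalization is in place, the cycle-equation verification and the reconstruction of $\fw'$, $\fw[G]'$ are routine applications of \cref{lem:edge-deformations-frameworks}, and the final linear-isomorphism statement is bookkeeping.
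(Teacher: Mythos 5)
Your proposal is correct and follows essentially the same route as the paper: in both, the key step is to project out one factor, use connectedness of the other factor to create degenerate paths identifying the copies of a vertex, and invoke \cref{prop:projection} (not \cref{cor:subspaceprojection}, exactly as you note, since the factors need not be indecomposable) to conclude that all parallel copies of a factor edge are dependent, after which reconstructing $\fw'$ and $\fw[G]'$ and concluding via \cref{lem:edge-deformations-frameworks,lem:connectedcomponents} is bookkeeping, as is the cone identity. One small caveat: the Cartesian product of frameworks has no ``mixed'' edges (every edge is a copy of an edge of $\fw$ or of $\fw[G]$), so your parenthetical splitting argument for an edge lying over both $uv\in\edges$ and $xy\in F$ should simply be dropped --- if such diagonal edges were present, the splitting would not by itself certify the deformation condition (it would force the two coefficients to agree), and the theorem itself would fail.
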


\begin{proof}
Let $uv$ be an edge of $\fw$ and $x, y$ two vertices of $\fw[G]$.
Consider $W = \{\b 0\}\oplus\R^e$ (\ie $\pi_W$ projects out~$\fw[G]$).
The two edges $(u, x)(v, x)$ and $(u, y)(v, y)$ of $\fw\times\fw[G]$  have the same image by the projection $\pi_W$.
As $\fw[G]$ is connected, in the framework of $(\fw\times\fw[G])_{/W}$, there is a degenerate edge $\pi\bigl((u,x)\bigr)\pi\bigl((u,y)\bigr)$, and a degenerate edge $\pi\bigl((v,x)\bigr)\pi\bigl((v,y)\bigr)$.
Thus, the edges $\pi\bigl((u,x)\bigr)\pi\bigl((v,x)\bigr)$ and $\pi\bigl((u,y)\bigr)\pi\bigl((v,y)\bigr)$ are dependent in $(\fw\times\fw[G])_{/W}$.
By \Cref{prop:projection} we get that $(u, x)(v, x)$ and $(u, y)(v, y)$ are dependent in $\fw\times\fw[G]$.

Hence, in a deformation $\fw[H]$ of $\fw\times\fw[G]$, all the sub-frameworks corresponding to $\fw\times\p[\psi]_x$ with $x\in \fw[G]$ are translations of the same deformation $\fw'$ of $\fw$, because they have the same edge lengths.
Similarly, all the subframeworks of the form $\p_v\times \fw[G]$ with $v\in\fw$ are translations of the same deformation $\fw[G]'$ of $\fw[G]$.
Since $\fw[H]$ and $\fw'\times \fw[G]'$ have the same edge lengths, they must coincide up to translation.
\end{proof}

\begin{corollary}\label{cor:ProductIndecomposables}
Let $\fw_1, \dots, \fw_r$ be indecomposable frameworks.
Then ${\fw_1\times\dots\times\fw_r}$ is uniquely decomposable and
 $\DefoCone[{\fw_1\times\dots\times\fw_r}]$ is a simplicial cone of dimension $r$ whose $r$ rays are the deformation cones of $\fw_1, \dots, \fw_r$. In particular, all the deformations of $\fw_1\times\dots\times\fw_r$ are of the form $(\b\lambda_1\fw_1)\times\dots\times(\b\lambda_r\fw_r)$ for some $\b\lambda_1, \dots, \b\lambda_r \geq 0$.
\end{corollary}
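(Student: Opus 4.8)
The plan is to deduce everything from \Cref{thm:products} by a straightforward induction on $r$, plus an application of \Cref{cor:ProductIndecomposables}'s hypothesis that each factor is indecomposable (equivalently, that $\DefoCone[\fw_i]$ is one-dimensional, spanned by the ray corresponding to $\fw_i$ itself). First I would observe that \Cref{thm:products} is stated for two factors but extends immediately to $r$ factors by induction: a Cartesian product is associative (up to the obvious affine identification of $\R^{d_1}\times\cdots\times\R^{d_r}$), so $\fw_1\times\cdots\times\fw_r = (\fw_1\times\cdots\times\fw_{r-1})\times\fw_r$, and one checks that a Cartesian product of connected frameworks is connected (an edge joins $(u,x)$ to $(v,y)$ whenever $uv$ is an edge or $xy$ is, so any two vertices are joined by a path obtained by concatenating paths in each factor). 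Hence, inductively, $\EDefoCone[{\fw_1\times\cdots\times\fw_r}] = \EDefoCone[\fw_1]\times\cdots\times\EDefoCone[\fw_r]$.

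Next I would use the hypothesis that each $\fw_i$ is indecomposable. By \Cref{cor:WconnectedPMinkowskiIndecomposable} (or directly by the definition of indecomposability together with \Cref{lem:connectedcomponents}), this means $\EDefoCone[\fw_i]$ is a one-dimensional cone, that is, a ray $\R_+\cdot \dv^{(i)}$, where $\dv^{(i)}=\dv_{\fw_i}(\fw_i)$ is the edge-deformation vector of $\fw_i$ itself (the all-ones vector on non-degenerate edges, after \Cref{cor:degeneratecontraction}). A Cartesian product of $r$ rays in complementary coordinate subspaces is exactly a simplicial cone of dimension $r$ whose extreme rays are the $r$ factor rays: a vector $\dv$ in the product decomposes uniquely as $\dv = \sum_i \lambda_i \dv^{(i)}$ with $\lambda_i\geq 0$, because the coordinate blocks indexed by the edges coming from different factors are disjoint. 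This gives simpliciality, dimension $r$, and the identification of the rays with $\fw_1,\dots,\fw_r$.

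Finally, to get the statement that every deformation of $\fw_1\times\cdots\times\fw_r$ is of the form $(\lambda_1\fw_1)\times\cdots\times(\lambda_r\fw_r)$, I would combine the first paragraph's structural conclusion of \Cref{thm:products} (iterated) — every deformation is a product $\fw_1'\times\cdots\times\fw_r'$ with $\fw_i'$ a deformation of $\fw_i$ — with the second paragraph's observation that indecomposability of $\fw_i$ forces $\fw_i'$ to be a dilate (up to translation) $\lambda_i\fw_i$. The translations can be absorbed, since a translation of a Cartesian product factor-by-factor is a translation of the whole product, and edge-deformation vectors are insensitive to translation by \Cref{lem:connectedcomponents}. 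I do not anticipate a serious obstacle here; the only point requiring a line of care is the base case / associativity bookkeeping for the iterated product and checking that the relevant frameworks stay connected so that \Cref{thm:products} applies at each step — everything else is a formal consequence of the two-factor theorem and the one-dimensionality of the factor cones.
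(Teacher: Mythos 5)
Your proposal is correct and follows essentially the same route as the paper's proof: indecomposability of each $\fw_i$ gives connectedness and a one-dimensional cone $\EDefoCone[\fw_i]$, and then \Cref{thm:products} (iterated over the $r$ factors) expresses $\EDefoCone[{\fw_1\times\dots\times\fw_r}]$ as a product of $r$ rays, which is a simplicial cone of dimension $r$. Your extra care about associativity, connectivity of the product, and absorbing translations just makes explicit what the paper leaves implicit.
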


\begin{proof}
As each $\fw_i$ is indecomposable, it is in particular connected, and $\DefoCone[\fw_i]$ is a $1$-dimensional ray.
The claim follows from \Cref{thm:products}, because the product of $r$ rays is a $r$-dimensional simplicial cone.
\end{proof}

\begin{example}
By \Cref{cor:ProductIndecomposables}, the deformation cone of a product of $r$ simplices is a simplicial cone of dimension $r$.
In \cite{CDGRY2020}, the authors show a stronger version: the deformation cone of any polytope which is \emph{combinatorially equivalent} to a product of $r$ simplices is a simplicial cone of dimension $r$.
\end{example}

\subsection{Dependent sets of vertices}
\subsubsection{Dependent sets of vertices}
Let $\fw=(\verts,\edges,\p)$ be a framework. We say that a subset $S\subseteq\verts$ of vertices is \defn{dependent} if every pair of vertices of $S$ is an implicit edge, and the non-degenerate ones are all pairwise dependent in $\EELD$. We extend naturally the notation and say that a subset $S\subseteq\verts(\pol)$ of vertices of a polytope~$\pol$ is \defn{dependent} if they are as vertices of the framework~$\fw(\pol)$.

Let us highlight that, despite our presentation, this dependence is not arising from an equivalence relation on vertices (as it could be that both $uv$  and $uw$ form implicit edges, but that these are not dependent, like in a parallelogram), but from an equivalence relation on \textbf{pairs} of vertices. We use this notation nevertheless because it simplifies our presentation.

Note first that, in dimension at least~$2$, we can weaken the definition of dependent sets of vertices, removing the assumption that the implicit edges are all pairwise dependent.
It follows from \Cref{lem:Dim2AllImplicitImpliesDependent}:

\begin{corollary}\label{cor:Dim2DependentVerticesDependentEdges}
A subset of vertices $S$ with $\p(S)$ of dimension at least~$2$ is dependent if and only if every pair of vertices of $S$ is an implicit edge.
\end{corollary}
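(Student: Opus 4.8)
The statement to prove is \Cref{cor:Dim2DependentVerticesDependentEdges}: a vertex set $S$ with $\p(S)$ of dimension at least $2$ is dependent if and only if every pair of vertices of $S$ is an implicit edge.

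The plan is to derive this directly from \Cref{lem:Dim2AllImplicitImpliesDependent} together with the definition of a dependent set of vertices. The forward implication is trivial: by definition, if $S$ is dependent then in particular every pair of its vertices is an implicit edge, and no dimension hypothesis is needed here. The content is entirely in the reverse implication.

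For the reverse direction, I would argue as follows. Suppose every pair of vertices of $S$ is an implicit edge. To conclude that $S$ is dependent, by definition I must additionally show that the non-degenerate implicit edges among vertices of $S$ are pairwise dependent in $\EELD$. But this is exactly the conclusion of \Cref{lem:Dim2AllImplicitImpliesDependent}, whose hypotheses are precisely that $\p(S)$ has dimension at least $2$ and that every pair of vertices of $S$ is an implicit edge — both of which we are assuming. So the non-degenerate implicit edges among the vertices of $S$ are pairwise dependent, and hence $S$ is dependent by definition.

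There is essentially no obstacle here: the corollary is a bookkeeping reformulation recording that in dimension $\geq 2$ the "pairwise dependent" clause in the definition of a dependent vertex set is automatic once all pairs are implicit edges. The only point to be slightly careful about is the definitional unwinding — that "$S$ dependent" means "all pairs are implicit edges \emph{and} the non-degenerate ones are pairwise dependent" — and matching each clause against what \Cref{lem:Dim2AllImplicitImpliesDependent} supplies. I would write this as a two- or three-line proof.

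\begin{proof}
If $S$ is dependent, then by definition every pair of vertices of $S$ is an implicit edge; this needs no hypothesis on the dimension.

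Conversely, suppose that $\p(S)$ has dimension at least $2$ and that every pair of vertices of $S$ is an implicit edge. By \Cref{lem:Dim2AllImplicitImpliesDependent}, all the non-degenerate implicit edges among the vertices of $S$ are then pairwise dependent in $\EELD$. Together with the hypothesis that every pair of vertices of $S$ is an implicit edge, this is exactly the definition of $S$ being a dependent set of vertices.
\end{proof}
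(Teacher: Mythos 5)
Your proof is correct and matches the paper's argument: the paper gives no separate proof, introducing the corollary with the remark that it "follows from \Cref{lem:Dim2AllImplicitImpliesDependent}", which is exactly the reduction you spell out (trivial forward direction from the definition, reverse direction by applying the lemma to supply the pairwise-dependence clause).
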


We can rephrase \Cref{lem:ELDindecomposable} in the language of dependent sets of vertices:

\begin{lemma}\label{cor:DependentVerticesIndecomposable}
Let $\fw=(\verts,\edges,\p)$ be a framework. Then $\ELD$ is a complete graph and $\fw$ is indecomposable if and only if $\verts$ is a dependent set of vertices.
\end{lemma}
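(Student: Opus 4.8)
The plan is to unwind the definitions and reduce everything to \Cref{lem:ELDindecomposable} together with \Cref{cor:indeccompleteclosure}. Recall that, by definition, $\verts$ being a \emph{dependent set of vertices} means that every pair of vertices of $\fw$ is an implicit edge and the non-degenerate ones are pairwise dependent in $\EELD$. So the statement to prove is the equivalence of three conditions: (a) $\ELD$ is complete, (b) $\fw$ is indecomposable, (c) $\verts$ is a dependent set of vertices.

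First I would observe that (a) $\Leftrightarrow$ (b) is exactly the content of \Cref{lem:ELDindecomposable} (for connected $\fw$; if $\fw$ is disconnected then $\ELD$ is not complete, $\fw$ is decomposable by convention, and $\verts$ cannot be dependent since vertices in different connected components are never implicit edges by \Cref{lem:connectedcomponents}, so all three conditions fail simultaneously and the equivalence holds vacuously — I would add a sentence handling this edge case). Then for (b) $\Rightarrow$ (c): if $\fw$ is indecomposable, then for every deformation $\fw[G]=(\verts,\edges,\p[\psi])$ we have $\p[\psi]=\lambda\p+\b t$ for some $\lambda\in\R_+$, $\b t\in\R^d$; hence every pair $uv$ satisfies \eqref{eq:implicitedge} with this same $\lambda$, so every pair is an implicit edge and moreover $\dv_{uv}(\fw[G])=\lambda$ for all non-degenerate pairs, making them pairwise dependent. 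Thus $\verts$ is dependent. For (c) $\Rightarrow$ (b): if $\verts$ is dependent, then in particular every pair of vertices is an implicit edge, so by \Cref{cor:indeccompleteclosure} the framework is indecomposable — but wait, that corollary requires $\dim\fw\geq 2$.

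The one genuine subtlety — which I expect to be the only real obstacle — is the dimension-$1$ (and dimension-$0$) case. If $\dim\fw\leq 1$, I would argue directly: in dimension $0$ all edges are degenerate, $\ELD$ has no nodes (hence is "complete" trivially on an empty node set) and the framework is a single point up to the equivalence, which one should check is consistent with the conventions; in dimension $1$, if $\fw$ has at least two distinct point-values among its vertices, then (as noted in the remark before \Cref{lem:Dim2AllImplicitImpliesDependent}) any two non-degenerate implicit edges can be rescaled independently by sliding points along the line, so the non-degenerate pairs are \emph{not} pairwise dependent, $\verts$ is not dependent, $\ELD$ is not complete, and $\fw$ is decomposable — all three conditions fail together. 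So the equivalence survives. Given that the paper has been consistently assuming "dimension at least $2$" for the clean statements and this lemma is stated without that hypothesis, the cleanest write-up is: handle $\dim\fw\geq 2$ via \Cref{cor:indeccompleteclosure} and \Cref{lem:ELDindecomposable}, and dispatch $\dim\fw\leq 1$ with the short direct argument above, noting that all three conditions are equivalent to "$\p(\verts)$ consists of at most one point" (equivalently, every edge is degenerate) in that low-dimensional regime.
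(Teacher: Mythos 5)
Your main line of argument for the case $\dim\fw\geq 2$ is correct and matches the spirit of the paper, but your dispatch of the low-dimensional case contains a false claim. You assert that in dimension $1$, as soon as $\p(\verts)$ contains at least two distinct points, all three conditions fail, and that in the regime $\dim\fw\leq 1$ the three conditions are equivalent to ``$\p(\verts)$ is at most one point''. This is wrong: a segment, i.e.\ two vertices with distinct realizations joined by one edge, is a $1$-dimensional \emph{indecomposable} framework whose $\ELD$ is complete (a single node with its loop) and whose vertex set is dependent (the unique pair is a non-degenerate implicit edge, trivially dependent with itself); similar examples with more vertices exist. The parenthetical remark you cite before \Cref{lem:Dim2AllImplicitImpliesDependent} requires at least \emph{three} distinct points on the line, not two. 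So the write-up you propose would contain an incorrect statement, even though the lemma itself does hold in every dimension.

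The cleaner fix --- and the route the paper takes --- is to avoid \Cref{cor:indeccompleteclosure} altogether, so that no dimension hypothesis is ever needed. The point is that the definition of a dependent set of vertices already \emph{includes} the pairwise dependence of the non-degenerate implicit edges; the dimension-$\geq 2$ hypothesis is only needed when one wants to drop that requirement (\Cref{cor:Dim2DependentVerticesDependentEdges}), which is exactly the subtlety you were worried about but which does not arise here. Concretely: $\verts$ being dependent says precisely that the closure $\Efw$ has all pairs as edges and that $\ELD[\Efw]=\EELD$ is complete; by \Cref{lem:ELDindecomposable} (applied to $\Efw$, which is connected since its underlying graph is complete) this is equivalent to $\Efw$ being indecomposable, which by \Cref{cor:extendedgraph}~(iii) is equivalent to $\fw$ being indecomposable, which by \Cref{lem:ELDindecomposable} again is equivalent to $\ELD$ being complete. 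Your direction (b)$\Rightarrow$(c) and your treatment of disconnected frameworks are fine and can be kept, but the $\dim\fw\leq 1$ paragraph should be deleted and replaced by this dimension-free argument.
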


\begin{proof}
Combine \Cref{lem:ELDindecomposable,cor:extendedgraph} (iii).
\end{proof}

% We can rephrase \Cref{cor:indeccompleteclosure} as follows:

% \begin{corollary}\label{cor:DependentVerticesIndecomposable}
% 	Let $\fw=(\verts,\edges,\p)$ be a framework of dimension $\geq 2$. Then $\ELD$ is a complete graph and $\fw$ is indecomposable if and only if $\verts$ is a dependent set of vertices.
% \end{corollary}

For the applications, the condition defining vertex dependency can be relaxed using \Cref{prop:connectedimplicit}:

\begin{lemma}\label{lem:connecteddependentrigid}
Let $\fw=(\verts,\edges,\p)$.  
A subset of vertices $S\subseteq\verts$ with $\p(S)$ of dimension at least~$2$ is dependent if and only if for every $u,v\in S$ there is a path of pairwise dependent implicit edges connecting $u$ and $v$.
\end{lemma}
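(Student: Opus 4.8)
The plan is to prove the two implications separately, noting that the forward direction is essentially trivial and the reverse direction is the content of the lemma.

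\textbf{The easy direction.} Suppose $S$ is dependent. By definition every pair $u,v\in S$ is an implicit edge, and the non-degenerate ones are pairwise dependent in $\EELD$. For any $u,v\in S$ the single edge $uv$ (if non-degenerate) is already a path of pairwise dependent implicit edges joining $u$ and $v$; if $uv$ is degenerate, I would instead exploit that $\p(S)$ has dimension $\geq 2$ to route through a third vertex $w\in S$ with $\p_u,\p_v,\p_w$ affinely independent — then $uw$ and $wv$ are non-degenerate implicit edges, which are pairwise dependent by the definition of $S$ being dependent (or, more robustly, by \Cref{cor:Dim2DependentVerticesDependentEdges} together with \Cref{lem:Dim2AllImplicitImpliesDependent}), giving the required path $u,w,v$.

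\textbf{The substantive direction.} Assume that for every $u,v\in S$ there is a path of pairwise dependent implicit edges connecting $u$ and $v$ (inside the closure $\Efw$). I want to conclude that $S$ is dependent, i.e. every pair of vertices of $S$ is an implicit edge and (by \Cref{cor:Dim2DependentVerticesDependentEdges}, since $\dim\p(S)\geq 2$) this already suffices — I do not separately need the pairwise-dependence of the non-degenerate implicit edges. So the only thing to show is: for every $u,v\in S$, the pair $uv$ is an implicit edge of $\fw$. But this is exactly \Cref{prop:connectedimplicit} applied to the framework $\Efw$: a path of pairwise dependent (implicit) edges between $u$ and $v$ forces $uv$ to be an implicit edge of $\Efw$, dependent with all edges on the path. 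Since implicit edges of $\Efw$ are implicit edges of $\fw$ (the closure of the closure is the closure — more precisely, \eqref{eq:implicitedge} for $\Efw$ gives \eqref{eq:implicitedge} for $\fw$ because deformations of $\fw$ and of $\Efw$ agree up to the linear isomorphism of \Cref{thm:characterizationimplicit}, cf.\ \Cref{cor:extendedgraph}(i)), we get that $uv$ is an implicit edge of $\fw$. Running this over all pairs $u,v\in S$ shows every pair in $S$ is an implicit edge, and then \Cref{cor:Dim2DependentVerticesDependentEdges} upgrades this to $S$ being dependent.

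\textbf{Main obstacle.} The only delicate point is bookkeeping about working inside the closure: \Cref{prop:connectedimplicit} is stated for a framework and its own edges, so I must be careful that a "path of pairwise dependent implicit edges" is literally a path in $\Efw$ whose edges are pairwise dependent in $\EELD = \ELD[\Efw]$, and then invoke \Cref{prop:connectedimplicit} with $\Efw$ in place of $\fw$. The accompanying fact that implicit edges of the closure are implicit edges of the original framework should be recorded explicitly (it is immediate from \Cref{thm:characterizationimplicit}/\Cref{cor:extendedgraph}(i), since adding implicit edges does not change the deformation cone up to linear isomorphism, so the same coefficients $\lambda_{uv}$ work). Everything else is a direct citation.
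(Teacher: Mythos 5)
Your proposal is correct and follows essentially the same route as the paper: apply \Cref{prop:connectedimplicit} inside the closure to conclude that every pair of vertices of $S$ is an implicit edge, and then invoke the dimension-at-least-$2$ fact (\Cref{cor:Dim2DependentVerticesDependentEdges}, equivalently \Cref{lem:Dim2AllImplicitImpliesDependent}, which the paper reaches via \Cref{cor:indeccompleteclosure}) to upgrade this to pairwise dependence. The only blemish is in your easy direction: when $uv$ is degenerate you cannot choose $w\in S$ with $\p_u,\p_v,\p_w$ affinely independent (since $\p_u=\p_v$); you only need $\p_w\neq\p_u$, and with that trivial fix the detour $u,w,v$ works as you intend.
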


\begin{proof}
By \Cref{prop:connectedimplicit}, if $u,v\in S$ are connected by a path of pairwise dependent edges, then $uv$ is an implicit edge.
Hence, the subframework of $\Efw$ spanned by~$S$ is complete.
By \Cref{cor:indeccompleteclosure}, this means that this subframework is indecomposable, and hence that all its implicit edges are pairwise dependent.
\end{proof}

\subsubsection{Recovering Shephard's criteria}
The combination of \Cref{cor:DependentVerticesIndecomposable} and \Cref{lem:connecteddependentrigid} directly gives our first new indecomposability criterion. 
It relies on proving that certain subsets of edges are dependent,
as many previous indecomposability criteria, but its scope is more general, because dependencies can not only be deduced via 
indecomposable subframeworks (such as triangles), but we can also exploit other structures such as parallelisms to deduce dependencies, for example via \Cref{cor:subspaceprojection}.

Even though this result will be subsumed by the stronger, but harder to state, \Cref{thm:mainthm}, the following simpler version is already enough for our applications in \Cref{sec:newrays}, where the previously known methods did not work.

% The combination of \Cref{cor:DependentVerticesIndecomposable} and \Cref{lem:connecteddependentrigid} directly gives a new important indecomposability criterion.
% As many previous indecomposability criteria, it depends on proving that certain subsets of edges are dependent. The main advantage of our perspective with respect to previous approaches is its versatility, in the sense that it can be easily adapted to bootstrap new strategies for deriving dependencies between the edges.
% Most of the former methods for proving the indecomposability of a framework (or polytope) rely on finding enough 
% indecomposable subframeworks (such as triangles), and cleverly tying them together to show that $\ELD$ is connected.
% Thanks to \cref{cor:subspaceprojection}, we can also exploit other structures such as parallelisms to deduce connections in $\ELD$ via subframeworks that are not indecomposable themselves. This will be crucial for our applications in \Cref{sec:newrays}, where the known methods did not work.

%\begin{lemma}\label{lem:parallelogram}
%Let $\pol$ be a polytope, and $\pol[F]$ a 2-dimensional face that is a quadrilateral with edges $\pol[e],\pol[f],\pol[e]',\pol[f]'$ in cyclic order.
%If $\pol[e]$ and $\pol[e]'$ are parallel, then $\pol[f]\pol[f]'$ is an arc in $\ELD$.
%
%Consequently, if $\pol[F]$ is a parallelogram, then both $\pol[e]\pol[e]'$ and $\pol[f]\pol[f]'$ are arcs in $\ELD$.
%\end{lemma}
%
%\begin{proof}
%Follows from the polygonal face equation of a trapezoid or of a parallelogram, see \Cref{ex:parallelogram}.
%\end{proof}

\begin{theorem}\label{thm:mainthmframeworks}
	Let $\fw=(\verts,\edges,\p)$ be a framework of dimension $\geq2$. 
	If any pair of vertices of $\verts$ is connected through a path of pairwise dependent implicit edges, then $\ELD$ is a complete graph and $\fw$ is indecomposable.
\end{theorem}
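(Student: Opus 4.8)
The plan is to reduce the statement directly to the results already established in this subsection. The key observation is that the hypothesis — every pair of vertices of $\verts$ is connected by a path of pairwise dependent implicit edges — is exactly the relaxed criterion for $\verts$ being a dependent set of vertices provided by \Cref{lem:connecteddependentrigid}, once we know that $\fw$ has dimension at least $2$. So the proof has essentially two moves: first invoke \Cref{lem:connecteddependentrigid} with $S = \verts$ to conclude that $\verts$ is a dependent set of vertices; then invoke \Cref{cor:DependentVerticesIndecomposable} to conclude that $\ELD$ is a complete graph and that $\fw$ is indecomposable.

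More precisely, first I would check that \Cref{lem:connecteddependentrigid} applies: its hypothesis is that $\p(S)$ has dimension at least $2$, which for $S = \verts$ is precisely the assumption that $\fw$ has dimension $\geq 2$. The lemma then tells us that $\verts$ is dependent, meaning every pair of vertices is an implicit edge and the non-degenerate implicit edges among them are pairwise dependent in $\EELD$. In other words, $\EELD = \ELD[\Efw]$ is a complete graph (with loops) on the non-degenerate implicit edges. By \Cref{cor:extendedgraph}(ii), $\ELD$ is the subgraph of $\EELD$ induced by the node set $\edges$, so $\ELD$ is also a complete graph; and by \Cref{cor:extendedgraph}(iii), or equivalently directly by \Cref{cor:DependentVerticesIndecomposable}, $\fw$ is indecomposable. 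Alternatively one can phrase the whole conclusion in a single line: combine \Cref{lem:connecteddependentrigid} (to get that $\verts$ is dependent) with \Cref{cor:DependentVerticesIndecomposable} (which says $\verts$ dependent $\iff$ $\ELD$ complete and $\fw$ indecomposable).

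There is essentially no obstacle here; the theorem is a clean repackaging of the preceding two lemmas into a form that is convenient to apply in \Cref{sec:newrays}. The only thing to be slightly careful about is the role of degenerate implicit edges: the path connecting $u$ and $v$ consists of \emph{dependent} implicit edges, and by the definition of the graph of edge dependencies these are automatically non-degenerate (degenerate edges are not even nodes of $\EELD$), so \Cref{prop:connectedimplicit} applies verbatim inside the proof of \Cref{lem:connecteddependentrigid} and no separate degenerate-edge bookkeeping is needed at the level of this theorem. I would therefore write the proof as a two-sentence deduction.

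\begin{proof}
	Since $\fw$ has dimension at least $2$, we may apply \Cref{lem:connecteddependentrigid} with $S=\verts$: the hypothesis that any pair of vertices of $\verts$ is connected through a path of pairwise dependent implicit edges implies that $\verts$ is a dependent set of vertices. By \Cref{cor:DependentVerticesIndecomposable}, this is equivalent to $\ELD$ being a complete graph and $\fw$ being indecomposable.
\end{proof}
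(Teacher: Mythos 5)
Your proposal is correct and follows essentially the same route as the paper: the paper's own proof simply cites \Cref{prop:connectedimplicit} (to get that every pair of vertices is an implicit edge) and \Cref{cor:indeccompleteclosure}, which are exactly the ingredients packaged inside \Cref{lem:connecteddependentrigid} and \Cref{cor:DependentVerticesIndecomposable} that you invoke. Your two-step deduction is valid and matches the strategy the paper itself announces just before the theorem.
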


\begin{proof}
By \Cref{prop:connectedimplicit}, any pair in $\verts$ is an implicit edge.
By \Cref{cor:indeccompleteclosure}, $\ELD$ is complete.
\end{proof}

%\begin{proof}
%	%Suppose that $\ELD$ satisfies the hypotheses, and l
%	Let $\fw[G]=(\verts,\edges,\p[\psi])$ be a deformation of $\fw$. Let $\lam\eqdef\dv(\fw[G])_{e}$ for any $e\in X$ ($\b\lambda$ does not depend on the edge chosen because because $X$ are pairwise dependent). 
%	
%	Fix a vertex $u_0\in \verts$. For any other vertex $v$ there is a path $\c P = (u_0, u_1,\dots , u_r=v)$ connecting $u_0$ and $v$, in which all the edges $u_iu_{i+1}$ belong to~$X$. 
%	We have \[\p[\psi]_v - \p[\psi]_{u_0}= \sum\nolimits_{1\leq i\leq r} (\p[\psi]_{u_i}-\p[\psi]_{u_{i-1}})  = \sum\nolimits_{1\leq i\leq r} \b\lambda (\p_{u_i}-\p_{u_{i-1}}) = \b\lambda(\p_v - \p_{u_0}).\]
%	Since this holds for any vertex $v\in\verts$, we have $\p[\psi] = \b\lambda \p +\b t$ for some $\b t\in \R^d$. Thus, $\fw$ is indecomposable, and $\ELD$ is complete by \cref{lem:WclusterGraph}.
%\end{proof}

%In the case when $\fw$ is the framework of a polytope, this will be subsumed by the stronger \cref{cor:mainthmpolytopes}.
% This result will be subsumed by the stronger, but harder to state, \cref{thm:mainthm}.

As a first application, we use \Cref{thm:mainthmframeworks} to reprove one the first known indecomposability criteria for polytopes, given by Shephard in 1963 \cite{Shephard1963}. 
There are two slightly different versions of Shephard's indecomposability theorem. The original version~\cite[Thm.~12]{Shephard1963} is equivalent to \Cref{thm:Shephard}, whereas it is cited by McMullen in \cite[Thm.~1]{McMullen1987} with a slightly stronger statement (that we give in \Cref{cor:McMullen1}). 
We could not find the argument proving the stronger version anywhere in the literature, but we will see that it follows from \Cref{thm:mainthmframeworks}.
In our setup, the nuance is that in the first version, we will connect all the vertices through paths of edges that are all dependent with a given edge $\pol[e]$; whereas in the second version, the edges of each of the paths are pairwise dependent, but different paths could belong to different equivalence classes of the dependency relation. This subtle difference the reason why \Cref{cor:indeccompleteclosure} fails in dimension~$1$, as explained above its statement. 

Using McMullen's notation from \cite{McMullen1987}, we define a \defn{strong chain} of faces of a polytope~$\pol$ as a sequence of faces $\pol[F]_0,\dots,\pol[F]_k$ such that $\dim(\pol[F]_{i}\cap\pol[F]_{i-1})\geq 1$ for $1\leq i\leq k$. Any vertex or edge of $\pol[F]_0$ is said to be connected to any vertex or edge of $\pol[F]_i$ with $1\leq i\leq k$.

\begin{corollary}[{\cite[Thm. 12]{Shephard1963}}]\label{thm:Shephard}
	Let $\pol$ be a polytope. If there exist an edge $\pol[e]$ to which each vertex of~$\pol$ is connected by a strong chain of indecomposable faces, then $\pol$ is itself indecomposable.
\end{corollary}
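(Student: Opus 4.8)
The plan is to derive this as a corollary of \cref{thm:mainthmframeworks} applied to the framework $\fw(\pol)$: it is enough to produce, for any two vertices $u,v$ of $\pol$, a path of pairwise dependent edges of $\pol$ joining them (actual edges being implicit edges, such a path is admissible in that theorem). We may assume $\dim\pol\geq 2$, the case of a segment being trivial.

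First I would observe that each indecomposable face $\pol[F]$ occurring in a strong chain contributes a clique to $\ELD[\pol]$. Indeed, $\fw(\pol[F])$ is a subframework of $\fw(\pol)$ (the vertices and edges of a face of $\pol$ are vertices and edges of $\pol$), and it is indecomposable: as a polytope it is, hence so is its framework, and a $1$-dimensional face is trivially indecomposable. So by \cref{cor:indecsubframeclique} the set $\edges(\pol[F])$ lies inside a single clique of $\ELD[\pol]$; moreover, by \cref{lem:WclusterGraph} each edge of $\pol$ lies in a \emph{unique} clique (connected component) of $\ELD[\pol]$. Write $\c C$ for the clique containing the distinguished edge $\pol[e]$.

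The key step is to show that every vertex $u$ of $\pol$ is joined to a fixed endpoint $\b p$ of $\pol[e]$ by a path of edges of $\pol$ lying entirely in $\c C$. Fix a strong chain $\pol[F]_0,\dots,\pol[F]_k$ of indecomposable faces with $u\in\pol[F]_0$ and $\pol[e]\subseteq\pol[F]_k$. Since $\dim(\pol[F]_{i-1}\cap\pol[F]_i)\geq 1$, this intersection contains an edge $g_i$ of $\pol$, which is then an edge of both $\pol[F]_{i-1}$ and $\pol[F]_i$; setting $g_{k+1}\coloneqq\pol[e]$, each $g_i$ (for $1\leq i\leq k+1$) lies in the clique of $\pol[F]_{i-1}$ and in the clique of $\pol[F]_i$, so by uniqueness these cliques coincide, and inductively the edge sets of $\pol[F]_0,\dots,\pol[F]_k$ all lie in one and the same clique, which is $\c C$ since $\pol[e]=g_{k+1}\subseteq\pol[F]_k$. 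Now pick an endpoint $a_i$ of $g_i$ for $1\leq i\leq k+1$, put $a_0\coloneqq u$ and $\b p\coloneqq a_{k+1}$; for each $i$ both $a_i$ and $a_{i+1}$ are vertices of $\pol[F]_i$, and since the graph of a polytope is connected there is a path of edges of $\pol[F]_i$ from $a_i$ to $a_{i+1}$. Concatenating these paths over $i=0,\dots,k$ gives the desired path from $u$ to $\b p$, all of whose edges lie in $\c C$.

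To finish, given vertices $u,v$, concatenate the path from $u$ to $\b p$ with the reverse of the path from $v$ to $\b p$ (inserting $\pol[e]$ if the two chosen endpoints of $\pol[e]$ disagree): this is a path of edges of $\pol$ all lying in the clique $\c C$, hence pairwise dependent, and it joins $u$ to $v$. \cref{thm:mainthmframeworks} then yields that $\pol$ is indecomposable. I expect the only delicate point to be the identification of all the local cliques coming from the faces of the chains into the single clique $\c C$; this is precisely where one uses that all chains terminate at the \emph{same} edge $\pol[e]$, and it is the feature separating this statement from the stronger variant attributed to McMullen, where distinct chains may land in distinct dependency classes and one must instead argue through \cref{cor:indeccompleteclosure} in dimension $\geq 2$.
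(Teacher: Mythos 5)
Your proof is correct and follows essentially the same route as the paper's: indecomposable faces give cliques in $\ELD[\pol]$ via \cref{cor:indecsubframeclique}, consecutive faces of a strong chain share an edge so their cliques merge into the component of $\pol[e]$, every vertex is then joined to an endpoint of $\pol[e]$ by a path of dependent edges, and \cref{thm:mainthmframeworks} concludes. Your write-up is simply a more explicit version (unique-clique bookkeeping, endpoint matching via inserting $\pol[e]$) of the paper's argument.
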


\begin{proof}
	Consider the connected component $X$ of $\pol[e]$ in $\ELD[\pol]$. For every vertex, we consider a strong chain $\pol[F]_0,\dots,\pol[F]_k$ connecting it to $\pol[e]$. By \Cref{cor:indecsubframeclique}, all the edges in the $1$-skeleton of each $\pol[F]_i$ form a clique in $\ELD[\pol]$. As  $\pol[F]_{i-1}$ and $\pol[F]_i$ intersect on at least one edge, these cliques belong to~$X$. Moreover, since the $1$-skeletons of each $\pol[F]_i$ are all connected and intersect, each vertex of $\pol$ is connected to both endpoints of $\pol[e]$ by edges in $X$. Therefore all pairs of vertices of $\pol$ are connected together by paths of dependent edges, and we can apply \Cref{thm:mainthmframeworks} to deduce that $\pol$ is indecomposable.
\end{proof}

We can also prove the slightly stronger version of Shephard's criterion stated in \cite[Thm.~1]{McMullen1987}.

\begin{corollary}[{\cite[Thm. 12]{Shephard1963}\cite[Thm. 1]{McMullen1987}}]\label{cor:McMullen1}
	Let $\pol$ be a polytope. If any pair of vertices of $\pol$ can be connected by a strong chain of indecomposable faces, then $\pol$ is itself indecomposable.
\end{corollary}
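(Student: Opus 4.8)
The plan is to follow the structure of the proof of \cref{thm:Shephard}, but without the crutch of a single edge $\pol[e]$ common to all the strong chains. First I would dispose of the trivial cases: a polytope of dimension $0$ or $1$ is automatically indecomposable, so we may assume $\dim\pol\geq 2$, which is exactly what \cref{thm:mainthmframeworks} requires. The goal is then to show that any two vertices of $\pol$ are joined by a path of pairwise dependent edges in $\ELD[\pol]$, and to conclude via \cref{thm:mainthmframeworks}.

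Concretely, fix vertices $\b p,\b q$ of $\pol$ and a strong chain of indecomposable faces $\pol[F]_0,\dots,\pol[F]_k$ connecting them, so that $\b p$ is a vertex of $\pol[F]_0$, $\b q$ is a vertex of $\pol[F]_k$, and $\dim(\pol[F]_{i-1}\cap\pol[F]_i)\geq 1$ for $1\leq i\leq k$. Each $\pol[F]_i$ is indecomposable, so by \cref{cor:indecsubframeclique} the edges in its $1$-skeleton form a clique in $\ELD[\pol]$; in particular they are pairwise dependent. Since $\pol[F]_{i-1}$ and $\pol[F]_i$ meet in a face of dimension at least $1$, they share an edge, so these cliques lie in a common connected component of $\ELD[\pol]$; as $\ELD[\pol]$ is a cluster graph (\cref{lem:WclusterGraph}), an easy induction on $i$ shows that \emph{all} edges appearing in the $1$-skeleton of $\bigcup_{0\leq i\leq k}\pol[F]_i$ are pairwise dependent. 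Now the $1$-skeleton of each $\pol[F]_i$ is connected and consecutive ones share a vertex (since they share an edge), so the $1$-skeleton of $\bigcup_i\pol[F]_i$ is connected and contains both $\b p$ and $\b q$; hence it contains a path from $\b p$ to $\b q$ whose edges are pairwise dependent, and in particular pairwise dependent implicit edges. Since $\b p$ and $\b q$ were arbitrary, \cref{thm:mainthmframeworks} applies, so $\ELD[\pol]$ is complete and $\pol$ is indecomposable.

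The one point to watch — and the reason this is genuinely stronger than \cref{thm:Shephard} rather than an immediate consequence of it — is that distinct pairs of vertices may be connected through paths belonging to different equivalence classes of the dependency relation, so one cannot argue directly that $\ELD[\pol]$ is a single clique from the start. The whole point of \cref{thm:mainthmframeworks} is that it only needs pairwise-dependent connecting paths, which is also why the hypothesis $\dim\pol\geq2$ cannot be dropped here (cf. the failure of \cref{cor:indeccompleteclosure} in dimension $1$). Beyond that, every step is a routine bookkeeping application of results already established, so I do not anticipate any serious obstacle.
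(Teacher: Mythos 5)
Your proof is correct and follows essentially the same route as the paper's: each indecomposable face gives a clique in $\ELD[\pol]$ by \cref{cor:indecsubframeclique}, consecutive faces of a strong chain share an edge so the cliques merge, and \cref{thm:mainthmframeworks} then applies to the resulting paths of pairwise dependent edges. Your only additions are bookkeeping the paper leaves implicit (the explicit treatment of $\dim\pol\leq 1$ and the induction along the chain), which is fine.
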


\begin{proof}
	If $\b p,\b q\in\verts(\pol)$ are connected by a strong chain of indecomposable faces, then they are connected by a dependent set of edges, by \Cref{cor:indecsubframeclique}. We conclude by \Cref{thm:mainthmframeworks}.
\end{proof}

Finally, we advertise the following corollary of \Cref{thm:mainthmframeworks}.
For polytopes, it also follows from Shephard's criterion, but it can be easily extended to frameworks with some structural assumptions.

\begin{corollary}\label{cor:VertexAdjacentToAll}
If $\fw = (\verts, \edges, \p)$ is a framework with a vertex $v\in \verts$ such that (i) $v$ is connected to every vertex of $\fw$ with an edge, (ii) $v$ does not belong to the line spanned by any two other vertices of $\fw$, and (iii) the subframework $\fw\ssm v$ obtained by deleting $v$ from $\fw$ is connected, then $\fw$ is indecomposable.
\end{corollary}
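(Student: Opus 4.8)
The plan is to reduce everything to \cref{thm:mainthmframeworks}: it suffices to show that any two vertices of $\fw$ are joined by a path of pairwise dependent implicit edges. We may assume $\dim\fw\ge 2$; indeed, if $\dim\fw\le 1$ the hypothesis on $v$ forces all vertices other than $v$ to share a common point, so $\fw$ is, up to scaling, a segment or a point and the claim is trivial.

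The key step I would establish first is that \emph{all edges incident to $v$ are pairwise dependent}. To this end, I would note that $\p_v\ne\p_u$ for every $u\ne v$: since $\dim\fw\ge 2$ there is some $w$ with $\p_w\ne\p_u$, and $\p_v=\p_u$ would place $v$ on the line through $\p_u$ and $\p_w$, contradicting the hypothesis. Hence every edge $vu$ is non-degenerate. Now take any edge $\{a,b\}$ of the subframework $\fw\setminus v$ obtained by deleting $v$; since $v$ is adjacent to every vertex, $va$ and $vb$ are also edges, so $va, ab, bv$ form a $3$-cycle in $\fw$. If $\{a,b\}$ is non-degenerate, then $\p_v,\p_a,\p_b$ are affinely independent (because $v$ is not on the line through $\p_a$ and $\p_b$), so \cref{cor:RigidCycle0} gives that $va$ and $vb$ are dependent. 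If $\{a,b\}$ is degenerate, I would instead apply \cref{lem:degenerate} to the degenerate edge $\{a,b\}$ together with the non-degenerate edge $\{a,v\}$, which yields that $vb$ is an (implicit, in fact genuine) edge dependent with $va$. Either way $va$ and $vb$ are dependent. Then, using that $\fw\setminus v$ is connected, for any $u,u'\ne v$ a path $u=y_0,y_1,\dots,y_m=u'$ in $\fw\setminus v$ gives $vu=vy_0 \sim vy_1 \sim \dots \sim vy_m=vu'$, and transitivity of dependency (\cref{lem:WclusterGraph}) finishes the claim.

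With this in hand the conclusion is immediate: any vertex $u\ne v$ is joined to $v$ by the single edge $vu$, and any two vertices $p,q\ne v$ are joined by the length-two path $p,v,q$ whose two edges $pv$ and $vq$ are dependent; these are genuine, hence implicit, edges, so \cref{thm:mainthmframeworks} applies and $\fw$ is indecomposable. I expect the only real friction to be the careful treatment of possibly degenerate edges inside $\fw\setminus v$ — handled cleanly by \cref{lem:degenerate} — and of the low-dimensional degenerate frameworks; everything else is just the ``chain triangles through the apex $v$'' argument already used for pyramids in \cref{ex:pyramid}.
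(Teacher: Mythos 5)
Your proposal is correct and follows essentially the same route as the paper's proof: triangles through the apex $v$ (the paper cites \cref{ex:triangle}, you cite \cref{cor:RigidCycle0}) make all edges at $v$ pairwise dependent, connectivity of $\fw\ssm\{v\}$ chains these dependencies, and \cref{thm:mainthmframeworks} applied to the length-two paths through $v$ concludes. The only difference is that you explicitly dispose of degenerate edges in $\fw\ssm\{v\}$ via \cref{lem:degenerate} and of the dimension $\leq 1$ case, details the paper's proof leaves implicit.
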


\begin{remark}\label{rmk:VertexAdjacentToAll}
\Cref{cor:VertexAdjacentToAll} implies that, for a polytope $\pol$, if a vertex $\b v$ of $\pol$ is connected to every other vertex, then $\pol$ is indecomposable: if $\dim \pol = 1$, it is obvious; if $\dim \pol \geq 2$, then we can apply the corollary because the 1-skeleton of $\pol$ is $2$-connected.
\end{remark}

\begin{proof}[Proof of \Cref{cor:VertexAdjacentToAll}]
Let $xy$ be an edge of $\fw\ssm v$, then as $vx$ and $vy$ are edges of $\fw$, \Cref{ex:triangle} applied to the triangle $vxy$ ensures that the edges $vx$, $vy$ and $xy$ are dependent.
Moreover, if $xy$ and $xz$ are edges of $\fw$, then $xy$ and $xz$ are both dependent with $vx$, so they are dependent to each other.
In particular, $\ELD$ contains the line graph of the graph $\bigl(\verts\ssm\{v\},\, \edges\ssm\{vx~;~x\in \verts\}\bigr)$.
As $\fw\ssm v$ is connected, so is its line graph.
Finally, the nodes $vx$ are connected to the nodes of this line graph in $\ELD$, so the set $\bigl\{vx ~;~ x\in \verts\ssm\{v\}\bigr\}$ is a set of dependent edges.

By \Cref{thm:mainthmframeworks}, as all the vertices $x, y\in \verts\ssm\{v\}$ are connected through a path of dependent edges (namely $(vx, vy)$), the framework $\fw$ is indecomposable.
\end{proof}

\begin{example}\label{ex:hyperorder}
We thank Federico Castillo for bringing to our consideration the following example.

As per usual, we denote $\b e_{X} = \sum_{j\in X} \b e_j$, especially $\b e_{[i,\, n]} = \b e_i + \b e_{i+1} + \dots + \b e_{n-1} + \b e_n$.

For $k \leq n$, let $\pol_{n, k} = \left\{\b x\in \R^n ~;~ 0\leq x_1\leq \dots \leq x_n\leq 1 \text{ and } \sum_{i=1}^n x_i = k\right\}$, which is the intersection of the $(n, k)$-hypersimplex $\left\{\b x\in \R^n ~;~ 0\leq x_i\leq 1 \text{ for all } i\in[n] \text{, and } \sum_{i=1}^n x_i = k\right\} = \conv(\b e_X ~;~ X\subseteq[n],\, |X| = k)$ with the $n$-order cone $\{\b x\in \R^n ~;~ x_1\leq \dots \leq x_n\}$.
Equivalently, the polytope $\pol_{n, k}$ is the intersection of the $n$-order simplex $\pol[O]_n \coloneqq \{\b x\in \R^n ~;~ 0\leq x_1\leq \dots \leq x_n\leq 1\} = \conv(\b e_{[i,\, n]}~;~ i\in [n])$ with the hyperplane $\c H_k \coloneqq \left\{\b x\in \R^n ~;~ \sum_{i=1}^n x_i = k\right\}$.

Note that the point $\b e_{[n-k+1,\, n]}$ is a vertex of $\pol[O]_n$ which lies in $\c H_k$.
It is the only face of $\pol[O]_n$ that is contained in $\c H_k$.
Hence, except for $\b e_{[n-k+1,\, n]}$ itself, the vertices of $\pol_{n, k} = \pol[O]_n\cap \c H_k$ are the non-empty intersections of the edges of $\pol[O]_n$ with $\c H_k$; and the edges of $\pol_{n, k}$ are obtained as the intersection of $2$-faces of $\pol[O]_n$ with $\c H_k$.
In particular, as $\pol[O]_n$ is a simplex, all its edges either contain $\b e_{[n-k+1,\, n]}$, or share a $2$-face with $\b e_{[n-k+1,\, n]}$.

This ensures that $\b e_{[n-k+1,\, n]}$ is a vertex of $\pol_{n, k}$ that shares an edge with all the other vertices of $\pol_{n, k}$.
By \Cref{cor:VertexAdjacentToAll} (especially by \Cref{rmk:VertexAdjacentToAll}), the polytope $\pol_{n, k}$ is indecomposable.

We do not detail a proof here, but from $\pol_{n, k} = \pol[O]_n\cap \c H_k$, it follows that the vertices of $\pol_{n, k}$ are $(0, \dots, 0, \frac{k-j}{i}, \dots, \frac{k-j}{i}, 1, \dots, 1)$ with $i$ copies of $\frac{k-j}{i}$ and $j$ copies of $1$, satisfying: $j \leq k$ and $i \leq n-j$.

The polytope $\pol_{n, k}$ is an instance of a \emph{dominant weight polytope} for the type $A$ root system, presented in \cite{BurrullGuiHu2024StronglyDominantWeightPolytopesAreCubes}.
% No knowledge of dominant weight polytope is required here, but for disambiguation: 
For disambiguation:
In type $A$, the cone $\overline{C_+}$ of \cite[Section 1]{BurrullGuiHu2024StronglyDominantWeightPolytopesAreCubes} is the $n$-order cone presented here, and the Weyl group $W_f$ is the group of permutations of $[n]$.
Hence for the dominant weight $\b\lambda = (0, \dots, 0, 1, \dots, 1)$ with $k$ occurrences of $1$s, the polytope $\conv (W_f\b\lambda)$ is the $(n, k)$-hypersimplex, and the polytope $\pol_{n ,k}$ is the dominant weight polytope $\pol^{\b\lambda} = \conv(W_f\b\lambda)\cap \overline{C_+}$.
(Note that, for $\b\lambda=(1,2,\dots,n)$, the polytope $\pol^{\b\lambda}$ is one of the \emph{poset permutahedra} from \cite{BlackRehbergSanyal2025-PosetPermutahedra}.)
In \cite[Proposition 2.13]{BurrullGuiHu2024StronglyDominantWeightPolytopesAreCubes}, the authors show that $\pol^{\b\lambda+\mu} = \pol^{\b\lambda} + \pol^{\mu}$, moreover, in their section, they more-or-less explain that $\{\pol^\mu ~;~ \mu\in \overline{C_+}\}$ is the deformation cone of $\pol^{\b\lambda}$ for any $\b\lambda\in C_+ = \interior\overline{C_+}$.
Hence, $\pol_{n, k}$ is a ray of a deformation cone because $(0, \dots, 0, 1, \dots, 1)$ is a ray of $\overline{C_+}$: this also shows that $\pol_{n, k}$ is indecomposable. 
\end{example}

\subsubsection{The dimension of the deformation cone}

Finally, note that \Cref{thm:mainthmframeworks} can be improved to an upper bound on the dimension of~$\DefoCone[\fw]$ (indecomposable frameworks are those whose deformation cone is $1$-dimensional). We use ideas coming from dependent set of vertices to relax the conditions for our previous upper bound for $\dim(\DefoCone[\fw])$ from \Cref{thm:dim1}. 

\begin{theorem}\label{thm:DCdimBoundedByNumberOfDependentSets}
	If $X_1, \dots, X_r$ are disjoint dependent sets of implicit edges such that any two vertices of~$\fw$ are connected through a path of implicit edges in $\bigsqcup_i X_i$, then $\dim\DefoCone[\fw]\leq r$.
\end{theorem}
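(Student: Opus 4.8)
The plan is to reduce the statement to an application of \Cref{prop:dim} after passing to the closure $\Efw$ and quotienting by an appropriate equivalence relation. First I would replace $\fw$ by its closure $\Efw=(\verts,\Iedges,\p)$; by \Cref{cor:extendedgraph}\,(i) we have $\EDefoCone[\fw]\simeq\EDefoCone[\Efw]$, so it suffices to bound $\dim\DefoCone[\Efw]$, and now all the edges in each $X_i$ are genuine (implicit) edges of the framework we are working with. The key point is that, since each $X_i$ is a dependent set of (implicit) edges, on $\EDefoCone[\Efw]$ the coordinate functions $\dv_e$ for $e\in X_i$ all coincide: they take a common value, say $\mu_i$, on every deformation.

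Next I would exploit the connectivity hypothesis. Let $H$ be the subgraph of $G=(\verts,\edges)$ — or rather of the graph underlying $\Efw$ — whose edge set is $\bigsqcup_i X_i$. By hypothesis $H$ is connected and spans $\verts$. Fix a spanning tree $T$ of $H$; it has $|\verts|-1$ edges, each lying in exactly one $X_i$. I claim that a deformation of $\Efw$ (up to translation) is determined by the values $(\mu_1,\dots,\mu_r)$. Indeed, by \Cref{lem:connectedcomponents} (applied to the connected graph $(\verts, \bigsqcup_i X_i)$, which is a subgraph carrying all the relevant information, together with \Cref{lem:edge-deformations-frameworks}), the positions $\p[\psi]_v$ are recovered from the edge-deformation values along paths in $T$: writing $v$ as the endpoint of the unique $T$-path $u_0=u,u_1,\dots,u_k=v$ from a fixed root $u$, we get $\p[\psi]_v-\p[\psi]_u=\sum_{j=1}^k \mu_{c(j)}(\p_{u_j}-\p_{u_{j-1}})$, where $c(j)$ is the index with $u_{j-1}u_j\in X_{c(j)}$. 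Hence the map sending a deformation to $(\mu_1,\dots,\mu_r)\in\R^r$ is injective modulo translation, i.e.\ on $\DefoCone[\Efw]$.

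The cleanest way to package this is via the quotient machinery already in the paper: the natural move is to mimic \Cref{prop:dim}. Consider the linear subspace $L=\bigcap_{i=1}^r\{\dv\in\R^{\Iedges}\;;\;\dv_e=\dv_f \text{ for all } e,f\in X_i\}$. By dependency of each $X_i$ we have $\EDefoCone[\Efw]\subseteq L$, so it is enough to bound $\dim L$ by $r$ — but this is false as stated, since $L$ also leaves the coordinates outside $\bigsqcup_i X_i$ free. So instead I would argue directly on the linear span of the cone: let $\ell\colon\DefoCone[\Efw]\to\R^r$, $\dv\mapsto(\mu_1(\dv),\dots,\mu_r(\dv))$, which is linear and, by the spanning-tree argument of the previous paragraph combined with \Cref{lem:connectedcomponents}, injective. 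Therefore $\dim\DefoCone[\Efw]\le r$, and by the isomorphism $\EDefoCone[\fw]\simeq\EDefoCone[\Efw]$ we conclude $\dim\DefoCone[\fw]\le r$.

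The main obstacle I anticipate is the injectivity step: one must be careful that an edge-deformation vector of $\Efw$ really is determined, up to translations \emph{of the whole framework}, by the values $\mu_i$. This is where connectivity and spanningness of $\bigsqcup_i X_i$ are essential — without connectivity one could translate components independently (exactly the failure mode in \Cref{lem:connectedcomponents} and in the non-implicit-edge discussion after \Cref{thm:characterizationimplicit}) — and where one must invoke that the $\mu_i$ are well defined (the $X_i$ are \emph{dependent}, not merely that they happen to share one value on one deformation). A secondary technical point is handling degenerate implicit edges inside some $X_i$: on those $\dv_e=0$ by definition, which forces $\mu_i=0$ if $X_i$ contains a degenerate edge, but this only makes the bound stronger and the path-sum formula still holds since a degenerate edge contributes $\p_{u_j}-\p_{u_{j-1}}=\b 0$.
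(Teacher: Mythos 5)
Your proposal is correct and follows essentially the same route as the paper's proof: fix the common value $\ell_j$ on each dependent set $X_j$, use the path-sum along the connecting implicit edges to recover every edge-deformation coordinate from $(\ell_1,\dots,\ell_r)$, and conclude that the linear map $\b\lambda\mapsto(\ell_1,\dots,\ell_r)$ is injective on $\DefoCone[\fw]$. Your explicit passage to the closure $\Efw$ and the spanning-tree bookkeeping are just a cleaner packaging of what the paper does implicitly when it takes $\lambda_e$ for implicit edges $e\in X_j$, so there is no substantive difference.
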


\begin{proof}
	Suppose $X_1, \dots, X_r$ satisfy the hypotheses, and let $\b\lambda\in \DefoCone[\fw]$, with $\fw_{\b\lambda} = (\verts, \edges, \p[\psi])$ the associated deformation.
	For $j\in [r]$, let $\ell_j = \b\lambda_{e}$ for any $e\in X_j$.
	Consider two adjacent vertices $u, v\in V$.
	There is a path $\c P = (u = u_0, u_1, \dots, u_r = v)$ connecting $u$ and $v$, in which all edges $uv$ belong to some $X_{j_i}$.
	We have:
	$$\p[\psi]_v - \p[\psi]_u = \sum\nolimits_{1\leq i\leq r} (\p[\psi]_{u_i} - \p[\psi]_{u_{i-1}}) = \sum\nolimits_{1\leq i\leq r} \ell_{j_i} (\p_{u_i}-\p_{u_{i-1}})$$
	
	By definition, $\p[\psi]_v - \p[\psi]_u = \b\lambda_{uv} (\p_v - \p_u)$.
	Hence, for every edge $uv$ of $\fw_{\b\lambda}$, the value $\b\lambda_{uv}$ can be deduced from the values of $\ell_j$, $j\in [r]$.
	Equivalently, the projection $\b\lambda \mapsto (\ell_j ~;~ j\in [r])$ is injective on $\DefoCone$.
	This gives the bound $\dim\DefoCone[\fw] \leq r$.
\end{proof}

\begin{figure}[htpb]
\centering
\begin{subfigure}[b]{0.21\linewidth}
     \centering
     \includegraphics[width=\textwidth]{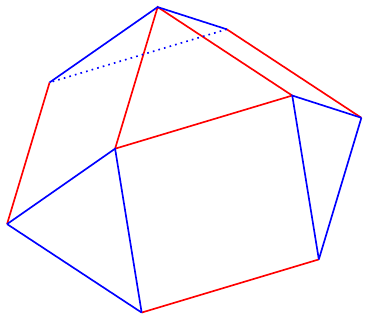}
     \caption{Triangular cupola}
     \label{sfig:TriangularCupola}
\end{subfigure}\hfill
\begin{subfigure}[b]{0.26\linewidth}
     \centering
     \includegraphics[width=\linewidth]{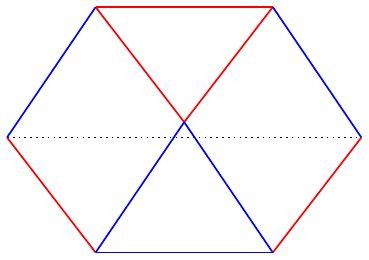}
     \caption{A sum of two triangles}
     \label{sfig:TriangleSum}
\end{subfigure}
\hfill
\begin{subfigure}[b]{0.16\linewidth}
     \centering
     \includegraphics[width=\textwidth]{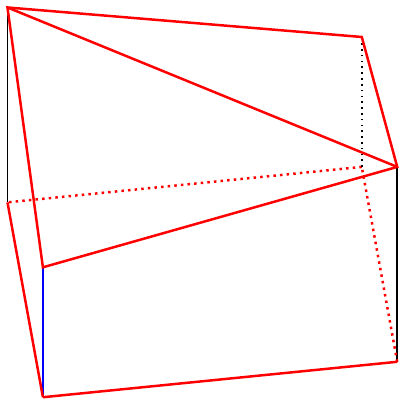}
     \caption{Chiseled cube}
     \label{sfig:FrustrumRegular}
\end{subfigure}
\caption[A sum of 2 triangles, a triangular cupola and a chiseled cube]{(Left) A triangular cupola $J_{3}$.
(Middle) A 3-dimensional polytope constructed as the sum of 2 triangles which share 1 edge direction.
(Right) A chiseled cube, obtained by lowering the $z$-coordinate of two diagonally opposite vertices.
Their deformation cones have dimension 2, and have 2 rays.}
\label{fig:TriangleSum}
\end{figure}

\begin{example}
The triangular cupola $J_3$ in \Cref{sfig:TriangularCupola} is a Minkowski sum of a tetrahedron with a triangle, which already shows that $\dim\DefoCone[J_3]\geq 2$. Using parallelograms and triangles, we deduce that any two edges with the same color in \Cref{sfig:TriangularCupola} are dependent.
Note that the hypotheses of \Cref{thm:DCdimBoundedByNumberOfDependentSets} are satisfied with $X_1$ the set of \textcolor{blue}{blue} edges and $X_2$ the set of \textcolor{red}{red} edges: this ensures that $\dim\DefoCone[J_3] \leq 2$. Consequently, $\dim\DefoCone[J_3] = 2$, and $\DefoCone[J_3]$ is a cone with 2 rays (the tetrahedron and the triangle), because every cone of dimension 2 has exactly 2 rays.

Now let $\pol$ be the polytope constructed as in \Cref{sfig:TriangleSum} by doing the sum of two triangles (the \textcolor{blue}{blue} one and the \textcolor{red}{red} one).
Again, as $\pol$ is constructed as a Minkowski sum, we get: $\dim\DefoCone \geq 2$.
Using parallelograms and triangles, we deduce that any two edges with the same color in \Cref{sfig:TriangleSum} are dependent. Even if we do not cover all the edges with these two colors, the hypotheses of \Cref{thm:DCdimBoundedByNumberOfDependentSets} are still satisfied, and we deduce that $\dim\DefoCone = 2$.
The cone $\DefoCone$ is the cone whose rays are the \textcolor{blue}{blue} and the \textcolor{red}{red} triangles.

Finally, consider the chiseled cube in \Cref{sfig:FrustrumRegular}, which is obtained from a standard $3$-cube by \emph{chiseling} two opposite vertices of the top face. Note that the vertical facets are trapezoids, and we can exploit \Cref{ex:parallelogram} to get the displayed edge-dependencies among the top and bottom edges in red. Taking one of the vertical edges as the second class, we get a connected spanning subgraph, proving $\dim\DefoCone \leq 2$. The rays of the deformation cone are a vertical segment, and the polytope obtained from the chiseled cube by Minkowski subtracting the vertical segment until one of the vertical edges collapses into a point.
\end{example}

% \subsection{Covering families of flats and McMullen's criterion}
%\subsection{Improving the indecomposability criteria using covering families of flats}\label{ssec:CoveringFamilyOfFlats}
\subsection{Pinning closures}\label{ssec:CoveringFamilyOfFlats}

In our final improvement, we will show how to grow dependent families of vertices and unveil more implicit edges using the affine flats spanned by subsets of vertices of our framework.

\subsubsection{Vertices pinned by families of flats}
The main new object are collections of \emph{flats} and the vertices they \emph{pin}.
\begin{definition}
Let $\fw=(\verts,\edges,\p)$ be a framework.
 A \defn{flat} of $\fw$ is a subset of vertices $F\subseteq \verts$ inducing a connected subframework of $\fw$ (that is, that for every pair of vertices in~$F$ there is a path of edges in $\fw$ only using vertices in $F$). 
 The \defn{direction} of a flat~$F$ is $\di=\lin\set{\p_u-\p_v}{u,v\in F}$. 
 
 We say that a collection of flats $\c C$:
	\begin{compactitem}

		\item  \defn{pins} a vertex $v\in \verts$ if there is a subset $\c I\subseteq \c C$ such that $v\in \bigcap_{F\in \c I}F$ and $\bigcap_{F\in \c I}\di=\{\b 0\}$; and that 
		% A collection $\c C$ is \defn{covering} for $\fw$ if every vertex $v\in \verts$ is pinned by some subset of~$\c C$.
		\item  is \defn{grounded} on a subset of vertices $S\subseteq\verts$ if every flat of $\c C$ contains some vertex in $S$.
	\end{compactitem}
\end{definition}

\begin{remark}
	Note that, for a polytope $\pol$, the flats of $\fw(\pol)$ \textbf{are not} the flats of its associated matroid.
	
	However, as the graph of any face of $\pol$ is connected, the set of vertices of any face of $\pol$ forms a flat in~$\fw(\pol)$.
	In particular, the collection formed by all the facets of $\pol$ pins each of its vertices. Furthermore, if $\c C$ is a sub-collection of facets of the $d$-polytope $\pol$ such that each vertex of $\pol$ is contained in at least $d$ facets in $\c C$ that have linearly independent normal vectors, then $\c C$ pins each of the vertices of $\pol$.
\end{remark}

\begin{lemma}\label{lem:DirDeformation}
	For a deformation $\fw[G]$ of $\fw$, if $F$ is a flat in $\fw$, then $F$ is a flat in $\fw[G]$ with $\di[{\fw[G]}] \subseteq \di$.
\end{lemma}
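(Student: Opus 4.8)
The statement splits into two parts: that $F$ is again a flat in $\fw[G]$, and that its direction can only shrink under deformation. The first part is essentially immediate from the definitions. A flat is a subset of vertices inducing a connected subframework, and connectivity of the induced subframework depends only on the underlying graph $G=(\verts,\edges)$. Since a deformation $\fw[G]=(\verts,\edges,\p[\psi])$ has, by definition, the \emph{same} underlying graph as $\fw$, the subframework of $\fw[G]$ induced by $F$ has exactly the same edges as the one induced by $\fw$; hence it is connected, and $F$ is a flat in $\fw[G]$.

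For the inclusion of directions, the plan is to use that $\fw[G]$ is a deformation, so for each edge $uv\in\edges$ there is $\lam_{uv}\in\R_+$ with $\p[\psi]_v-\p[\psi]_u=\lam_{uv}(\p_v-\p_u)$, as in \eqref{eq:ELDF}. Fix $u,v\in F$. Since $F$ induces a connected subframework of $\fw$, there is a path $u=u_0,u_1,\dots,u_k=v$ with all $u_i\in F$ and $u_{i-1}u_i\in\edges$. Telescoping along this path gives
\[
\p[\psi]_v-\p[\psi]_u=\sum_{i=1}^{k}\bigl(\p[\psi]_{u_i}-\p[\psi]_{u_{i-1}}\bigr)=\sum_{i=1}^{k}\lam_{u_{i-1}u_i}\bigl(\p_{u_i}-\p_{u_{i-1}}\bigr).
\]
Each summand lies in $\lin\set{\p_{u_i}-\p_{u_{i-1}}}{i\in[k]}\subseteq\di[F]$, because all the $u_i$ belong to $F$. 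Hence $\p[\psi]_v-\p[\psi]_u\in\di[F]$ for every pair $u,v\in F$, and therefore $\di[{\fw[G]},F]=\lin\set{\p[\psi]_u-\p[\psi]_v}{u,v\in F}\subseteq\di[F]$.

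There is no real obstacle here; the only point requiring a little care is to remember that the path witnessing connectivity must stay inside $F$, which is exactly what the definition of flat guarantees, and that the coefficients $\lam_{u_{i-1}u_i}$ are precisely the coordinates of the edge-deformation vector $\dv(\fw[G])$. This same telescoping argument already appears in the proofs of \cref{lem:connectedcomponents} and \cref{prop:connectedimplicit}, so the computation is entirely routine.
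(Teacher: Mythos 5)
Your proof is correct and takes essentially the same route as the paper's: the paper also observes that flats depend only on the underlying graph (so $F$ stays a flat), and then expresses $\di$ via edge differences along paths inside $F$ and applies the relation \eqref{eq:ELDF} edge by edge, which is exactly your telescoping argument. The only quibble is notational: with the paper's macro, the direction of $F$ in $\fw$ is written $\di$ and in the deformation $\di[{\fw[G]}]$, so a couple of your displayed symbols place the flat in the subscript slot where the framework belongs.
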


\begin{proof}
	Let $\fw = (\verts,\edges,\p)$ and $\fw[G] = (\verts,\edges,\p[\psi])$. 
	% If $\fw[G]$ is a deformation of $\fw$, then their graphs are the same, guaranteeing that flats of $\fw$ are flats of $\fw[G]$.
	% By deformation, for any edge $uv$, we have $\p[\psi]_u - \p[\psi]_v = \b\lambda_{uv}(\p_u - \p_v)$ for some $\b\lambda\in \R$.
	% This yields $\di[{\fw[G]}] \subseteq \di$ (where the inclusion can be strict if some $\b\lambda_{uv} = 0$).
	Since both share the same underlying graph, they have the same flats, because these do not depend on the realization. 
	If $F$ is a flat, then it is connected, and therefore \[\di=\lin\set{\p_u-\p_v}{u,v\in F}=\lin\set{\p_u-\p_v}{u,v\in F\text{ and }uv\in E}\] 
	because every vector $\p_u-\p_v$ can be decomposed as $\sum_{i=0}^{k-1}\p_{u_{i+1}}-\p_{u_i}$ if $u=u_o,u_1,\dots, u_k=v$ is a path joining $u$ and $v$ in the induced subgraph. Moreover, since $\fw[G]$ is a deformation of $\fw$, for any edge $uv$, we have $\p[\psi]_u - \p[\psi]_v = \b\lambda_{uv}(\p_u - \p_v)$ for some $\b\lambda\in \R$.
	This yields $\di[{\fw[G]}] \subseteq \di$ (where the inclusion can be strict if some $\b\lambda_{uv} = 0$).
\end{proof}

\begin{lemma}\label{lem:PinnedVertex}

Let $\c C$ be a collection of flats of $\fw$ grounded on a subset $S\subseteq \verts$ that pins a vertex $u$.
If $\fw[G] = (\verts, \edges, \p[\psi])$ and $\fw[G]' = (\verts, \edges, \p[\psi]')$ are deformations of $\fw$ such that $\p[\psi]_{v} = \p[\psi]'_{v}$ for every $v\in S$, then $\p[\psi]_{u}=\p[\psi]'_{u}$.%	Let $\c C$ be a collection of flats of $\fw$ that pins a vertex $u$.
%	If $\fw[G] = (\verts, \edges, \p[\psi])$ and $\fw[G]' = (\verts, \edges, \p[\psi]')$ are deformations of $\fw$ such that for every $F\in \c C$ there is a vertex $v_F\in F$ with $\p[\psi]_{v_F} = \p[\psi]'_{v_F}$, then $\p[\psi]_{u}=\p[\psi]'_{u}$.
\end{lemma}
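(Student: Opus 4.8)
The plan is to prove that $\p[\psi]_u$ is determined as a function of the shared vertex data $\{\p[\psi]_{v_F}\}_{F\in\c C}$, by using the pinning condition $\bigcap_{F\in\c C}\di=\{\b 0\}$ to pin down $\p[\psi]_u$ up to nothing. The key idea is that within each flat $F\in\c C$, the difference $\p[\psi]_u-\p[\psi]_{v_F}$ lies in a translate of $\di[{\fw[G]}]\subseteq\di$, and similarly $\p[\psi]'_u-\p[\psi]'_{v_F}$ lies in $\di[{\fw[G]'}]\subseteq\di$ (both by \cref{lem:DirDeformation}); subtracting, $\p[\psi]_u-\p[\psi]'_u$ lies in $\di$ for every $F\in\c C$, and hence in $\bigcap_{F\in\c C}\di=\{\b 0\}$.

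In more detail, first I would fix $F\in\c C$. Since $u\in F$ (because $\c C$ pins $u$, so $u\in\bigcap_{F\in\c C}F$) and $v_F\in F$, and since $F$ induces a connected subframework, there is a path $u=w_0,w_1,\dots,w_k=v_F$ using only vertices of $F$ and edges of $\fw$. Then
\[
\p[\psi]_u-\p[\psi]_{v_F}=\sum_{i=0}^{k-1}\bigl(\p[\psi]_{w_i}-\p[\psi]_{w_{i+1}}\bigr)
=\sum_{i=0}^{k-1}\lambda_{w_iw_{i+1}}\bigl(\p_{w_i}-\p_{w_{i+1}}\bigr)\in\di,
\]
using that $\fw[G]$ is a deformation of $\fw$ (so $\p[\psi]_{w_i}-\p[\psi]_{w_{i+1}}=\lambda_{w_iw_{i+1}}(\p_{w_i}-\p_{w_{i+1}})$ for some $\lambda_{w_iw_{i+1}}\in\R_+$) and that each $\p_{w_i}-\p_{w_{i+1}}\in\di$ by definition of the direction of a flat. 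The same computation applied to $\fw[G]'$ gives $\p[\psi]'_u-\p[\psi]'_{v_F}\in\di$. Now, using the hypothesis $\p[\psi]_{v_F}=\p[\psi]'_{v_F}$, we subtract:
\[
\p[\psi]_u-\p[\psi]'_u=\bigl(\p[\psi]_u-\p[\psi]_{v_F}\bigr)-\bigl(\p[\psi]'_u-\p[\psi]'_{v_F}\bigr)\in\di.
\]

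Since this holds for every $F\in\c C$, we conclude $\p[\psi]_u-\p[\psi]'_u\in\bigcap_{F\in\c C}\di=\{\b 0\}$, the last equality being exactly the pinning condition. Hence $\p[\psi]_u=\p[\psi]'_u$, as claimed. There is no real obstacle here; the only points requiring a little care are (i) invoking \cref{lem:DirDeformation} correctly to get $\di[{\fw[G]}]\subseteq\di$ rather than working with $\di$ directly — though in fact the explicit path computation above already shows membership in $\di$ without needing the lemma — and (ii) making sure the path stays inside $F$, which is guaranteed by the definition of a flat as inducing a \emph{connected} subframework on the vertex set $F$ (so the connecting path uses only vertices of $F$).
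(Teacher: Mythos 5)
Your proof is correct and takes essentially the same route as the paper's: both arguments rest on the fact that $\p[\psi]_u-\p[\psi]_{v_F}\in\di$ for every $F\in\c C$ (you establish this by the explicit path computation inside the flat, which is precisely the proof of \cref{lem:DirDeformation} inlined, while the paper invokes that lemma and intersects the affine translates $\p[\psi]_{v_F}+\di$), and then conclude via the pinning condition $\bigcap_{F\in\c C}\di=\{\b 0\}$. Your subtraction step, giving directly $\p[\psi]_u-\p[\psi]'_u\in\bigcap_{F\in\c C}\di$, is a clean and slightly more economical phrasing of the same argument.
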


\begin{proof}
	For each $F\in \c C$, there is a point $v_F\in F\cap S$. 
	We have $H_F = \aff\{\p[\psi]_u ~;~ u \in F\} = \p[\psi]_{v_F} + \di[{\fw[G]}]$.
%	If we denote $H_F = \aff\{\p[\psi]_u ~;~ u \in F\}$ and $H'_F = \aff\{\p[\psi]'_u ~;~ u \in F\}$,
%	 we have $H_F = \p[\psi]_{v_F} + \di[{\fw[G]}]$.
	As $\fw[G]$ is a deformation of $\fw$, by \Cref{lem:DirDeformation}, we have that $H_F \subseteq \bigl(\p[\psi]_{v_F} + \di\bigr)$.
	As $u\in F$, we have $\p[\psi]_u\in H_F$.
	In particular, we have $\p[\psi]_u \in \bigcap_{F\in \c C} H_F \subseteq \bigcap_{F\in \c C} \bigl(\p[\psi]_{v_F} + \di\bigr)$.
	As $\c C$ pins $u$, we get $\bigcap_{u\in F\in \c C} \di = \{\b 0\}$, so $\bigcap_{u\in F\in \c C} \bigl(\p[\psi]_{v_F} + \di\bigr)$ is a point.
	We conclude that $\bigcap_{u\in F\in \c C} \bigl(\p[\psi]_{v_F} + \di\bigr) = \{\p[\psi]_u\}$.
	Similarly, the same holds for $\fw[G]'$, yielding $\bigcap_{u\in F\in \c C} \bigl(\p[\psi]'_{v_F} + \di\bigr) = \{\p[\psi]'_u\}$.
	As $\p[\psi]_{v_F} = \p[\psi]'_{v_F}$ for all $F\in\c C$, we get: $\p[\psi]_u = \p[\psi]'_u$.
\end{proof}

The following result is our main insight. It allows to iteratively increase dependent subsets of vertices.

\begin{proposition}\label{prop:pinningdependent}
	Let $\fw = (\verts,\edges,\p)$ be a framework, $S\subseteq \verts$ a dependent subset of vertices, and $v\in \verts$. If there is a collection of flats $\c C$ grounded on~$S$ that pins~$v$, then $S\cup \{v\}$ is a dependent subset.	
\end{proposition}
\begin{proof}
	Let $u\in S$. If $uw$ forms a degenerate implicit edge (\ie $\p_u=\p_w$) for every $w\in S$, then for any collection of flats $\c C$ grounded on~$S$ we have $\bigcap_{u\in F\in \c C} \bigl(\p_{u} + \di\bigr) \supseteq \{\p_u\}$, and thus any vertex $v$ pinned by~$\c C$ fulfills $\p_u=\p_v$. In this case $S\cup \{v\}$ form a clique of degenerate implicit edges and there is nothing to prove.
	 
	Otherwise, consider a vertex $w\in S$ such that $u,w\in\ndie$. If $\fw[G]=(\verts,\edges,\p[\psi])$ is a deformation of $\fw$, then there is some $\lam\geq 0$ such that $\p[\psi]_w-\p[\psi]_u=\lam(\p_w-\p_u)$. 	Let $\fw[G]'=(\verts,\edges,\p[\psi]'\eqdef x\mapsto \p[\psi]_u +\lam (\p_x -\p_u))$. 
	Note that since $S$ is dependent, for all $x\in S$ we have that $ux$ is an implicit edge with the same edge-deformation coefficient~$\b\lambda$, and hence
	$\p[\psi]_x=\p[\psi]_u+\lam(\p_x-\p_u)=\p[\psi]'_x$. 
	
	Now, as $\c C$ pins $u$ and $F\cap S\ne\emptyset$ for all $F\in \c C$, \Cref{lem:PinnedVertex} gives $\p[\psi]_v = \p[\psi]_v' = \p[\psi]_u +\lam (\p_v -\p_u)$. Thus $\p[\psi]_v-\p[\psi]_u=\lam(\p_v-\p_u)$ and $uv$ forms an implicit edge dependent with $uw$. 
	
	This is independent of the chosen $u$, showing that $S\cup\{v\}$ forms a dependent subset.	
\end{proof}

This motivates the following definition.

\begin{definition}\label{def:pinningclosure}
	Let $\fw = (\verts,\edges,\p)$ be a framework, and $S\subseteq \verts$ a %dependent 
	subset of vertices. Set $S_0=S$ and for $i\geq 1$ define\[ 	S_i \eqdef S_{i-1} \cup	\set{v\in \verts}{v \text{ is pinned by a collection of flats grounded on } S_{i-1} }. 	\]
	Since $\verts$ is finite, the sequence stabilizes and there exists $k$ with $S_k = S_{k+1}$.  
	We call the resulting set $S_k$ the \defn{pinning closure} of $S$, and denote it by $\mathdefn{\pcl}\eqdef S_{k}$.
\end{definition}

Note that if $S$ is dependent, then so is its pinning closure $\pcl$, by \Cref{prop:pinningdependent}.

\subsubsection{The main indecomposability theorem}
The next theorem encompasses and improves all the indecomposability criteria presented in previous sections. The heavy lifting is done by \Cref{prop:pinningdependent} above.

\begin{theorem}\label{thm:mainthm}
	Let $\fw=(\verts,\edges,\p)$ be a framework. If there is a dependent subset of vertices $S\subseteq \verts$ whose pinning closure covers all vertices, $\pcl=\verts$, then $\fw$ is indecomposable.
\end{theorem}

\begin{proof}
If $S$ is dependent, then so is its pinning closure by \Cref{prop:pinningdependent}. Thus $\verts$ is a dependent subset of vertices and we conclude by \Cref{cor:DependentVerticesIndecomposable}.
\end{proof}

\begin{figure}[htpb]
	\centering
	\includegraphics[width=0.65\linewidth]{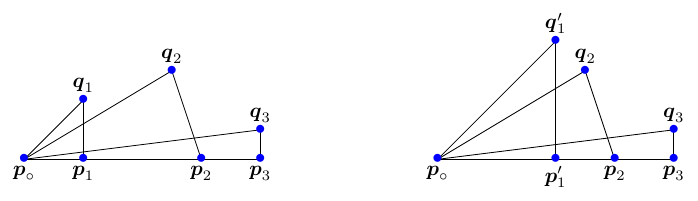}
	\caption{
		% (Left) An indecomposable framework (black triangle) with one added vertex and edge (in \textcolor{red}{red}). It is decomposable (replacing $\b x_\circ$ by $\b x_\circ + \b\lambda(\b x_\circ - \b u_\circ)$ yields a non trivial deformation).
		Two frameworks which are deformations of each other, where all $\b p_i$ are pairwise linked by an edge. The collection of black lines supports a covering collection of flats, and $S = \{\b p_\circ, \b p_1, \b p_2, \b p_3\}$ is a family of vertices touching every flat.
        Since $\p(S)$ is 1-dimensional, this does not guarantee indecomposability.}
	\label{fig:HypothesesMainThm}
\end{figure}

\begin{remark}
	If $\fw = (V, E, \p)$ is 1-dimensional, a collection $\c C$ of flats that pins a vertex $v$ of~$\fw$ must contain the singleton $\{v\}$. Hence $S=\pcl$ and \Cref{thm:mainthm} is straightforward: $S$ being dependent amounts to every pair of vertices forming dependent implicit edges.
	
	On the other hand, if $S$ is at least $2$-dimensional (\ie $\dim\lin\{\p_v ~;~ v\in S\} \geq 2$), then \Cref{cor:Dim2DependentVerticesDependentEdges} allows to replace ``dependent subset of vertices $S\subseteq V$'' by ``subsets of vertices $S\subseteq V$ pairwise linked by an implicit edge'' in \Cref{thm:mainthm}, which is a slightly weaker condition that is easier to check in practice.
	
	The condition that $S$ is at least $2$-dimensional is necessary. For example, consider the \emph{wedge of triangles}~$\fw$ given in \Cref{fig:HypothesesMainThm}.
%	 (middle and right).
It is decomposable.
		However, let $\c C$ be the collection formed by the flat $(\b p_\circ, \b p_1, \b p_2, \b p_3)$ together with the flats $(\b p_i, \b q_i)$ and $(\b p_\circ, \b q_i)$ for each $i\in [3]$.
		In the set of vertices $S = \{\b p_\circ, \b p_1, \b p_2, \b p_3\}$, there are (implicit) edges between every pair of vertices (but these edges are not pairwise dependent), and every flat of $\c C$ contains a vertex of~$S$; yet $\fw$ is decomposable.
\end{remark}

\begin{example}\label{ex:iterativeclosure}
	Consider a $3$-polytope $\pol$ whose graph coincides with that of \Cref{fig:IterativePinning}. We can use \Cref{thm:mainthm} to prove that $\pol$ is indecomposable as follows. The subset of vertices $S=\{A,B,C\}$ forms a triangle, and is therefore dependent by \Cref{ex:triangle}. The flats $\{A,D,E,F\}$, $\{B,E,G,H\}$ and $\{C,E,I,J\}$ are grounded on $S$, and pin $E$. Therefore $S_1=\{A,B,C,E\}$ is a dependent subset. The collection of all facets of $\pol$ is grounded on $S_1$, and pins all the vertices. Therefore $\pol$ is indecomposable. However, our previous theorems were not enough for proving it.
\end{example}

\begin{figure}[htpb]
	\centering
	\includegraphics[width=0.33\linewidth]{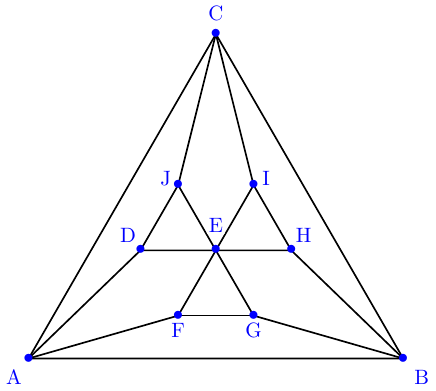}
	\caption{Any $3$-polytope with this graph is indecomposable.}
	\label{fig:IterativePinning}
\end{figure}

The pinning closure is defined iteratively, and \Cref{ex:iterativeclosure} shows that multiple iterations may be required. However, the special case when only one iteration is needed has a slightly simpler formulation that is sufficient in many cases. We say that a collection of flats is \defn{covering} if it pins all the vertices.

\begin{corollary}\label{cor:mainthm1step}
	Let $\fw=(\verts,\edges,\p)$ be a framework. %of dimension at least $2$.
	If there is a dependent subset of vertices $S\subseteq \verts$, and a covering collection of flats $\c C$ grounded on $S$, then $\fw$ is indecomposable.
\end{corollary}

\begin{proof}
As $\c C$ is covering, we get $\pcl = V$, hence \Cref{thm:mainthm} gives the claim.
\end{proof}

\subsubsection{Recovering McMullen's criterion}
Identifying covering collections can be difficult, but for a polytope $\pol$, it is direct to see that the facets of $\pol$ form a covering collection.

\begin{corollary}\label{cor:mainthmpolytopes}
	Let $\pol$ be a polytope. If there is a dependent subset of vertices $S\subseteq \verts(\pol)$ such that every facet of $\pol$ contains a vertex in $S$, then $\pol$ is indecomposable.

	In particular, if for $S\subseteq \verts(\pol)$ any pair of vertices of $S$ is connected through a path of pairwise dependent implicit edges and every facet of $\pol$ contains a vertex in $S$, then $\pol$ is indecomposable. 
\end{corollary}

\begin{proof}
	Direct application of \Cref{cor:mainthm1step}, as the collection of facets is a covering collection of flats.
\end{proof}

With it, we can deduce another indecomposability criterion introduced in~\cite{McMullen1987}. While not strictly comparable to \Cref{cor:McMullen1}, both turn out to be consequences of \Cref{cor:mainthmpolytopes}.
A family of faces $\scr F$ of a polytope is \defn{strongly connected} if for any $\pol[F],\pol[G]\in \scr F$ there is a strong chain $\pol[F]=\pol[F]_0,\pol[F]_1,\dots,\pol[F]_k=\pol[G]$ with each $\pol[F]_i\in \scr F$. We say that $\scr F$ touches a face $\pol[F]$ if $(\bigcup \scr F)\cap \pol[F]\neq \emptyset$.

\begin{corollary}[{\cite[Thm.~2]{McMullen1987}}]\label{cor:McMullen}
	If a polytope $\pol$ has a strongly connected family of indecomposable faces which touches each of its facets, then $\pol$ is itself indecomposable.
\end{corollary}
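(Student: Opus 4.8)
The plan is to reduce this to \Cref{cor:mainthmpolytopes}: it suffices to produce a dependent subset of vertices $S\subseteq\verts(\pol)$ meeting every facet of $\pol$. Let $\scr F$ be the given strongly connected family of indecomposable faces touching each facet, and set $S\eqdef\bigcup_{\pol[F]\in\scr F}\verts(\pol[F])$. (If $\dim\pol\le 1$ then $\pol$ is a point or a segment, hence indecomposable for trivial reasons, so assume $\dim\pol\ge 2$.) That $S$ meets every facet $\pol[G]$ follows from the hypothesis that $\scr F$ touches $\pol[G]$: the set $(\bigcup\scr F)\cap\pol[G]=\bigcup_{\pol[F]\in\scr F}(\pol[F]\cap\pol[G])$ is a nonempty union of faces of $\pol$, hence contains a vertex $\b w$ of $\pol$, and $\b w$ lies in some $\pol[F]\in\scr F$ and in $\pol[G]$, so $\b w\in S\cap\verts(\pol[G])$.

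The heart of the proof is showing that $S$ is a dependent set of vertices. For each $\pol[F]\in\scr F$, the framework $\fw(\pol[F])$ is an indecomposable subframework of $\fw(\pol)$, so by \Cref{cor:indecsubframeclique} the edges of $\pol$ lying in $\pol[F]$ form a clique $C_{\pol[F]}$ in $\ELD[\pol]$; moreover the $1$-skeleton of $\pol[F]$ is connected. If $\pol[F],\pol[G]\in\scr F$ satisfy $\dim(\pol[F]\cap\pol[G])\ge 1$, then the face $\pol[F]\cap\pol[G]$ contains an edge $\pol[a]$ of $\pol$ lying in both, so $\pol[a]\in C_{\pol[F]}\cap C_{\pol[G]}$; since lying in the same connected component of $\ELD[\pol]$ is transitive (\Cref{lem:WclusterGraph}), strong-connectivity of $\scr F$ forces all the cliques $C_{\pol[F]}$, $\pol[F]\in\scr F$, to lie in a single dependency class. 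In other words, \emph{all edges of $\pol$ that lie in some face of $\scr F$ are pairwise dependent in $\ELD[\pol]$}.

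To conclude, take $\b p,\b q\in S$, say $\b p\in\verts(\pol[F])$, $\b q\in\verts(\pol[G])$ with $\pol[F],\pol[G]\in\scr F$, and a strong chain $\pol[F]=\pol[F]_0,\dots,\pol[F]_k=\pol[G]$ in $\scr F$; for each $i$ pick an edge of $\pol$ inside $\pol[F]_{i-1}\cap\pol[F]_i$ together with one of its endpoints $\b b_i$. Using connectedness of the $1$-skeletons, concatenate a path of edges of $\pol[F]_0$ from $\b p$ to $\b b_1$, paths of edges of $\pol[F]_i$ from $\b b_i$ to $\b b_{i+1}$, and a path of edges of $\pol[F]_k$ from $\b b_k$ to $\b q$; this produces a path in the $1$-skeleton of $\pol$ joining $\b p$ and $\b q$ whose edges all lie in faces of $\scr F$ and are therefore pairwise dependent. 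By \Cref{prop:connectedimplicit}, $\b p\b q$ is an implicit edge, dependent with every edge of this path. Hence every pair of vertices of $S$ is an implicit edge; and since every non-degenerate implicit edge produced this way is dependent with the (pairwise dependent) family of edges of $\pol$ lying in faces of $\scr F$, all these non-degenerate implicit edges are pairwise dependent. Thus $S$ is dependent and, by \Cref{cor:mainthmpolytopes}, $\pol$ is indecomposable. The step requiring the most care is this dependency bookkeeping: one must keep the connecting paths made of genuine edges of $\pol$, so that \Cref{cor:indecsubframeclique} and \Cref{prop:connectedimplicit} apply directly and one sidesteps the fact that implicit edges of a subframework need not be implicit in the ambient framework, and one must check not merely that each pair of vertices of $S$ is an implicit edge, but that all the resulting non-degenerate implicit edges collapse into one dependency class, which is exactly what the highlighted intermediate fact provides.
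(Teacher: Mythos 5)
Your proof is correct and follows essentially the same route as the paper: take $S$ to be the union of the vertex sets of the faces in $\scr F$, use \Cref{cor:indecsubframeclique} to get a clique in $\ELD[\pol]$ for each face, merge these cliques into one dependency class via the strong chains, and conclude with \Cref{cor:mainthmpolytopes}. The extra bookkeeping you supply (explicit paths in the $1$-skeletons, the invocation of \Cref{prop:connectedimplicit}, and the check that the touched facets contain vertices of $S$) just makes explicit what the paper's shorter proof leaves implicit.
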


\begin{proof}
	The union of the vertex sets of the indecomposable faces in the strongly connected family form a dependent subset. Indeed, all the edges in each of the faces forms a clique in $\ELD[\pol]$ by \Cref{cor:indecsubframeclique}, and these cliques share a node because the family is strongly connected.
	Thus we have a dependent subset of vertices touching all facets, and we conclude with \Cref{cor:mainthmpolytopes}.
	% It is a direct consequence of the fact that the union of the vertex sets of the strongly connected family of indecomposable faces form a dependent set of vertices. Indeed, the subsets of vertices in each of the faces of the family are dependent because of \cref{cor:indecomposablerigid}. And all vertices in a strong chain of such faces are rigid because in between the faces of the chain there is at least a pair of intersection, and thus by \cref{lem:EELDCCcliques} all pairs are dependent.  
\end{proof}

\subsubsection{A final bound for the dimension of the deformation cone}
As we did with our previous indecomposability theorems, \Cref{thm:mainthm} extends to an upper bound for the dimension of deformation cones.

%\begin{theorem}\label{thm:DCdimBoundedByNumberOfDependentSetsCovering}
%	Let $\fw=(\verts,\edges,\p)$ be a framework, and $\c C$ a covering collection of flats.
%	If there is a subset of vertices $S\subseteq \verts$ and a collection $X_1, \dots, X_r$ of dependent sets of implicit edges such that
%	\begin{compactitem}
%		\item any 2 vertices of $S$ are connected in the graph $G = (V, E)$ through a path of implicit edges in~$\bigcup_i X_i$,
%		\item and every flat of $\c C$ contains a vertex in $S$
%	\end{compactitem}	
%	then $\dim\DefoCone[\fw]\leq r$.
%\end{theorem}

\begin{theorem}\label{thm:DCdimBoundedByNumberOfDependentSetsCovering}
	Let $\fw=(\verts,\edges,\p)$ be a framework, $X_1, \dots, X_r$ be dependent sets of implicit edges, and $S\subseteq \verts$ be a subset of vertices such that
any $2$ vertices of $S$ are connected in the graph $G = (\verts, \edges)$ through a path of implicit edges in~$\bigcup_i X_i$. If $\pcl=\verts$ then $\dim\DefoCone[\fw]\leq r$.
\end{theorem}

\begin{proof}
	We will combine (the proof of) \Cref{thm:DCdimBoundedByNumberOfDependentSets} with \Cref{lem:PinnedVertex}.
%	
%	{prop:pinningdependent}
%	
%		Let $u\in S$. If $uw$ form a degenerate implicit edge (\ie $\p_u=\p_w$) for every $w\in S$, then for any collection of flats $\c C$ grounded on~$S$ we have $\bigcap_{u\in F\in \c C} \bigl(\p_{u} + \di\bigr) \supseteq \{\p_u\}$, and thus any vertex $v$ pinned by~$\c C$ fulfills $\p_u=\p_v$. In this case $S\cup \{v\}$ form a clique of degenerate implicit edges and there is nothing to prove.
%	
%	Otherwise, consider a vertex $w\in S$ such that $u,w\in\ndie$. If $\fw[G]=(\verts,\edges,\p[\psi])$ is a deformation of $\fw$, then there is some $\lam\geq 0$ such that $\p[\psi]_w-\p[\psi]_u=\lam(\p_w-\p_u)$. 	Let $\fw[G]'=(\verts,\edges,\p[\psi]'\eqdef x\mapsto \p[\psi]_u +\lam (\p_x -\p_u))$. 
%	Note that since $S$ is dependent, for all $x\in S$ we have that $ux$ is an implicit edge with the same edge-deformation coefficient~$\b\lambda$, and hence
%	$\p[\psi]_x=\p[\psi]_u+\lam(\p_x-\p_u)=\p[\psi']_x$. 
%	
%	Now, as $\c C$ pins $u$ and $F\cap S\ne\emptyset$ for all $F\in \c C$, \Cref{lem:PinnedVertex} gives $\p[\psi]_v = \p[\psi]'_v=\p[\psi]_u +\lam (\p_v -\p_u)$. Thus $\p[\psi]v-\p[\psi]_u=\lam(\p_v-\p_u)$ and $uv$ forms an implicit edge dependent with $uw$. 
%	
%	This is independent of the chosen $u$, showing that $S\cup\{v\}$ form a dependent subset.	

	Fix $\b\lambda\in \DefoCone[\fw]$ and $\fw[G] = (\verts, \edges, \p[\psi])$ the associated deformation of $\fw$.
	For $i\in [r]$, let $\ell_i(\b\lambda)  = \b\lambda_{e_i}$ for some $e_i\in X_i$.
	Note that, as $X_i$ is a set of dependent edges, all $\b\lambda_{e}$ share the same value for $e\in X_i$, hence $\b\ell(\b\lambda) \coloneqq \bigl(\b\ell_1(\b\lambda) ,\dots, \b\ell_r(\b\lambda) \bigr)$ is well-defined.
	We show that $\b\lambda\mapsto \b\ell(\b\lambda)$ is an injective linear projection.

	Let $S_0=S,S_1,\dots,S_k=\pcl$ be defined as in \Cref{def:pinningclosure}.
	
	Let $u, v\in S$, and $\c P = (u = u_0, u_1, \dots, u_k = v)$ a path from $u$ to $v$ using only edges in $\bigcup_i X_i$. 
	For $i\in [k]$, let $j_i$ such that $u_{i-1}u_i\in X_{j_i}$, we have: $\p[\psi]_v - \p[\psi]_u = \sum_{j\in [k]} \b\lambda_{u_{i-1}u_i} (\p_{u_i} - \p_{u_{i-1}}) = \sum_{j\in [r]} \ell_{i_j}(\b\lambda) \,(\p_{u_i} - \p_{u_{i-1}})$.
	
	Now, fix $v_\circ\in S$ and pick two deformations $\fw[G], \fw[G]'$ of $\fw$ with $\p[\psi]_{v_\circ} = \p[\psi]'_{v_\circ}$.
	Let $\b\lambda, \b\lambda'$ be the associated edge-deformation vectors.
	If $\b\ell(\b\lambda) = \b\ell(\b\lambda')$, then we have for all $u\in S$: $\p[\psi]_u - \p[\psi]_{v_\circ} = \sum_{j\in [r]} \ell_{i_j}(\b\lambda) \,(\p_{u_i} - \p_{u_{i-1}}) = \sum_{j\in [r]} \ell_{i_j}(\b\lambda') \,(\p_{u_i} - \p_{u_{i-1}}) = \p[\psi]'_u - \p[\psi]'_{v_\circ}$.
	Hence, $\p[\psi]_u = \p[\psi]'_u$ for all $u\in S=S_0$.
	
	Assume now for induction that $\p[\psi]_u = \p[\psi]'_u$ for all $u\in S_i$. Then every $v\in S_{i+1}$ is pinned by a collection of flats grounded on $S_i$. Then  \Cref{lem:PinnedVertex} ensures that $\p[\psi]_v = \p[\psi]'_v$. By induction we deduce that $\p[\psi]_v = \p[\psi]'_v$ for all $v\in S_k=\verts$.
	
	Consequently, $\b\lambda\mapsto\b\ell(\b\lambda)$ is an injective projection from $\DefoCone[\fw]$ to $\R^r$, implying: $\dim\DefoCone[\fw] \leq r$.
\end{proof}

\begin{figure}[htpb]
	\centering
	\includegraphics[width=0.8\linewidth]{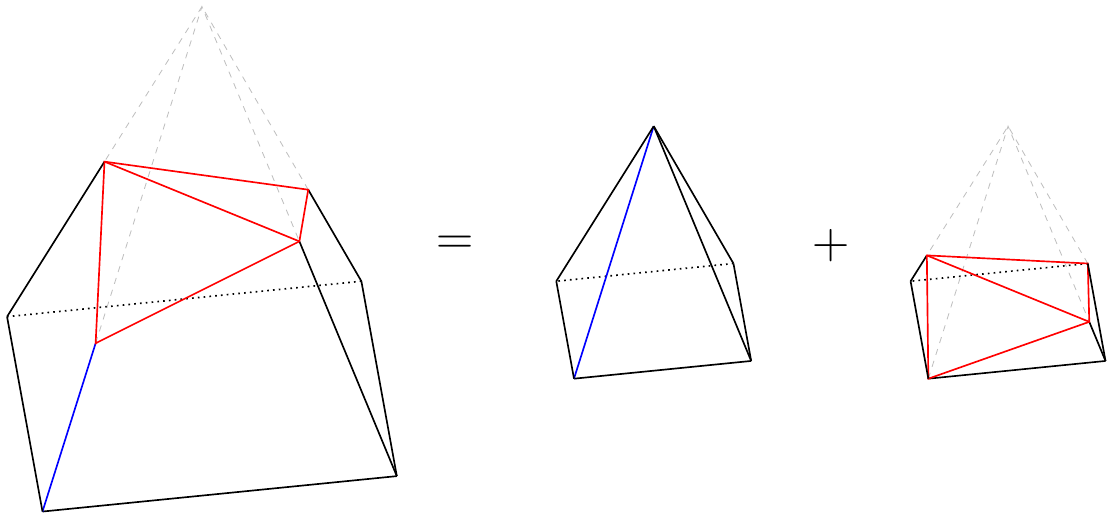}
	\caption{Decomposition of a chiseled square pyramid as a square pyramid $+$ a deeply chiseled cube.}
	\label{fig:Frustrum}
\end{figure}

\begin{example}
	Consider the left-most polytope $\pol$ in \Cref{fig:Frustrum}, a chiseled square pyramid. It is combinatorially equivalent to the chiseled cube from \Cref{sfig:FrustrumRegular}, but the vertical edges are not parallel any more. Therefore we cannot use trapezoids to derive dependencies. However, it is enough to consider $X_1$ to be the set of \textcolor{red}{red} edges, which are dependent because of the triangles, and $X_2$ a single \textcolor{blue}{blue} edge shown in \Cref{fig:Frustrum}. If $\c C$ is the collection of facets of $\pol$, and $S$ the set of vertices adjacent to either a \textcolor{blue}{blue} or a \textcolor{red}{red} edge, then the hypotheses of \Cref{thm:DCdimBoundedByNumberOfDependentSetsCovering} are satisfied.
	This implies that $\dim\DefoCone \leq 2$. As $\pol$ can be written as a Minkowski sum (see \Cref{fig:Frustrum}), we get that $\dim\DefoCone \geq 2$.
	Consequently, $\dim\DefoCone = 2$. %, and $\DefoCone$ is the cone with 2 rays. 
	Given that a $2$-dimensional cone has exactly 2 rays, the unique (up to scaling) Minkowski decomposition of $\pol$ into indecomposable polytopes is the one written in \Cref{fig:Frustrum}.
\end{example}

% \germain{Write these other criteria from the literature that we re-obtain. See here comments}

% Draft:

% Kallay82, thm 11 : not done, do-able ?
% Kallay82, thm 9&10 + Yost (More indecomposable), prop3, thm4(ii) : basically done in \Cref{thm:RigidCycle, exm:KallayConditionallyDecomposable}

% Kallay (other paper on projective equivalence) : not do-able.

% Shepard, statement (14) : not done explicitly, but immediate

% Smilansky, (d) : a free join is indecomposable : do-able ?
% YES-> edge join edge is a simplex
% Smilansky, all his laius on duality (especially (g)) : not done, I don't think it is do-able
% Smilansky, Thm 6.11(c) : for 3-polytope, if 2V-6 <= F <= 2V-4, then P is indecomposable, it may be do-able, that's more a problem of combinatorics of polytopes, not of indecomposability.

% Yost (More indecomposable), prop1 : do-able
% Yost, prop6 : do-able

% Yost (decomposability of polytopes) : pas grand chose à re-prouver

%Pres-Yost Thm2 

% Kallay 82: Lem 7 thm 8

\subsection{The indecomposability game}

One can exemplify the core idea of our methods to prove polytope indecomposability as a 1-player game.
Pick a polytope $\pol$, and a starting edge $\pol[e]$: paint it red.
Then recursively paint in red all edges that belong to a triangle with a red edge, or that are opposite to a red edge in a parallelogram.
Once done, if the collection of red edges is connected (in the 1-skeleton of $\pol$) and if for each facet of $\pol$ there is at least one vertex of $\pol$ which belongs to one of the red edges, then the polytope $\pol$ is indecomposable.
\Cref{fig:SmallDimPandQ} is a good example of the utility of such game.

\vspace{0.15cm}

This first and easy version of the indecomposability game is already sufficient in many cases, but we can go far beyond. 
To this end, one should interpret of our main result, \Cref{thm:mainthm}, as a meta-algorithm that can be used to prove the indecomposability of a framework (or a polytope). Indeed, it tells that to prove indecomposability one should find a large family of pairwise dependent implicit edges, but how to find those is left open (just like classical criteria such as
\Cref{thm:Shephard,cor:McMullen1,cor:McMullen} left open how to find strong chains and strongly connected families). 

%\newpage
Yet, we have provided several tools to uncover edge dependencies: % The basic building blocks and techniques for uncovering edge dependencies that we have used in applications are the following:
\begin{compactitem}
	\item First, all degenerate edges should be contracted using \Cref{prop:degenerate}.
	\item The classical approach is to exploit indecomposable subframeworks via \Cref{prop:subframework}. This requires a previous database of indecomposable subframeworks, but on the other hand also turns every new framework that could not be proven to be indecomposable by previous methods into a new tool to find edge dependencies. The most basic indecomposable subframework consists of a cycle supported on affinely independent points \Cref{cor:RigidCycle0}, and the fundamental example are triangles.
	\item The novel insight from \Cref{prop:projection} is that projections can be used to lift dependencies. This does not require to have indecomposable subframeworks, but instead that there are other subframeworks that can be used to transmit information on edge dependencies, without necessarily being as rigid. The main strategy that we envision is to contract subspaces spanned by some of the edges of our framework, via \Cref{cor:subspaceprojection}. The fundamental example are parallelograms (or just trapezoids).
	\item Once these techniques are exhausted, one should uncover implicit edges, and add them to the framework. This will not alter the deformation cone (\Cref{cor:extendedgraph}), but the new implicit edges can be used to discover new edge dependencies.
	The simplest method for uncovering implicit edges is \Cref{prop:connectedimplicit}: paths of dependent edges can be replaced by an implicit edge between the extremities of the path.
	\item Finally, we can use the pinning method from \Cref{prop:pinningdependent}. The dependent implicit edges give rise to dependent subsets of vertices (\ie dependent cliques of implicit edges). We should look at collections of flats grounded in such a set to compute their pinning closure, potentially adding many new dependent implicit edges. Finding such collections of flats 
	depends a lot on the concrete applications, but a good example are the facets of a polytope, which pin all the vertices (\cf \Cref{cor:mainthmpolytopes}).
	
	\item Note that this starts a recursive procedure. Indeed, the new implicit edges might produce new subframeworks on which to apply the above techniques, that were not apparent before. Hence after unveiling the implicit edges, one should go back to the previous steps.

\end{compactitem}

Once this has been repeatedly applied, one has hopefully found that the whole set of vertices is dependent. 
If this approach is unsuccessful to prove indecomposability, then at least it will give an upper bound on the dimension of $\DefoCone[\fw]$, see \Cref{thm:dim1,thm:DCdimBoundedByNumberOfDependentSets,thm:DCdimBoundedByNumberOfDependentSetsCovering} (and one should try proving the framework at stake is decomposable, after all).

\begin{figure}
	\centering
	\includegraphics[width=0.99\linewidth]{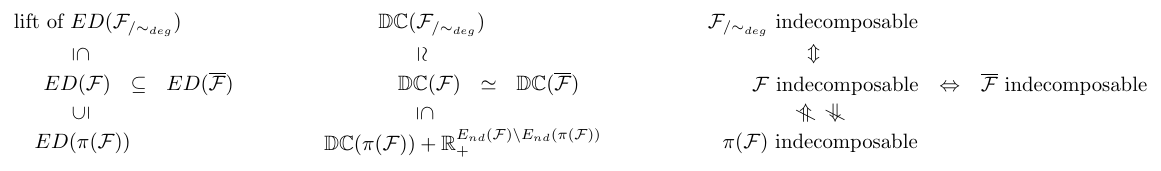}
	\caption[Interplay between the framework $\fw$, its closure $\Efw$, its quotient $\fw_{/\der}$, and its projection $\pi(\fw)$]{The interplay between the framework $\fw$, its closure $\Efw$, its quotient $\fw_{/\der}$, and its projection $\pi(\fw)$.
		(Middle) We denote ``$\simeq$'' the linear equivalence between cones.}
	\label{fig:DiagrammFrameWork}
\end{figure}

Some of these techniques are summarized in \Cref{fig:DiagrammFrameWork} (left): for a framework $\fw$, it shows the interplay between the graph of edge-dependencies of $\fw$ and its counter-parts for the closure of $\fw$, for its quotient and for its projections. 
The right of this figure illustrates the immediate consequences one can deduce from these inclusion of edge-dependencies graph.

\vspace{0.25cm}

Finally, note that the (in)decomposability of an explicit framework or polytope given by the coordinates of its vertices can be decided in polynomial time:
as presented in \Cref{ssec:Prelim}, the cone $\DefoCone[\fw]$ lies in $\R_+^{f_1(\fw)}$ and is defined by $f_2(\fw)$ equalities where $f_1(\fw)$ and $f_2(\fw)$ are the number of edges and cycles of~$\fw$ (or of 2-faces of a polytope $\pol$)\footnote{Dually, it is also polynomial in the number of facets and ridges of a polytope.}, respectively.
By \Cref{rmk:dimension}, we can use Gaussian elimination to test whether ``$\dim\DefoCone[\fw] = 1$'' or not, in polynomial time in $f_1(\fw)$ and $f_2(\fw)$.
%Using linear programming allows for testing whether ``$\dim\DefoCone[\fw] = 1$'' or not, in polynomial time in $f_1(\fw)$ and $f_2(\fw)$.
Our method is not designed as an optimization of this algorithm, but rather finds its utility in other contexts.
For example, it allows us to decide indecomposability for families of frameworks (as in \Cref{thm:PandQareIndecomposable}), or to conclude it only with partial combinatorial information without the exact vertex coordinates (as in \Cref{thm:DecomposabilityVertexStacking}).

\section{New rays of the submodular cone}\label{sec:newrays}

In this section, we construct a new infinite family of rays of the submodular cone. These are constructed by truncating graphical zonotopes of complete bipartite graphs (and taking permutahedral wedges over them). We use the previous results to certify indecomposability.
Except for the indecomposability proofs in \Cref{thm:ConnectedMatroidPolytopesAreIndecomposable,thm:PandQareIndecomposable,thm:PermutahedralWedgesIndecomposable}, the rest of this section does not require~\Cref{sec:criteria}.

\subsection{Prelude: Connected matroid (base) polytopes}\label{sec:matroidpolytopes}

Before presenting our new rays of the submodular cone, we use the graph of edge dependencies to prove that matroid polytopes of connected matroids are indecomposable, thus providing a large family of indecomposable polytopes. We are very grateful to Federico Castillo, who encouraged us to add this section to our manuscript, and shared his thoughts on the proof. In particular, he pointed us to \cite[Proposition 4.3.2]{Oxley} as a potential key step for the proof.

Let $M$ be a matroid with ground set~$\ground$, and set of bases~$\bases(\mat)$ (see~\cite{Oxley} for a thorough introduction to matroid theory).
The \defn{matroid (base) polytope} of~$M$ is the convex hull of the indicator vectors of its bases:
\[\mathdefn{\matpol}\eqdef \conv\set{\b e_B\eqdef \sum_{x\in B}\b e_x}{B\in \bases(\mat)}\subseteq \R^\ground.\]
This gives rise to the following polyhedral characterization of matroids.

\begin{theorem}[{\cite[Thm.~4.1]{GelfandGoreskiMacPhersonSerganova}}]
	A collection $\bases$ of subsets of $\ground$ is the set of bases of a matroid if and only if all edges of 
	$\conv\{\b e_B ~;~ B\in \bases\}$
	are in direction $\b e_i-\b e_j$ for some $i,j\in\ground$.
\end{theorem}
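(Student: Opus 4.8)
The plan is to prove the two implications separately, using that for every $B\in\bases$ the point $\b e_B$ lies in $\{0,1\}^\ground$, hence is a vertex of the cube $[0,1]^\ground$ and therefore a vertex of $\pol[Q]\eqdef\conv\set{\b e_B}{B\in\bases}$; so the vertices of $\pol[Q]$ are exactly the points $\b e_B$ with $B\in\bases$, and I only need to understand which pairs of them span an edge.

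\textbf{($\Rightarrow$)} Assume $\bases=\bases(\mat)$ for a matroid $\mat$, and let $\pol[e]=\conv(\b e_B,\b e_{B'})$ be an edge of $\matpol$. I would argue by contradiction, assuming $|B\symdif B'|>2$ and picking $i\in B\ssm B'$. By the \emph{symmetric} basis exchange property of matroids (see \cite{Oxley}), there is $j\in B'\ssm B$ with both $B-i+j$ and $B'-j+i$ bases; a one-line check gives $\b e_{B-i+j}+\b e_{B'-j+i}=\b e_B+\b e_{B'}$, so the midpoint of $\pol[e]$ equals the midpoint of the segment $[\b e_{B-i+j},\,\b e_{B'-j+i}]$. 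Since $\pol[e]$ is a face of $\matpol$, every vertex appearing with positive coefficient in a convex representation of a point of $\pol[e]$ must lie in $\pol[e]$; hence $B-i+j$ and $B'-j+i$ both lie in $\{B,B'\}$. As $i\notin B-i+j$ while $i\in B$, I get $B-i+j\neq B$, so $B-i+j=B'$, forcing $|B\symdif B'|=2$ — a contradiction. Thus $|B\symdif B'|=2$, say $B'=B-i+j$, and $\b e_B-\b e_{B'}=\b e_i-\b e_j$.

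\textbf{($\Leftarrow$)} Assume every edge of $\pol[Q]$ has direction $\b e_i-\b e_j$ for suitable $i,j$. The $1$-skeleton of $\pol[Q]$ is connected and every edge direction is orthogonal to $\one$, so $|B|=\dotprod{\one}{\b e_B}$ takes a common value $r$ over all $B\in\bases$; it then remains to verify the basis exchange axiom. I would fix $B_1,B_2\in\bases$ and $i\in B_1\ssm B_2$, and for $N$ large consider the face $G\eqdef\pol[Q]^{N\one_{B_1\cup B_2}}$, which equals $\conv\set{\b e_C}{C\in\bases,\ C\subseteq B_1\cup B_2}$ because all bases have size $r$, so the maximizers of $N\one_{B_1\cup B_2}$ among the $\b e_C$ are precisely those with $C\subseteq B_1\cup B_2$; in particular $\b e_{B_1},\b e_{B_2}\in G$. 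Since $\b e_{B_2}\in G$ has $i$-th coordinate $0$ while $\b e_{B_1}$ has $i$-th coordinate $1$, the vertex $\b e_{B_1}$ is not a maximizer of $-x_i$ on $G$, so some edge of $G$ (hence of $\pol[Q]$) issuing from $\b e_{B_1}$ strictly decreases the $i$-th coordinate. By hypothesis this edge has direction $\b e_j-\b e_i$; being $\{0,1\}$-valued forces its other endpoint to be $\b e_{B_1-i+j}$ with $j\notin B_1$, and lying in $G$ forces $j\in B_1\cup B_2$, hence $j\in B_2\ssm B_1$. So $B_1-i+j\in\bases$, which is the exchange axiom; together with equicardinality, $\bases$ is the set of bases of a matroid.

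\textbf{Main obstacle.} I expect the delicate points to be, in ($\Rightarrow$), the need for the \emph{symmetric} exchange property (one-sided exchange does not give the midpoint identity) combined with the elementary but essential fact that a face of a polytope contains every vertex occurring in a convex representation of one of its points; and, in ($\Leftarrow$), correctly identifying the face $G$ as the ``matroid-polytope-like'' slice and running the simplex-type step to produce the exchanged basis as a genuine $\{0,1\}$-vertex of $G$. Neither step is deep, but both require the bookkeeping above; everything else is immediate.
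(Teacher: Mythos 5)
Your proof is correct. Note, however, that the paper does not prove this statement at all -- it is quoted verbatim from Gelfand--Goresky--MacPherson--Serganova -- so there is no in-paper argument to compare with; what you give is essentially the standard proof of the GGMS characterization (the midpoint trick, which genuinely needs Brualdi's symmetric exchange theorem that you correctly invoke, for one direction; equicardinality via $\langle\one,\cdot\rangle$ plus edge-descent on the face maximizing $\one_{B_1\cup B_2}$ for the converse), and all steps check out, modulo the harmless convention that $\bases\neq\emptyset$.
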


That is, matroid polytopes are characterized by being the deformed permutahedra with $0/1$-coordinates \cite{GelfandGoreskiMacPhersonSerganova}. More precisely, there is an edge between the indicator vectors of two bases if and only if their symmetric difference has cardinal $2$.

The \defn{connected components} of $M$ are the equivalence classes of the relation:
$x\sim y$ if there is a circuit of~$M$ containing both $x$ and $y$. Every matroid is the direct sum of its connected components~\cite[Proposition 4.2.8]{Oxley}, 
and it follows from the definition that the matroid polytope of a direct sum is the Cartesian product of the matroid polytopes of its connected components (which is a Minkowski sum because they lie in complementary subspaces);
that is, if $\ground_1\sqcup \ground_2\sqcup \dots\sqcup \ground_r = \ground$ are the connected components of~$\mat$, then:
\begin{equation}\label{eq:matpoldecomposition}
	\matpol = \matpol[\mat_{|\ground_1}]\times\matpol[\mat_{|\ground_2}]\times \dots \times \matpol[\mat_{|\ground_r}] = \matpol[\mat_{|\ground_1}] + \matpol[\mat_{|\ground_2}] + \dots +\matpol[\mat_{|\ground_r}] 
\end{equation}

In~\cite[Section 10]{Nguyen1986-SemimodularFunctions}, Nguyen gave a converse statement (see \cite[Section 7.2]{StudenyKroupa2016} for an explication of the literature around Nguyen's theorem).
He proved that matroid polytopes of connected matroids are indecomposable.
We give a new proof, exploiting our tools and the following result on connected matroids.

For a base $B$ and an element $e\in \ground\ssm B$, we denote by $C(B,e)$ the \defn{fundamental circuit} of $e$ with respect to $B$ (the unique circuit in $B\cup \{e\}$).
The \defn{fundamental graph} of $\mat$ with respect to $B$ is the bipartite graph $G_B(\mat)$ with nodes $\ground$ and arcs $ij$ for $i\in B$, $j\notin B$ with $i\in C(B,j)$. %$\bigl\{(i,j) ~;~ j\notin B \text{ and } j\in C(B,i)\ssm \{i\} \bigr\}$.
Note that the arcs of $G_B(\mat)$ are in bijection with the edges of $\matpol$ incident to the vertex $\b e_B$. Namely, $ij$ is an arc of $G_B(\mat)$ if and only if $\b e_B+\b e_j -\b e_i$ is a vertex of $\matpol$ (which necessarily forms an edge with $\b e_B$).

%Oxley attributes this result to Cunningham 1973, Krogdahl 1977

\begin{lemma}[{\cite[Prop.~4.3.2]{Oxley}, originally from \cite{Cunningham1973-Decomposition,Krogdahl1977-DependenceBasesInMatroids}}]\label{lem:connectedfundamentalgraph}
	Let $\mat$ be a non-empty matroid and $B$ one of its bases. Then $\mat$ is connected (as a matroid) if and only if $G_B(\mat)$ is connected (as a graph).
\end{lemma}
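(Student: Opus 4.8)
The plan is to prove the two implications of the equivalence separately, working purely at the level of matroids and exploiting that the arcs of $G_B(\mat)$ record, for each $j\notin B$, exactly which elements of $B$ occur in the fundamental circuit $C(B,j)$.

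First I would dispatch the easy direction: if $\mat$ is disconnected, then $G_B(\mat)$ is disconnected. Write $\ground=\ground_1\sqcup\ground_2$ with $\mat=\mat|_{\ground_1}\oplus\mat|_{\ground_2}$ and both parts nonempty. Then $B\cap\ground_i$ is a base of $\mat|_{\ground_i}$, so $B$ splits across the two parts, and since every circuit of $\mat$ lies entirely in one part, the fundamental circuit $C(B,j)$ of any $j\notin B$ lies in the part containing $j$. Hence no arc of $G_B(\mat)$ joins $\ground_1$ to $\ground_2$, so $G_B(\mat)$ is disconnected.

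The bulk of the work is the converse: a disconnected fundamental graph forces a disconnected matroid. Let $X$ be the vertex set of one connected component of $G_B(\mat)$, so that $X$ and $\ground\setminus X$ are both nonempty; the goal is $\mat=\mat|_X\oplus\mat|_{\ground\setminus X}$, which by the standard characterization of separators (see, e.g., \cite{Oxley}) is equivalent to the rank identity $r(X)+r(\ground\setminus X)=r(\ground)$, where $r$ denotes the rank function of $\mat$. Submodularity already gives ``$\geq$'', so it suffices to show that $B\cap X$ spans $X$ and $B\cap(\ground\setminus X)$ spans $\ground\setminus X$; these sets are independent (being subsets of the base $B$), hence would be bases of the two restrictions, giving $r(X)+r(\ground\setminus X)=|B\cap X|+|B\cap(\ground\setminus X)|=|B|=r(\ground)$. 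To see that $B\cap X$ spans $X$: for $j\in X\setminus B$, every $i\in C(B,j)\setminus\{j\}$ lies in $B$ and yields an arc $ij$ of $G_B(\mat)$, so $i$ lies in the same connected component as $j$, i.e. $i\in X$; thus $C(B,j)\setminus\{j\}\subseteq B\cap X$, and $j$ lies in the span of this set. The same argument with $\ground\setminus X$ in place of $X$ completes the spanning claims, hence the proof.

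I expect the main obstacle to be this second implication — and within it, the conceptual step of recognizing that ``being a connected component of $G_B(\mat)$'' is precisely the hypothesis needed to confine an entire fundamental circuit to one side, after which the argument collapses to rank bookkeeping. A minor point to keep in mind is the degenerate behaviour of loops and coloops: they appear as isolated vertices of $G_B(\mat)$ and as singleton components of $\mat$, and the argument above already accommodates them, since $C(B,j)\setminus\{j\}$ is then empty (so $j$ is a loop, spanned by $\emptyset$) or no such $j$ exists.
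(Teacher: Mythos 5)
Your proof is correct. Note that the paper does not prove this lemma at all: it is quoted from \cite[Prop.~4.3.2]{Oxley} (originally Cunningham/Krogdahl) and used as a black box in the proof of \Cref{thm:ConnectedMatroidPolytopesAreIndecomposable}, so there is no in-paper argument to compare against. What you supply is a complete, self-contained proof along the standard textbook lines: the easy direction via the fact that circuits of a direct sum do not cross the parts, so no arc of $G_B(\mat)$ joins the two sides; and the converse via the rank characterization of separators, using that for $j\in X\ssm B$ the fundamental circuit satisfies $C(B,j)\ssm\{j\}\subseteq B\cap X$ whenever $X$ is a union of components of $G_B(\mat)$, so that $B\cap X$ spans $X$, $B\cap(\ground\ssm X)$ spans $\ground\ssm X$, and $r(X)+r(\ground\ssm X)=|B|=r(\ground)$ exhibits a proper nonempty separator. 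The only external ingredients you invoke (a set is a separator iff the ranks add, and connectedness means no proper nonempty separator) are indeed standard and available in \cite{Oxley}, and your remark about loops and coloops appearing as isolated vertices is consistent with the paper's surrounding discussion, so the argument stands as a valid replacement for the citation.
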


%\begin{lemma}[{\cite[Lem.~1.1.3]{Oxley}}]\label{lem:circuitelemination}
%If $C_1$ and $C_2$ are different circuits of $\mat$ and $i\in C_1\cap C_2$, then there is a circuit $C_3$ of $\mat$ such that $C_3\subseteq (C_1\cup C_2)\ssm \{i\}$.
%\end{lemma}
%
%\begin{lemma}[{\cite[Prop.~4.3.6]{Oxley}}]\label{lem:connectedcharacterization}
%Let $\mat$ be a matroid on a ground set $\ground$ with at least 3 elements.
%Then $\mat$ is connected if and only if, for all $3$-element subsets $X$ of $\ground$, there is a circuit or a cocircuit containing~$X$.
%\end{lemma}

\begin{theorem}[\cite{Nguyen1986-SemimodularFunctions}]\label{thm:ConnectedMatroidPolytopesAreIndecomposable}
	If $\mat$ is a connected matroid, then $\matpol$ is indecomposable.
\end{theorem}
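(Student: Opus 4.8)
### Proof Proposal

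The plan is to certify indecomposability of $\matpol$ via \Cref{cor:mainthmpolytopes}, by exhibiting a dependent subset of vertices $S$ touching every facet. The natural choice for $S$ is the set of vertices adjacent to a fixed vertex $\b e_B$ together with $\b e_B$ itself, for some base $B$; that is, $S = \{\b e_B\} \cup \set{\b e_B + \b e_j - \b e_i}{ij \in G_B(\mat)}$. The edges of $\matpol$ all have direction $\b e_i - \b e_j$, and the $2$-faces of a matroid polytope are known to be triangles (with direction vectors $\b e_i - \b e_j$, $\b e_j - \b e_k$, $\b e_i - \b e_k$) or parallelograms. First I would argue that the edges incident to $\b e_B$ form a dependent set: using \Cref{lem:connectedfundamentalgraph}, $G_B(\mat)$ is connected, so any two arcs $ij$, $i'j'$ of $G_B(\mat)$ are joined by a path; and along such a path consecutive arcs share a node, so the corresponding edges of $\matpol$ share the vertex $\b e_B$ — but two edges sharing a vertex of a matroid polytope either lie in a common triangular $2$-face (hence are dependent by \Cref{ex:triangle}) or one must check that even when they do not span a triangle, a short rigid configuration forces the dependency. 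This is the point where I would lean on \Cref{cor:RigidCycle0} or \Cref{cor:subspaceprojection}: the three vertices $\b e_B$, $\b e_B + \b e_j - \b e_i$, $\b e_B + \b e_{j'} - \b e_{i'}$ span an affinely independent triple precisely when the direction vectors are linearly independent, which holds as soon as $\{i,j\}\ne\{i',j'\}$ and the pair is not of the special parallel type; in the parallel case the two edges are opposite edges of a parallelogram $2$-face and are again dependent by \Cref{ex:parallelogram}.

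Next I would show that $S$ touches every facet. The facets of $\matpol$ are of two kinds: the hyperplane facets $\{x_e = 0\}$ and $\{x_e = 1\}$ coming from the ground set elements $e$ (when these are actually facets), and the ``flag'' facets $\{x(A) = \mathrm{rk}(A)\}$ for certain flats $A$. Because $\b e_B$ is a vertex of $\matpol$, it lies on many facets, but certainly not all; I would instead use the whole of $S$. A clean way: for any facet $\polF$ of $\matpol$ and any vertex $\b e_{B'}$ on $\polF$, connectedness of $G_B(\mat)$ gives a path in the edge graph of $\matpol$ from $\b e_B$ to $\b e_{B'}$ each step of which changes the base by a single symmetric-difference-$2$ move; the first vertex along this path that leaves $\polF$ (or $\b e_{B'}$ itself) is adjacent across an edge to a vertex on $\polF$, but that does not immediately put a vertex of $S$ on $\polF$. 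So the cleaner route is: enlarge $S$ to all vertices reachable from $\b e_B$ by a path of dependent edges — by the previous paragraph and \Cref{prop:connectedimplicit}/\Cref{thm:mainthmframeworks}, once the edges incident to $\b e_B$ are all dependent, and since $\matpol$ is a connected (even $2$-connected when $\dim \ge 2$) graph whose $2$-faces are triangles or parallelograms, one propagates dependency across the whole edge graph, so in fact $\ELD[\matpol]$ becomes connected outright.

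The cleanest line of argument, then, which I would actually write up, is: (1) the $2$-faces of $\matpol$ are triangles or parallelograms, hence any two edges of $\matpol$ sharing a $2$-face are dependent; (2) the ``$1$-incidence graph'' of $\matpol$ (edges of $\matpol$ as nodes, adjacency when sharing a $2$-face) is connected, which I would deduce from connectedness of the matroid — concretely, the edges incident to $\b e_B$ correspond to arcs of the connected graph $G_B(\mat)$ and two edges incident to a common vertex always share a $2$-face of $\matpol$ (triangle if the move directions are independent, parallelogram otherwise), so the edges at $\b e_B$ are all in one component, and since $\matpol$ is vertex-transitively built from connected fundamental graphs at every vertex, the whole $1$-incidence graph is connected; (3) conclude by \Cref{cor:WconnectedPMinkowskiIndecomposable} (or \Cref{thm:mainthmframeworks}) that $\matpol$ is indecomposable, handling the low-dimensional edge case ($\dim \matpol \le 1$, i.e. $M$ has at most two nontrivial elements) separately and trivially.

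The main obstacle I anticipate is step (2): justifying that two edges of $\matpol$ incident to a common vertex always lie in a common $2$-face, and identifying that $2$-face, requires a small matroid-exchange argument — given arcs $ij$ and $i'j'$ of $G_B(\mat)$ one must produce the fourth vertex completing a parallelogram, namely checking whether $\b e_B + \b e_j - \b e_i + \b e_{j'} - \b e_{i'}$ is a vertex of $\matpol$ (it is, via a multiple-basis-exchange/symmetric-exchange property applied to $C(B,j)$ and $C(B,j')$), or else that the triple $\{\b e_B, \b e_B+\b e_j-\b e_i, \b e_B+\b e_{j'}-\b e_{i'}\}$ itself spans a triangular $2$-face of $\matpol$. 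Controlling which of the two cases occurs, and making sure the relevant $2$-face really is a face (not just a planar subconfiguration), is the technical heart; everything after that is a direct appeal to the machinery of \Cref{sec:criteria}.
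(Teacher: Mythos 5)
Your overall architecture (fix a base $B$, use connectedness of the fundamental graph $G_B(\mat)$ to tie together the edges at $\b e_B$, then globalize over the connected graph of $\matpol$) is the same as the paper's, but the step you yourself flag as the technical heart is not just unproven — it is false. You claim that two edges of $\matpol$ incident to a common vertex always share a $2$-face (a triangle, or a parallelogram completed by the ``multiple exchange'' vertex $\b e_B+\b e_j-\b e_i+\b e_{j'}-\b e_{i'}$). When the two exchanges involve four distinct elements this fails in two ways. First, the fourth point need not be a basis at all: in the rank-$2$ connected matroid on $\{1,2,3,4\}$ in which $3$ and $4$ are parallel, with $B=\{1,2\}$, both $\{2,3\}$ and $\{1,4\}$ are bases adjacent to $\b e_B$, but $\{3,4\}$ is not a basis, and $\b e_{\{2,3\}}$, $\b e_{\{1,4\}}$ are not adjacent either, so the two edges at $\b e_{\{1,2\}}$ lie in no common $2$-face. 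Second, even when the fourth vertex exists the parallelogram need not be a face: for $\mat=U_{2,4}$ the quadrilateral $\b e_{\{1,2\}},\b e_{\{1,3\}},\b e_{\{3,4\}},\b e_{\{2,4\}}$ is an equatorial square of the octahedron $\Delta(2,4)$, not a $2$-face. The same problem undermines your first paragraph's appeal to \Cref{cor:RigidCycle0}: affine independence of the three points is not the issue; the needed cycle does not exist in the framework, because $\b e_{B-i+j}$ and $\b e_{B-i'-j'}$ are simply not adjacent when $\{i,j\}\cap\{i',j'\}=\emptyset$.

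The paper never needs such pairs. It only compares edges at $\b e_B$ whose arcs of $G_B(\mat)$ share an endpoint: if the arcs share $j\notin B$, the bases $B_{ij}=B\cup\{j\}\setminus\{i\}$ for $i\in C(B,j)$ are pairwise adjacent and affinely independent, so \Cref{cor:RigidCycle0} (applied to the subframework, with no claim that anything is a $2$-face of $\matpol$) makes the corresponding edges dependent; if the arcs share $i\in B$, the three bases $B$, $B_{ij}$, $B_{ik}$ form a triangle subframework, handled by \Cref{ex:triangle} and \Cref{cor:indecsubframeclique}. Connectedness of $G_B(\mat)$ (\Cref{lem:connectedfundamentalgraph}) then makes its line graph connected, so all edges at $\b e_B$ are pairwise dependent, and connectivity of the graph of $\matpol$ finishes via \Cref{cor:WconnectedPMinkowskiIndecomposable}. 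Your proof becomes correct (and essentially identical to the paper's) if you delete the disjoint-arc case entirely and, for the endpoint-sharing cases, argue at the level of subframeworks rather than $2$-faces — then the question ``is the relevant $2$-face really a face?'' never arises, which is precisely the flexibility of the framework machinery of \Cref{sec:criteria}.
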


\begin{proof}
We will prove first that for every vertex $\b e_B$ of $\matpol$, all the edges incident to $\b e_B$ are pairwise dependent.

Let $j\notin B$, and consider the fundamental circuit $C(B,j)$. 
For every $i\in C(B,j)$, let $B_{ij}\eqdef B\cup\{j\} \ssm\{i\}$ (in particular $B_{jj}=B$). Note that each of these $B_{ij}$ is a basis (it cannot contain a circuit because $C(B,j)$ is the unique circuit in $B\cup\{j\}$), and that for $i,k\in C(B,j)$, we have $|B_{ij}\,\Delta\, B_{kj}|=2$, and thus their indicator vectors $\b e_{B_{ij}}$ and $\b e_{B_{kj}}$ are joined by an edge in $\matpol$. 
Hence, the vertices $\b e_{B_{ij}}$ for $i\in C(B, j)$ induce a complete subframework of $\fw(\matpol)$.
Moreover, these vertices are affinely independent (because $\b e_{B_{ij}}$ has a non-zero $i^{\text{th}}$-coordinate and a zero $k^{\text{th}}$-coordinate for $k\in C(B, j)$ with $k\ne i$).
Therefore, we can apply \Cref{cor:RigidCycle0} to conclude that all its edges are dependent. 
This shows that for a given $j\notin B$, all the arcs of $G_B(\mat)$ of the form $ij$ are pairwise connected.
	
Now, let $i\in B$. And let $j,k\notin B$ such that $i\in C(B,j)\cap C(B,k)$. Then the bases $B$, $B_{ij}$, and $B_{ik}$ differ pairwise in two elements, and hence they form a triangle in $\matpol$.
Thus, by \Cref{ex:triangle}, the three edges between the corresponding indicator vectors are pairwise dependent.

We have shown that if two arcs of $G_B(\mat)$ are connected in its line graph, then the corresponding edges of $\matpol$ are dependent. Since $G_B(\mat)$ is connected by \Cref{lem:connectedfundamentalgraph}, so is its line graph. And hence all the edges incident to $\b e_B$ are pairwise dependent.
This implies that all the edges are dependent, because the graph of $\matpol$ is connected, and so is its line graph.
\end{proof}

Note that, as explained in \cite[Section 7.2]{StudenyKroupa2016}, the following corollary is often misquoted without having into account the loops and coloops.
A (nontrivial) connected matroid cannot have loops or coloops (which form their own connected components), but adding a loop or a coloop to a matroid only amounts to translating the matroid polytope, without altering its indecomposability.
The key is that in the decomposition of \eqref{eq:matpoldecomposition} the presence of loops and coloops gives rise to summands that are a single point (this is the only case, because if a matroid has a single basis, then all its elements are either loops or coloops).
\begin{corollary}
	If $\mat$ is a matroid, then $\matpol$ is indecomposable if and only if $\mat$ stripped of loops and coloops is connected.
\end{corollary}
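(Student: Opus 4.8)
The plan is to deduce the corollary directly from \Cref{thm:ConnectedMatroidPolytopesAreIndecomposable} by exploiting the connected-component decomposition~\eqref{eq:matpoldecomposition}. Write $\ground = \ground_1 \sqcup \dots \sqcup \ground_r$ for the partition of the ground set into the connected components of $\mat$, so that $\matpol = \matpol[\mat_{|\ground_1}] + \dots + \matpol[\mat_{|\ground_r}]$ with the summands lying in the pairwise complementary coordinate subspaces $\R^{\ground_i}$. The first thing to record is that the components of size one are exactly the loops and coloops of $\mat$: a singleton matroid $\mat_{|\{e\}}$ is either a loop (rank $0$) or a coloop (rank $1$), has a unique basis, and hence $\matpol[\mat_{|\{e\}}]$ is a single point. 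Conversely, by the fact recalled just before the statement (a matroid with a single basis has only loops and coloops, which form their own components), every component with $|\ground_i| \geq 2$ has at least two bases, so $\dim \matpol[\mat_{|\ground_i}] \geq 1$, and moreover $\matpol[\mat_{|\ground_i}]$ is indecomposable by \Cref{thm:ConnectedMatroidPolytopesAreIndecomposable} since $\mat_{|\ground_i}$ is connected. Finally, $\mat$ stripped of its loops and coloops is precisely the restriction of $\mat$ to $\ground' \eqdef \bigcup_{|\ground_i| \geq 2} \ground_i$, whose connected components are exactly the $\ground_i$ with $|\ground_i| \geq 2$; so this stripped matroid is connected if and only if there is at most one index $i$ with $|\ground_i| \geq 2$.

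For the ``if'' direction, I would argue as follows. Suppose the stripped matroid is connected. If no $\ground_i$ has size $\geq 2$, then $\matpol$ is a point, hence trivially indecomposable. Otherwise there is a unique index $i_0$ with $|\ground_{i_0}| \geq 2$; all other summands in~\eqref{eq:matpoldecomposition} are points, so $\matpol$ is a translate of $\matpol[\mat_{|\ground_{i_0}}]$, which is indecomposable by \Cref{thm:ConnectedMatroidPolytopesAreIndecomposable}.

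For the ``only if'' direction, suppose the stripped matroid is disconnected, i.e.\ there are at least two indices $i$ with $|\ground_i| \geq 2$. Discarding the point summands, \eqref{eq:matpoldecomposition} exhibits $\matpol$, up to translation, as the Cartesian product of these (at least two) indecomposable polytopes, each of dimension $\geq 1$; by \Cref{cor:ProductIndecomposables} its deformation cone $\DefoCone[\matpol]$ then has dimension equal to their number, which is at least $2$, so $\matpol$ is decomposable.

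I expect no serious obstacle here: the whole content is a bookkeeping around loops and coloops — that they account precisely for the one-point summands in~\eqref{eq:matpoldecomposition}, and that every other connected component contributes a genuinely higher-dimensional indecomposable factor — which is exactly what the cited fact supplies, with the rest being a direct application of~\eqref{eq:matpoldecomposition}, \Cref{thm:ConnectedMatroidPolytopesAreIndecomposable} and \Cref{cor:ProductIndecomposables}. The only place to be slightly careful is the degenerate case where $\mat$ consists entirely of loops and coloops (so $\matpol$ is a single point), which is handled by the ``no $\ground_i$ of size $\geq 2$'' branch above.
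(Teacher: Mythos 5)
Your proof is correct and follows essentially the same route as the paper: the ``if'' direction comes from \Cref{thm:ConnectedMatroidPolytopesAreIndecomposable} together with the observation that loops and coloops only contribute point summands (hence a translation), and the ``only if'' direction comes from the decomposition~\eqref{eq:matpoldecomposition} exhibiting at least two positive-dimensional summands. Your extra appeal to \Cref{cor:ProductIndecomposables} for the ``only if'' is a harmless elaboration (the paper itself uses \Cref{thm:products} this way in the very next corollary), and your explicit handling of the all-loops-and-coloops degenerate case is careful bookkeeping rather than a departure from the paper's argument.
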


\begin{proof}
The ``if'' is immediate from \Cref{thm:ConnectedMatroidPolytopesAreIndecomposable}, and the ``only if'' from \eqref{eq:matpoldecomposition}.
\end{proof}

\begin{corollary}\label{cor:matroidpolytopesimplicial}
	For any matroid $\mat$ with $r$ connected components $S_1, \dots, S_r$, the deformation cone $\DefoCone[{\matpol}]$ is a simplicial cone of dimension $r$.
	Consequently, $\matpol$ has a unique decomposition into indecomposable polytopes, namely $\matpol = \matpol[\mat_{|\ground_1}] + \matpol[\mat_{|\ground_2}] + \dots +\matpol[\mat_{|\ground_r}]$, and each of its deformations is normally equivalent to $\sum_{j\in A} \matpol[\mat_{|\ground_j}]$ for some $A\subseteq[r]$.
\end{corollary}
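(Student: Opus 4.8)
The plan is to deduce the whole statement from the product structure already established. First I would recall the decomposition \eqref{eq:matpoldecomposition}, namely $\matpol = \matpol[\mat_{|\ground_1}] \times \dots \times \matpol[\mat_{|\ground_r}]$, where the Cartesian product coincides with the Minkowski sum because the factors are supported in complementary coordinate subspaces of $\R^\ground$. Since each restriction $\mat_{|\ground_j}$ is a connected matroid by the very definition of a connected component, \Cref{thm:ConnectedMatroidPolytopesAreIndecomposable} gives that every factor $\matpol[\mat_{|\ground_j}]$ is indecomposable; a component reduced to a loop or a coloop yields a single-point factor, which is (trivially) indecomposable and contributes nothing to the deformation cone, so after discarding those one may assume all factors are honestly indecomposable polytopes. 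Then \Cref{cor:ProductIndecomposables} applies directly and yields at once that $\DefoCone[\matpol]$ is a simplicial cone of dimension $r$, that its $r$ rays are the classes of $\matpol[\mat_{|\ground_1}], \dots, \matpol[\mat_{|\ground_r}]$, and that every deformation of $\matpol$ is of the form $(\lambda_1\matpol[\mat_{|\ground_1}]) \times \dots \times (\lambda_r\matpol[\mat_{|\ground_r}])$ with $\lambda_1, \dots, \lambda_r \geq 0$.

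For the uniqueness of the indecomposable decomposition I would use that a simplicial cone has, up to positive scaling, a unique generating set of extreme rays, and that any Minkowski decomposition of $\matpol$ into indecomposable summands amounts to writing a representative of $\matpol$ in $\DefoCone[\matpol]$ as a nonnegative combination of points on those rays; the only such combination up to scaling is $\matpol[\mat_{|\ground_1}] + \dots + \matpol[\mat_{|\ground_r}]$. For the final claim, given $\b\lambda = (\lambda_1, \dots, \lambda_r) \in \R_+^r$ I would set $A = \{j \in [r] : \lambda_j > 0\}$ and invoke the standard facts that positive dilation does not change the normal fan of a polytope, that multiplying a summand by $0$ collapses it to a point, and that the normal fan of a Minkowski sum is the common refinement of the normal fans of the summands; together these show that $(\lambda_1\matpol[\mat_{|\ground_1}]) \times \dots \times (\lambda_r\matpol[\mat_{|\ground_r}])$ is normally equivalent to $\sum_{j \in A} \matpol[\mat_{|\ground_j}]$.

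I do not expect a genuine obstacle here: once \Cref{thm:ConnectedMatroidPolytopesAreIndecomposable}, \Cref{cor:ProductIndecomposables}, and \eqref{eq:matpoldecomposition} are available, the argument is pure bookkeeping. The only two points that deserve a careful sentence are, first, the loop/coloop caveat mentioned above (needed so that the dimension is really $r$), and second, the verification that the normal equivalence class of a deformation depends only on the support set $A$ and not on the particular values of the $\lambda_j$, which is exactly the behaviour of normal fans under dilation and Minkowski sum recalled in the previous paragraph.
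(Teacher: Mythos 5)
Your proof is correct and takes essentially the same route as the paper: the product decomposition \eqref{eq:matpoldecomposition}, indecomposability of each connected factor via \Cref{thm:ConnectedMatroidPolytopesAreIndecomposable}, and the product theorem for deformation cones (\Cref{thm:products}, which you invoke through \Cref{cor:ProductIndecomposables}), with the uniqueness and normal-equivalence claims corresponding to what the paper attributes to \Cref{prop:SimplicialDC}. Your loop/coloop caveat and the normal-fan bookkeeping at the end merely spell out details the paper leaves implicit.
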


\begin{proof}
	As $\matpol = \matpol[\mat_{|\ground_1}] \times \dots \times \matpol[\mat_{|\ground_r}]$, we can apply \Cref{thm:products}, using the fact that $\DefoCone[{\matpol[\mat_{|\ground_j}]}]$ is a 1-dimensional ray, for each $j\in [r]$.
	The second sentence is the direct application of \Cref{cor:uniquelydecomposablepolytope}, and the fact that the deformation cone is simplicial.
%	\Cref{prop:SimplicialDC}.
\end{proof}

\subsection{Graphical zonotopes}\label{sec:graphicalzonotopes}

Let $G=(V,E)$ be a graph with node set $V$ and arc\footnote{Recall that we will reserve the names \emph{vertex} and \emph{edge} for polytopes and frameworks, and use \emph{node} and \emph{arc} for abstract graphs.} set $E$.
An orientation of $G$ is an \defn{acyclic orientation} if there is no directed cycle.
%An \defn{acyclic orientation} of $G$ is an orientation of each arc such that the induced directed graph has no directed cycle.
Let \defn{$\AO(G)$} be the set of acyclic orientations of $G$.
An \defn{ordered partition} of~$G$ is a pair $(\mu,\omega)$ consisting of a partition $\mu$ of $V$ where each part induces a connected subgraph of~$G$, together with an acyclic orientation $\omega$ of the contraction $\contrG$. 

The \defn{graphical zonotope} $\ZG \subset \R^V$ is defined as the Minkowski sum 
$\ZG 
= \sum_{e\in E} \simplex[e] 
\coloneqq \sum_{uv \in E} [\b e_u, \b e_v]$, where $(\b e_u)_{u \in V}$ is the standard basis of~$\R^V$, and $[\b p,\b q]$ denotes the segment between $\b p$ and $\b q$.
% Up to translation, this is the zonotope of the vector configuration $\c X = \bigl\{\b e_j - \b e_i ~;~ \{i,j\}\in E\bigr\}$.% and $[\b e_i, \b e_j]$ denotes the segment between $\b e_i$ and $\b e_j$.
%We will focus on the graphical zonotope of complete bipartite graphs, and will not give a detailed description of the combinatorics of graphical zonotopes.
The faces of a graphical zonotope are indexed by ordered partitions (see \cite[Prop.~2.5]{Stanley2007} or \cite[Sec.~1.1]{OrientedMatroids}).
Namely, the ordered partition $(\mu,\omega)$ is associated to the face:
\[\polZ_{(\mu,\omega)}\coloneqq
\sum_{u\to v \text{ in }\omega} \{\b e_v\}+\sum_{\substack{uv \in E \text{ in the}\\ \text{ same part of }\mu}} [\b e_u, \b e_v]
\]
In particular:
\begin{compactenum}
    \item The vertices of $\ZG$ are in bijection with the acyclic orientations $\AO(G)$. Namely, 
   each $\rho\in \AO(G)$ is in correspondence with the vertex $\b v_\rho = \sum_{i\in V} d_{in}(i, \rho) \,\b e_i$ where $d_{in}(i, \rho)$ is the in-degree of the node $i\in V$ in the acyclic orientation $\rho$.
    \item The edges $\pol[e]_{g, \rho}$ of $\ZG$ are in bijection with the couples $(g, \rho)$ where $g\in E$ and $\rho\in \AO (\contrG[e])$, \ie with the couples of acyclic orientations $\rho_1, \rho_2\in \AO(G)$ which differ only on the orientation of the arc $g$. Moreover, if $g =uv$ is oriented $u\to v$ in $\rho_1$, then $\b v_{\rho_1} - \b v_{\rho_2} = \b e_v - \b e_u$.
    \item The $2$-faces of $\ZG$ are:
    \begin{compactenum}
        \item Hexagons: one per $(t, \rho)$ where $t$ is a triangle in $G$ and $\rho\in\AO(\contrG[t])$.
        \item Parallelograms: one per 
        $(s, \rho)$ where $s$ is either the union of two disjoint edges or an induced path of length 2, and $\rho\in \AO(\contrG[s])$
    \end{compactenum}
\end{compactenum}

%\begin{proof}
%As all the $2$-faces of $\ZG$ are only parallelograms if $G$ is triangle-free, we can apply \Cref{thm:ZonotopesWithOnlyParallelograms}.
%\end{proof}

\subsection{Truncated graphical zonotopes of complete bipartite graphs}\label{ssec:TruncatedGraphicalZonotope}
For $n, m\geq 1$, let \defn{$\Knm$} be the complete bipartite graph with nodes $a_1,\dots, a_n$ and $b_1, \dots, b_m$ and arcs $a_ib_j$ for all $i\in [n]$ and $j\in [m]$.
Note that $\Knm$ is triangle-free.
We abbreviate its graphical zonotope by $\mathdefn{\ZGKnm}\eqdef\polZ_{K_{n, m}}$.

We define two new polytopes by intersecting $\ZGKnm$ with some half-spaces. 
Precisely, recall that $\ZGKnm$ is embedded in $\R^{n + m}$ whose coordinates are labeled by $a_i$, $i\in [n]$ and $b_j$, $j\in [m]$, and define:
\begin{align*}
\mathdefn{\Pnm} &\eqdef \ZGKnm \cap \Big\{\b x ~;~ \sum\nolimits_{j=1}^m x_{b_j} \leq nm-1\Big\},&&\text{and } &\mathdefn{\Qnm} &\eqdef \Pnm \cap \Big\{\b x ~;~ \sum\nolimits_{i=1}^n x_{a_i} \leq nm - 1\Big\}.
\end{align*}
Note that they are obtained by deeply truncating one and two opposite vertices of $\ZGKnm$, respectively. (The deep truncation of polytope at a vertex~$\b v$ is defined as the intersection with the halfspace that does not contain~$\b v$ but contains all the neighbors of~$\b v$ in its boundary, this notion is explored in details in \Cref{ssec:DeepTruncationsOfParallelogramicZonotopes}.)

\begin{example}\label{exmp:SmallDimPandQ}
	We describe $\Pnm$ and $\Qnm$ for $n + m \leq 4$.
	Since $\dim\ZGKnm = n + m - 1$, these are the ones we can picture in 3 dimensions.
	We summarized their properties in the following table, see \Cref{fig:SmallDimPandQ}.
	\begin{center}
% 	\begin{tabular}{c|cccc}
% 		$ (m,n)$ & (1,1) & (1,2) & (1,3) & (2,2)  \\ \hline
% 		$\Pnm$  & point & triangle & \emph{strawberry}
% %		$\Pnm[3][1]$ 
% 		&\emph{persimmon}
% %		$\Pnm[2][2]$ 
% \\
% 		\text{indec.} & \yes & \yes & \yes & \yes \\
% 		\text{matroid pol.} & \yes & \yes & \no  &\no\\ \hline
% 		$\Qnm$ & $\emptyset$ & segment & octahedron & cuboctahedron \\
% 		\text{indec.} &  & \yes & \yes & \no \\
% 		\text{matroid pol.} & & \yes & \yes & \no
% 	\end{tabular}

	\begin{tabular}{c|cccc|ccc}
     & \multicolumn{4}{c|}{$\Pnm$} & \multicolumn{3}{c}{$\Qnm$} \\
		$ (m,n)$ & (1,1) & (1,2) & (1,3) & (2,2) & (1,2) & (1,3) & (2,2)  \\ \hline
		name  & point & triangle & \emph{strawberry}
		&\emph{persimmon} & segment & octahedron & cuboctahedron \\
		\text{indecomposable} & \yes & \yes & \yes & \yes & \yes & \yes & \no \\
		\text{matroid polytope} & \yes & \yes & \no  &\no & \yes & \yes & \no
	\end{tabular}
\end{center}
	
The first row answers ``Is this polytope indecomposable?'', and the second ``Is this polytope (normally equivalent to) a matroid polytope (\ie does it have $0/1$-coordinates)?'' (\yes~ is ``yes'', and \no~ is ``no'').

The polytopes $\Pnm[3][1]$, $\Qnm[3][1]$, and $\Pnm[2][2]$, $\Qnm[2][2]$ are depicted in \Cref{fig:SmallDimPandQ}. The polytope $\Pnm[3][1]$ (nicknamed \emph{strawberry}) is obtained by truncating a vertex of a $3$-cube and has $f$-vector $(7, 12, 7)$; and $\Pnm[2][2]$ (nicknamed \emph{persimmon}) has $f$-vector $(13, 24, 13)$.
As illustrated in the figure, $\ELD[{\Pnm[3][1]}]$, $\ELD[{\Qnm[3][1]}]$ and $\ELD[{\Pnm[2][2]}]$ contain connected subgraphs whose associated edges touch all the vertices: by \Cref{thm:mainthmframeworks}, $\Pnm[3][1]$, $\Qnm[3][1]$ and $\Pnm[2][2]$ are indecomposable.
The cuboctahedron $\Qnm[2][2] $ is decomposable as the Minkowski sum of a regular tetrahedron with its central reflection.
	
	%The octahedron $\Qnm[3][1] $ is simplicial thus indecomposable, see \Cref{exmp:SimplicialVsSimple}.
	%\arnau{I think we'll remove the example}
		\begin{figure}[htpb]
		\centering
		\includegraphics[width=0.2\linewidth]{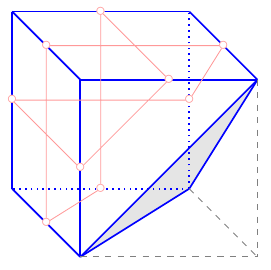}
        \includegraphics[width=0.2\linewidth]{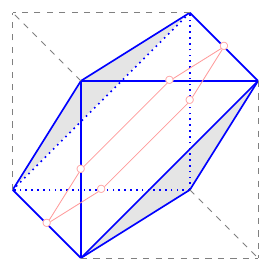}
        %\linebreak
		\includegraphics[width=0.28\linewidth]{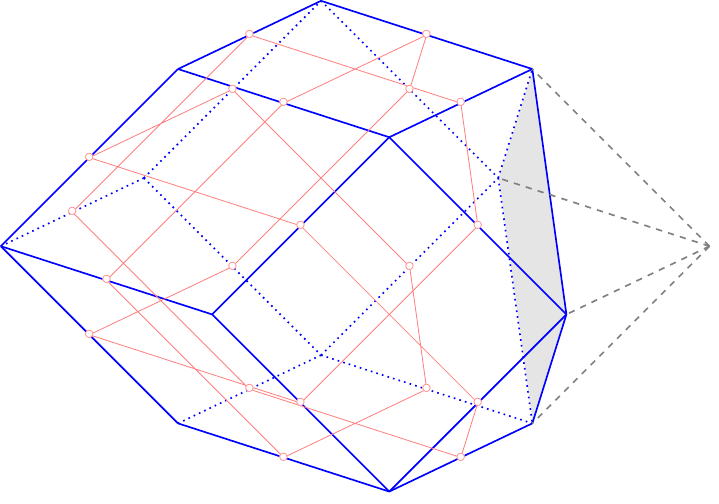}
		\includegraphics[width=0.28\linewidth]{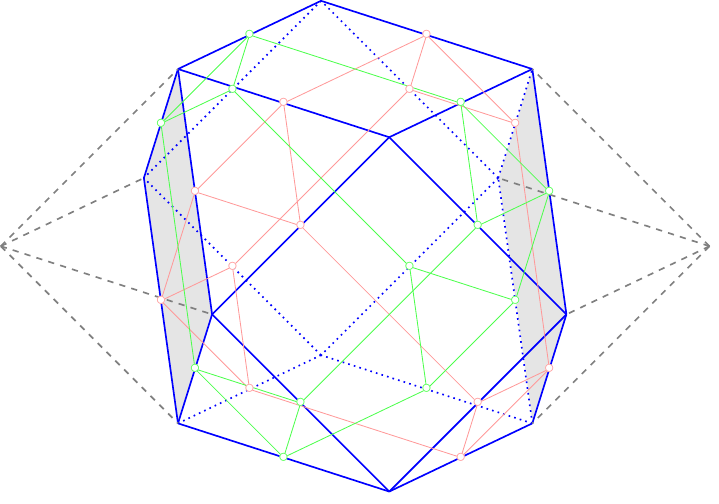}
        \caption[Truncated graphical zonotopes of complete bipartite graphs $\Pnm$ and $\Qnm$]{(Left to right) strawberry $\Pnm[3][1]$, octahedron $\Qnm[3][1]$, persimmon $\Pnm[2][2]$, cuboctahedron $\Qnm[2][2]$.
        Each polytope is in \textcolor{blue}{blue}, while the \textcolor{gray}{gray} dashed edges outline $\ZGKnm$.
        %, with $\pol[F]_{n \to m}$ and $\pol[F]_{n \leftarrow m}$.
        In \textcolor{OrangeRed}{red}, the subgraphs of $\ELD$ obtained by linking opposite edges of (some) parallelograms and edges of (some) triangles (these are the subgraphs from the proof of \Cref{thm:PandQareIndecomposable}).
        The graph $\ELD[{\Qnm[2][2]}]$ has two connected components: the clique on the \textcolor{OrangeRed}{red} subgraph, and the clique on the \textcolor{green}{green} one.
        \vspace{-0.75cm}}
		\label{fig:SmallDimPandQ}
	\end{figure}
    
	Note that $\Pnm[3][1]$ and $\Pnm[2][2]$ are self-dual and have exactly 4 triangular faces.
	Smilansky proved in \cite[Cor. 6.7 \& 6.8]{Smilansky1987} that every indecomposable $3$-polytope has at least 4 triangular faces, and has (weakly) more facets than vertices: $\Pnm[3][1]$ and $\Pnm[2][2]$ are extremal in this sense.
\end{example}

We denote by \defn{$\b v_{n\to m}$} the vertex of $\ZGKnm$ associated to the acyclic orientation \defn{$\rho_{n\to m}$} given by $a_i \to b_j$ for all $i\in [n]$ and $j\in [m]$; and by \defn{$\b v_{n \leftarrow m}$} that associated to \defn{$\rho_{n \leftarrow m}$} given by $b_j \to a_i$ for all $i\in [n]$ and $j\in [m]$.
An acyclic orientation of $\Knm$ is \defn{almost left-right} (respectively \defn{almost right-left}) if it is obtained from $\rho_{n \to m}$ (respectively from $\rho_{n \leftarrow m}$) by reversing the orientation of one arc called its \defn{reversed arc}.

Note that, by construction, the supporting hyperplanes for facets of $\Pnm$ and $\Qnm$ are either supporting hyperplanes for facets of $\ZGKnm$ or the hyperplane $\bigl\{\sum_{j=1}^m x_{b_j} = nm-1\bigr\}$ (and the hyperplane $\bigl\{\sum_{i=1}^n x_{a_i} = nm - 1\bigr\}$ for $\Qnm$). 
We distinguish the facet \defn{$\pol[F]_{n \to m}$} of $\Pnm$ and $\Qnm$ supported by the hyperplane $\bigl\{\sum_{j=1}^m x_{b_j} = nm-1\bigr\}$, and the facet \defn{$\pol[F]_{n \leftarrow m}$} of $\Qnm$ supported by $\bigl\{\sum_{i=1}^n x_{a_j} = nm-1\bigr\}$.
The facet $\pol[F]_{n \to m}$ is the convex hull of the vertices indexed by almost left-right orientations, and the facet $\pol[F]_{n \leftarrow m}$ by those indexed by almost right-left orientations. They are affinely isomorphic to the Cartesian product of simplices $\simplex[n-1]\times \simplex[m-1]$ (because they are the vertex figure of $\ZGKnm$ at $\b v_{n\to m}$ and $\b v_{n \leftarrow m}$, respectively).

If $\b v$ is a vertex of a polytope $\pol$, we denote \defn{$\pol\ssm \b v$} the polytope obtained as the convex hull of all the vertices of $\pol$ except $\b v$.
For notational convenience, we also define $\pol\ssm \b v = \pol$ if $\b v\notin \pol$.

\begin{lemma}\label{lem:VerticesFacesOfPandQ}
Let $n, m\geq 1$ with $nm > 2$. The vertices of $\Pnm$ are $\Qnm$ are resp. $\verts(\Pnm)=\verts(\ZGKnm) \ssm \{\b v_{n \to m}\}$ and $\verts(\Qnm)=\verts(\ZGKnm) \ssm \{\b v_{n \to m}, \b v_{n \leftarrow m}\}$. That is, $\Pnm = \ZGKnm \ssm \b v_{n\to m}$ and $\Qnm = \ZGKnm \ssm (\b v_{n\to m}, \, \b v_{n \leftarrow m})$.
\end{lemma}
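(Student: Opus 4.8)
The plan is to recognize $\Pnm$ and $\Qnm$ as deep truncations of the vertices $\b v_{n\to m}$ and $\b v_{n\leftarrow m}$ of $\ZGKnm$, and then to use that deeply truncating a vertex of a polytope deletes exactly that vertex from the vertex set, without creating any new vertices.

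First I would analyze the linear functional $\ell_b(\b x) = \sum_{j=1}^m x_{b_j}$ on $\verts(\ZGKnm)$. Since the vertex attached to an acyclic orientation $\rho\in\AO(\Knm)$ is $\b v_\rho = \sum_v d_{in}(v,\rho)\,\b e_v$ and each node $b_j$ is incident to exactly $n$ arcs of $\Knm$, we get $\ell_b(\b v_\rho) = \sum_j d_{in}(b_j,\rho) \le nm$, with equality if and only if every arc points to the $b$-side, that is $\rho = \rho_{n\to m}$; and $\ell_b(\b v_\rho) = nm-1$ exactly when $\rho$ is an almost-left-right orientation. Because $\Knm$ is bipartite and $\rho_{n\to m}$ orients all arcs the same way, reversing any single arc of $\rho_{n\to m}$ keeps the orientation acyclic (a directed cycle would have to traverse the unique reversed arc and then reach a node of the $b$-side that has no outgoing arc), so the almost-left-right orientations are exactly the $nm$ acyclic orientations obtained from $\rho_{n\to m}$ by one arc reversal; by the description of the edges of $\ZGKnm$ as pairs of acyclic orientations differing on a single arc, they index precisely the neighbors of $\b v_{n\to m}$ in $\ZGKnm$. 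Hence the hyperplane $\{\ell_b = nm-1\}$ contains every neighbor of $\b v_{n\to m}$, has $\b v_{n\to m}$ on its strictly positive side, and has all other vertices of $\ZGKnm$ on its closed negative side; that is, $\Pnm = \ZGKnm\cap\{\ell_b\le nm-1\}$ is the deep truncation of $\ZGKnm$ at $\b v_{n\to m}$.

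Next I would conclude that $\verts(\Pnm) = \verts(\ZGKnm)\ssm\{\b v_{n\to m}\}$. One inclusion is immediate, as every vertex of $\ZGKnm$ other than $\b v_{n\to m}$ satisfies $\ell_b\le nm-1$. For the converse I would invoke the standard fact (see \Cref{ssec:DeepTruncationsOfParallelogramicZonotopes}) that if a hyperplane $H$ has a single vertex $\b v$ of a polytope $P$ on its strictly positive side, all other vertices on its closed negative side, and every neighbor of $\b v$ on $H$ itself, then no edge of $P$ crosses $H$ transversally, so cutting along $H$ creates no new vertices and $P\cap H^{\le}$ has vertex set $\verts(P)\ssm\{\b v\}$; applied to $\ZGKnm$ and $\b v_{n\to m}$ this gives $\Pnm = \ZGKnm\ssm\b v_{n\to m}$. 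For $\Qnm$ I would repeat the argument with the functional $\ell_a(\b x) = \sum_{i=1}^n x_{a_i}$ applied to $\Pnm$: by the symmetric computation $\ell_a$ attains the value $nm$ on $\verts(\ZGKnm)$ only at $\b v_{n\leftarrow m}$, and the value $nm-1$ exactly on the almost-right-left orientations, which are the neighbors of $\b v_{n\leftarrow m}$ in $\ZGKnm$. The point needing care --- and where the hypothesis $nm>2$ is used --- is that $\Pnm$ could a priori differ from $\ZGKnm$ in the neighborhood of $\b v_{n\leftarrow m}$; but since $\rho_{n\to m}$ and $\rho_{n\leftarrow m}$ differ on all $nm>1$ arcs, $\b v_{n\leftarrow m}$ is not a neighbor of $\b v_{n\to m}$, and moreover $\ell_b(\b v_{n\leftarrow m}) = 0 < nm-1$, so $\b v_{n\leftarrow m}$ lies strictly inside the first truncating halfspace. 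Hence removing $\b v_{n\to m}$ neither deletes nor adds an edge at $\b v_{n\leftarrow m}$, its neighbors in $\Pnm$ are again exactly the almost-right-left vertices, and these still lie on $\{\ell_a = nm-1\}$ and remain vertices of $\Pnm$; applying the cutting fact once more, to $\Pnm$ and $\b v_{n\leftarrow m}$, yields $\verts(\Qnm) = \verts(\Pnm)\ssm\{\b v_{n\leftarrow m}\} = \verts(\ZGKnm)\ssm\{\b v_{n\to m},\b v_{n\leftarrow m}\}$, hence $\Qnm = \ZGKnm\ssm(\b v_{n\to m},\b v_{n\leftarrow m})$.

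I expect the main obstacle to be the ``no new vertices'' step: it rests on the combinatorial identification of the neighbors of $\b v_{n\to m}$ (and of $\b v_{n\leftarrow m}$) with the almost-left-right (respectively almost-right-left) orientations, so that the truncating hyperplane meets the zonotope exactly along the vertex figure of the truncated vertex rather than slicing through edges; and, only for $\Qnm$, on the small verification that the first truncation leaves the edge-structure at $\b v_{n\leftarrow m}$ intact, which is precisely what $nm>2$ secures.
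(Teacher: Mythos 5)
Your proposal is correct and takes essentially the same route as the paper: the paper's proof is exactly the observation that the truncating hyperplane passes through all neighbors of $\b v_{n \to m}$ (resp.\ $\b v_{n \leftarrow m}$) and meets no edge of $\ZGKnm$ in its interior, which is what your computation of the integer-valued functionals $\sum_j x_{b_j}$ and $\sum_i x_{a_i}$ on the vertices $\b v_\rho$ establishes. Your extra care in running the second cut through $\Pnm$ (checking that the first truncation leaves the neighborhood of $\b v_{n \leftarrow m}$ untouched) merely makes explicit a step the paper leaves implicit.
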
	
\begin{proof}
	This follows from the fact that the hyperplane $\bigl\{\sum_{j=1}^m x_{b_j} = nm-1\bigr\}$ (resp. $\bigl\{\sum_{i=1}^m x_{a_i} = nm-1\bigr\}$) goes through all the neighbors of $\b v_{n \to m}$ (resp. $\b v_{n \leftarrow m}$) and does not intersect any edge of $\ZGKnm$ in the interior. 
\end{proof}

\begin{lemma}\label{lem:OtherFacesOfPandQ}
	Let $n, m\geq 1$ with $nm > 2$.	The faces of $\Pnm$ and $\Qnm$ are
	 \begin{compactenum}[(i)]
	 	\item the faces of $\pol[F]_{n \to m}$ (resp. of $\pol[F]_{n \to m}$ or $\pol[F]_{n \leftarrow m}$) or 
	 	\item the faces of the form $\pol[F]\ssm \b v_{n \to m}$ (resp. $\pol[F]\ssm (\b v_{n \to m},\b v_{n \leftarrow m})$) for a face $\pol[F]$ of $\ZGKnm$.
 	\end{compactenum}
In particular, the edges of $\Pnm$ (resp. $\Qnm$) are either:
		\begin{compactenum}[(i)]
				\item edges of $\ZGKnm$  not containing $\b v_{n \to m}$ (resp. $\b v_{n \to m}$ nor $\b v_{n \leftarrow m}$), or 
				\item edges $\b v_{\rho_1} \b v_{\rho_2}$ for two almost left-right orientations (resp. two almost left-right or almost right-left orientations) $\rho_1$, $\rho_2$ whose reversed arcs share an endpoint.
			\end{compactenum}
\end{lemma}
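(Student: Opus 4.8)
The plan is to view $\Pnm$, and then $\Qnm$, as \emph{vertex-figure truncations} of $\ZGKnm$, i.e.\ as polytopes obtained by slicing off a vertex with a hyperplane that passes through all of its neighbors. Indeed, \Cref{lem:VerticesFacesOfPandQ} already says that no new vertex is created when passing from $\ZGKnm$ to $\Pnm = \ZGKnm\cap\{\sum_j x_{b_j}\le nm-1\}$; equivalently, the hyperplane $H\eqdef\{\b x~;~\sum_j x_{b_j} = nm-1\}$ passes through every neighbor of $\b v_{n\to m}$ in $\ZGKnm$ and strictly separates $\b v_{n\to m}$ from all other vertices (the functional $\sum_j x_{b_j}$ equals $nm$ at $\b v_{n\to m}$, equals $nm-1$ at each neighbor, and is at most $nm-1$ everywhere else, the last point being the content of \Cref{lem:VerticesFacesOfPandQ}). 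Then $\pol[F]_{n\to m} = \ZGKnm\cap H = \Pnm\cap H$ is the vertex figure of $\ZGKnm$ at $\b v_{n\to m}$, a facet of $\Pnm$. First I would prove a general statement about such truncations, then iterate it for $\Qnm$, and finally read off the edges.

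The general statement is: if $P' = P\cap H^-$ where the bounding hyperplane $H$ of $H^-$ passes through all neighbors of a vertex $\b v$ of $P$ and strictly separates $\b v$ from the remaining vertices, then the faces of $P'$ are exactly (i) the faces of $G\eqdef P\cap H$, and (ii) the polytopes $\pol[F]\ssm\b v\eqdef\conv\bigl(\verts(\pol[F])\ssm\{\b v\}\bigr)$ for $\pol[F]$ a face of $P$. For the ``$\supseteq$'' inclusion: faces of $G$ are faces of the facet $G$ of $P'$, and if $\pol[F]=P\cap\{d\cdot\b x=\max_P d\}$ then $\pol[F]\ssm\b v = P'\cap\{d\cdot\b x=\max_P d\}$ is a face of $P'$, since the vertices of $\pol[F]$ other than $\b v$ already lie in $P'$ and still attain $\max_P d$. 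For the converse, write a face of $P'$ as $\pol[G] = P'\cap\{d\cdot\b x=\eta\}$ with $\eta=\max_{P'}d$ and compare $\eta$ with $\max_P d$. If they are equal, then $\pol[G] = \pol[F]\cap H^-$ for $\pol[F]=P\cap\{d\cdot\b x=\eta\}$; as the only vertex of $P$ outside $H^-$ is $\b v$ and all its neighbors lie on $H$, the cut introduces no new vertex, so $\pol[G]=\conv\bigl(\verts(\pol[F])\ssm\{\b v\}\bigr)=\pol[F]\ssm\b v$, of type (ii). If $\max_P d > \eta$, then $\b v$ is the unique $d$-maximal vertex of $P$; and if some vertex $\b w$ of $\pol[G]$ were not on $H$, pushing $\b w$ slightly towards $\b v$ would keep it in $P'$ while strictly increasing $d$, contradicting $\eta=\max_{P'}d$, so $\pol[G]\subseteq H$ is a face of $G$, of type (i).

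Applying this with $P=\ZGKnm$, $\b v=\b v_{n\to m}$, $G=\pol[F]_{n\to m}$ proves the claim for $\Pnm$. For $\Qnm=\Pnm\cap H'^-$ with $H'=\{\sum_i x_{a_i}=nm-1\}$ I would check that this is again a vertex-figure truncation, now of $\Pnm$ at $\b v_{n\leftarrow m}$: since $nm>2$ the vertices $\b v_{n\to m}$ and $\b v_{n\leftarrow m}$ are non-adjacent in $\ZGKnm$, lie in no common $2$-face, and have disjoint neighbor sets, so slicing off $\b v_{n\to m}$ does not alter the star of $\b v_{n\leftarrow m}$; hence the neighbors of $\b v_{n\leftarrow m}$ in $\Pnm$ are its neighbors in $\ZGKnm$, all lying on $H'$, while $H'$ separates $\b v_{n\leftarrow m}$ from all other vertices of $\Pnm$. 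The general statement then describes the faces of $\Qnm$ as the faces of $\pol[F]_{n\leftarrow m}=\Pnm\cap H'$ together with the sets $\pol[G]\ssm\b v_{n\leftarrow m}$ for $\pol[G]$ a face of $\Pnm$; substituting the description of the faces of $\Pnm$ just obtained and using that $\b v_{n\leftarrow m}\notin\pol[F]_{n\to m}$ (indeed $\sum_j(\b v_{n\leftarrow m})_{b_j}=0\neq nm-1$) yields exactly the stated list.

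For the ``in particular'', a $1$-dimensional type-(ii) face must come from an edge of $\ZGKnm$ not containing the sliced vertex(es): a higher-dimensional face of $\ZGKnm$ cannot become an edge after deleting a vertex, because all $2$-faces of $\ZGKnm$ are parallelograms (as $\Knm$ is triangle-free), so a $2$-face minus a vertex is a triangle and a $\geq 3$-face minus a vertex remains $\geq 3$-dimensional. A type-(i) edge is an edge of $\pol[F]_{n\to m}$ (resp.\ of $\pol[F]_{n\to m}$ or $\pol[F]_{n\leftarrow m}$), and since $\pol[F]_{n\to m}\cong\simplex[n-1]\times\simplex[m-1]$ with vertex set the almost left-right orientations, each indexed by its reversed arc $(i,j)\in[n]\times[m]$, two such vertices form an edge iff they agree in one coordinate, i.e.\ iff their reversed arcs share an endpoint; equivalently, two edges of $\ZGKnm$ at $\b v_{n\to m}$ span a common $2$-face, necessarily a parallelogram, exactly when their reversed arcs share an endpoint. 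The same analysis applies to $\pol[F]_{n\leftarrow m}$. The main obstacle in this plan is the general truncation statement, specifically the case $\max_P d>\eta$ where the face must be forced onto the cutting hyperplane, together with checking that the second cut producing $\Qnm$ really is a clean vertex-figure truncation — which is exactly where the hypothesis $nm>2$ is used.
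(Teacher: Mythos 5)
Your proposal is correct, and its skeleton is the same as the paper's: you classify the faces of the truncation as those lying in the cutting hyperplane (faces of the distinguished facet) versus intersections $\pol[F]\cap H^-$ of faces $\pol[F]$ of $\ZGKnm$ with the half-space, which equal $\pol[F]\ssm\b v_{n\to m}$ by the no-new-vertex property behind \cref{lem:VerticesFacesOfPandQ}, and you identify the new edges as joining neighbors of the truncated vertex whose reversed arcs share an endpoint. What you do differently, and what it buys: you isolate a reusable general lemma on vertex-figure truncations (with a clean separation argument for the case $\max_P d>\eta$) and you verify explicitly that the second cut giving $\Qnm$ is again such a truncation (non-adjacency, disjoint neighbor sets, $\b v_{n\leftarrow m}$ not on the first cutting hyperplane), where the paper only writes ``the case of $\Qnm$ is similar''; and you derive the adjacency criterion on $\pol[F]_{n\to m}$ from its identification with $\simplex[n-1]\times\simplex[m-1]$ (a fact stated in the paper just before the lemma), whereas the paper argues directly that $\b v_{\rho_1}$, $\b v_{\rho_2}$, $\b v_{n\to m}$ lie in a common $2$-face exactly when the reversed arcs share an endpoint, via acyclic orientations of the contraction of $\Knm$ along the two arcs. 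Note that your parenthetical ``equivalently, two edges at $\b v_{n\to m}$ span a common $2$-face exactly when their reversed arcs share an endpoint'' asserts an equivalence whose ``only if'' direction (disjoint reversed arcs span no $2$-face, because the contracted orientation acquires a directed $2$-cycle) you never check; fortunately your main route through the product of simplices does not need it. Two small touch-ups: the claim that a face of dimension $\geq 3$ minus a vertex stays $\geq 3$-dimensional is false in general (a $3$-simplex drops to a triangle); what you need, and what is true, is that deleting one vertex lowers the affine dimension by at most one, so such a face stays at least $2$-dimensional and can never become an edge. Likewise, in the ``$\supseteq$'' direction of your general lemma you should invoke the same no-new-vertex observation to get the equality $\pol[F]\cap H^-=\conv\bigl(\verts(\pol[F])\ssm\{\b v\}\bigr)$, not merely the inclusion of $\pol[F]\ssm\b v$ in the corresponding face of the truncation.
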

\begin{proof}
	The faces of $\Pnm$ are either contained in the hyperplane $\bigl\{\sum_{j=1}^m x_{b_j} = nm-1\bigr\}$, in which case we get a face of $\pol[F]_{n \to m}$, or a face $\pol[F]$ of $\Pnm$ intersected with $\bigl\{\sum_{j=1}^m x_{b_j} \leq nm-1\bigr\}$, yielding $\pol[F]\ssm \b v_{n \to m}$ by \Cref{lem:VerticesFacesOfPandQ}. 
	
		For the description of the edges of the distinguished facets, note that the only edges of $\Pnm$ that are not edges of $\ZGKnm$ come from the intersection of the hyperplane $\bigl\{\sum_j x_{b_j} = nm-1\bigr\}$ with $2$-faces of $\ZGKnm$.
		An edge will appear between two neighbors $\b v_{\rho_1}, \b v_{\rho_2}$ of $\b v_{n\to m}$ that share a common $2$-face. The vertex $\b v_\rho$ is neighbor of $\b v_{n\to m}$ if and only if $\rho$ is an almost left-right orientation.
		Denoting $g_1, g_2$ the reversed arcs of $\rho_1$, $\rho_2$, then $\b v_{\rho_1}$, $\b v_{\rho_2}$ and $\b v_{n\to m}$ belong to a common $2$-face when $\rho_1$ and $\rho_2$ induce the same orientation on $\contrG[g_1, g_2]$, which happens precisely when $g_1$ and $g_2$ share an endpoint.
        The case of $\Qnm$ is similar.
\end{proof}

Now, we focus on $\Pnm$ and $\Qnm$ for $n, m\geq 1$, $(n, m) \ne (1, 1)$. Recall from the introduction that a \defn{deformed $N$-permutahedron} is a deformation of the permutahedron $\permuto[N]$, equivalently, it is a polytope $\pol~\subset~\R^N$ whose edges are parallel to $\b e_i - \b e_j$ for some $i, j\in [N]$, see \cite[Definition 6.1]{Postnikov2009}.
In the upcoming sections, we will prove theorems that combined give the following key result.
%We obtain the following key corollaries (we state them here, and then prove the corresponding theorems in the next sections).
% In the upcoming sections, we will prove that they are deformed $(n+m)$-permutahedra (\Cref{thm:PandQareGP}), indecomposable (except $\Qnm[2][2]$, \Cref{thm:PandQareIndecomposable}), and not matroid polytopes (except $\Pnm[1][1]$, $\Pnm[1][2]$, $\Qnm[1][2]$, $\Qnm[1][3]$, \Cref{thm:PandQnotMatroidPolytopes}).

\begin{theorem}\label{thm:NewRAYS}
For integers $n, m\geq 1$ with $n+m \geq 5$, the polytopes $\Pnm$ and $\Qnm$ are indecomposable deformed $(n+m)$-permutahedra that are not isomorphic to matroid polytopes.
\end{theorem}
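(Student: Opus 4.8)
The statement is the conjunction of three facts that I would establish separately: that $\Pnm$ and $\Qnm$ are deformed $(n+m)$-permutahedra (\cref{thm:PandQareGP}), that they are indecomposable (\cref{thm:PandQareIndecomposable}), and that they are not normally equivalent to matroid polytopes (\cref{thm:PandQnotMatroidPolytopes}). The plan is to treat them in this order, and I sketch the approach for each below.

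\emph{Deformed permutahedra.} Since a deformed $(n+m)$-permutahedron is exactly a polytope all of whose edges point in directions $\b e_i-\b e_j$, it suffices to inspect the edge directions of $\Pnm$ (and $\Qnm$) using the classification in \cref{lem:OtherFacesOfPandQ}. An edge is either an edge of $\ZGKnm$, which as a graphical zonotope has all edges in directions $\b e_u-\b e_v$ with $uv$ an arc of $K_{n,m}$, or a new edge $\b v_{\rho_1}\b v_{\rho_2}$ arising from two almost left-right (resp.\ almost right-left) acyclic orientations whose reversed arcs share an endpoint. Writing an almost left-right orientation $\rho$ with reversed arc $a_ib_j$ as the vertex $\b v_{n\to m}+\b e_{a_i}-\b e_{b_j}$ (reversing $a_i\to b_j$ increases $d_{in}(a_i)$ by one and decreases $d_{in}(b_j)$ by one), a one-line subtraction gives $\b v_{\rho_1}-\b v_{\rho_2}\in\{\b e_{b_k}-\b e_{b_l},\ \b e_{a_i}-\b e_{a_k}\}$ according to whether the two reversed arcs share their $a$- or their $b$-endpoint; the almost right-left case for $\Qnm$ is symmetric. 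Hence every edge is in a permutahedral direction.

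\emph{Indecomposability.} This is the core of the argument and the part that uses \cref{sec:criteria}; I would play the ``indecomposability game'' and conclude with \cref{cor:mainthmpolytopes} by producing a dependent set of vertices meeting every facet. Three mechanisms feed the game: (a) since $K_{n,m}$ is triangle-free, every $2$-face of $\ZGKnm$ is a parallelogram, so opposite edges are dependent (\cref{ex:parallelogram}), and the parallelograms surviving in $\Pnm$ (those not through $\b v_{n\to m}$) contribute such pairs; (b) deep truncation of $\b v_{n\to m}$ cuts each parallelogram $2$-face through $\b v_{n\to m}$ along the diagonal joining the two neighbours of $\b v_{n\to m}$, turning it into a triangle of $\Pnm$ whose three edges become pairwise dependent (\cref{ex:triangle}), one of them lying on the distinguished facet $\pol[F]_{n\to m}$; (c) $\pol[F]_{n\to m}\cong\simplex[n-1]\times\simplex[m-1]$ is a product of simplices, so by \cref{cor:ProductIndecomposables} its deformation cone is $2$-dimensional and its edges fall into at most two dependency classes in $\ELD[\Pnm]$, namely the $\b e_{a_i}-\b e_{a_k}$ class and the $\b e_{b_j}-\b e_{b_l}$ class. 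Propagating dependencies from the truncation-triangles through the surviving parallelograms of $\ZGKnm$ and across $\pol[F]_{n\to m}$ should produce a connected, pairwise-dependent family of (implicit) edges touching all vertices; the delicate point — and the main obstacle I anticipate — is the global bookkeeping: translating ``which $2$-faces of $\ZGKnm$ are incident to $\b v_{n\to m}$'' into the combinatorics of acyclic orientations of $K_{n,m}$ and checking that the resulting dependent vertex set meets every facet (both those inherited from $\ZGKnm$ and $\pol[F]_{n\to m}$). For $\Qnm$ one runs the same argument around both $\b v_{n\to m}$ and $\b v_{n\leftarrow m}$ and must bridge the two halves through an edge of $\ZGKnm$; here $n+m\geq 5$ (equivalently $nm\geq 4$) is what guarantees that the two truncation-zones are disjoint and connected only through unaffected parallelogram edges of $\ZGKnm$, which is precisely what fails for the decomposable cuboctahedron $\Qnm[2][2]$.

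\emph{Not matroid polytopes.} Suppose $\Pnm$ (resp.\ $\Qnm$) is normally equivalent to a matroid polytope $\matpol[M]$ on a ground set of size $N$. Normally equivalent polytopes have linearly isomorphic deformation cones, so by the previous step $\matpol[M]$ is indecomposable; by the corollaries of \cref{thm:ConnectedMatroidPolytopesAreIndecomposable} this means $M$ with its loops and coloops removed is connected, and comparing dimensions ($\dim\matpol[M]=\dim\Pnm=n+m-1$, and a connected matroid polytope on $N'$ elements has dimension $N'-1$) forces $M$ to be connected on all $n+m$ elements. It then remains to contradict the normal fan. When $n,m\geq 2$, the facet $\pol[F]_{n\to m}\cong\simplex[n-1]\times\simplex[m-1]$ corresponds to a flacet $A$ with $M|_A$ and $M/A$ connected matroids with simplex base polytopes, hence uniform of rank $1$ or corank $1$; this pins $M$ (or its dual) down to rank $2$, whose base polytope is a parallel-class refinement of a hypersimplex $\Delta_{2,\cdot}$ and has only triangular and square $2$-faces (in particular no hexagons and no cube-like structure) — incompatible with the zonotopal $2$-faces of $\Pnm$. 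When $n=1$ or $m=1$, $\Pnm$ is a truncated $(n+m-1)$-cube and has an honest $(n+m-2)$-cube among its facets; such a facet of a matroid polytope would be $\matpol[M|_A]\times\matpol[M/A]$ normally equivalent to $(\simplex[1])^{\,n+m-2}$, forcing $M|_A$ and $M/A$ to split as direct sums of $U_{1,2}$'s on a total of $2(n+m-2)$ elements, which equals $n+m$ only when $n+m=4$; so for $n+m\geq 5$ this is a contradiction (and this is exactly why the octahedron $\Qnm[3][1]$, at $n+m=4$, genuinely is a matroid polytope). In all cases one reaches a contradiction, completing the proof.
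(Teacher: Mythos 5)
Your top-level reduction is exactly the paper's: \cref{thm:NewRAYS} is proved there verbatim by combining \cref{thm:PandQareGP,thm:PandQareIndecomposable,thm:PandQnotMatroidPolytopes}, and your sketch of the permutahedral part coincides with the paper's proof of \cref{thm:PandQareGP}. The gaps are in the other two ingredients. For indecomposability, the step you defer as ``global bookkeeping'' is the actual content of the paper's proof of \cref{thm:PandQareIndecomposable}: for each arc $uv$ of $\Knm$, the graph $\zELD[uv]$ whose nodes are the $uv$-edges, joined when opposite in a parallelogram, is identified with the $1$-skeleton of the graphical zonotope $\ZG[{\contrG[uv][\Knm]}]$ (\cref{lem:EDuv}), hence is $(n+m-2)$-connected by Balinski's theorem; since $n+m-2\geq 3$, it stays connected after deleting the one or two nodes destroyed by truncating $\b v_{n\to m}$ (and $\b v_{n\leftarrow m}$), and the truncation triangles then tie together the classes of arcs sharing an endpoint via the connected line graph of $\Knm$, after which \cref{thm:mainthmframeworks} applies. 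Without an argument of this kind your ``connected, pairwise-dependent family touching all vertices'' is not established, so indecomposability is not proved. Also your parenthetical ``$n+m\geq 5$ (equivalently $nm\geq 4$)'' is false: $(n,m)=(2,2)$ has $nm=4$ yet gives the decomposable cuboctahedron, and the true role of $n+m\geq 5$ is exactly the $3$-connectivity above, not disjointness of the two truncation zones.

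The non-matroidality part takes a genuinely different route from the paper's (which applies \cref{lem:NotMatroidPolytope}: an indecomposable polytope normally equivalent to a matroid polytope has each coordinate taking at most two values on its vertices, and then exhibits orientations with in-degrees $0,1,2$ at $a_1$), and as written it does not close in the main case $n,m\geq 2$. First, the claim that a connected matroid whose base polytope is a simplex must be $U_{1,k}$ or $U_{k-1,k}$ is asserted without proof. Second, even granting it, the reduction ``pins $M$ (or its dual) down to rank $2$'' is incorrect: $\pol[F]_{n\to m}$ is a translate of $\conv\{\b e_{a_i}\}_i\times\conv\{-\b e_{b_j}\}_j$, so (for $m\geq 3$) normal equivalence forces precisely the mixed case $M|_A\cong U_{m-1,m}$ and $M/A\cong U_{1,n}$, i.e.\ $M$ has rank $m$, which your rank-$2$ analysis does not cover (and in any case $M|_A$ and $M/A$ do not determine $M$). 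Third, the intended contradiction ``only triangular and square $2$-faces \dots\ incompatible with the zonotopal $2$-faces of $\Pnm$'' is not a contradiction: since $\Knm$ is triangle-free, all $2$-faces of $\ZGKnm$ are parallelograms, so every $2$-face of $\Pnm$ is itself a triangle or a parallelogram, exactly as for a rank-$2$ matroid polytope; you would need a finer invariant, such as the paper's two-values-per-coordinate lemma, a vertex count, or higher-dimensional cube faces. Your $n=1$ cube-facet argument is essentially sound, but the case $n,m\geq 2$ is where the gaps lie.
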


\begin{proof}
Combine \Cref{thm:PandQareGP} (deformed permutahedra), \Cref{thm:PandQareIndecomposable} (indecomposable), and \Cref{thm:PandQnotMatroidPolytopes} (not matroid polytopes).
\end{proof}

As a consequence, we get an infinite new family of rays of the submodular cone.
\begin{corollary}\label{cor:NewMIGP}
For $N\geq 4$, there exist (at least) $2\lfloor\frac{N-1}{2}\rfloor$ non-isomorphic indecomposable $N$-deformed permutahedra which are not matroid polytopes.
\end{corollary}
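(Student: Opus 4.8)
The plan is to realize the claimed polytopes as the $\Pnm$ and $\Qnm$ attached to the unbalanced bipartitions of $N$, and then to separate them combinatorially by counting vertices. Concretely, for a fixed $N\ge 5$ consider all ordered pairs $(n,m)$ with $n+m=N$ and $1\le n<m$; there are exactly $\lfloor\frac{N-1}{2}\rfloor$ of them (namely $n\in\{1,\dots,\lceil N/2\rceil-1\}$), and since each satisfies $n+m=N\ge 5$, \Cref{thm:NewRAYS} applies to each of $\Pnm[n][m]$ and $\Qnm[n][m]$: all $2\lfloor\frac{N-1}{2}\rfloor$ of these polytopes are indecomposable deformed $N$-permutahedra that are not (normally equivalent to) matroid polytopes. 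The only thing left is that they are pairwise non-isomorphic.

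For the non-isomorphism I would use the vertex count. By \Cref{lem:VerticesFacesOfPandQ} one has $|\verts(\Pnm[n][m])|=|\AO(\Knm[n][m])|-1$ and $|\verts(\Qnm[n][m])|=|\AO(\Knm[n][m])|-2$, where $|\AO(\Knm[n][m])|=|\verts(\ZGKnm)|$ is the number of acyclic orientations of $\Knm[n][m]$ (\Cref{sec:graphicalzonotopes}). Since $\Knm[n][m]$ has at least one arc, reversing all arcs is a fixed-point-free involution on $\AO(\Knm[n][m])$, so $|\AO(\Knm[n][m])|$ is even; hence every $\Pnm[n][m]$ has an odd number of vertices while every $\Qnm[n][m]$ has an even one, which already rules out any isomorphism between a polytope of the first family and one of the second. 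Within each of the two families it remains to observe that $|\AO(\Knm[n][m])|$ takes pairwise distinct values as $\{n,m\}$ ranges over the unordered bipartitions of $N$. This follows from the fact that, for fixed $n+m$, the quantity $|\AO(\Knm[n][m])|$ is a strictly increasing function of $nm$ (equivalently of $\min(n,m)$), which can be read off from $|\AO(\Knm[n][m])|=(-1)^{n+m}\chi_{\Knm[n][m]}(-1)$ using the chromatic polynomial $\chi_{\Knm[n][m]}(t)=\sum_{i\ge 1}\binom{t}{i}\, i!\, S(n,i)\,(t-i)^m$, with $S(n,i)$ the Stirling numbers of the second kind.

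The case $N=4$ must be handled separately, since then $\lfloor\frac{N-1}{2}\rfloor=1$ requires two polytopes but $n+m=4<5$ puts it outside the range of \Cref{thm:NewRAYS}. Here I would instead invoke \Cref{exmp:SmallDimPandQ}: the strawberry $\Pnm[3][1]$ (with $f$-vector $(7,12,7)$) and the persimmon $\Pnm[2][2]$ (with $f$-vector $(13,24,13)$) are both indecomposable deformed $4$-permutahedra that are not normally equivalent to matroid polytopes, and they are non-isomorphic because their $f$-vectors differ. Together with the previous paragraph this exhibits, for every $N\ge 4$, at least $2\lfloor\frac{N-1}{2}\rfloor$ pairwise non-isomorphic such polytopes.

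The main obstacle is the last point of the $N\ge 5$ case: showing that the $2\lfloor\frac{N-1}{2}\rfloor$ vertex numbers are all distinct. The parity observation kills for free any collision between a $\Pnm[n][m]$ and a $\Qnm[n',m']$, so what genuinely remains is the monotonicity of $n\mapsto|\AO(\Knm[n][N-n])|$ at fixed $N$; this is elementary but needs either a short estimate on the alternating sum above or a closed formula for acyclic orientations of complete bipartite graphs. Everything else reduces immediately to \Cref{thm:NewRAYS}, \Cref{lem:VerticesFacesOfPandQ}, and \Cref{exmp:SmallDimPandQ}.
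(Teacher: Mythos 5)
Your overall architecture matches the paper's proof: invoke \Cref{thm:NewRAYS} for every split $n+m=N\geq 5$, handle $N=4$ separately via the strawberry and the persimmon from \Cref{exmp:SmallDimPandQ}, and then separate the $2\lfloor\frac{N-1}{2}\rfloor$ polytopes by a combinatorial invariant. Your choice of invariant differs from the paper's: you use vertex counts, and your parity observation (reversing all arcs is a fixed-point-free involution on $\AO(\Knm)$, so by \Cref{lem:VerticesFacesOfPandQ} every $\Pnm$ has an odd and every $\Qnm$ an even number of vertices) is a correct and rather elegant way to rule out any isomorphism between the two families.

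There is, however, a genuine gap exactly where you flag it: within each family you need $|\AO(\Knm[n][N-n])|$ to take pairwise distinct values for $1\le n\le\lfloor\frac{N-1}{2}\rfloor$, and you only assert that this quantity is strictly increasing in $\min(n,m)$, to be ``read off'' the chromatic polynomial. That monotonicity is plausible (and checks out in small cases), but it is an alternating-sum estimate that you neither carry out nor reduce to a citable fact, so as written the non-isomorphism of, say, $\Pnm[2][N-2]$ and $\Pnm[3][N-3]$ is unproven --- and this is the entire content of the within-family separation. The paper sidesteps this by using the facet count instead: the number of facets of $\ZGKnm$ is $2^N+N+2-(2^n+2^{N-n})$, which is visibly injective in $n$ on the relevant range because $2^n+2^{N-n}$ is strictly decreasing for $1\le n\le\lfloor\frac{N-1}{2}\rfloor$ (and the deep truncations add one, respectively two, facets, with a parity argument handling cross-family collisions). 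So either complete the monotonicity argument for acyclic-orientation counts of complete bipartite graphs, or replace your vertex-count invariant by the facet count, where distinctness is immediate; with either fix the rest of your argument stands.
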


\begin{proof}
For $N = 4$, see \Cref{exmp:SmallDimPandQ}.
%According to \Germain{Ai ajouté cette preuve}\Cref{thm:PandQareGP,thm:PandQareIndecomposable,thm:PandQnotMatroidPolytopes},
According to \Cref{thm:NewRAYS}, both $\Pnm$ and $\Qnm$ for $n + m = N \geq 5$ are indecomposable deformed $N$-permutahedra which are not matroid polytopes.
These polytopes are not pairwise isomorphic for $1\leq n \leq \lfloor\frac{N-1}{2}\rfloor$ and $m = N - n$, as the number of facets of $\ZGKnm$ is the number of connected subgraphs of $\Knm$ whose complement is also connected, \ie $2^N + N + 2 - (2^n + 2^{N-n})$, which is different for each $n$.
\end{proof}

% Building on our numerical computations for $N \in\{3, 4, 5, 6\}$, we conjecture the following. Let $\mathdefn{t_N}$ be the number of indecomposable deformed $N$-permutahedra, and $\mathdefn{m_N}$ be the number of indecomposable deformed $N$-permutahedra which are matroid polytopes (\ie deformed $N$-permutahedra on $0/1$ coordinates).
% \begin{conjecture}${m_N}/{t_N} \to 0$ when $N\to+\infty$.
% \end{conjecture}

% As $m_N$ is doubly exponential, \Cref{cor:NewMIGP} is far too weak to tackle this conjecture.
% An article proving this conjecture by constructing new indecomposable deformed permutahedra with completely different methods is in preparation with Georg Loho and the two authors.
% \Germain{Ai rajouté ceci}\arnau{see how we cite this}

\begin{remark}\label{rmk:NotGeneratedByLPPmethod}
Another construction for a large family of indecomposable deformed permutahedra is given in \cite{LohoPadrolPoullot2025RaysSubmodularCone}.
We show that the present construction does not boil down to an application of the latter.

Fix $n, m\geq 3$ and some $i \in [n]$.
The vertices of the face $\left(\ZGKnm\right)^{-\b e_{a_i}}$ are in bijection with acyclic orientations of $\Knm$ such that all the arcs $ix$ for $x\in [m]$ are oriented from $i$ towards $x$.
Hence, the face $\left(\ZGKnm\right)^{-\b e_{a_i}}$ is isomorphic to $\ZGKnm[n-1]$ (more precisely, it is a translation of an embedding of $\ZGKnm[n-1]$ in $\R^{n+m}$).
Accordingly, for any $i\in [n]$, the face $\left(\Pnm\right)^{-\b e_{a_i}}$ is isomorphic to $\ZGKnm[n-1]$, while the face $\left(\Pnm\right)^{+\b e_{a_i}}$ is isomorphic to $\Pnm[n-1]$.
Similarly, for any $j\in [m]$, the face $\left(\Pnm\right)^{-\b e_{b_j}}$ is isomorphic to $\Pnm[n][m-1]$, while the face $\left(\Pnm\right)^{+\b e_{b_j}}$ is isomorphic to $\ZGKnm[n][m-1]$.
Finally, recall that $\ZGKnm$ is \textbf{not} indecomposable for $n, m\geq 2$, since it is a Minkowski sum.

This shows that $\Pnm$ cannot be constructed with the tools from \cite{LohoPadrolPoullot2025RaysSubmodularCone}, which always give rise to polytopes that have a pair of antipodal indecomposable facets.

Now, let \defn{$t_N$} denote the number of indecomposable deformed $N$-permutahedra which are not pairwise normally equivalent.
Let \defn{$a_N$} be the number of these polytopes $\pol$ such both $\pol^{+\b e_N}$ and $\pol^{-\b e_N}$ are indecomposable.
In \cite[Question 3.32]{LohoPadrolPoullot2025RaysSubmodularCone}, the authors ask about the behavior of the ratio $\frac{a_N}{t_N}$ when $N \to+\infty$.
In particular, by \Cref{thm:NewRAYS} and what we just argued, for all $n, m\geq 3$, the polytopes $\Pnm$ are deformed $(n+m)$-permutahedra, but %for all $i\in [n]\sqcup[m]$, 
either $\pol^{+\b e_N}$ or $\pol^{-\b e_N}$ is not indecomposable.
Hence, we proved: $t_N - a_N \geq \left\lfloor\frac{N-1}{2}\right\rfloor$.

For $\Qnm$, the opposite faces $\left(\Qnm\right)^{-\b e_i}$ and $\left(\Qnm\right)^{+\b e_i}$ are equivalent to $\Pnm[n-1]$ or to $\Pnm[n][m-1]$, depending whether $i \in [n]$ or $i \in [m]$.
Hence, $\Qnm$ could be constructed by the inductive methods of \cite[Proposition 3.8]{LohoPadrolPoullot2025RaysSubmodularCone}. However, they do not belong to the families constructed in \cite[Corollary 3.20]{LohoPadrolPoullot2025RaysSubmodularCone} because the polytopes $\Pnm$ cannot be constructed by the aforementioned inductive methods.
\end{remark}

\subsubsection{Deformed permutahedra}
The deformations of $\permuto[N]$ are characterized in~\cite[Definition~6.1]{Postnikov2009} by having all its edges parallel to $\b e_i - \b e_j$.
Using \Cref{lem:OtherFacesOfPandQ}, we verify that $\Pnm$ and $\Qnm$ satisfy this condition.
	\begin{theorem}\label{thm:PandQareGP}
		The polytopes $\Pnm$ and $\Qnm$ are deformed permutahedra.
	\end{theorem}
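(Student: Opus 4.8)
The plan is to verify directly that every edge of $\Pnm$ (respectively $\Qnm$) is parallel to a vector of the form $\b e_i - \b e_j$, which is exactly Postnikov's characterization of deformed permutahedra cited above. By \Cref{lem:OtherFacesOfPandQ}, the edges of $\Pnm$ come in two types, so I would treat them separately.

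First, for an edge of the first type---that is, an edge of $\ZGKnm$ not containing the truncated vertex $\b v_{n\to m}$ (and, in the case of $\Qnm$, not containing $\b v_{n\leftarrow m}$)---I invoke the edge description of graphical zonotopes from \Cref{sec:graphicalzonotopes}: an edge $\pol[e]_{g,\rho}$ of $\ZG$ with $g = uv$ oriented $u\to v$ satisfies $\b v_{\rho_1} - \b v_{\rho_2} = \b e_v - \b e_u$. Thus every edge of $\ZGKnm$ is already of the desired form, and in particular so is every edge of the first type. (This already shows $\ZGKnm$ itself is a deformed permutahedron, as expected since it is a graphical zonotope, hence a generalized permutahedron.)

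The second type is the new kind of edge created by the deep truncation: an edge $\b v_{\rho_1}\b v_{\rho_2}$ where $\rho_1,\rho_2$ are almost left-right orientations (or almost right-left, for $\Qnm$) whose reversed arcs share an endpoint. Here I must compute the direction vector explicitly. Write $\rho_1$ as $\rho_{n\to m}$ with arc $g_1 = a_{i}b_{j_1}$ reversed (so $b_{j_1}\to a_i$), and $\rho_2$ as $\rho_{n\to m}$ with arc $g_2 = a_i b_{j_2}$ reversed, where the shared endpoint is $a_i$ (the case where the shared endpoint lies on the $b$-side is symmetric, with the roles of $a$'s and $b$'s swapped). Using the in-degree formula $\b v_\rho = \sum_{k} d_{\mathrm{in}}(k,\rho)\,\b e_k$ and comparing $\rho_1$ with $\rho_2$: reversing $a_ib_{j_1}$ back to $a_ib_{j_2}$ changes only the in-degrees at $b_{j_1}$, $b_{j_2}$ (and leaves $a_i$ unchanged, since it loses one incoming arc and gains one). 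Concretely one finds $\b v_{\rho_1} - \b v_{\rho_2} = \b e_{b_{j_1}} - \b e_{b_{j_2}}$, which is again of the required form. The case with shared endpoint on the $b$-side gives $\b e_{a_{i_1}} - \b e_{a_{i_2}}$, and the almost right-left orientations relevant to $\Qnm$ are handled identically by symmetry.

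Since both facets $\pol[F]_{n\to m}$ and $\pol[F]_{n\leftarrow m}$ are isomorphic to products of simplices $\simplex[n-1]\times\simplex[m-1]$, their edges are among the edges of $\Pnm$ and $\Qnm$ already accounted for, so no new directions appear there. Having checked all edges, $\Pnm$ and $\Qnm$ are polytopes all of whose edges are parallel to some $\b e_i - \b e_j$, hence deformed $(n+m)$-permutahedra by \cite[Definition~6.1]{Postnikov2009}. The only mildly delicate point is the bookkeeping in the in-degree computation for the second edge type---making sure the shared endpoint's coordinate genuinely cancels---but this is a short finite check rather than a real obstacle.
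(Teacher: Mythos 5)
Your proposal is correct and takes essentially the same route as the paper: both use \cref{lem:OtherFacesOfPandQ} to split the edges into zonotope edges (already in direction $\b e_i - \b e_j$ since $\ZGKnm$ is a deformed permutahedron) and the new truncation edges, whose direction is computed explicitly from the in-degree description of the vertices when the two reversed arcs share an endpoint, with $\Qnm$ handled by the symmetric/almost right-left case. (Your stated direction $\b e_{b_{j_1}} - \b e_{b_{j_2}}$ is off by a sign from what the bookkeeping gives, but this is immaterial for parallelism to $\b e_i - \b e_j$.)
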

	
	\begin{proof}
%		Being a deformed permutahedron is equivalent to having all its edges parallel to $\b e_i - \b e_j$ (for some $i, j$).
		
		The edges of $\Pnm$ are given in \Cref{lem:OtherFacesOfPandQ}. 
		$\ZGKnm$ is a deformed permutahedron, and hence the edges of $\Pnm$ of type \emph{(i)} are in direction $\b e_i - \b e_j$ for some $i, j \in [n]$.
		
		For an edge of the form \emph{(ii)}, let $\rho_1, \rho_2$ be two almost left-right orientations.
		We have $\b v_{\rho_1} - \b v_{\rho_2} = (\b v_{\rho_1} - \b v_{n\to m}) - (\b v_{\rho_2} - \b v_{n\to m})$. 	The reversed arcs of $\rho_1$ and $\rho_2$ share an endpoint, say they are of the form $a_ib_k$ and $a_jb_k$.
		Hence we have $\b v_{\rho_1} - \b v_{n\to m} = \b e_{a_i} - \b e_{b_k}$ and $\b v_{\rho_1} - \b v_{n\to m} = \b e_{a_j} - \b e_{b_k}$.
		Consequently, $\b v_{\rho_1} - \b v_{\rho_2} = \b e_{a_i} - \b e_{a_j}$ is indeed in the correct direction.
		We have proven that all the edges of $\Pnm$ are parallel to $\b e_i - \b e_j$ for some $i, j$, thus $\Pnm$ is a deformed permutahedron.
		The edges of $\Qnm$ are either edges of $\Pnm$ or of $\Pnm[m][n]$, thus $\Qnm$ is also a deformed permutahedron.
	\end{proof}

Unfortunately, our construction only gives deformed permutahedra for complete bipartite graphs.
%While the proof is somehow technical, we include it for completeness.

\begin{theorem}
For a connected graph $G$ and a vertex $\b v$ of $\ZG$, if $\ZG\ssm \b v$ is a deformed permutahedron, then $G$ is a complete bipartite graph $\Knm$ and $\b v = \b v_{n\to m}$ or $\b v = \b v_{m\to n}$.
\end{theorem}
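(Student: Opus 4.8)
The strategy is to exploit that $\ZG$ is itself a deformed permutahedron (it is a zonotope, hence a Minkowski sum of segments $[\b e_u,\b e_v]$ each parallel to $\b e_u-\b e_v$), so that the only possible obstruction to $\ZG\ssm\b v=\conv\big(\verts(\ZG)\ssm\{\b v\}\big)$ being a deformed permutahedron lies in the edges newly created when $\b v$ is deleted. Write $\b v=\b v_\rho$ for an acyclic orientation $\rho$ of $G$. As in \cref{lem:VerticesFacesOfPandQ,lem:OtherFacesOfPandQ}, deleting a single vertex of a polytope does not destroy any other vertex, and for every $2$-face $F$ of $\ZG$ with $\b v\in F$ the polygon $F\ssm\b v$ is again a face of $\ZG\ssm\b v$ (if $F=\ZG^{\b c}$ then $(\ZG\ssm\b v)^{\b c}=F\ssm\b v$, since $F$ has vertices besides $\b v$); in particular the two neighbours of $\b v$ in $F$ are joined by an edge of $\ZG\ssm\b v$. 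The edges of $\ZG$ at $\b v$ point in the directions $\b e_u-\b e_w$ with $u\to w$ a reversible arc of $\rho$, so such a new edge has direction $(\b e_{u_1}-\b e_{w_1})-(\b e_{u_2}-\b e_{w_2})$ for the two reversible arcs $u_1w_1$, $u_2w_2$ defining the two edges of $F$ at $\b v$. A one-line cancellation check shows this vector is proportional to some $\b e_i-\b e_j$ exactly when $u_1w_1$ and $u_2w_2$ share an endpoint carrying the same orientation role in $\rho$ (both tails, or both heads). Since $\ZG\ssm\b v$ is assumed to be a deformed permutahedron, this must hold for the two edges of \emph{every} $2$-face through $\b v$.

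Now one classifies the $2$-faces of $\ZG$ through $\b v$ using the description recalled in \cref{sec:graphicalzonotopes}. A hexagonal one (from a triangle $t$ of $G$) has its two $\b v$-edges equal to the two arcs of $t$ incident to its $\rho$-middle vertex, which meet there with opposite roles; so no hexagonal $2$-face may pass through $\b v$, i.e.\ $\rho/t$ is cyclic for every triangle $t$. A parallelogram from two vertex-disjoint edges has its two $\b v$-edges vertex-disjoint; so no such parallelogram may pass through $\b v$, i.e.\ the contraction merging both edges is cyclic for every pair of disjoint edges. A parallelogram from an induced path $x\!-\!y\!-\!z$ has its two $\b v$-edges meeting at $y$, so whenever it passes through $\b v$, $\rho$ must make $y$ a local source or sink there, never a through-vertex. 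The plan is to show these constraints force $G$ to be triangle-free and then complete bipartite: if $G$ had a triangle one produces a triangle $t$ with $\rho/t$ acyclic (a forbidden hexagon), and if $G=(A\sqcup B,E)$ were triangle-free but not complete bipartite — an odd induced cycle, or two non-adjacent vertices on opposite sides — one produces either a forbidden disjoint-edges parallelogram or a through-directed induced $P_3$ bounding a $2$-face at $\b v$. Finally, on $G=\Knm$ the remaining $P_3$-condition together with acyclicity of $\rho$ forces every arc of $G$ to be oriented the same way (all $a_i\to b_j$, or all $b_j\to a_i$), i.e.\ $\b v$ is one of the two vertices $\b v_{n\to m},\b v_{m\to n}$ named in the statement.

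A convenient device for the structural part is induction on $|V(G)|$. Pick a node $w$ of $G$ that is a source of $\rho$ (one exists since $\rho$ is acyclic) and pass to the face $(\ZG)^{-\b e_w}$, a translate of $\ZG[G-w]$ that contains $\b v$; its trace on $\ZG\ssm\b v$ is $(\ZG\ssm\b v)^{-\b e_w}=\ZG[G-w]\ssm\b v_{\rho|_{G-w}}$, a face of a deformed permutahedron and hence a deformed permutahedron. If $G-w$ had two non-trivial components, then $\b v$ would lie on a product-of-edges ($=$ disjoint-edges) parallelogram, which is forbidden; so $G-w$ is a connected graph plus vertices pendant at $w$, and induction identifies its nontrivial component as $\Knm[n'][m']$ with $\rho$ source/sink-oriented on it. One then records, using once more the triangle and disjoint-edge conditions at $\b v$, that $w$ must be adjacent to exactly one side of $\Knm[n'][m']$ and to nothing else, which glues $w$ into the bipartition and yields $G=\Knm$ with $\rho$ source/sink-oriented throughout.

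The main obstacle is precisely the passage from ``no forbidden $2$-face through the \emph{specific} vertex $\b v$'' to the global statements ``$G$ has no triangle'' and ``$G$ is complete bipartite'': a triangle or a bad disjoint pair inside $G$ need not bound a $2$-face through $\b v$, because the relevant contraction $\rho/(\cdot)$ may fail to be acyclic. Overcoming this requires choosing the triangle / disjoint pair extremally with respect to $\rho$, or running a descent on the length of a directed path witnessing the non-acyclicity of the contraction, and this bookkeeping — intertwined with the inductive gluing of the deleted source — is where the real work lies.
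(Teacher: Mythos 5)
Your setup is sound and matches the paper's: you correctly reduce everything to the new edges created by deleting $\b v=\b v_\rho$, note that for each $2$-face $F\ni\b v$ the polygon $F\ssm\b v$ is a face of $\ZG\ssm\b v$ so the two neighbours of $\b v$ in $F$ become adjacent, compute the new edge direction as $(\b e_{u_1}-\b e_{w_1})-(\b e_{u_2}-\b e_{w_2})$, and classify the $2$-faces through $\b v$ (hexagons from triangles, parallelograms from disjoint pairs and from paths of length $2$). But from there on you only state a plan, and the step you yourself flag as "where the real work lies" is exactly the content of the paper's proof and is missing from yours: a forbidden configuration in $G$ (a directed path $u\to v\to w$ in $\rho$, a triangle, a disjoint pair, a missing arc between the two sides) does \emph{not} automatically give a forbidden $2$-face through $\b v$, because the relevant contraction of $\rho$ may be cyclic. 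The paper handles this by an explicit extremal argument: it assumes a directed path $u\to v\to w$, chooses $v$ maximal among in-neighbours of $w$ and $u$ maximal among in-neighbours of $v$ so that both single-arc reversals are acyclic; if the parallelogram on $\{uv,vw\}$ does not contain $\b v_\rho$, there must be a $u$--$w$ directed path, and taking its first step $u\to x$ with $x$ minimal produces a \emph{different} parallelogram through $\b v_\rho$ whose new edge has direction $\b e_x-\b e_u+\b e_v-\b e_w$, a contradiction either way. This shows every node is a source or sink of $\rho$ (hence $G$ is bipartite and triangle-freeness comes for free, rather than as a separate first step as in your outline), and then completeness follows because for $a\in A$, $b\in B$ non-adjacent, the double contraction of $ab'$ and $a'b$ is automatically acyclic precisely since $ab\notin E$. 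None of this bookkeeping appears in your proposal.

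Your alternative inductive route (delete a source $w$ of $\rho$ and restrict to the facet $(\ZG)^{-\b e_w}\cong\ZG[G-w]$) is a genuinely different idea, but as written it has its own unproved steps: the claim that two nontrivial components of $G-w$ force a disjoint-edges parallelogram through $\b v$ needs a choice of one arc per component whose \emph{simultaneous} contraction keeps $\rho$ acyclic (this can be arranged, e.g.\ by taking in each component an arc from a source to its topologically minimal out-neighbour and using that $w$ stays a source, but you do not argue it); if $G-w$ has isolated vertices it is disconnected, so the inductive hypothesis does not apply as stated; and the "gluing" step must show that $w$ is adjacent to \emph{all} vertices of one side of $\Knm[n'][m']$ and that no pendant vertices coexist with a nonempty opposite side, which again requires producing concrete forbidden $2$-faces through $\b v$. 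So the proposal is an accurate road map that identifies the right obstruction, but the decisive combinatorial argument is absent.
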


\begin{proof}
Let $G = (V, E)$ be a connected graph,
and $\rho$ an acyclic orientation of $G$, such that $\b v_\rho$ admits a deep truncation and $\ZG\ssm \b v_\rho$ is a deformed permutahedra.
Equivalently, for all acyclic orientations $\rho_1, \rho_2$ such that $\rho_1$ and $\rho_2$ are obtained by reversing the direction of one arc in $\rho$, if $\b v_\rho$, $\b v_{\rho_1}$ and $\b v_{\rho_2}$ belong to a common $2$-face of $\ZG$, then $\b v_{\rho_1} - \b v_{\rho_2} = \b e_i - \b e_j$ for some $i, j\in V$.

We first prove that there is no directed path of length 2 in $\rho$, which implies that $G$ is bipartite (between sources and sinks in $\rho$), and then prove that all sources of $\rho$ are linked to all sinks of $\rho$, which concludes.
We assume that $V$ is labeled according to a topological order of $\rho$, \ie if $u\to v$ in $\rho$, then $u\leq v$.

Suppose there is a directed path of length 2, namely $u\to v \to w$.
We can assume that $v$ is maximal among the incoming neighbors of $w$ with in-degree at least 1, and that $u$ is maximal among the incoming neighbors of $v$.
In particular, there is no directed path from $u$ to $v$ (other than the arc $u\to v$ itself), otherwise the last vertex arc of this path would be $x \to v$ with $u < x$.
Similarly, there is no directed path from $v$ to $w$ in~$\rho$.
Consider $\rho_{uv}$ the orientation obtained from~$\rho$ by changing the orientation of the arc $u\to v$, and similarly~$\rho_{vw}$.
Both are acyclic orientations of $G$ (otherwise there would be two paths from $u$ to $v$ in $\rho$, respectively from $v$ to $w$), so $\b v_{\rho_{uv}}$ and $\b v_{\rho_{vw}}$ are vertices of $\ZG$ adjacent to $\b v_{\rho}$.
If $\b v_{\rho}$, $\b v_{\rho_{uv}}$ and $\b v_{\rho_{vw}}$ belong to a common 2-face of $\ZG$, then $\b v_{\rho_{uv}}\b v_{\rho_{vw}}$ is an edge of $\ZG\ssm \b v_{\rho}$.
However, the direction of $\b v_{\rho_{uv}}\b v_{\rho_{vw}}$ is $\b e_u - 2 \b e_v + \b e_w$, so in this case, $\ZG\ssm\b v_{\rho}$ is not a deformed permutahedron.

Consequently, $\b v_{\rho}$, $\b v_{\rho_{uv}}$ and $\b v_{\rho_{vw}}$ do not belong to a common 2-face of $\ZG$, meaning there is a path (of length at least 2) from $u$ to $w$ in $\rho$.
Let $x$ be minimal among the vertices such that $x\ne v$ and there is a path from $u$ to $w$ in $\rho$ whose first arc is $u\to x$.
Let $\rho_{ux}$ be obtained by reversing the direction of the arc $u\to x$ in $\rho$.
There is no path from $u$ to $x$ other than the edge $u\to x$, as $x$ in minimal: $\rho_{ux}$ is acyclic and $\b v_{\rho_{ux}}$ is a neighbor of $\b v_\rho$.
There is no directed path from $v$ to $x$ in $\rho$ (otherwise, there will be a directed path from $v$ to $w$ going through $x$).
There is no directed path from $w$ to $u$ (otherwise there would be a cycle using $u\to v\to w$).
Thus, contracting both $u\to x$ and $v\to w$ leads to an acyclic orientation, implying that $\b v_\rho$, $\b v_{\rho_{ux}}$ and $\b v_{\rho_{v w}}$ belong to a common 2-face of $\ZG$.
But, this implies that $\b v_{\rho_{ux}}\b v_{\rho_{vw}}$ is an edge of $\ZG\ssm \b v_\rho$: as its direction is $\b e_x - \b e_u + \b e_v - \b e_w$, this contradicts the fact that $\ZG\ssm\b v_\rho$ is a deformed permutahedron.

Therefore, there is no directed path of length 2 in $\rho$: each vertex is either a source or a sink.
Let $V = A\sqcup B$ be this bi-partition, and suppose that $G$ is not complete bipartite.
Let $a\in A$, $b\in B$ such that $ab \notin E$.
As $G$ is connected, let $b'\in B$ be a neighbor of $a$, and $a'\in A$ a neighbor of $b$.
The orientations $\rho_{ab'}$ and $\rho_{a'b}$ are acyclic because $a$ and $a'$ are sources.
Contracting the arcs $ab'$ and $a'b$ in $\rho$ does not lead to a cycle because $ab\notin E$ (so there is no path from $a$ to $b$).
Hence, $\b v_{\rho_{ab'}}$, $\b v_{\rho_{a'b}}$ and $\b v_{\rho}$ lie in a common 2-face, so $\b v_{\rho_{ab'}}\b v_{\rho_{a'b}}$ is an edge in $\ZG\ssm\b v_\rho$, in direction $\b e_b - \b e_{a'} + \b e_a - \b e_{b'}$.
This contradicts the fact that $\ZG\ssm\b v_{\rho}$ is a deformed permutahedron: $G$ is a complete bipartite graph $\Knm$, and $\rho\in\{\rho_{n\to m},\, \rho_{m\to n}\}$.
\end{proof}

\subsubsection{Indecomposability}
We will use \Cref{thm:mainthmframeworks} to prove indecomposability.

For an arc $uv$ of $\Knm$, we call \defn{$uv$-edges} the edges of $\ZGKnm$, $\Pnm$ or $\Qnm$ in direction $\b e_u-\b e_v$, and \defn{$a$-edge} an $uv$-edge for some arc $uv$ in $\Knm$, see \Cref{fig:ProofIndecomposabilityPnmQnm} (note that the edges of the distinguished facets of $\Pnm$ and $\Qnm$ are precisely their non-$a$-edges). 
% Let \defn{$\zELD[uv]$} (resp. \defn{$\zELD[uv]^{\dashcircle}$} and \defn{$\zELD[uv]^{\circledashcircle}$}) be the subgraph of $\ELD[\ZGKnm]$ (resp. $\ELD[\Pnm]$ and $\ELD[\Qnm]$) induced on all $uv$-edges of $\ZGKnm$ (resp. of $\Pnm$ and $\Qnm$).
Let \defn{$\zELD[uv]$} (resp. \defn{$\zELD[uv]^{\dashcircle}$} and \defn{$\zELD[uv]^{\circledashcircle}$}) be the subgraph of $\ELD[\ZGKnm]$ (resp. $\ELD[\Pnm]$ and $\ELD[\Qnm]$) whose nodes are the $uv$-edges and where two $uv$-edges are linked by an arc if they are opposite in a parallelogram of $\ZGKnm$ (resp. of $\Pnm$ and $\Qnm$).

\begin{lemma}\label{lem:EDuv}
The graph \defn{$\zELD[uv]$} is isomorphic to the $1$-skeleton of the graphical zonotope of the contracted graph ${\contrG[uv][\Knm]}$.
\end{lemma}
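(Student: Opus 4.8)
The plan is to match, step by step, the combinatorial description of $\zELD[uv]$ with the combinatorial description of the $1$-skeleton of $\ZG[{\contrG[uv][\Knm]}]$ recalled in \cref{sec:graphicalzonotopes}. \textbf{Node sets.} By the classification of the edges of a graphical zonotope, the $uv$-edges of $\ZGKnm$ (\ie the edges in direction $\b e_u-\b e_v$, which are exactly the edges $\pol[e]_{uv,\rho}$ attached to the fixed arc $g=uv$) are in bijection with $\AO(\contrG[uv][\Knm])$; and the vertices of $\ZG[{\contrG[uv][\Knm]}]$ are also in bijection with $\AO(\contrG[uv][\Knm])$. Composing these two bijections gives a bijection $\Phi$ between the node set of $\zELD[uv]$ and the vertex set of $\ZG[{\contrG[uv][\Knm]}]$, sending $\pol[e]_{uv,\rho}$ to $\b v_\rho$.

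\textbf{Adjacency.} It remains to check that $\Phi$ is a graph isomorphism, \ie that $\pol[e]_{uv,\rho_1}$ and $\pol[e]_{uv,\rho_2}$ are opposite edges of a common parallelogram of $\ZGKnm$ if and only if $\b v_{\rho_1}$ and $\b v_{\rho_2}$ are joined by an edge of $\ZG[{\contrG[uv][\Knm]}]$, that is, if and only if $\rho_1$ and $\rho_2$ differ in the orientation of exactly one arc of $\contrG[uv][\Knm]$. For the ``only if'' direction: since $\Knm$ is triangle-free every $2$-face of $\ZGKnm$ is a parallelogram, so a common $2$-face $\polF$ of these two edges is of the form $\polZ_{(s,\omega)}$ with $s=\{uv,g\}$ a pair of disjoint edges or an induced path of length $2$ (we must have $uv\in s$, as the $uv$-direction is one of the two edge directions of $\polF$), and $\omega\in\AO(\contrG[s][\Knm])$; the four vertices of $\polF$ are the acyclic orientations of $\Knm$ inducing $\omega$ on $\contrG[s][\Knm]$, its two $uv$-edges are the pairs differing only in the orientation of $uv$, so contracting $uv$ shows that $\rho_1,\rho_2$ induce $\omega$ on $\contrG[s][\Knm]$ and differ only in the orientation of $g$. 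For the ``if'' direction: given $\rho_1,\rho_2\in\AO(\contrG[uv][\Knm])$ differing only in one arc $g$, note $g$ is an arc of $\Knm$ with $g\ne uv$ (as $\Knm$ is bipartite and $uv$ is an edge, no parallel arc is created by the contraction), so $s:=\{uv,g\}$ is again a pair of disjoint edges or an induced path of length $2$; let $\omega$ be the common orientation induced by $\rho_1,\rho_2$ on $\contrG[s][\Knm]$, which equals the further contraction of $\contrG[uv][\Knm]$ by $g$. One then checks that the four orientations of $\Knm$ extending $\omega$ are all acyclic, so that $\polZ_{(s,\omega)}$ is a genuine parallelogram $2$-face having $\pol[e]_{uv,\rho_1}$ and $\pol[e]_{uv,\rho_2}$ as its pair of opposite $uv$-edges.

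Conceptually this is just the statement that the linear projection of $\R^{n+m}$ killing the direction $\b e_u-\b e_v$ sends $\ZGKnm$ onto $\ZG[{\contrG[uv][\Knm]}]$, carrying each $uv$-edge to a vertex and each parallelogram with opposite $uv$-edges to an edge; one could alternatively run the proof through \cref{prop:projection1}. \textbf{Main obstacle.} The only genuinely delicate point is the acyclicity bookkeeping in the ``if'' direction, namely verifying that the putative face $\polZ_{(s,\omega)}$ actually occurs in $\ZGKnm$: since $\rho_1$ is an acyclic orientation of $\contrG[uv][\Knm]$, there is in $\Knm$ (with the orientations induced by $\rho_1$ and the arc $uv$ removed) no directed path from $u$ to $v$ nor from $v$ to $u$, so both orientations of $uv$ keep the orientation of $\Knm$ acyclic, and likewise for $g$; hence the four extensions of $\omega$ are acyclic. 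Everything else is a direct translation between the face descriptions of $\cref{sec:graphicalzonotopes}$.
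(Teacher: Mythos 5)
Your proof is correct and follows essentially the same route as the paper: both identify the $uv$-edges of $\ZGKnm$ with $\AO({\contrG[uv][\Knm]})$, i.e.\ with the vertices of $\ZG[{\contrG[uv][\Knm]}]$, and match the parallelogram $2$-faces containing $uv$-edges with the edges of the contracted zonotope via the ordered-partition description of faces. The paper states this in two lines, while you additionally spell out the adjacency and acyclicity bookkeeping (and note the projection viewpoint), which is a fuller but not different argument.
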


\begin{proof}
Following the combinatorial description of faces of $\ZGKnm$ in terms of ordered partitions from \Cref{sec:graphicalzonotopes}, we see that $uv$-edges of $\ZGKnm$ are in bijection with 
vertices of $\ZG[{\contrG[uv][\Knm]}]$, and the $2$-faces of $\ZGKnm$ containing $uv$-edges (which are parallelograms) are in bijection with the edges of $\ZG[{\contrG[uv][\Knm]}]$.
\end{proof}

\begin{figure}
    \centering
    \includegraphics[width=0.99\linewidth]{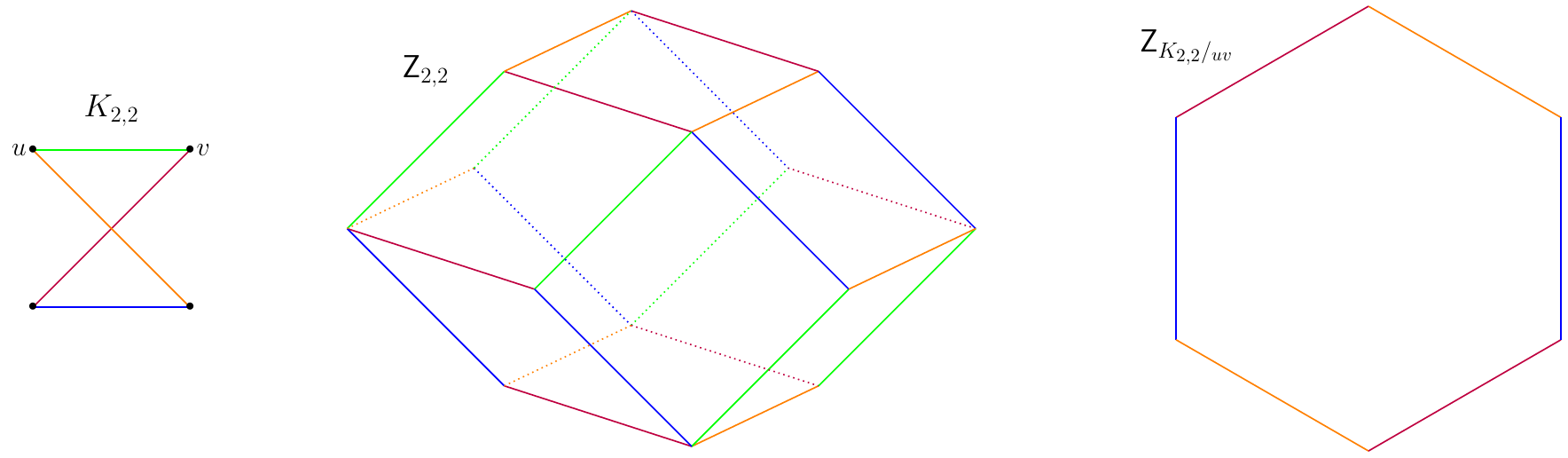}
    \caption{(Left) The complete bipartite graph $\Knm[2][2]$.
    (Middle) The corresponding graphical zonotope $\ZGKnm[2][2]$, the \textcolor{green}{green} edges are the $uv$-edges for the arc $uv$ drawn in \textcolor{green}{green} on $\Knm[2][2]$ (and reciprocally for the other colors).
    (Right) As proven in \Cref{lem:EDuv}, the graph $\zELD[uv]$ is isomorphic to the $1$-skeleton of the graphical zonotope $\ZG[{\contrG[uv][{\Knm[2][2]}]}]$ which can be obtained by projecting the zonotope $\ZGKnm[2][2]$ along the direction of its \textcolor{green}{green} edges.
    Note that the graph $\contrG[uv][{\Knm[2][2]}]$ is a triangle: its graphical zonotope is indeed a regular hexagon.}
    \label{fig:ProofIndecomposabilityPnmQnm}
\end{figure}

\begin{theorem}\label{thm:PandQareIndecomposable}
$\Pnm$ is indecomposable for all $n, m\geq 1$, and $\Qnm$ is indecomposable  for all $n, m\geq 1$ with $(n, m) \neq (2, 2)$.
\end{theorem}

\begin{proof}
We fix $n, m$ with $n+m \geq 5$. For the lower dimensional cases, see \Cref{exmp:SmallDimPandQ}.

First note that, as all the 2-faces of $\ZGKnm$ are parallelograms, by \Cref{ex:parallelogram}, the graphs $\zELD[uv]$, $\zELD[uv]^{\dashcircle}$, and $\zELD[uv]^{\circledashcircle}$
are subgraphs of $\ELD[\ZGKnm]$, $\ELD[\Pnm]$, and $\ELD[\Qnm]$, respectively. 

By \Cref{lem:EDuv}, the graph $\zELD[uv]$ is isomorphic to the $1$-skeleton of a $(n+m-2)$-dimensional polytope, see \Cref{fig:ProofIndecomposabilityPnmQnm}.
In particular, this graph is $(n+m-2)$-connected, by Balinksi's theorem (see \cite[Section 3.5]{Ziegler-polytopes}).
Moreover, as $n+m-2\geq 3$, removing 1 or 2 vertices to this graph does not break its connectivity, and hence, $\zELD[uv]^{\dashcircle}$ and $\zELD[uv]^{\circledashcircle}$ are also connected.

Now, let $uv$, $uw$ be two arcs of $\Knm$ sharing an endpoint, \ie they form an arc in the line graph of~$\Knm$.
Let $\rho_1$, $\rho_2$ be their associated almost left-right orientations.
The edge between $\b v_{\rho_1}$ and $\b v_{\rho_2}$ shares a triangular $2$-face with an $uv$-edge and a $uw$-edge, see \Cref{fig:SmallDimPandQ}.
As any triangle is indecomposable, \Cref{cor:indecsubframeclique} ensures that $\zELD[uv]^{\dashcircle}$ and $\zELD[uw]^{\dashcircle}$ are connected together.
As $\Knm$ is connected, so is its line-graph, thus the subgraph of $\ELD[\Pnm]$ induced on  all $a$-edges is connected.
The $a$-edges form a dependent subset, and every vertex of $\Pnm$ appears as endpoint of one of these edges in $\G(\Pnm)$ (because any $\rho\in\AO(G)$ has a re-orientable arc): by \Cref{thm:mainthmframeworks}, we get that $\Pnm$ is indecomposable.

The same reasoning holds for $\Qnm$.
\end{proof}

% \begin{proof}
% We fix $n, m$ with $n+m \geq 5$. For the lower dimensional cases, see \Cref{exmp:SmallDimPandQ}.

% The vertices of $\ZGKnm$ associated to almost left-right orientations (respectively almost right-left orientations) belong to a common hyperplane $\left\{\b x ~;~ \sum_{j = 1}^m x_{b_j} = nm - 1\right\}$ (respectively $\left\{\b x ~;~ \sum_{i = 1}^n x_{a_i} = nm - 1\right\}$).
% Besides, for all $i\in [n]$ and $j\in [m]$, let $\rho$ be the almost left-right orientation whose reversed arc is $a_ib_j$, then the edge $\b v_{n\to m}\b v_{\rho}$ is in direction $\b e_{a_i} - \b e_{b_j}$.
% Hence, up to translation, $\ZGKnm$ equals $\sum_{\b v_{n\to m}\b w\in E(\ZGKnm)} [\b v_{n\to m}, \b w]$.

% The polytopes $\Pnm$ and $\Qnm$ satisfy the hypotheses of \Cref{cor:ZonotopeWithDistinguishedVertex}: they are indecomposable.
% \arnau{We should add a proof here... and then say that \Cref{cor:ZonotopeWithDistinguishedVertex} is a generalization of this result}
% \end{proof}

This allows for refuting a conjecture by Smilansky from 1987:

\begin{conjecture}[{\cite[Conjecture~6.12]{Smilansky1987}}]
If a $d$-polytope $\pol$ satisfies 
\begin{equation}\label{eq:SmilanskyConj}\verts > 1 +\, \binom{\facets - 1-\lfloor{d}/{2}\rfloor }{\facets-d} \,+\, \binom{\facets - 1-\lfloor({d+1})/{2}\rfloor }{\facets-d},\end{equation}
where $\verts\eqdef|\verts(\pol)|$ and $\facets\eqdef|\facets(\pol)|$ are the number of vertices and facets of $\pol$, then $\pol$ is decomposable.
\end{conjecture}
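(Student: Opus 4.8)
The plan is to \emph{refute} \cite[Conjecture~6.12]{Smilansky1987}, by exhibiting a single explicit counterexample from the family constructed above. Note first that in dimension $3$ the conjecture is exactly Smilansky's own \cite[Cor.~6.8]{Smilansky1987}, so the $3$-polytopes $\Pnm[3][1]$ and $\Pnm[2][2]$ cannot help; I would therefore use the $4$-dimensional polytope $\Pnm[1][4]$, \ie the $4$-cube with one vertex deeply truncated. First I would record its combinatorial data. As $K_{1,4}$ is a star, its four generators $\b e_{a_1}-\b e_{b_j}$ are linearly independent, so $\ZGKnm[1][4]$ is combinatorially the $4$-cube $\cube[4]$, with $16$ vertices and $8$ facets. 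By \cref{lem:VerticesFacesOfPandQ} we get $\Pnm[1][4]=\ZGKnm[1][4]\ssm\b v_{1\to 4}$, which has $16-1=15$ vertices; and by \cref{lem:OtherFacesOfPandQ} its facets are the new facet $\pol[F]_{1\to 4}$ (a tetrahedron $\simplex[0]\times\simplex[3]$, on the $4$ neighbours of $\b v_{1\to 4}$) together with, for each of the $8$ facets $\pol[F]$ of $\ZGKnm[1][4]$, a facet $\pol[F]\ssm\b v_{1\to 4}$ (these $8$ being pairwise distinct); hence $\Pnm[1][4]$ has $8+1=9$ facets. So $d=4$, $\verts=15$ and $\facets=9$.

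Next I would substitute these values into inequality \eqref{eq:SmilanskyConj}. For $d=4$ one has $\lfloor d/2\rfloor=\lfloor(d+1)/2\rfloor=2$ and $\facets-d=5$, so the right-hand side of \eqref{eq:SmilanskyConj} equals $1+\binom{6}{5}+\binom{6}{5}=13$. Since $\verts=15>13$, the polytope $\Pnm[1][4]$ satisfies the hypothesis of the conjecture, and the conjecture would therefore force it to be decomposable. But $\Pnm[1][4]$ is indecomposable by \cref{thm:PandQareIndecomposable} (the case $n=1$, $m=4$). This contradiction refutes the conjecture.

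I do not expect any genuine difficulty beyond this bookkeeping; the only delicate point is the choice of example. Deeply truncating the $m$-cube $\ZGKnm[1][m]$ yields $\verts=2^m-1$ and $\facets=2m+1$, but the binomial terms on the right-hand side of \eqref{eq:SmilanskyConj} grow like $\binom{\lceil 3m/2\rceil}{m}$, \ie faster than $2^m$, so this construction does \emph{not} give an infinite family of counterexamples, and a short check shows that among the deeply truncated cubes only $m=4$ strictly violates the bound (while the doubly truncated cube $\Qnm[1][4]$ narrowly fails to). Accordingly, I would single out the small example $\Pnm[1][4]$ rather than attempt an asymptotic argument.
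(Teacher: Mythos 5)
Your refutation is correct and follows essentially the same route as the paper: both check that the $4$-dimensional truncated zonotope $\Pnm[1][4]$ (with $\verts=15$, $\facets=9$) satisfies inequality \eqref{eq:SmilanskyConj} while being indecomposable by \cref{thm:PandQareIndecomposable}. The paper merely adds $\Pnm[2][3]$ (with $f$-vector $(45,111,89,23)$) as a second counterexample and quotes the $f$-vectors directly instead of deriving $\verts$ and $\facets$ from the truncated-cube combinatorics as you do.
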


For $d = 3$, condition \eqref{eq:SmilanskyConj} translates to $V > F$.
Smilansky indeed proved that if a polytope has strictly more vertices than facets, then it is decomposable, see \cite[Theorem 6.7 \& Figure 1]{Smilansky1987}.

For $d = 4$, condition \eqref{eq:SmilanskyConj} translates to $\verts\geq 2\facets -4$.
Both $\Pnm[1][4]$ and $\Pnm[2][3]$ are indecomposable and $4$-dimensional.
Yet their $f$-vectors are respectively $(\textbf{15}, 34, 28, \textbf{9})$ and $(\textbf{45}, 111, 89, \textbf{23})$, both fulfilling \eqref{eq:SmilanskyConj}.

Our construction does not directly give counterexamples to Smilansky's conjecture for $d \geq 5$, but we believe they exist.
The interesting question of constructing counterexamples in dimensions $\geq 5$ is left open.

\subsubsection{Not matroid polytopes}
The \defn{matroid polytope} of a matroid is the convex hull of all indicator vectors of its bases. Matroid polytopes are characterized by being the deformed permutahedra with $0/1$-coordinates \cite{GelfandGoreskiMacPhersonSerganova}.
We can show that $\Pnm$ and $\Qnm$ are not normally equivalent to matroid polytopes (except for some sporadic cases) by providing vertices whose first coordinate takes at least $3$ different values.

\begin{lemma}\label{lem:NotMatroidPolytope}
If an indecomposable polytope $\pol\subset\R^d$ %with vertices $\b v_1, \dots, \b v_n$
is normally equivalent to a matroid polytope then for all $1\leq i\leq d$, the $i^\text{th}$ coordinates of its vertices take at most 2 different values.%, \ie for all $1\leq i\leq d$, $\bigl|\{v_{j, i} ~;~ 1\leq j\leq n\}\bigr|\leq 2$.
\end{lemma}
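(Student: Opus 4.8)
The plan is to reduce the statement to the basic fact that, for an indecomposable polytope, being normally equivalent to another polytope forces it to be a dilated translate of that polytope.

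First I would record that if $\pol$ is normally equivalent to a matroid polytope $\polC\subseteq\R^d$, then $\pol$ and $\polC$ have the same normal fan, so in particular $\polC$ is a deformation of $\pol$. Hence $\polC$ is determined, up to translation, by its edge-deformation vector $\dv_{\pol}(\polC)\in\EDefoCone[\pol]$ (using that the graph of a polytope is connected, so by \cref{lem:edge-deformations-frameworks} a deformation is determined up to translation by its edge-deformation vector). Moreover $\dv_{\pol}(\polC)\neq\b 0$, since $\polC$ is not a point and so at least one of its edges has positive length.

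Next, since $\pol$ is indecomposable, $\EDefoCone[\pol]$ is one-dimensional; being a cone contained in the positive orthant $\R_+^{\edges(\pol)}$ and containing a nonzero vector, it is a single ray. The vector $\dv_{\pol}(\pol)$ is the all-ones vector $\b 1$, which lies on this ray, and $\dv_{\pol}(\polC)$ is a nonzero vector on the same ray, so there is $\mu>0$ with $\dv_{\pol}(\polC)=\mu\,\b 1=\dv_{\pol}(\mu\pol)$. By the correspondence above, $\polC$ and $\mu\pol$ coincide up to translation, say $\polC=\mu\pol+\b t$ for some $\b t\in\R^d$; equivalently $\pol=\tfrac1\mu(\polC-\b t)$.

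Finally, a matroid polytope is a $0/1$-polytope, so for every $i$ and every vertex of $\polC$ its $i$th coordinate is $0$ or $1$; therefore the $i$th coordinate of every vertex of $\pol=\tfrac1\mu(\polC-\b t)$ equals $-t_i/\mu$ or $(1-t_i)/\mu$, i.e.\ it takes at most two values. The only step requiring any care is the first one — that normal equivalence to a fixed matroid polytope actually produces a nonzero element of the (one-dimensional) deformation cone of $\pol$ — and this is immediate from the preliminaries: normal equivalence implies mutual deformation, and a deformation of a polytope is captured by its edge-deformation vector.
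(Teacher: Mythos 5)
Your proof is correct and follows essentially the same route as the paper: both arguments reduce to the fact that an indecomposable polytope's normally equivalent polytopes are exactly its dilated translates $\lambda\pol+\b t$, and then use that a matroid polytope has $0/1$ coordinates together with the bijectivity of $x\mapsto\lambda x+t_i$. The only difference is that you spell out this reduction via the one-dimensionality of $\EDefoCone[\pol]$ and edge-deformation vectors, whereas the paper invokes it directly from the definition of indecomposability; this is a matter of detail, not of approach.
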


\begin{proof}
%Matroid polytopes are deformed permutahedra with $0/1$-coordinates.
As $\pol$ is indecomposable, its normally equivalent polytopes are of the form $\b\lambda\pol + \b t$ for $\b\lambda > 0$ and $\b t\in\R^d$.
The $i^\text{th}$ coordinates of the vertices of $\b\lambda\pol + \b t$ are $\b\lambda v_i + t_i$ for $\b v\in \verts(\pol)$.
As $\b\lambda \ne 0$, the application $x \mapsto \b\lambda x + t_i$ is bijective on $\R$, thus $\{v_i ~;~ \b v \text{ vertex of } \pol\}$ has the same cardinal as $\{w_i ~;~ \b w \text{ vertex of } \b\lambda\pol + \b t\}$.
If $\pol$ is normally equivalent to a matroid polytope, there exist $\b\lambda, \b t$ such that the second set is of cardinal $\leq 2$.
\end{proof}

\begin{theorem}\label{thm:PandQnotMatroidPolytopes}
For $1\leq n\leq m$, $\Pnm$ (resp. $\Qnm$) is normally equivalent to a matroid polytope if and only if $n = 1$ and $m \in \{1, 2\}$  (resp. $n = 1$ and $m \in \{1, 2, 3\}$).
%For $n, m\geq1$, $(n,m)\ne(2,2)$, $\Qnm$ is normally equivalent to a matroid polytope if and only if $n = 1$ and $m \in \{1, 2,3\}$  or $n \in \{1, 2, 3\}$ and $m = 1$.
\end{theorem}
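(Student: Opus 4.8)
The plan is to reduce the statement to \Cref{lem:NotMatroidPolytope} and the indecomposability proved in \Cref{thm:PandQareIndecomposable}. The ``if'' direction requires no new argument: for $(n,m)\in\{(1,1),(1,2)\}$ the polytope $\Pnm$ is a point or a triangle, and for $(n,m)\in\{(1,1),(1,2),(1,3)\}$ the polytope $\Qnm$ is a point, a segment or an octahedron; in each of these cases a suitable integer translate already has $0/1$ coordinates (for instance $\Pnm[1][2]-\b e_{a_1}=\simplex[2]$ and $\Qnm[1][3]-\b e_{a_1}$ is a hypersimplex), and these polytopes are deformed permutahedra by \Cref{thm:PandQareGP}, hence matroid polytopes, as recorded in \Cref{exmp:SmallDimPandQ}. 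So the substance of the proof is the ``only if'' direction.

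For the ``only if'' direction the plan is to exhibit, in each remaining case, a single coordinate on which the vertices take at least three distinct values; since $\Pnm$ (always) and $\Qnm$ (for $(n,m)\ne(2,2)$) are indecomposable, \Cref{lem:NotMatroidPolytope} then forbids them from being normally equivalent to a matroid polytope. I would use the combinatorial description from \Cref{sec:graphicalzonotopes}: the vertex $\b v_\rho$ of $\ZGKnm$ attached to an acyclic orientation $\rho$ of $K_{n,m}$ has its $b_1$-coordinate equal to the number of arcs of $\rho$ oriented $a_i\to b_1$ and its $a_1$-coordinate equal to the number of arcs oriented $b_j\to a_1$, and by \Cref{lem:VerticesFacesOfPandQ} passing to $\Pnm$ only deletes $\b v_{n\to m}$ (of $b_1$-coordinate $n$, $a_1$-coordinate $0$) and passing to $\Qnm$ additionally deletes $\b v_{n\leftarrow m}$ (of $b_1$-coordinate $0$, $a_1$-coordinate $m$). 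When $n\ge 2$ (so $m\ge 2$ too) I would note that each of the values $0,1,2$ of the $b_1$-coordinate is attained by several acyclic orientations — fix the orientation of the arcs at $b_1$ and extend arbitrarily on the rest of the graph, which still contains another $b$-node — so at least one orientation attaining each of these values survives the deletion of $\b v_{n\to m}$ and $\b v_{n\leftarrow m}$; hence the $b_1$-coordinate takes three values on $\verts(\Pnm)$ and on $\verts(\Qnm)$. When $n=1$ the graph $K_{1,m}$ is a star and the $a_1$-coordinate runs over all of $\{0,\dots,m\}$ on $\verts(\ZGKnm[1][m])$; deleting $\b v_{1\to m}$ leaves the values $\{0,\dots,m-1\}\supseteq\{0,1,2\}$ for $\Pnm[1][m]$ once $m\ge 3$, and deleting also $\b v_{1\leftarrow m}$ leaves $\{1,\dots,m-1\}\supseteq\{1,2,3\}$ for $\Qnm[1][m]$ once $m\ge 4$. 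This covers all $(n,m)$ with $n\le m$ except $(1,1),(1,2)$ for $\Pnm$ and $(1,1),(1,2),(1,3),(2,2)$ for $\Qnm$, which is exactly what is needed.

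The one case this does not reach is $\Qnm[2][2]$, which is decomposable (it is a cuboctahedron, see \Cref{exmp:SmallDimPandQ}) so \Cref{lem:NotMatroidPolytope} does not apply; I expect this to be the main obstacle, and the plan is to dispose of it by a dimension count. Any matroid polytope $\matpol[\mat]$ normally equivalent to $\Qnm[2][2]$ would be combinatorially a cuboctahedron, hence $3$-dimensional; after discarding loops and coloops (which only translate the polytope) and decomposing $\mat$ into its connected components via \eqref{eq:matpoldecomposition}, $\matpol[\mat]$ would be a Cartesian product of matroid polytopes whose dimensions sum to $3$. If there is more than one factor, $\matpol[\mat]$ is a nontrivial product of polytopes, hence a prism over a polygon or a parallelepiped, neither of which is combinatorially a cuboctahedron; and if $\mat$ is connected, its ground set has $4$ elements, so $\mat$ has at most $\binom{4}{2}=6$ bases, while a cuboctahedron has $12$ vertices. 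Either way we reach a contradiction, so $\Qnm[2][2]$ is not normally equivalent to a matroid polytope, and assembling the two directions completes the argument.
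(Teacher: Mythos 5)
Your proposal is correct, and its core runs along the same lines as the paper's proof: both arguments combine the indecomposability from \Cref{thm:PandQareIndecomposable} with \Cref{lem:NotMatroidPolytope} and exhibit a single coordinate on which the surviving vertices take three distinct values, read off as in-degrees of acyclic orientations; your use of the $b_1$-coordinate with a "some acyclic extension survives the two deletions" existence argument is only cosmetically different from the paper's choice of the $a_1$-coordinate with three explicitly drawn orientations on the $K_{2,2}$ subgraph (for $n=1$ the two arguments are essentially identical). One small wording issue: "extend arbitrarily on the rest of the graph" is not literally valid, since an arbitrary extension of a mixed orientation at $b_1$ need not be acyclic; but the extensions you actually need (e.g.\ orienting all remaining arcs from the $a$-side to the $b$-side, or making $b_1$ a source/sink) are clearly acyclic, so this is easily repaired. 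The genuine divergence is the case $\Qnm[2][2]$: the paper disposes of it inside \Cref{exmp:SmallDimPandQ} by the geometric observation that the cuboctahedron has no pair of parallel faces containing all of its vertices, whereas you argue matroid-theoretically that a matroid polytope combinatorially equivalent to the cuboctahedron is impossible, splitting according to whether the matroid (stripped of loops and coloops) is connected — a connected matroid on $4$ elements has at most $\binom{4}{2}=6<12$ bases, and a disconnected one gives a combinatorial cube or prism via \eqref{eq:matpoldecomposition}. Your route is valid and self-contained on the polytope side, but note it silently uses the standard fact that the matroid polytope of a connected matroid on $k$ elements has dimension $k-1$ (not stated in the paper), while the paper's remark is shorter and purely geometric; either works.
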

	
\begin{proof}		
\Cref{exmp:SmallDimPandQ} handles $n = 1, m \leq 3$ and $(n, m) = (2, 2)$: note that the cuboctahedron $\Qnm[2][2]$ does not have any pair of parallel faces containing all the vertices, hence is not a deformation of a matroid polytope.

For the remaining cases, $\Pnm$ and $\Qnm$ are indecomposable deformed permutahedra by \Cref{thm:PandQareGP,thm:PandQareIndecomposable}.
%Using \Cref{lem:NotMatroidPolytope}, it is enough to find a coordinate such that the vertices of our polytope takes 3 different values.
Recall that the $i^\text{th}$ coordinate of $\b v_\rho$ for $\rho\in\AO(G)$ is $d_{in}(i, \rho) = \bigl|\{j ~;~ j\to i \in \rho\}\bigr|$.

Fix $n, m \geq 2$ and $(n, m)\ne (2, 2)$.
We construct 3 acyclic orientations $\rho_1, \rho_2, \rho_3 \in \AO(\Knm)\ssm\{\rho_{n\to m}, \rho_{n\leftarrow m}\}$, such that $a_1$ has respective in-degree $0$, $1$ and $2$.
\Cref{lem:NotMatroidPolytope} thus concludes.
Consider the subgraph of $\Knm$ induced on $\{a_1, a_2, b_1, b_2\}$: it is isomorphic to $\Knm[2][2]$.
The orientations $\rho_1$, $\rho_2$, $\rho_3$ are obtained constructing $\rho'_1$, $\rho'_2$, $\rho'_3$ as drawn, and orienting other arcs $a_i \to b_j$ in $\Knm$.
		
		\begin{center}
			\begin{tikzpicture}[decoration={markings, mark=at position 0.3 with {\arrow{>}}}]
				\begin{scope}[shift = {(0, 0)}]
					\coordinate (1a) at (0, 0);
					\coordinate (2a) at (0, -1);
					\coordinate (1b) at (1, 0);
					\coordinate (2b) at (1, -1);
					
					\draw (1a) node{$\bullet$};
					\draw (2a) node{$\bullet$};
					\draw (1b) node{$\bullet$};
					\draw (2b) node{$\bullet$};
					
					\draw (1a) node[left]{$a_1$};
					\draw (2a) node[left]{$a_2$};
					\draw (1b) node[right]{$b_1$};
					\draw (2b) node[right]{$b_2$};
					
					\draw[postaction={decorate}] (1a) -- (1b); 
					\draw[postaction={decorate}] (1a) -- (2b); 
					\draw[postaction={decorate}] (1b) -- (2a); 
					\draw[postaction={decorate}] (2a) -- (2b); 
					
					\draw (0.5, -1.5) node{$\rho'_1$};
				\end{scope}
				
				\begin{scope}[shift = {(4, 0)}]
					\coordinate (1a) at (0, 0);
					\coordinate (2a) at (0, -1);
					\coordinate (1b) at (1, 0);
					\coordinate (2b) at (1, -1);
					
					\draw (1a) node{$\bullet$};
					\draw (2a) node{$\bullet$};
					\draw (1b) node{$\bullet$};
					\draw (2b) node{$\bullet$};
					
					\draw (1a) node[left]{$a_1$};
					\draw (2a) node[left]{$a_2$};
					\draw (1b) node[right]{$b_1$};
					\draw (2b) node[right]{$b_2$};
					
					\draw[postaction={decorate}] (1b) -- (1a); 
					\draw[postaction={decorate}] (1a) -- (2b); 
					\draw[postaction={decorate}] (2a) -- (1b); 
					\draw[postaction={decorate}] (2a) -- (2b);
					
					\draw (0.5, -1.5) node{$\rho'_2$};
				\end{scope}
				
				\begin{scope}[shift = {(8, 0)}]
					\coordinate (1a) at (0, 0);
					\coordinate (2a) at (0, -1);
					\coordinate (1b) at (1, 0);
					\coordinate (2b) at (1, -1);
					
					\draw (1a) node{$\bullet$};
					\draw (2a) node{$\bullet$};
					\draw (1b) node{$\bullet$};
					\draw (2b) node{$\bullet$};
					
					\draw (1a) node[left]{$a_1$};
					\draw (2a) node[left]{$a_2$};
					\draw (1b) node[right]{$b_1$};
					\draw (2b) node[right]{$b_2$};
					
					\draw[postaction={decorate}] (1b) -- (1a); 
					\draw[postaction={decorate}] (2b) -- (1a); 
					\draw[postaction={decorate}] (2a) -- (1b); 
					\draw[postaction={decorate}] (2b) -- (2a); 
					
					\draw (0.5, -1.5) node{$\rho'_3$};
				\end{scope}
			\end{tikzpicture}
		\end{center}
        \vspace{-0.33cm}
        
For $n = 1$, the graph $\Knm$ is a star on $m$ vertices.
An acyclic orientation amounts to choosing $X\subseteq[m]$ and fixing $a_1 \to b_j$ for $j\in X$ and $a_1 \leftarrow b_j$ for $j \in [m]\ssm X$.

For $m \geq 3$, taking $X = \emptyset$, $\{1\}$, $\{1, 2\}$ gives 3 orientations which are not $\rho_{n \to m}$, where $a_1$ has in-degree $m$, $m-1$ and $m-2$, proving that $\Pnm[1][m]$ is not normally equivalent to a matroid polytope.

For $m \geq 4$, $X = \{1\}$, $\{1, 2\}$, $\{1, 2, 3\}$ gives 3 orientations which are not $\rho_{n \to m}$ nor $\rho_{n \leftarrow m}$, where $a_1$ has in-degree $m-1$, $m-2$ and $m-3$, thus $\Qnm[1][m]$ is not normally equivalent to a matroid polytope.
\end{proof}

\subsection{Permutahedral wedges}\label{sec:wedges}

In this section, we present a special case of the polyhedral wedges construction (see \cite[Section 2.1]{RorigZiegler-Wedges}), embedded with special coordinates, which we call \defn{permutahedral wedge}.
On the one side, wedges of indecomposable polytopes are always indecomposable (because they have an indecomposable face which shares a vertex with every facet, see McMullen's criterion stated in \Cref{cor:McMullen}).
On the other side, our construction is designed in order to guarantee that the permutahedral wedge of a deformed permutahedron is again a deformed permutahedron.

\begin{definition}\label{def:PermutahedralWedge}
Let $\pol\subset\R^n$ be a polytope and $i\in [n]$, the \defn{$i^{\text{th}}$-minimal permutahedral wedge $\pol_{\wedge i}$} and the \defn{$i^{\text{th}}$-maximal permutahedral wedge $\pol^{\wedge i}$} are defined as the polytopes in $\R^{n+1}$:
\begin{align*}
\pol_{\wedge i}&\eqdef \left(\pol\times(\b e_{n+1} - \b e_i)\R\right) \cap \{\b x \in \R^{n+1}~;~ x_{n+1} \geq  0\} \cap \{\b x \in \R^{n+1}~;~ x_i \geq \min_{\b y\in \pol} y_i\}\\
\pol^{\wedge i}&\eqdef \left(\pol\times(\b e_{n+1} - \b e_i)\R\right) \cap \{\b x \in \R^{n+1}~;~ x_{n+1} \leq  0\} \cap \{\b x \in \R^{n+1}~;~ x_i \leq \max_{\b y\in \pol} y_i\}
\end{align*}
\end{definition}

\begin{figure}[htpb]
    \centering
    \includegraphics[width=0.99\linewidth]{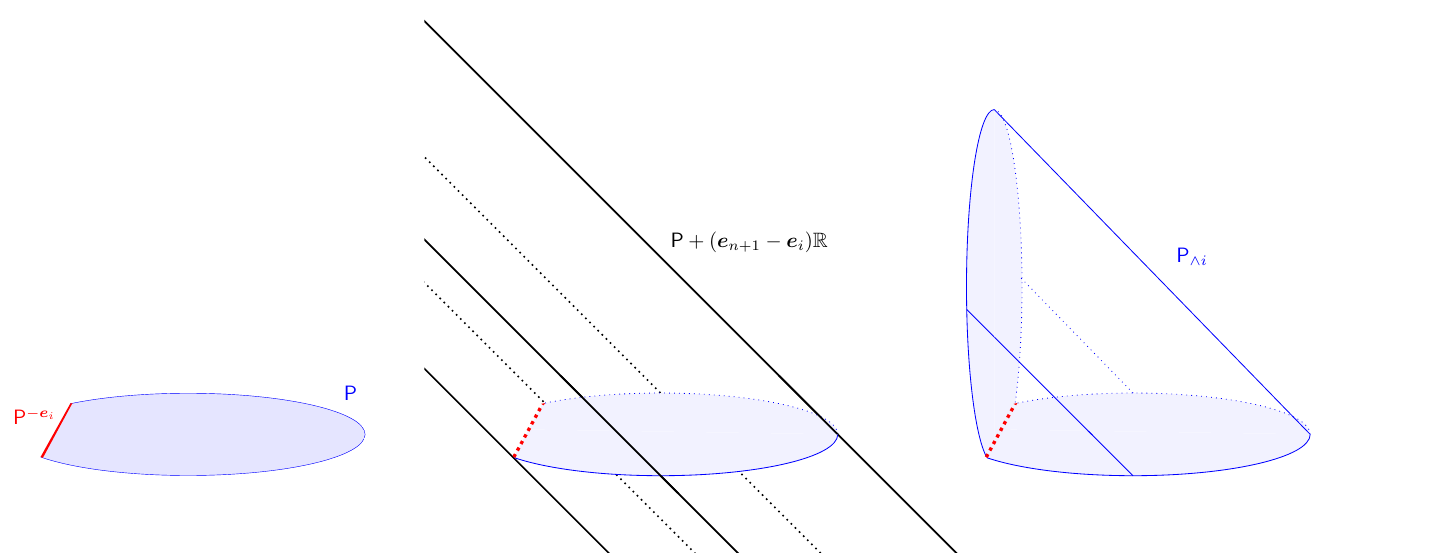}
    \caption[Construction of the (minimal) permutahedral wedge]{Construction of the $i^{\text{th}}$-minimal permutahedral wedge $\pol_{\wedge i}$.
    The vertical direction is $\b e_{n+1}$.
    As we care not only about combinatorics but also about geometry, we specify the axis of the cylinder to be~$\b e_{n+1} - \b e_i$, and we cut by orthogonal hyperplanes.
    This ensures that if $\pol$ is a generalized permutahedron, then so is~$\pol_{\wedge i}$.}
    \label{fig:Wedge}
\end{figure}

\Cref{fig:Wedge} depicts this construction. In~$\R^{n+1}$, we consider the cylinder $\pol\times(\b e_{n+1} - \b e_i)\R$. 
We cut this cylinder with the hyperplanes $\{\b x \in \R^{n+1}~;~ x_{n+1} = 0\}$ and $\{\b x \in \R^{n+1}~;~ x_i = \min_{\b y\in \pol} y_i\}$ (respectively $\{\b x \in \R^{n+1}~;~ x_i = \max_{\b y\in \pol} y_i\}$).
These cuts divide the cylinder into two unbounded parts and a bounded one: the bounded one is the $i^{\text{th}}$-permutahedral wedge $\pol_{\wedge i}$ (respectively $\pol^{\wedge i}$).
Said differently, with the notations of \cite[Section 2.1]{RorigZiegler-Wedges}, the $i^{\text{th}}$-minimal (respectively maximal) permutahedral wedge is the wedge of $\pol$ on its face $\pol^{-\b e_i}$ (resp. $\pol^{+\b e_i}$), but embedded with specific coordinates.% viewed through the affine map defined by $\b e_j \mapsto \b e_j$ for $j\leq n$, and $\b e_{n+1} \mapsto \b e_{n+1} - \b e_i$. FALSE

For a vertex $\b v$ of $\pol$, we denote \defn{$\b v_{\wedge i}$} $:= \b v + (v_i - \min_{\b y\in \pol}y_i)(\b e_{n+1} - \b e_i)$, and \defn{$\b v^{\wedge i}$} $:= \b v + (v_i - \max_{\b y\in \pol}y_i)(\b e_{n+1} - \b e_i)$.

\begin{lemma}[{\cite[Lem. 2.5]{RorigZiegler-Wedges}}]\label{lem:WedgeVertices}
For $\pol\subset\R^n$ and $i\in [n]$, the vertices of $\pol_{\wedge i}$ are $\b v$ and $\b v_{\wedge i}$ for $\b v$ a vertex of $\pol$.
Respectively, the vertices of $\pol^{\wedge i}$  are $\b v$ and $\b v^{\wedge i}$ for $\b v$ a vertex of $\pol$. 
\end{lemma}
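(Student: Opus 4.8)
The plan is to trivialize the slanted-cylinder construction by an affine change of coordinates and then carry out the (routine) vertex computation on the resulting prism. First, set $m_i\eqdef\min_{\b y\in\pol}y_i$. A point of the cylinder $\pol\times(\b e_{n+1}-\b e_i)\R$ is $\b p+t(\b e_{n+1}-\b e_i)$ with $\b p\in\pol$ and $t\in\R$; its last coordinate is $t$ and its $i$-th coordinate is $p_i-t$, so the two cutting halfspaces $\{x_{n+1}\ge 0\}$ and $\{x_i\ge m_i\}$ become $0\le t$ and $t\le p_i-m_i$, a non-empty interval since $p_i\ge m_i$. Hence
\[
\pol_{\wedge i}=\bigl\{\,\b p+t(\b e_{n+1}-\b e_i)~;~\b p\in\pol,\ 0\le t\le p_i-m_i\,\bigr\}.
\]
The map $(\b p,t)\mapsto\b p+t(\b e_{n+1}-\b e_i)$ is a linear automorphism of $\R^{n+1}$ (unipotent: it fixes every coordinate except the $i$-th, which becomes $p_i-t$, while $t$ is read off the last coordinate), so it identifies $\pol_{\wedge i}$ with the ``slanted prism'' $\widehat\pol\eqdef\{(\b p,t)\in\R^n\times\R~;~\b p\in\pol,\ 0\le t\le p_i-m_i\}$, sending $(\b v,0)\mapsto\b v$ and $(\b v,v_i-m_i)\mapsto\b v_{\wedge i}$. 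It therefore suffices to find the vertices of $\widehat\pol$.

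For the inclusion ``every vertex is on the list'', I would use the projection $\pi\colon\widehat\pol\to\pol$, $(\b p,t)\mapsto\b p$, whose fibre over $\b p$ is the segment $\{\b p\}\times[0,p_i-m_i]$: a vertex of $\widehat\pol$ cannot lie in the relative interior of its fibre, so it has $t\in\{0,\,p_i-m_i\}$; and it must have $\b p\in\verts(\pol)$, for otherwise $\b p=\tfrac12(\b p'+\b p'')$ with $\b p'\neq\b p''$ in $\pol$ would give $(\b p,0)=\tfrac12\bigl((\b p',0)+(\b p'',0)\bigr)$ and $(\b p,p_i-m_i)=\tfrac12\bigl((\b p',p'_i-m_i)+(\b p'',p''_i-m_i)\bigr)$, expressing the point as a midpoint of two distinct points of $\widehat\pol$ (on the bottom, resp.\ the slanted top, facet). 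Thus every vertex of $\widehat\pol$ is $(\b v,0)$ or $(\b v,v_i-m_i)$ with $\b v\in\verts(\pol)$, these two coinciding exactly when $v_i=m_i$, i.e.\ for $\b v$ on the wedging face $\pol^{-\b e_i}$.

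For the converse, given $\b v\in\verts(\pol)$ pick $\b c\in(\R^n)^*$ uniquely maximized at $\b v$ over $\pol$ and fix any $M>0$. On $\widehat\pol$, the linear functional $(\b p,t)\mapsto\b c(\b p)-Mt$ is, for each fixed $\b p$, maximal at $t=0$, hence its maximum equals $\b c(\b v)$ and is attained only at $(\b v,0)$; and $(\b p,t)\mapsto\b c(\b p)+M(t-p_i)$ is, for each fixed $\b p$, maximal at $t=p_i-m_i$ with value $\b c(\b p)-Mm_i$, hence attained only at $(\b v,v_i-m_i)$. So both points are vertices. Transporting along the coordinate change, the vertices of $\pol_{\wedge i}$ are exactly $\b v$ and $\b v_{\wedge i}$ for $\b v\in\verts(\pol)$. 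The case of $\pol^{\wedge i}$ is the same argument with $m_i$ replaced by $M_i\eqdef\max_{\b y\in\pol}y_i$ and the fibre interval by $[p_i-M_i,\,0]$ (equivalently, apply the minimal case to the image of $\pol$ under $x_i\mapsto-x_i$), giving the vertices $\b v$ and $\b v^{\wedge i}$.

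There is no real obstacle here: this is essentially \cite[Lem.~2.5]{RorigZiegler-Wedges}, and the only things to watch are that $(\b p,t)\mapsto\b p+t(\b e_{n+1}-\b e_i)$ is a genuine affine isomorphism (so that faces correspond) and the bookkeeping of the degenerate overlap $\b v=\b v_{\wedge i}$, which happens precisely at the vertices of the face over which one wedges.
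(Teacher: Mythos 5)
Your proof is correct, and it takes a somewhat different route from the paper's. The paper disposes of this lemma by citation: it invokes R\"orig--Ziegler's Lemma~2.5 on wedges over a face (with the specific choices $\pol[Q]=[-\min_{\b y\in \pol}y_i,0]$ and $\b a_0=\tfrac{1}{\min_{\b y\in\pol}y_i}\b e_i$) and then transports the result by the linear map $\b e_j\mapsto\b e_j$ ($j\le n$), $\b e_{n+1}\mapsto\b e_{n+1}-\b e_i$, so the actual vertex computation is outsourced. You instead make the lemma self-contained: the same shear, read in the other direction as the unipotent isomorphism $(\b p,t)\mapsto\b p+t(\b e_{n+1}-\b e_i)$, turns $\pol_{\wedge i}$ into the slanted prism $\{(\b p,t):\b p\in\pol,\ 0\le t\le p_i-m_i\}$, and you then verify the vertex description directly — the fibre/midpoint argument for the inclusion of vertices into the list, and explicit linear functionals $\b c(\b p)-Mt$ and $\b c(\b p)+M(t-p_i)$ certifying that each listed point is a vertex; both computations are correct, and you handle the degenerate coincidence $\b v=\b v_{\wedge i}$ on the face $\pol^{-\b e_i}$ properly. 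What your approach buys is independence from the external reference (useful if one wants the paper self-contained); what the paper's buys is brevity and an explicit link to the general wedge construction it later relies on conceptually. One small caveat: your parenthetical shortcut for $\pol^{\wedge i}$ via the reflection $x_i\mapsto-x_i$ alone does not quite map one wedge to the other (you must also flip $x_{n+1}\mapsto -x_{n+1}$ to match the halfspace $x_{n+1}\le 0$ and preserve the cylinder line), but this is harmless since you also give the direct argument with $M_i=\max_{\b y\in\pol}y_i$ and fibre $[p_i-M_i,0]$, which is complete.
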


\begin{proof}
For $\pol_{\wedge i}$, use \cite[Lem. 2.5]{RorigZiegler-Wedges} with $\pol[Q] = [-\min_{\b y\in \pol} y_i, 0]$ and $\b a_0 = \frac{1}{\min_{\b y\in \pol} y_i} \b e_i$.
Then apply the linear transformation given by $\b e_j\mapsto\b e_j$ for $j\leq n$, and $\b e_{n+1} \mapsto \b e_{n+1} - \b e_i$.
A similar result holds for $\pol^{\wedge i}$.
\end{proof}

McMullen's indecomposability criterion directly shows that wedges of indecomposable polytopes are indecomposable, we give here a proof with our vocabulary.

\begin{theorem}\label{thm:PermutahedralWedgesIndecomposable}
For $i\in [n]$, if $\pol\subset\R^n$ is indecomposable, then $\pol_{\wedge i}$ and $\pol^{\wedge i}$ are indecomposable polytopes.
\end{theorem}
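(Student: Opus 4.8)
The plan is to deduce the statement from McMullen's criterion (\Cref{cor:McMullen}): it suffices to exhibit a strongly connected family of indecomposable faces of the wedge that touches every facet. The family will be a single facet, namely an affine copy of $\pol$ sitting inside the wedge, so strong connectedness is automatic and the whole content is that this copy touches all facets.

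First I would dispose of the degenerate case. If the functional $\b e_i$ is constant on $\pol$, i.e. $\min_{\b y\in\pol}y_i=\max_{\b y\in\pol}y_i$, then writing the cylinder as $\pol\times(\b e_{n+1}-\b e_i)\R=\set{\b p+t(\b e_{n+1}-\b e_i)}{\b p\in\pol,\ t\in\R}$, the constraint $x_{n+1}\geq 0$ forces $t\geq 0$ while $x_i\geq\min_{\b y\in\pol}y_i$ forces $t\leq 0$; so $\pol_{\wedge i}=\pol\times\{0\}$ is an affine copy of $\pol$, hence indecomposable by \Cref{rmk:affineisomorphism} (and likewise for $\pol^{\wedge i}$). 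So I may assume $\b e_i$ is non-constant on $\pol$, in which case $\b e_{n+1}-\b e_i$ is transverse to $\aff(\pol)$ and the two defining half-spaces meet the interior of the cylinder, so $\dim\pol_{\wedge i}=\dim\pol+1=n+1$. Now set $\polF_0\eqdef\pol_{\wedge i}\cap\{\b x\in\R^{n+1}~;~x_{n+1}=0\}$. On the cylinder, $x_{n+1}=0$ means $t=0$, and then $x_i=p_i\geq\min_{\b y\in\pol}y_i$ is automatic; hence $\polF_0=\pol\times\{0\}$ is an affine copy of $\pol$, and it is a genuine facet since $\{x_{n+1}\geq 0\}$ is one of the defining half-spaces and $\dim\polF_0=n$. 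In particular $\polF_0$ is indecomposable.

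It then remains to check that $\polF_0$ touches every facet of $\pol_{\wedge i}$. Every facet lies on one of the hyperplanes of the $H$-description of $\pol_{\wedge i}$, which are $\{x_{n+1}=0\}$, $\{x_i=\min_{\b y\in\pol}y_i\}$, and the lifts $\aff(\polG)+(\b e_{n+1}-\b e_i)\R$ of the facet-hyperplanes of the cylinder, one per facet $\polG$ of $\pol$. The facet on $\{x_{n+1}=0\}$ is $\polF_0$ itself; the facet on $\{x_i=\min_{\b y\in\pol}y_i\}$ meets $\polF_0$ exactly in $\pol^{-\b e_i}\neq\emptyset$; and the facet on $\aff(\polG)+(\b e_{n+1}-\b e_i)\R$ contains $\polG\times\{0\}$, which is a non-empty subset of $\polF_0$. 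Thus $\scr F=\{\polF_0\}$ is a (trivially) strongly connected family of indecomposable faces touching every facet of $\pol_{\wedge i}$, and \Cref{cor:McMullen} yields that $\pol_{\wedge i}$ is indecomposable. The argument for $\pol^{\wedge i}$ is identical, with $\pol^{\wedge i}\cap\{x_{n+1}=0\}\cong\pol$ in place of $\polF_0$ (now meeting the remaining distinguished facet in $\pol^{+\b e_i}$).

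The only slightly delicate step is the book-keeping of the facets of the wedge — concretely, that every facet other than the two copies of $\pol$ is cut out by a lifted cylinder facet-hyperplane $\aff(\polG)+(\b e_{n+1}-\b e_i)\R$ and hence contains $\polG\times\{0\}\subseteq\polF_0$; this is a standard property of polyhedral wedges (\cite{RorigZiegler-Wedges}), but it is the one place where some care is needed rather than invoking a black box. As an alternative that sidesteps the facet combinatorics entirely, one can argue with the edge-dependency graph: the edge sets of the two copies of $\pol$ inside the wedge form cliques in $\ELD[\pol_{\wedge i}]$ by \Cref{cor:indecsubframeclique}, each vertical edge $\b w\,\b w_{\wedge i}$ is tied (through triangular and quadrilateral $2$-faces of the wedge) to edges of these copies, the vertices incident to all these edges exhaust $\verts(\pol_{\wedge i})$, and one concludes with \Cref{cor:mainthmpolytopes}.
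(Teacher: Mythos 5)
Your proof is correct and follows essentially the same route as the paper: both identify the bottom facet $(\pol_{\wedge i})^{-\b e_{n+1}}\cong\pol\times\{0\}$, observe that every facet of the wedge meets it, and conclude by an indecomposability criterion. The only cosmetic difference is that you invoke \Cref{cor:McMullen} (which the paper itself cites for this fact just before the theorem), while the paper's written proof applies \Cref{cor:mainthmpolytopes} directly to the dependent vertex set of that facet — the same argument in the paper's own vocabulary.
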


\begin{proof}
The facets of $\pol_{\wedge i}$ either come from facets of the cylinder $\pol\times (\b e_{n+1} - \b e_i)\R$, or are supported by one of the hyperplanes $\{\b x \in \R^{n+1}~;~ x_{n+1} = 0\}$ or $\{\b x \in \R^{n+1}~;~ x_i = \min_{\b y\in \pol} y_i\}$.
All facets share a vertex with the facet~$(\pol_{\wedge i})^{-\b e_{n+1}}\cong \pol\times\{0\}$.
As~$\pol$ is indecomposable, its vertices form a dependent subset of vertices, and \Cref{cor:mainthmpolytopes} ensures that $\pol_{\wedge i}$ is indecomposable.
The same holds for $\pol^{\wedge i}$.
\end{proof}

The construction is designed so that it remains in the family of deformed permutahedra.

\begin{theorem}\label{thm:PermutahedralWedgesGP}
For $i\in [n]$, if $\pol\subset\R^n$ is a deformed permutahedron, then $\pol_{\wedge i}$ and $\pol^{\wedge i}$ also are.
\end{theorem}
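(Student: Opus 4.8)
The plan is to verify directly that every edge of $\pol_{\wedge i}$ (and of $\pol^{\wedge i}$) is parallel to some $\b e_j - \b e_k$, which by \cite[Definition~6.1]{Postnikov2009} is exactly the condition for being a deformed permutahedron. First I would record that $\pol \subset \R^n$ deformed permutahedron means $\pol$ is embedded in the hyperplane $\{\sum_j x_j = c\}$ for some constant $c$, and that the permutahedral wedge lives in $\R^{n+1}$. A preliminary observation: the defining relation $\sum_{j=1}^{n} x_j = c$ of $\pol$, together with $x_{n+1}$ being a free "vertical" parameter that moves along $\b e_{n+1} - \b e_i$, implies $\sum_{j=1}^{n+1} x_j = c$ holds on the cylinder $\pol \times (\b e_{n+1} - \b e_i)\R$ as well, hence on $\pol_{\wedge i}$; so $\pol_{\wedge i}$ sits in the correct permutahedral hyperplane of $\R^{n+1}$. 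This is what the choice of axis $\b e_{n+1} - \b e_i$ buys us.

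The core is the edge analysis, using the explicit vertex description from \Cref{lem:WedgeVertices}: the vertices of $\pol_{\wedge i}$ are $\b v$ and $\b v_{\wedge i} = \b v + (v_i - \min_{\b y\in\pol} y_i)(\b e_{n+1} - \b e_i)$ for $\b v \in \verts(\pol)$. I would split the edges of $\pol_{\wedge i}$ into three types, mirroring the structure of a wedge (cf.\ \cite[Section~2.1]{RorigZiegler-Wedges}): (a) "bottom" edges $\b v\b w$ where $\b v, \b w$ are vertices of $\pol$ forming an edge of $\pol$ — these lie in $\pol\times\{0\}$ and are parallel to an edge of $\pol$, hence in direction $\b e_j - \b e_k$ since $\pol$ is a deformed permutahedron; (b) "top" edges $\b v_{\wedge i}\b w_{\wedge i}$ — here the displacement is $(\b v - \b w) + (v_i - w_i)(\b e_{n+1} - \b e_i)$; writing $\b v - \b w$ (an edge direction of $\pol$, so a multiple of $\b e_j - \b e_k$) one checks that if $i \notin \{j,k\}$ then $v_i = w_i$ and the top edge is again parallel to $\b e_j - \b e_k$, while if, say, $k = i$, the direction becomes a multiple of $\b e_j - \b e_i + (\b e_{n+1} - \b e_i)\cdot(-1)\cdot(\pm 1)$, which simplifies to $\pm(\b e_j - \b e_{n+1})$ — again of the required form; (c) "vertical" edges $\b v\b v_{\wedge i}$ joining a vertex of $\pol$ to its lift, which by definition have direction $v_i - \min_{\b y\in\pol}y_i$ times $(\b e_{n+1} - \b e_i)$, i.e.\ parallel to $\b e_{n+1} - \b e_i$; these occur precisely when $\b v$ is a vertex of the face $\pol^{-\b e_i}$ (so that $\b v_{\wedge i} \ne \b v$). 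In all three cases the edge is parallel to a difference of two standard basis vectors, so $\pol_{\wedge i}$ is a deformed $(n+1)$-permutahedron. The argument for $\pol^{\wedge i}$ is identical after replacing $\min$ by $\max$ and flipping the sign in the last cut.

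The main obstacle I anticipate is the bookkeeping in case (b): one must be careful that the lift $\b w \mapsto \b w_{\wedge i}$ is affine in $\b w$ (it is, since $\b w_{\wedge i} = \b w + (w_i - \min)(\b e_{n+1} - \b e_i)$ and $\b w \mapsto w_i$ is linear) so that $\b v_{\wedge i} - \b w_{\wedge i}$ depends only on $\b v - \b w$, and that the combinatorial fact "$\{\b v, \b w\}$ is an edge of $\pol$ iff $\{\b v_{\wedge i},\b w_{\wedge i}\}$ is an edge of $\pol_{\wedge i}$" holds — but this is exactly part of the standard wedge face structure, which I would cite from \cite{RorigZiegler-Wedges} rather than reprove. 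A secondary check is that there are no further edges beyond types (a)–(c), which again follows from the wedge face lattice: every proper face of a wedge is either a face of the base, a lift of a face of the base, or a wedge over a face of the link face $\pol^{-\b e_i}$, and edges arise only in the three ways enumerated. With these two facts quoted, the computation is routine linear algebra.
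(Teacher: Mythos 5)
Your proposal is correct and matches the paper's proof almost exactly: the same split of the edges of $\pol_{\wedge i}$ into bottom edges (in $\pol\times\{0\}$), top edges (in the tilted facet $(\pol_{\wedge i})^{-\b e_i}$), and vertical edges parallel to $\b e_{n+1}-\b e_i$, the only cosmetic difference being that the paper treats the top edges by noting that the tilted facet is the reflection of $\pol\times\{0\}$ exchanging $\b e_i$ and $\b e_{n+1}$, whereas you compute the lifted edge directions directly (both yield directions $\b e_j-\b e_k$, possibly with $k=n+1$). One immaterial slip: the nondegenerate vertical edges $\b v\,\b v_{\wedge i}$ occur precisely when $\b v$ is \emph{not} a vertex of $\pol^{-\b e_i}$, since for $\b v\in\pol^{-\b e_i}$ one has $\b v_{\wedge i}=\b v$.
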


\begin{proof}
The edges of $\pol_{\wedge i}$ are either (i) edges of the facet $(\pol_{\wedge i})^{-\b e_{n+1}}=\pol\times \{0\}$, (ii) edges of the facet $(\pol_{\wedge i})^{-\b e_{i}}\conv\bigl(\b v_{\wedge i} ~;~ \b v \in \verts(\pol)\bigr)$, or (iii) edges in the direction of the cylinder. 
Edges of the form (i) belong to $\pol\times\{0\}$, which is a deformed permutahedron, and hence are all in direction $\b e_j - \b e_k$ for some $j \ne k$.
For (ii), note that $(\pol_{\wedge i})^{-\b e_{i}}$ is the image of $(\pol_{\wedge i})^{-\b e_{n+1}}$ under the reflection on the hyperplane $x_{n+1}-x_i \geq \min_{\b y\in \pol} y_i$. This reflection exchanges $\b e_{n+1}$ and $\b e_i$ and leaves all the other $\b e_j$ unchanged, and hence the edges $(\pol_{\wedge i})^{-\b e_{i}}$ are also of direction $\b e_j - \b e_k$ for some $j \ne k$. Finally, the edges (iii) of the cylinder are all in direction $\b e_{n+1} - \b e_i$. Consequently, $\pol_{\wedge i}$ is a deformed permutahedron.
The same holds for $\pol^{\wedge i}$.
\end{proof}

In order to create ``a lot'' of indecomposable deformed permutahedra which are not matroid polytopes, we can construct a family of polytopes, closed under taking permutahedral wedges, which contains all (indecomposable and non-matroidal) truncated graphical zonotopes $\Pnm$ and $\Qnm$.
To describe a polytope in this family we can give a starting truncated zonotope $\pol_0$ living in $\R^d$, and a sequence of indices $(j_d, \dots, j_{N-1})$ with $j_k\in \{\pm1, \dots, \pm k\}$. Then, for $d\leq k\leq N$, $\pol_k\eqdef (\pol_{k-1})_{\wedge i}$ if $j_k=+i$, and $\pol_k\eqdef (\pol_{k-1})^{\wedge i}$  if $j_k=-i$.
The number of indecomposable deformed $N$-permutahedra in such a family is bounded above by: $\sum_{d\geq 4} 2\lfloor\frac{d-1}{2}\rfloor\, 2^{N-d}\frac{(N-1)!}{(d-1)!} \,=\, O(N^2\, 2^N\, N!)$.
Note that this is only an upper bound, because we cannot guarantee that we construct non-normally-equivalent polytopes. Actually, computer experiments tend to indicate that the method described here cannot generate $O(N^2\, 2^N\, N!)$ non-normally-equivalent polytopes.

We give an unoptimized simple lower bound by restricting ourselves to a sub-family generated by the strawberry $\Pnm[1][3]$.

\begin{corollary}\label{cor:BetterLowerBound}
For $N\geq 4$, there are at least $\frac{1}{3!}(N-1)!$ non-normally-equivalent indecomposable deformed $N$-permutahedra which are not normally-equivalent to matroid polytopes.
\end{corollary}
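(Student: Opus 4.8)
The plan is to count polytopes in a sub-family of the permutahedral-wedge construction generated by the strawberry $\Pnm[1][3]$, which lives in $\R^4$ and is indecomposable, deformed-permutahedral, and not a matroid polytope by \Cref{thm:NewRAYS} (via \Cref{exmp:SmallDimPandQ}). Starting from $\pol_0 \eqdef \Pnm[1][3]$ in $\R^4$, I build a sequence $\pol_4 = \pol_0, \pol_5, \dots, \pol_N$ where at each step $k$ one chooses an index $j_k \in \{\pm 1, \dots, \pm k\}$ and sets $\pol_k \eqdef (\pol_{k-1})_{\wedge i}$ if $j_k = +i$ and $\pol_k \eqdef (\pol_{k-1})^{\wedge i}$ if $j_k = -i$. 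By \Cref{thm:PermutahedralWedgesIndecomposable} each $\pol_k$ stays indecomposable, and by \Cref{thm:PermutahedralWedgesGP} each $\pol_k$ stays a deformed $k$-permutahedron; moreover indecomposability forbids $\pol_N$ from being normally equivalent to a matroid polytope once the base already fails the $2$-values-per-coordinate test of \Cref{lem:NotMatroidPolytope}, a property inherited through the wedges since they only duplicate coordinate values already present (and add the coordinate $x_{N+1}$, which takes only two values on a wedge but the relevant bad coordinate persists). So every polytope produced this way is an indecomposable deformed $N$-permutahedron that is not normally equivalent to a matroid polytope.

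Next I count. The number of raw sequences $(j_5, \dots, j_N)$ with $j_k \in \{\pm 1, \dots, \pm k\}$ is $\prod_{k=5}^{N} 2k = 2^{N-4}\frac{N!}{4!}$. This over-counts normal-equivalence classes, so to get a clean lower bound I restrict to a single sign at each step, say always taking the minimal wedge $\pol_k = (\pol_{k-1})_{\wedge i}$ with $i \in [k]$ free; this gives $\prod_{k=5}^{N} k = \frac{N!}{4!} = \frac{1}{4!}N!$ sequences. To reach the claimed bound $\frac{1}{3!}(N-1)!$ one notes $\frac{1}{4!}N! = \frac{N}{4}\cdot\frac{1}{3!}(N-1)! \geq \frac{1}{3!}(N-1)!$ for $N \geq 4$, so even this crude count suffices once distinctness of the classes is established.

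The main obstacle — and the step requiring real care — is showing that sufficiently many of these sequences yield pairwise non-normally-equivalent polytopes, so that the count of sequences really is a lower bound on the count of equivalence classes. The clean way is a reconstruction argument: from the combinatorial (indeed normal-fan) data of $\pol_N$ one can detect that it is a wedge over a specific facet, peel off the last wedge to recover $\pol_{N-1}$ together with the index $j_N$ (the apex direction $\b e_{N} - \b e_{i}$ of the cylinder is intrinsic), and iterate down to $\pol_0$. Concretely, the facet $(\pol_{k})^{-\b e_{k+1}} \cong \pol_{k-1} \times \{0\}$ is distinguished as the unique facet meeting all others, and the reflection symmetry exchanging it with $(\pol_k)^{-\b e_i}$ pins down $i$; since wedging strictly increases dimension and strictly adds vertices, no collision between different-length sequences can occur, and two sequences of the same length producing normally-equivalent polytopes would have to agree step by step. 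One must be slightly careful because different indices $i$ could, in degenerate situations, give normally-equivalent wedges; restricting to $i$ in a range where the base polytope has genuinely distinct coordinate behaviour (which the strawberry and all its wedges do, by the explicit coordinates) rules this out, at the cost of a possibly smaller but still super-exponential-in-$N!$ constant, which is absorbed into the slack $\frac{N}{4} \geq 1$ above. With this reconstruction in hand the bound $\frac{1}{3!}(N-1)!$ follows.
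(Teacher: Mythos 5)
Your overall route is the same as the paper's: start from the strawberry $\Pnm[1][3]$, iterate permutahedral wedges, invoke \Cref{thm:PermutahedralWedgesIndecomposable}, \Cref{thm:PermutahedralWedgesGP} and \Cref{lem:NotMatroidPolytope} to preserve the three properties, and count sequences of wedge indices. That part is fine (modulo a small inaccuracy: the new coordinate of a wedge does not take only two values in general, but the relevant point, that the number of values on the ``bad'' coordinate cannot decrease, is the same argument the paper uses). The genuine gap is the distinctness step, i.e.\ that different index sequences give non-normally-equivalent polytopes — which is the actual content of the corollary. Your reconstruction argument rests on the claim that the base copy $\pol_{k-1}\times\{0\}$ is ``the unique facet meeting all others''; this is false in general: its reflected copy $(\pol_k)^{-\b e_i}$ meets every other facet as well, and further facets of the base may too, so the peeling procedure is not justified (and normal equivalence is a statement about normal fans, not just face lattices, so even a correct combinatorial reconstruction would not suffice as stated). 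You then concede that ``different indices could, in degenerate situations, give normally-equivalent wedges'' and propose to absorb the loss into a slack factor $\tfrac{N}{4}$ — but that slack comes from a miscount: when wedging $\pol_{k-1}\subset\R^{k-1}$ the index lies in $[k-1]$, not $[k]$, so the single-sign count is exactly $\prod_{k=4}^{N-1}k=\tfrac{1}{3!}(N-1)!$ and there is no room to discard anything.

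This is precisely where the paper does its real work: it proves \Cref{lem:WedgesKeepsFacets} (wedging preserves the relevant co-facets) and \Cref{lem:WedgesAreRecognazable} (a wedge taken in a co-facet direction remembers both its index $i$ and whether it is a minimal or maximal wedge, up to normal equivalence), and it chooses the min/max type at each step via the $\varepsilon_k$ bookkeeping exactly so that the co-facet hypothesis of \Cref{lem:WedgesAreRecognazable} holds at every step; non-normal-equivalence of the $\tfrac{1}{3!}(N-1)!$ polytopes then follows by induction on the length of the sequence. Your always-minimal-wedge variant is plausibly salvageable along these lines (the strawberry has $\pol^{-\b e_i}$ a facet for every coordinate $i$, and \Cref{lem:WedgesKeepsFacets} shows minimal wedges preserve this), but you would still have to state and prove a recognizability lemma of this kind and run the induction; as written, the pairwise non-normal-equivalence — and hence the lower bound — is not established.
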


To prove this corollary, we will need two short lemmas on the geometry of wedges.

\begin{definition}
For a polytope $\pol\subset\R^d$ and $\b c\in \R^d$, we say that the face $\pol^{\b c}$ is a \defn{co-facet} if $\pol^{-\b c}$ is a facet.
\end{definition}

\begin{lemma}\label{lem:WedgesKeepsFacets}
Fix a polytope $\pol\subset\R^n$ and $i\in [n]$. For $j\in [n]$, $j\ne i$, if the face minimizing (resp. maximizing) $\b e_j$ in $\pol$ is a co-facet, then the faces minimizing (resp. maximizing) $\b e_j$ in both $\pol_{\wedge i}$ and $\pol^{\wedge i}$ are co-facets. 
Moreover, the faces maximizing (resp. minimizing) $\b e_i$ or $\b e_{n+1}$ in $\pol_{\wedge i}$ (resp. in $\pol^{\wedge i}$) are co-facets.
\end{lemma}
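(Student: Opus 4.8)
The statement concerns how the facet/co-facet structure of $\pol$ propagates to its permutahedral wedges $\pol_{\wedge i}$ and $\pol^{\wedge i}$. I will treat the minimal wedge $\pol_{\wedge i}$ in detail; the case of $\pol^{\wedge i}$ is symmetric (apply the argument to $-\pol$, or reflect in the $x_i = 0$ hyperplane). Recall from \Cref{def:PermutahedralWedge} that $\pol_{\wedge i}$ is the bounded piece of the cylinder $\pol\times(\b e_{n+1}-\b e_i)\R$ cut by the halfspaces $\{x_{n+1}\geq 0\}$ and $\{x_i\geq \min_{\b y\in\pol}y_i\}$; by \Cref{lem:WedgeVertices} its vertices are $\b v$ and $\b v_{\wedge i}=\b v+(v_i-\min_{\b y\in\pol}y_i)(\b e_{n+1}-\b e_i)$ for $\b v\in\verts(\pol)$. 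The first, and easiest, claim: the two ``new'' facets are $(\pol_{\wedge i})^{-\b e_{n+1}}=\pol\times\{0\}$ and $(\pol_{\wedge i})^{-\b e_i}=\conv(\b v_{\wedge i}: \b v\in\verts(\pol))$, which are both affinely isomorphic to $\pol$. Hence the faces \emph{maximizing} $\b e_{n+1}$ and $\b e_i$ on $\pol_{\wedge i}$ are co-facets, which is the ``furthermore'' part — I would dispatch this first, just by reading off the definition.

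\textbf{The propagation of co-facets.} For the main claim, fix $j\neq i$ and suppose $\pol^{-\b e_j}$ is a facet of $\pol$, i.e. $\pol^{\b e_j}$ is a co-facet. I want to show $\pol_{\wedge i}^{\b e_j}$ is a co-facet of $\pol_{\wedge i}$, equivalently that $\pol_{\wedge i}^{-\b e_j}$ is a facet. Here the key observation is that $\b e_j$ (with $j\neq i, n+1$) is invariant under the linear projection along the cylinder axis $\b e_{n+1}-\b e_i$: for any vertex, $\langle \b e_j, \b v\rangle = \langle \b e_j, \b v_{\wedge i}\rangle$ since $\b v_{\wedge i}-\b v$ is a multiple of $\b e_{n+1}-\b e_i$ which is orthogonal to... — actually, more robustly, because the $j$-th coordinate is unchanged. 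So the face of $\pol_{\wedge i}$ minimizing $\b e_j$ is exactly $\{\b w\in\pol_{\wedge i}: \langle \b e_j,\b w\rangle = \min_{\b y\in\pol}\langle \b e_j,\b y\rangle\}$, which is the preimage under the ``wedge map'' of the face $\pol^{-\b e_j}$ — i.e. it is the wedge of $\pol^{-\b e_j}$ over $(\pol^{-\b e_j})^{-\b e_i}$, a polytope of dimension $\dim(\pol^{-\b e_j})+1 = \dim\pol = \dim\pol_{\wedge i}-1$. Hence it is a facet, so $\pol^{\b e_j}_{\wedge i}$ is a co-facet. I would make this precise by noting $\pol_{\wedge i}^{-\b e_j} = (\pol^{-\b e_j})_{\wedge i}$ (possibly with a degenerate-wedge caveat if $j$ happens to be ``perpendicular'' to the cylinder in a bad way — but since $j\neq i$ this does not occur, the wedge over $(\pol^{-\b e_j})^{-\b e_i}$ is genuine and full-dimensional inside the supporting hyperplane), and counting dimensions; the co-facet of $\pol_{\wedge i}$ dual to this facet maximizes $\b e_j$, as desired. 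The maximizing case is identical with signs flipped.

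\textbf{Expected obstacle.} The main subtlety — and the step I would be most careful about — is the dimension bookkeeping and degeneracy: I must check that $(\pol^{-\b e_j})_{\wedge i}$ is a genuine $(\dim\pol)$-dimensional wedge, i.e. that the face $(\pol^{-\b e_j})^{-\b e_i}$ over which we wedge is a proper face and that the axis direction $\b e_{n+1}-\b e_i$ is not already contained in the affine span of $\pol^{-\b e_j}$ sitting at height $x_{n+1}=0$ (which it is not, since that affine span lies in $x_{n+1}=0$). One edge case worth flagging: if $\pol^{-\b e_j}$ happens to lie entirely inside the face $\pol^{-\b e_i}$ or $\pol^{+\b e_i}$, the combinatorics of the wedge degenerates slightly, but since we only need $\pol_{\wedge i}^{-\b e_j}$ to be a facet — a codimension-one face — the count $\dim = \dim\pol_{\wedge i}-1$ still goes through. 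Once these are pinned down, the proof is a short verification; the bulk of the work is setting up the identification $\pol_{\wedge i}^{-\b e_j}=(\pol^{-\b e_j})_{\wedge i}$ cleanly and invoking \Cref{lem:WedgeVertices} to control vertices on both sides.
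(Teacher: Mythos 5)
Your argument is, in substance, the paper's own proof: the paper likewise identifies, for $j\ne i$, the face of $\pol_{\wedge i}$ in direction $\pm\b e_j$ as the convex hull of two copies of $\pol^{\pm\b e_j}$ lying in the two orthogonal cutting hyperplanes (your $(\pol^{\pm\b e_j})_{\wedge i}$), so that its dimension is $\dim\pol^{\pm\b e_j}+1$ and the facet property propagates, while $(\pol_{\wedge i})^{-\b e_{n+1}}$ and $(\pol_{\wedge i})^{-\b e_i}$ are affine copies of $\pol$, giving the ``furthermore'' part. The only caveat is your edge-case remark: if the facet $\pol^{-\b e_j}$ were contained in the face $\pol^{-\b e_i}$, i.e.\ in the hyperplane $\bigl\{x_i=\min_{\b y\in\pol}y_i\bigr\}$ over which one wedges, then the two copies coincide, the wedge of that facet is the facet itself, and the count $\dim = \dim\pol_{\wedge i}-1$ does \emph{not} go through (this genuinely can happen when $\pol$ is not full-dimensional, e.g.\ when the faces minimizing $x_i$ and $x_j$ coincide); the paper's proof silently assumes this non-degeneracy rather than arguing it, so your write-up matches the published argument, but the dismissal ``still goes through'' should be replaced by the observation that the relevant facet is not contained in the wedging hyperplane in the situations where the lemma is applied.
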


\begin{proof}
% A quick scribble shows that optimization and wedge almost commute: for $\b c\in \R^n$, if $\b c\ne -\b e_i$, then $\bigl(\pol_{\wedge i}\bigr)^{(\b c, 0)} = \bigl(\pol^{\b c}\bigr)_{\wedge i}$.
% Hence, if $\b c\ne\b e_i$ and $\pol^{-\b c}$ is a facet, then $\bigl(\pol_{\wedge i}\bigr)^{(-\b c, 0)}$ is also a facet: it is a wedge of a $(d-1)$-dimensional polytope on a proper face.
If $j\ne i$, then $\bigl(\pol_{\wedge i}\bigr)^{\b e_j}$ is the convex hull of two polytopes of dimension $\dim\pol^{\-\b e_j}$ living in orthogonal hyperplanes.
Hence if $\pol^{\b e_j}$ is a facet of $\pol$, then $\bigl(\pol_{\wedge i}\bigr)^{\-\b e_j}$ is a facet of $\pol_{\wedge i}$.
Moreover, $\bigl(\pol_{\wedge i}\bigr)^{-\b e_i}$ and $\bigl(\pol_{\wedge i}\bigr)^{-\b e_{n+1}}$ are affinely equivalent to $\pol$, and hence they are facets of $\pol_{\wedge i}$.
Similar arguments solve the remaining cases.
\end{proof}

\begin{lemma}\label{lem:WedgesAreRecognizable}
For a polytope $\pol$, and $i, j\in [n]$ with $i\ne j$, if the face maximizing (respectively minimizing) $\b e_i$ in $\pol$ is a co-facet, then $\pol_{\wedge i}$ (resp. $\pol^{\wedge i}$) is not normally equivalent to $\pol_{\wedge j}$ nor to $\pol^{\wedge j}$.
\end{lemma}

\begin{proof}
On the one hand, note that the face in direction $\b e_{n+1}$ of $\pol_{\wedge i}$ is normally equivalent to $\pol$, which is not the case for $\pol^{\wedge j}$ for all $j\in [n]$: hence $\pol_{\wedge i}$ and $\pol^{\wedge j}$ are not normally equivalent (even for $i = j$).

On the other hand, consider the face $\bigl(\pol_{\wedge i}\bigr)^{\b e_{n+1}}$: by construction, it lives in an hyperplane orthogonal to~$\b e_i$, and is isomorphic to $\pol^{\b e_i}$.
If $\pol^{\b e_i}$ is a facet, then $\bigl(\pol_{\wedge i}\bigr)^{\b e_{n+1}}$ is of dimension $d-1$.
Hence, $\bigl(\pol_{\wedge i}\bigr)^{\b e_{n+1}}$ does not live in any hyperplane orthogonal to $\b e_j$ for $j\notin \{i, n+1\}$.
Consequently, if $i\ne j$, then $\bigl(\pol_{\wedge i}\bigr)^{\b e_{n+1}}$ and $\bigl(\pol_{\wedge j}\bigr)^{\b e_{n+1}}$ live in different hyperplanes, so $\pol_{\wedge i}$ and $\pol_{\wedge j}$ are not normally equivalent.
\end{proof}

\begin{proof}[Proof of \Cref{cor:BetterLowerBound}]
First, note that performing a permutahedral wedge preserves the properties of:
\begin{compactenum}
\item[$\bullet$] being indecomposable (by \Cref{thm:PermutahedralWedgesIndecomposable});
\item[$\bullet$] being a deformed permutahedron (by \Cref{thm:PermutahedralWedgesGP});
\item[$\bullet$] being not normally equivalent to a matroid polytope (by \Cref{lem:NotMatroidPolytope}, because the number of different values taken on a given coordinate can only increase).
\end{compactenum}
Besides, $\dim \pol_{\wedge i} = \dim \pol^{\wedge i} = \dim \pol + 1$.

To each sequence of indices $\b j = (j_4, \dots, j_{N-1})$ with $1 \leq j_k \leq k$, we associate a polytope $\pol_N^\b j$ obtained recursively as follows: 
$$\pol_4^\b j = \Pnm[1][3] ~~~\text{ and }~~~
\pol_{k+1}^\b j = \begin{cases}
		(\pol_k)^{\wedge j_k} & \text{ if } \varepsilon_k = 1, \\
	(\pol_k)_{\wedge j_k} & \text{ if } \varepsilon_k = -1\end{cases},$$
where $\varepsilon_1 = \varepsilon_2 = \varepsilon_3 = \varepsilon_4 = -1$ and $\varepsilon_{k} = \varepsilon_{j_{k-1}} \times (-1)^{\text{number of copies of } j_k \text{ in } (j_4, \dots, j_{k-1})}$ for $5\leq k\leq N-1$.

By the above discussion, then $\pol_N^\b j$ is an indecomposable $N$-deformed permutahedron which is not normally equivalent to a matroid polytope.
There are $\frac{1}{3!}(N-1)!$ sequences $\b j = (j_4, \dots, j_{N-1})$ with $1 \leq j_k \leq k$.
It remains to prove that if $\b j\ne\b j'$, then $\pol_N^{\b j}$ and $\pol_N^{\b j'}$ are not normally equivalent.

Firstly, the faces of the strawberry $\Pnm[1][3]$ minimizing $\b e_1$, $\b e_2$, $\b e_3$ and $\b e_4$ are co-facets:
thanks to the choice of $(\varepsilon_1, \dots, \varepsilon_{N-1})$, \Cref{lem:WedgesKeepsFacets} guarantees, by induction, that all the wedges we perform are on co-facets (\ie $(\pol_k)^{-\varepsilon_k\b e_{j_k}}$ is a facet).

Next, as all the wedges are taken on co-facets, we can apply \Cref{lem:WedgesAreRecognizable} inductively: if all $\pol_N^\b j$ are not normally equivalent for sequences $\b j$ of length $\ell$, then there all $\pol_N^{\b j'}$ cannot be normally equivalent for sequences $\b j'$ of length $\ell + 1$.
This concludes the proof.
\end{proof}

In a concomitant article with Georg Loho~\cite{LohoPadrolPoullot2025RaysSubmodularCone}, we provide a much larger new family of $2^{2^{N-2}}$ indecomposable deformed $N$-permutahedra.

\section{Uniquely decomposable frameworks}\label{sec:edgedecomposable}

In this section, we study edge decomposable frameworks, an important family of uniquely decomposable frameworks defined by the fact that the contribution on each of the edges can be attributed to a single Minkowski summand.

\subsection{Contractible sets of edges}

\begin{definition}
	For a subset of edges $X\subseteq \edges $ of a framework $\fw = (\verts, \edges, \p)$, the \defn{characteristic vector} associated to $X$ is the vector $\mathdefn{\cv}\in\R^{E}_+$ defined by $\cve_{e} = \left\{\begin{array}{lr}
		1 & \text{ if } e\in X \\
		0 & \text{ if } e\notin X
	\end{array}\right.$.
\end{definition}

We study the cases where characteristic vectors are edge-deformation vectors of Minkowski summands.

\begin{definition}
	Let $\fw = (\verts, \edges, \p)$ be a framework. A subset of non-degenerate edges $X\subseteq \nde$ is \defn{contractible} if $\cv[\nde \ssm X]\in \DefoCone[\fw]$ (\ie if $\cv[\nde \ssm X]$ satisfies all cycle equations of $\fw$).
\end{definition}

This definition can be interpreted geometrically as follows: $X$ is contractible if shrinking all the edges in $X$ to a point while keeping the length of the edges in $\nde\ssm X$ yields a deformation of $\fw$.

\begin{lemma}
	A subset $X\subseteq\nde$ is contractible if and only if its complement $\nde \ssm X$ is contractible.
\end{lemma}

\begin{proof}
	Note that $\cv[\nde]$ is the edge-deformation vector of $\fw$, and hence it belongs to the deformation space. If $X$ is contractible, then $\cv[\nde \ssm X]$ also belongs to this subspace, and so does their difference $\cv[\nde]-\cv[\nde \ssm X]=\cv$, which is the characteristic vector  of $X$. This is the definition of $\nde \ssm X$ being contractible.
\end{proof}
This means that a subset of edges $X$ is contractible if the remaining edges can be contracted to a point, leaving $X$ as the edge set of a deformation. That is, $X$ is contractible if and only if $\cv$ belongs to the deformation cone. This is the characterization that we will use more often from now on.

\begin{definition}
	If $X\subseteq \nde$ is a contractible subset of edges, we define by $\mathdefn{\restr}$ and $\mathdefn{\contr}$ the deformations of $\fw$ with edge-deformation vectors $\cv$ and $\cv[\nde \ssm X]$, respectively.
\end{definition}

Note that the union of two disjoint contractible sets of edges is contractible because $\cv[X\sqcup Y] = \cv[X] + \cv[Y]$, and $\DefoCone[\fw]$ is a convex cone.
Furthermore, all contractible sets can be written as a union of dependent sets, because dependent sets partition the edges, and it is not possible to contract an edge without contracting its complete dependency class.

\begin{figure}
	\centering
	\includegraphics[width=0.75\linewidth]{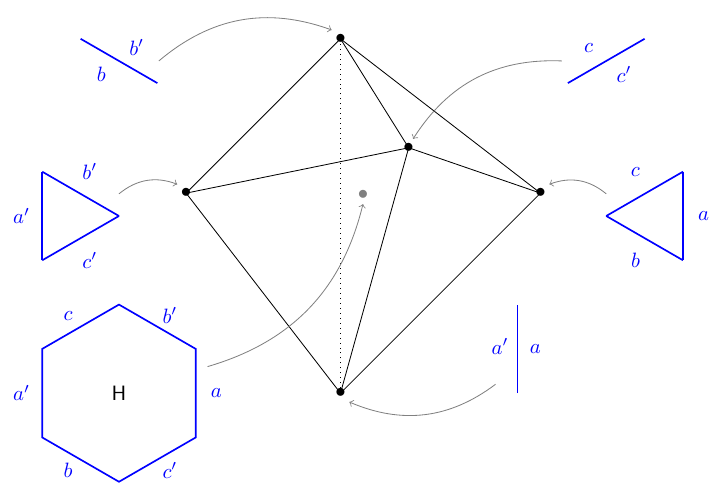}
	\caption[Deformation cone of the regular hexagon]{In \textcolor{blue}{blue} are the regular hexagon $\pol[H]$ and (the polytopes associated to) the rays of $\DefoCone[{\pol[H]}]$ obtained from characteristic vectors.
		The cone $\DefoCone[{\pol[H]}]$ is a cone over the \textbf{black} bi-pyramid in the center.}
	\label{fig:DChexagon}
\end{figure}

\begin{example}[Regular hexagon]\label{ex:hexagon}
	We label the edges of the regular hexagon $\pol[H]$ by $\pol[a], \pol[b]', \pol[c], \pol[a]', \pol[b], \pol[c]'$, in cyclic order, see \Cref{fig:DChexagon} (bottom left).
	There is only one 2-face, whose cycle equation is given by $\b\lambda_{\pol[a]} - \b\lambda_{\pol[a']} = \b\lambda_{\pol[b]} - \b\lambda_{\pol[b']} = \b\lambda_{\pol[c]} - \b\lambda_{\pol[c']}$.
	The contractible sets of edges with minimal support are $(\pol[a], \pol[a]')$, $(\pol[b], \pol[b]')$, $(\pol[c], \pol[c]')$, $(\pol[a], \pol[b], \pol[c])$ and $(\pol[a]', \pol[b]', \pol[c]')$.
	Each of these sets of edges $X$ is associated to its characteristic vector $\cv$, and to the framework $\restr$ with this edge-deformation vector (which will be the framework of a polytope $\restr[\pol]$ that is a deformation of the hexagon): up to translation, these polytopes are respectively the segments $\pol[a]$, $\pol[b]$, $\pol[c]$; and the two equilateral triangles with edges parallel to $\pol[a]$, $\pol[b]$ and $\pol[c]$ (which are related by a central symmetry).
	
	% \arnau{commented something here because I don't think it was precise. A subset can be minimal within the contractible but not be a ray}
	%As these 5 contractible sets of edges are minimal, th
	Their characteristic vectors are rays of $\DefoCone[{\pol[H]}]$.
	One checks that these 5 rays are the only rays:
	$\DefoCone[{\pol[H]}]$ is a cone over a bi-pyramid over a triangle (the apices of the bi-pyramid are associated to the two triangles; the vertices of the triangle to the segments), see \Cref{fig:DChexagon}.
	
	Note however that $\ELD[{\pol[H]}]$ is the graph on node set $E(\pol[H]) = (\pol[a], \pol[b], \pol[c], \pol[a]', \pol[b]', \pol[c]')$ composed of 6 isolated nodes.
	Indeed, for each pair of edges $\pol[e], \pol[f]\in E(\pol[H])$ there exists some $\b\lambda$ with $\b\lambda_{\pol[e]} \ne \b\lambda_{\pol[f]}$ that satisfies the above cycle equations.
	There are no dependent edges.
%	The minimal contractible sets are unions of dependent sets, and the inclusion of \Cref{prop:DCasSubCone} is strict: $\dim\DefoCone = 4$ whereas $\cone\left(\b\ell^C ~;~ C\in\c C\right) = \R_+^6$.
% \arnau{commented last sentence while thinking whether to keep that proposition}
\end{example}

We now focus on contractible \emph{dependent} sets of edges (recall that we say that a set of edges is \emph{dependent} if the edges it contains are pairwise dependent).

\begin{theorem}\label{thm:CharacteristicLengthsGiveRays}
	If a non-empty set of edges $X\subseteq \nde$ is contractible and dependent, then $\cv$ is a ray of $\DefoCone[\fw]$, and $X$ is a connected component of $\ELD$.
\end{theorem}

\begin{proof}
	Let $C$ be the connected component of $\ELD$ containing $X$.
	As $C$ is dependent, we have $\b\lambda_{e} = \b\lambda_{f}$ for all $e, f\in C$ and $\b\lambda\in\DefoCone[\fw]$.
	Now, $X$ is contractible, which means that $\cv[X]\in \DefoCone[\fw]$. This implies that $\cve_{e} = 1$ for all $e\in C$, and hence $C\subseteq X$.
	As $X$ dependent (\ie is connected in $\ELD$), we have $C = X$.
	
	Now, $\restr[\fw][X]$ is indecomposable, because its non-degenerate edges are pairwise dependent. Thus its edge-deformation vector $\cv[X]$ is a ray of $\DefoCone[\fw]$.
\end{proof}

The existence of a contractible dependent set of edges yields an even stronger consequence: it induces a product structure on the deformation cones.

\begin{theorem}\label{thm:contractibledependentdecomposition}
	For a framework $\fw = (\verts, \edges, \p)$, if $X\subseteq \nde$ is a contractible set of dependent edges, then 
	$\DefoCone[\fw] = \DefoCone[\contr] \times \DefoCone[\restr]$ where $\DefoCone[\fw_X] = \cone(\cv[X])$ is a ray. % (the latter being a ray by \cref{thm:CharacteristicLengthsGiveRays})\germain{I prefer $\cone(\ell^X)$ instead of $\DefoCone[\fw_X]$, to make it clear that it is a ray.}.
\end{theorem}

\begin{proof}
The fact that $\DefoCone[\fw_X] = \cone(\cv[X])$ is a ray is immediate from \Cref{thm:CharacteristicLengthsGiveRays}.

	We show that $f : (\b\mu, \b\mu') \mapsto \b\mu + \b\mu'$ is a bijection sending $\DefoCone[\contr]\times \DefoCone[\restr]$ to $\DefoCone[\fw]$.
	Note that, as  $\DefoCone[\contr],\DefoCone[\restr]\subseteq\DefoCone[\fw]$, the linear map $f$ is well-defined (because $\DefoCone[\fw]$ is closed under taking sums).
	Besides, as all $e\in X$ are degenerate edges in $\contr$, the cones $\DefoCone[\contr]$ and $\DefoCone[\restr]$ lie in orthogonal sub-spaces, hence $f$ is injective.
	It remains to prove that $f$ is surjective.
	
	Consider a deformation $\b\lambda\in \DefoCone[\fw]$, and let $\alpha = \b\lambda_e$ for some $e\in X$.
	As $X$ is dependent, $\alpha$ does not depend on the choice of $e\in X$, and $\alpha\cv[X]\in \R_+\b\cv=\DefoCone[\restr]$.
	Moreover, $\b\mu \coloneqq \b\lambda - \alpha\cv[X]$ has all non-negative coordinates, it lies in the dependence space, because it is a difference of vectors in the dependence space, and lies in the non-negative orthant. Hence $\b\mu\in \DefoCone[\fw]$.
    Note that, by definition of $\contr$, we have: $\DefoCone[\contr] = \DefoCone[\fw] \cap \{\b\lambda\in \R^\edges_+ ~;~ \b\lambda_e = 0 \text{ for all } e\in X\}$.
    Thus $\b\mu\in \DefoCone[\contr]$, which proves the surjectivity of~$f$, ending the proof.
\end{proof}

Note that if $X$ contractible but not dependent, then we have the inclusion $\DefoCone[\fw] \supseteq \DefoCone[\contr] \times \DefoCone[\restr]$, but it can be strict, like in the regular hexagon of \Cref{ex:hexagon} (\eg with $X = \{\pol[a], \pol[a]'\}$).

\subsection{Edge decomposable frameworks}

\begin{definition}
We say that a framework $\fw = (\verts, \edges, \p)$ is \defn{edge decomposable} if its support can be partitioned into non-empty contractible dependent sets of edges $\nde = C_1\sqcup \cdots\sqcup C_r$.
\end{definition}

We can use contractible dependent sets of edges to prove that certain deformation cones are simplicial.

\begin{theorem}\label{thm:NiceConnectedComponentImplySimplicialCone}
If a framework $\fw = (\verts, \edges, \p)$ is edge decomposable then it is uniquely decomposable. More precisely, if $\nde = C_1\sqcup \cdots\sqcup C_r$ is a partition into non-empty contractible dependent sets of edges, then $\DefoCone[\fw]$ is a simplicial cone of dimension $r$ whose rays are spanned by $\cv[C_1],\dots,\cv[C_r]$, and $\fw=\restr[\fw][C_1]+\cdots+\restr[\fw][C_r]$ is the unique decomposition into indecomposables with distinct support.
	%	
	%	Consequently, if $\fw[G]$ is a deformation of $\fw$, then there is a unique way of writing $\fw[G]$ as a sum of indecomposable frameworks. And if $\fw$ is the framework of a polytope $\pol$, then every deformation 
	%	$\pol[Q]$ of $\pol$ can be uniquely decomposed as a Minkowski sum of indecomposable polytopes.
\end{theorem}

\begin{proof}
Note that each $C_i$ is also a contractible dependent set of edges for any $\contr[\fw][C_j]$ with $j\neq i$. Therefore, we can iteratively apply \Cref{thm:contractibledependentdecomposition} to get that 
$\DefoCone[\fw] =\DefoCone[{\restr[\fw][C_1]}] \times \cdots \times \DefoCone[{\restr[\fw][C_r]}]$, which is a simplicial cone because $\DefoCone[{\restr[\fw][C_i]}]$ is the ray spanned by $\cv[C_i]$. By \Cref{lem:uniquelydecomposable}, the decomposition $\fw=\restr[\fw][C_1]+\cdots+\restr[\fw][C_r]$ must be unique.
%	By \Cref{thm:CharacteristicLengthsGiveRays}, each $C_i$ is a connected component of $\ELD$, and $\cv[{C_i}]$ is a ray of $\DefoCone[\fw]$.
%	As $C_1, \dots, C_r$ are \emph{all} the connected components, the inclusion of \Cref{prop:DCasSubCone} is an equality.
\end{proof}

\begin{remark}
	If $\fw$ is edge decomposable, then any deformation $\fw[G]\in \DefoCone[\fw]$ is too. Indeed, every contractible dependent set of edges $C_i$ is either completely contracted in $\fw[G]$, in which case they are degenerate edges of $\fw[G]$, or is still a contractible dependent set of edges of $\fw[G]$. Therefore, if $\fw$ is edge decomposable, the conclusion of \Cref{thm:NiceConnectedComponentImplySimplicialCone} holds for any of its deformations.
\end{remark}

\begin{remark}
We have proven that edge decomposability implies unique decomposability, but the reciprocal is not true.
Indeed, every quadrilateral in the plane is uniquely decomposable (by  \cite{CDGRY2020}, all polytopes combinatorially equivalent to a product of simplices are), but a scalene quadrilateral like in \Cref{sfig:ScaleneQuadrilateral} is not edge decomposable.
\end{remark}

%
%\begin{corollary}
%	Let $\fw=(\verts,\edges,\p)$ be a framework such that $\edges$ can be partitioned into autonomous dependent sets of edges. Then for each deformation $\fw[G]$ of $\fw$ (including $\fw$ itself) there is a unique way\germain{Describe this way} of writing $\fw[G]$ as a sum of indecomposable frameworks with the same graph as $\fw$.
%	
%	For a polytope $\pol$, if the set of edges of $\pol$ can be partitioned into autonomous dependent sets of edges, then every deformation 
%	$\pol[Q]$ of $\pol$ (including $\pol$ itself) can be uniquely decomposed as a Minkowski sum of indecomposable polytopes.
%\end{corollary}
%
%\begin{proof}
%	Direct consequence of \Cref{prop:SimplicialDC,thm:NiceConnectedComponentImplySimplicialCone}.
%\end{proof}

% \germain{Add: Cor on triangles and parallelograms.. Need to add notation for $\restr$ $\contr$ for polytopes}
As a first consequence, we show that polytopes whose $2$-faces are either triangles or parallelograms are always edge decomposable and thus uniquely decomposable.

\begin{lemma}\label{lem:EdgeDecompositionFor3and4cycles}
	Let $\pol$ be a polytope all whose $2$-faces are either triangles or parallelograms. Define the equivalence relation $\mathdefn{\sim}$ on the set of edges $E$ induced by $e\sim f$ if they belong to a common triangle, or are opposite edges of a parallelogram.	Then the equivalence classes of $\sim$ are non-empty contractible dependent sets that partition $\edges$.
\end{lemma}

\begin{proof}
It follows from observing that the characteristic vector of each equivalence class $C$ fulfills the face equations of $\pol$, and thus belongs to $\DefoCone$.
Indeed, for the triangular faces with edges $e,f,g$, the face equation is $\b\lambda_e = \b\lambda_f = \b\lambda_g$, and for parallelogram faces with cyclically ordered edges $e,f,g,h$, the face equations are $\b\lambda_e = \b\lambda_g$ and $\b\lambda_g = \b\lambda_h$.
By construction of our equivalence classes, their characteristic vectors fulfill these equations.
By \Cref{ex:triangle,ex:parallelogram}, they are dependent too.
\end{proof}

The combination of this with \Cref{thm:NiceConnectedComponentImplySimplicialCone} yields:

\begin{corollary}\label{cor:triangleparallelogram}
Every polytope $\pol$ all whose $2$-faces are either triangles or parallelograms is edge decomposable, and therefore uniquely decomposable. Precisely, if $C_1, \dots, C_r$ are the equivalence classes of~$\sim$, then $\DefoCone$ is a simplicial cone of dimension $r$ whose rays are spanned by $\restr[\pol][C_1],\dots,\restr[\pol][C_r]$, and where $\pol = \restr[\pol][C_1]+\cdots+\restr[\pol][C_r]$ is the unique decomposition into indecomposable polytopes with distinct normal fans.
\end{corollary}

\begin{proof}
Combine \Cref{lem:EdgeDecompositionFor3and4cycles,thm:NiceConnectedComponentImplySimplicialCone}.
\end{proof}

% \arnau{add corollaries? For example cite the case of zonotopes?}

\begin{example}
As direct examples of applications of \Cref{cor:triangleparallelogram}, we get:
\begin{compactitem}
\item For a polytope $\pol$ all whose 2-faces are triangles or parallelograms, $\pol$ is indecomposable if and only if the equivalence relation $\sim$ has a unique equivalence class.
\item The two polytopes of \Cref{fig:DiminishedTrapezohedronAndGyrobifastigium} have a simplicial deformation cone of dimension 2, and the unique way to write them as a Minkowski sum is as a sum of two triangles.
\item
The \emph{rhombicuboctahedron} is an Archimedean solid arising as the Minkowski sum of the standard cube $[0, 1]^n$ with the standard octahedron $\frac{1}{\sqrt{2}}\conv(\pm\b e_i ~;~ i\in [3])$.
All the 2-faces of a rhombicuboctahedron are triangles and parallelograms, and there are 4 equivalent classes, hence its deformation cone is a simplicial cone of dimension 4, whose 4 rays correspond to the chosen octahedron and the 3 segments which sum to the chosen cube.
% A rhombicuboctahedron is the Minkowski sum of a cube with an octahedron\germain{Flemme de dessiner...}, in generic positions.
% If both summands are taken to be the standard cube $[0, 1]^n$ and the standard octahedron $\frac{1}{\sqrt{2}}\conv(\pm\b e_i ~;~ i\in [3])$, then it is an Archimedean solid.
% All the 2-faces of a rhombicuboctahedron are triangles and parallelograms, and there are 4 equivalent classes, hence its deformation cone is a simplicial cone of dimension 4, whose 4 rays correspond to the chosen octahedron and the 3 segments which sum to the chosen cube.
\item If $\polZ = \sum_{\b v\in V} [\b 0, \b v]$ is a zonotope whose 2-faces are parallelograms, \ie there are no 3 vectors in $V$ lying in the same plane, then $\DefoCone[\polZ]$ is simplicial of dimension $r = |V|$, and the deformations of $\polZ$ are normally equivalent to the polytopes $\sum_{\b v\in V'} [\b 0, \b v]$ for $V'\subseteq V$. The converse is also true, if a zonotope $\polZ$ has a non-parallelogram $2$-face $\polF$, then its deformation cone its not simplicial because it contains $\DefoCone[\polF]$ as a face since $\polF$ is a deformation of $\polZ$.
A particular case was observed in \cite[Cor.~2.10]{PPP2023gZono}, which states that a graphical zonotope has a unique decomposition if and only if the underlying graph is triangle-free.
\item If $\pol = \pol[S]_{n_1}\times\dots\times \pol[S]_{n_r}$ is a product of simplices, then all the 2-faces of $\pol$ are product of simplices, hence triangles or parallelograms. It is straightforward to see that $\sim$ admits $r$ equivalence classes, so $\DefoCone$ is a simplicial cone of dimension $r$. This was generalized in \cite{CDGRY2020} to polytopes combinatorially equivalent to products of simplices.
\item Matroid polytopes have this property, and this provides an alternative proof for \Cref{cor:matroidpolytopesimplicial}, as the equivalence classes of edges correspond to the basis exchanges within each of the connected components of the matroid.
\item In fact, this extends to all $0/1$-polytopes,
% who also have only triangles and quadrilaterals as 2-faces,
see~\cite{HigashitaniPadrolSanyal} where it is proven that all $0/1$-polytopes that are not a Cartesian product are indecomposable.
% This property is one of the main ingredients used in \cite{HigashitaniPadrolSanyal} to prove that all $0/1$-polytopes that are not a Cartesian product are indecomposable.
\end{compactitem}
\end{example}

\subsection{Parallelogramic Minkowski sums}\label{ssec:parallelogramicMinkowskisums}
\Cref{thm:products} states that the deformation cone of a product of frameworks is the product of their deformation cones.
Yet, this property is not specific to Cartesian products, as we showcase now. A parallelogramic Minkowski sum of polytopes $\pol_1,\dots,\pol_k$ will be a polytope of dimension as low as $\max_i (\dim \pol_i)$, but whose deformation cone is isomorphic to $\DefoCone[\pol_1] \times \cdots \times \DefoCone[\pol_k]$.
When $\pol_1,\dots,\pol_k$ are indecomposable, then any of their parallelogramic Minkowski sums will be uniquely decomposable, but parallelogramic Minkowski sums need not to be edge decomposable in general.
These results are stated directly at the level of polytopes instead of frameworks, because to add frameworks one needs to have beforehand fixed a common underlying graph, and we will perform Minkowski sums of unrestricted families polytopes.

\begin{definition}
	% For a polytope $\pol$, we say that a vector $\b v$ is \defn{2-generated} by $\pol$ if $\b v$ lies in the plane spanned by some $2$-face of $\pol$, \ie there exists two edges $\b x\b y$ and $\b a\b b$ in a common $2$-face of $\pol$ such that $\b v = \alpha(\b y - \b x) + \beta(\b b - \b a)$ for some $\alpha, \beta\in \R$.
	% % if there exists two edges $\pol[e]$, $\pol[f]$ of $\pol$ which are adjacent in a $2$-face of $\pol$, and such that $\b v$ is a linear combination of the directions of $\pol[e]$ and the direction of $\pol[f]$.
	For a polytope $\pol$, we say that a vector $\b v$ is \defn{2-generic} with respect to $\pol$ if $\b v$ is not parallel to any of the $2$-faces of $\pol$, \ie there do not exist two edges $\b x\b y$ and $\b a\b b$ in a common $2$-face of $\pol$ such that $\b v = \alpha(\b y - \b x) + \beta(\b b - \b a)$ for some $\alpha, \beta\in \R$.
	
	Two polytopes $\pol$ and $\polQ$ are in \defn{parallelogramic position} if every edge direction of $\pol$ is $2$-generic with respect to $\polQ$, and every edge direction of $\polQ$ is $2$-generic with respect to $\pol$.
	% Two polytopes $\pol$ and $\polQ$ are in \defn{parallelogramic position} if no edge direction of $\pol$ is $2$-generated by $\polQ$, and no edge direction of $\polQ$ is $2$-generated by $\pol$.
\end{definition}

Recall that, for two polytopes $\pol$ and $\polQ$, the faces of the Minkowski sum $\pol + \polQ$ are of the form $\polF + \pol[G]$ for some face $\polF$ of $\pol$ and some face $\pol[G]$ of $\polQ$.
In particular, if $\pol$ and $\polQ$ are in parallelogramic position (or more generally if no edge of $\pol$ is parallel to an edge of $\polQ$), then all the edges of $\pol+\polQ$ are of the form $\pol[e] + \b x$ for some edge $\pol[e]$ and some vertex $\b x$ with $\pol[e] \subseteq \pol$ and $\b x\in \polQ$ or with $\b x \in \pol$ and $\pol[e]\subseteq\polQ$.

For an edge $\pol[e]$ of $\pol$ or $\polQ$, its associated \defn{edge class $C_{\pol[e]}$} in $\pol+\polQ$ is the set of edges of $\pol + \polQ$ of the form $\pol[e] + \b x$ for a vertex $\b x$ of $\polQ$ or $\pol$.

\begin{theorem}\label{thm:GeneralParallelogramicPositionDC}
	If $\pol$ and $\polQ$ are in parallelogramic position, then $\DefoCone[\pol+\polQ] \simeq \DefoCone \times \DefoCone[\polQ]$.
	
	Moreover, the graph $\ELD[\pol + \polQ]$ is the lift of $\ELD[\pol]\cup\ELD[\polQ]$ for the equivalence relation whose equivalence classes are the edge classes.
	% Moreover, considering that two edges of $\pol + \polQ$ are equivalent if they belong to the same edge class, the graph $\ELD[\pol + \polQ]$ is the lift of $\ELD[\pol]\times\ELD[\polQ]$.\germain{Oui, on a besoin de ca pour les stackings}
	% Equivalently, two edges $\pol[f]$ and $\pol[f]'$ of $\pol + \polQ$ are dependent if and only if they belong to edge classes $\pol[f]\in C_{\pol[e]}$ and $\pol[f]'\in C_{\pol[e]'}$ for equal or dependent edges $\pol[e]$ and $\pol[e]'$ (in particular, either $\pol[e], \pol[e]'\subseteq\pol$ or $\pol[e], \pol[e]'\subset\polQ$).
\end{theorem}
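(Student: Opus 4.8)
The plan is to show that the edge classes of $\pol+\polQ$ form dependent sets in $\ELD[\pol+\polQ]$, so that the deformation of $\pol+\polQ$ decomposes into independent deformations of (copies of) $\pol$ and (copies of) $\polQ$ glued along these classes. First I would establish the key structural fact: since $\pol$ and $\polQ$ are in parallelogramic position, no edge of $\pol$ is parallel to an edge of $\polQ$, so (as recalled right before the statement) every edge of $\pol+\polQ$ is of the form $\pol[e]+\b x$ with $\pol[e]$ an edge of one summand and $\b x$ a vertex of the other. I would then analyze the $2$-faces of $\pol+\polQ$: a $2$-face is $\polF+\pol[G]$ with $\dim\polF+\dim\pol[G]=2$, so it is either (a) a $2$-face of $\pol$ translated by a vertex of $\polQ$, (b) a $2$-face of $\polQ$ translated by a vertex of $\pol$, or (c) a parallelogram $\pol[e]+\pol[f]$ with $\pol[e]$ an edge of $\pol$ and $\pol[f]$ an edge of $\polQ$ — this is where the term ``parallelogramic'' comes from, and where $2$-genericity is used: the parallelogram is genuinely two-dimensional because $\pol[e]$ is not parallel to $\pol[f]$, and moreover $2$-genericity of edge directions guarantees that these are the only ways two edges can share a $2$-face, so no spurious identifications occur.

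Next I would use the type-(c) parallelograms to show that each edge class is dependent. Given two edges $\pol[e]+\b x$ and $\pol[e]+\b x'$ in the same class $C_{\pol[e]}$ with $\b x,\b x'$ vertices of $\polQ$: since the graph of $\polQ$ is connected, I can connect $\b x$ to $\b x'$ by a path of edges $\pol[f]_1,\dots,\pol[f]_k$ of $\polQ$, and each step $\pol[e]+\b x_{j-1}$ to $\pol[e]+\b x_j$ is an opposite pair in a type-(c) parallelogram face $\pol[e]+\pol[f]_j$ of $\pol+\polQ$; by \Cref{ex:parallelogram} opposite edges of a parallelogram are dependent, so transitivity of dependency (\Cref{lem:WclusterGraph}) gives that $\pol[e]+\b x$ and $\pol[e]+\b x'$ are dependent. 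Hence every edge class is contained in a single connected component of $\ELD[\pol+\polQ]$. Conversely, type-(a) and type-(b) faces show that the quotient graph (contracting each edge class to a point) embeds $\ELD[\pol]\cup\ELD[\polQ]$: a cycle equation of $\pol+\polQ$ coming from a type-(a) face $\polF+\b x$ is, after identifying all edges in a class, exactly a cycle equation of $\pol$ for the face $\polF$, and type-(c) faces only impose the equality of the two edges' coordinates already forced within the classes. This gives that $\ELD[\pol+\polQ]$ is precisely the lift of $\ELD[\pol]\cup\ELD[\polQ]$ along the edge-class equivalence relation, which is the second assertion.

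For the first assertion, I would argue as in the proof of \Cref{thm:products}. Having shown each edge class is dependent, in any deformation $\pol[R]$ of $\pol+\polQ$ all edges in a class $C_{\pol[e]}$ scale by the same factor; in particular all the translates $\pol+\b x$ ($\b x\in\verts(\polQ)$) appearing inside $\pol+\polQ$ deform to translates of one common deformation $\pol'$ of $\pol$ (they share their edges up to the class identification), and likewise all $\polQ+\b p$ deform to translates of one common deformation $\polQ'$ of $\polQ$. Since $\pol[R]$ and $\pol'+\polQ'$ have the same edge-deformation vector on every edge of $\pol+\polQ$, they coincide up to translation by \Cref{lem:edge-deformations-frameworks}. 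This exhibits a linear isomorphism $\DefoCone\times\DefoCone[\polQ]\to\DefoCone[\pol+\polQ]$, $(\b\lambda,\b\mu)\mapsto$ the vector that is $\lambda_{\pol[e]}$ on $C_{\pol[e]}$ for $\pol[e]$ an edge of $\pol$ and $\mu_{\pol[f]}$ on $C_{\pol[f]}$ for $\pol[f]$ an edge of $\polQ$; injectivity is clear from the class decomposition, surjectivity from the decomposition $\pol[R]=\pol'+\polQ'$ just obtained, and linearity is immediate.

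The main obstacle I anticipate is the careful bookkeeping in the $2$-face analysis: one must be sure that the $2$-genericity hypothesis genuinely rules out any $2$-face $\polF+\pol[G]$ with $\dim\polF=0$, $\dim\pol[G]=2$ where $\pol[G]$ happens to be a parallelogram sharing an edge direction with $\pol$, and that the cycle space of $\pol+\polQ$ modulo the edge-class relations is generated exactly by the (pushforwards of) cycle spaces of $\pol$ and $\polQ$ — this is the content needed to conclude the inclusion of \Cref{prop:DCasSubCone} is an equality here, rather than just an inclusion. Getting this ``no extra relations and no missing relations'' statement precise, using that the $2$-faces of a Minkowski sum of polytopes in parallelogramic position are fully classified into types (a), (b), (c), is the technical heart; everything else is a routine application of the dependency/lift machinery already developed.
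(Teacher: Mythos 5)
There is a genuine gap at the heart of your argument, namely in the step showing that each edge class $C_{\pol[e]}$ is dependent. You connect two vertices $\b x,\b x'$ of $\polQ$ by a path of edges $\pol[f]_1,\dots,\pol[f]_k$ in the graph of $\polQ$ and assert that each $\pol[e]+\pol[f]_j$ is a (type-(c)) parallelogram $2$-face of $\pol+\polQ$, and implicitly that each $\pol[e]+\b x_j$ is an edge of $\pol+\polQ$. Neither is true in general: a pair of faces $\polF\subseteq\pol$, $\pol[G]\subseteq\polQ$ yields a face $\polF+\pol[G]$ of $\pol+\polQ$ only when their normal cones intersect in the appropriate dimension, which happens for only some pairs. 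The paper's own example of the gyrobifastigium (\Cref{sfig:Gyrobifastigium}), a sum of two triangles in parallelogramic position, makes this concrete: it has only $4$ parallelogram faces out of the $9$ pairs (edge of $\pol[T]$, edge of $\pol[T]'$), and for a fixed edge $\pol[e]$ of $\pol[T]$ not every vertex $\b x$ of $\pol[T]'$ gives an edge $\pol[e]+\b x$ of the sum. So a path in the graph of $\polQ$ may pass through steps where no parallelogram face exists and where the intermediate ``edges'' $\pol[e]+\b x_j$ are not edges of $\pol+\polQ$ at all; the dependency chain breaks there, and since this is exactly the statement that makes the inclusion of \Cref{prop:DCasSubCone} an equality, the rest of the proof (the lift description and the product decomposition) does not go through as written.

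The correct replacement, which is what the paper does, is to work with the restriction of the normal fan of $\polQ$ to the $(d-1)$-dimensional normal cone of $\pol[e]$: its maximal cones index precisely the vertices $\b v$ of $\polQ$ for which $\pol[e]+\b v$ is an edge of the sum, and its dual graph is connected. The role of $2$-genericity is then not merely to classify $2$-faces, but to rule out that a wall of this restricted fan lies in the common boundary of three or more vertex normal cones (equivalently, that the normal cone of $\pol[e]$ meets the normal cone of a $2$-face of $\polQ$ in codimension $2$, i.e.\ that $\pol[e]$ is parallel to a $2$-face of $\polQ$); this forces every wall to correspond to an edge $[\b v,\b v']$ of $\polQ$ with $\pol[e]+[\b v,\b v']$ an honest parallelogram $2$-face, giving the needed dependencies along a connected graph. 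Your remaining steps (every $2$-face of $\pol$ survives as $\polF+\b v$ in the sum, hence the cycle equations of $\pol$ and $\polQ$ persist, and the projection $\b\lambda\mapsto(\b\lambda|_{\text{classes of }\pol},\b\lambda|_{\text{classes of }\polQ})$ is an injective and surjective linear map onto $\DefoCone\times\DefoCone[\polQ]$) do match the paper's argument once the class-dependency lemma is in place, but you would need to repair the connectivity argument along the lines above for the proof to be complete.
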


\begin{proof}
	% Recall that, for $\b c \in \R^d$, we have $(\pol+\polQ)^{\b c} = \pol^{\b c} + \polQ^{\b c}$.
	% In particular, for any edge $\pol[g]$ of $\pol+\polQ$, there exists $\b c$ such that $\pol[g] = (\pol + \polQ)^{\b c} = \pol^{\b c} + \polQ^{\b c}$.
	% As $\pol$ and $\polQ$ are in parallelogramic position, $\pol^{\b c}$ and $\polQ^{\b c}$ are not co-linear, hence one of them is an edge and the other a vertex: the edges of $\pol+\polQ$ are of the form $\pol[e] + \b v$ for $\pol[e]$ an edge of $\pol$ and $\b v$ a vertex of $\polQ$, or the opposite.
	% We will prove that, for some fixed edge $\pol[e]$ of $\pol$, all the edges $\pol[e] + \b v$ of $\pol+\polQ$ are dependent (and symmetrically inverting the roles of $\pol$ and $\polQ$).
	Pick an edge $\pol[e]$ of $\pol$.
	Consider the fan obtained by intersecting the normal cone of $\pol[e]$ in $\pol$ with the normal fan of $\polQ$, \ie the fan whose collection of cones are $\c N_{\pol[e]}(\b v) \coloneqq \{\b c \in \R^d ~;~ (\pol+\polQ)^{\b c} \supseteq \pol[e] + \b v\}$ for each vertex $\b v$ of $\polQ$.
	For every vertex $\b v$ of $\polQ$, if $\dim\bigl(\c N_{\pol[e]}(\b v)\bigr) = d - 1$, then for $\b c\in \interior \bigl(\c N_{\pol[e]}(\b v)\bigr)$, we get $(\pol+\polQ)^{\b c} = \pol[e] + \b v$.
	As $\pol[e]$ is 2-generic with respect to $\polQ$, there are no vertices $\b v, \b v',\b v''$ of $\polQ$ such that $\dim\bigl(\c N_{\pol[e]}(\b v)\cap \c N_{\pol[e]}(\b v')\cap \c N_{\pol[e]}(\b v'')\bigr)  = d - 2$, because otherwise $\pol[e]$ would be parallel to the $2$-face of $\polQ$ containing $\b v, \b v',\b v''$ (there is such a face if their normal cones intersect in co-dimension~$2$).
	Thus, if $\b v, \b v'$ are such that $\dim\bigl(\c N_{\pol[e]}(\b v)\cap \c N_{\pol[e]}(\b v')\bigr) = d - 2$, then for $\b c\in \interior\bigl(\c N_{\pol[e]}(\b v)\cap \c N_{\pol[e]}(\b v')\bigr)$, we have that $(\pol+\polQ)^{\b c} = \pol[e] + [\b v, \b v']$ is a 2-face of $\pol+\polQ$:
	as the latter is a parallelogram, the edges $\pol[e] + \b v$ and $\pol[e] + \b v'$ of $\pol + \polQ$ are dependent.
	Consequently, the following graph is a sub-graph of $\ELD[\pol+\polQ]$:
	its nodes are the vertices $\b v$ of $\polQ$ with $\dim\bigl(\c N_{\pol[e]}(\b v)\bigr) = d - 1$, and arcs are between $\b v$ and $\b v'$ for which $\dim\bigl(\c N_{\pol[e]}(\b v)\cap \c N_{\pol[e]}(\b v')\bigr) = d - 2$.
	This graph is connected because it is the dual graph of the above-defined fan.
	All the edges of $\pol+\polQ$ of the form $\pol[e] + \b v$ are nodes of this graph.
	
	Thus, for any fixed edge $\pol[e]$ of $\pol$, all the edges of $\pol+\polQ$ in the edge class $C_{\pol[e]}$ are dependent
	(and symmetrically inverting the roles of $\pol$ and $\polQ$).
	%Let $ED_{\pol[e]}$ be the set of these edges, for $\pol[e]$ an edge of $\pol$ or of $\polQ$.
	For any two face $\polF$ of $\pol$, taking $\b c$ with $\pol^{\b c} = \polF$ yields $(\pol+\polQ)^{\b c} = \polF + \b v$ for a vertex $\b v$ of $\polQ$, as otherwise there would be an edge of $\polQ$ which is not $2$-generic with respect to $\pol$.
	Hence, the cycle equations of $\pol$ are also valid in $\pol+\polQ$.
	Consequently, for two edges $\pol[e]$ and $\pol[e]'$ of $\pol$, the edge sets $C_{\pol[e]}$ and $C_{\pol[e]'}$ are dependent in $\ELD[\pol+\polQ]$ if and only if $\pol[e]$ and $\pol[e]'$ are dependent in $\ELD[\pol]$.
	The same holds for $\polQ$.
	This proves the $\ELD[\pol+\polQ]$ is the lift of $\ELD[\pol] \cup \ELD[\polQ]$ with respect to the equivalence relation induced by the partition into edge classes.
	
	Finally, for each edge $\pol[e]$ of $\pol$ or of $\polQ$, fix some representative $\pol[e]_\circ$ in $C_{\pol[e]}$.
	Consider the projection $\pi~\!:~\!\DefoCone[\pol+\polQ] \to \DefoCone\times\DefoCone[\polQ]$ with $\b\lambda \mapsto \bigl(\b\lambda_{\pol[e]_\circ} ~;~ C_{\pol[e]} \text{ edge class of } \pol \text{ or of } \polQ\bigr)$.
	As the cycle equations of $\pol$ and of $\polQ$ hold for $\pol+\polQ$, we indeed have that $\pi(\b\lambda) \in \DefoCone\times\DefoCone[\polQ]$.
	Moreover, as the edges in any $C_{\pol[e]}$ are dependent, this projection is injective.
	Lastly, this projection is surjective because the sum of any deformation $\pol'$ of $\pol$ with a deformation $\polQ'$ of $\polQ$ yields a deformation $\pol' + \polQ'$ of $\pol + \polQ$.
	This ensures that $\DefoCone[\pol+\polQ] \simeq \DefoCone[\pol]\times\DefoCone[\polQ]$.
\end{proof}

We say that a collection of polytopes $\pol_1, \dots, \pol_r$ is in \defn{parallelogramic position} if for every $1\leq i\leq r$, the polytopes $P_i$ and $\sum_{j\neq i}\pol_j$ are in parallelogramic position.
For example, the collection of polytopes $\pol_1, \dots, \pol_r$ is in parallelogramic position if no edge of one of the $\pol_i$ is parallel to the plane spanned by an edge of $\pol_j$ and an edge of $\pol_k$ (for $j\ne i$ and $k\ne i$ but possibly $j = k$).

\begin{corollary}\label{cor:ParallelogramicSumIndecomposablePolytopes}
	If $\pol_1, \dots, \pol_r$ are indecomposable polytopes in parallelogramic position, then $\pol_1 + \dots + \pol_r$ is edge decomposable, and therefore uniquely decomposable. 
	The deformation cone of $\pol_1 + \dots + \pol_r$ is simplicial of dimension $r$ whose rays are associated to the polytopes $\pol_1, \dots, \pol_r$.
	In particular every deformation of $\pol_1 + \dots + \pol_r$ is of the form $\alpha_1\pol_1 + \dots + \alpha_r\pol_r$ with $\alpha_i\geq 0$, and this decomposition is unique up to translation (with the condition that the summands are not normally equivalent).
\end{corollary}

\begin{proof}
For every $i$, the union of all edges that are copies of edges of $\pol_i$, \ie $\bigcup_{\pol[e]\in \edges(\pol_i)} C_{\pol[e]}$, is contractible (its characteristic vector is the edge-deformation vector of $\pol_i$ itself) and dependent, indeed, we have seen that copies of the same edge are dependent in the the proof of \Cref{thm:GeneralParallelogramicPositionDC}), and those that belong to a same copy of $\pol_i$ are pairwise dependent by \Cref{lem:ELDindecomposable}. The rest follows from  \Cref{thm:NiceConnectedComponentImplySimplicialCone}.
\end{proof}

\begin{remark}
	We denote $\b 0_m \coloneqq (0, \dots, 0)$ the zero vector of length~$m$.
	For two polytopes $\pol\subset\R^d$ and $\polQ\subset\R^e$, the polytopes $\pol\times\{\b 0_e\} \subset \R^{d+e}$ and $\{\b 0_d\}\times\polQ \subset \R^{d+e}$ are in parallelogramic position.
	Moreover, by definition: $\pol\times\polQ = \bigl(\pol\times\{\b 0_e\}\bigr) + \bigl(\{\b 0_d\}\times\polQ\bigr)$.
	Hence, \Cref{thm:GeneralParallelogramicPositionDC} implies \Cref{thm:products} for polytopes.
\end{remark}

\begin{figure}
	\centering
	\begin{subfigure}[b]{0.26\linewidth}
		\centering
		\includegraphics[width=\linewidth]{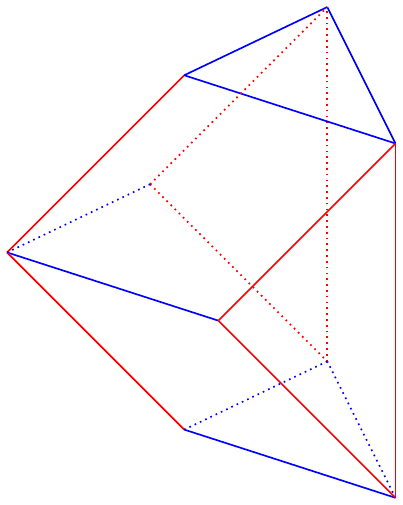}
		\caption{Diminished trapezohedron}
		\label{sfig:DiminishedTrapezohedron}
	\end{subfigure}\hspace{3cm}
	\begin{subfigure}[b]{0.3\linewidth}
		\centering
		\includegraphics[width=\textwidth]{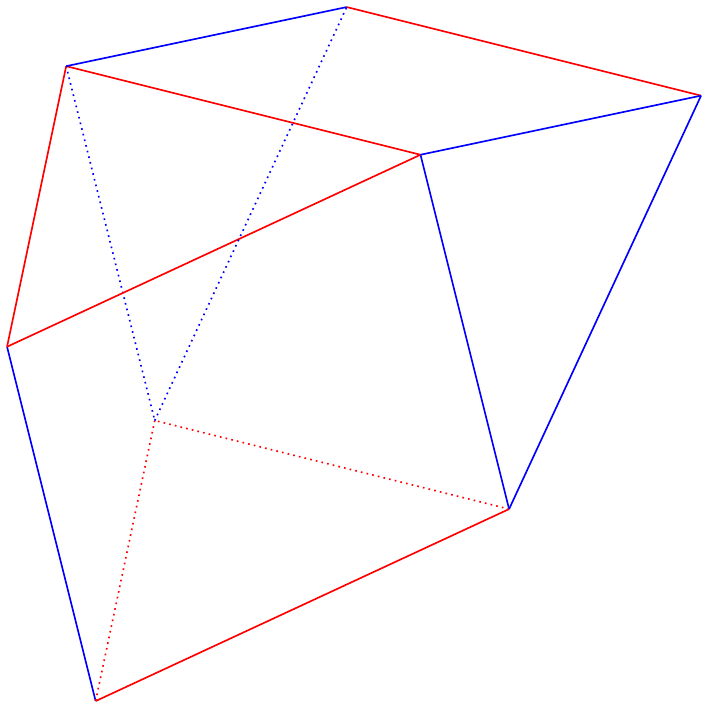}
		\caption{Gyrobifastigium}
		\label{sfig:Gyrobifastigium}
	\end{subfigure}
	\caption[Diminished trapezohedron and gyrobifastigium]{Two 3-dimensional polytopes obtained as the Minkowski sum $\textcolor{red}{\pol[T]} + \textcolor{blue}{\pol[T]'}$ of two triangles in parallelogramic position: (left) the diminished trapezohedron can be seen as half of the graphical zonotope $\ZGKnm[2][2]$, (right) the gyrobifastigium can be seen as gluing two triangular prisms.
		In each polytope, we draw in \textcolor{red}{red} the edges parallel to some edge of $\textcolor{red}{\pol[T]}$, and conversely for \textcolor{blue}{blue} edges.}
	\label{fig:DiminishedTrapezohedronAndGyrobifastigium}
\end{figure}

\begin{example}
	As depicted in \Cref{fig:DiminishedTrapezohedronAndGyrobifastigium}, the Minkowski sum of two triangles in parallelogramic position in dimension 3 gives rise either to a diminished trapezohedron or to a gyrobifastigium (if moreover the two triangles are taken to be equilateral, then this is the Johnson solid $J_{26}$, see \cite{Johnson1966-Solids}), depending on the relative positions of the two triangles.
	By \Cref{cor:ParallelogramicSumIndecomposablePolytopes}, all these polytopes have a simplicial deformation cone of dimension~$2$, and the unique way to decompose them as a sum of indecomposable polytopes is as a sum of two triangles.
\end{example}

\begin{example}\label{exm:ParallelogramicZonotopesLiterature}
	Another typical example of Minkowski sums of polytopes in parallelogramic position are parallelogramic zonotopes: Minkowski sums of segments in parallelogramic position (including cubes, and graphical zonotopes for triangle-free graphs).
	\Cref{cor:ParallelogramicSumIndecomposablePolytopes} ensures that the deformation cone $\DefoCone[\polZ(V)]$ of a parallelogramic zonotope is simplicial.
	Moreover, by \Cref{thm:GeneralParallelogramicPositionDC}, the edge-dependency graph of a parallelogramic zonotope $\polZ(V)$ is formed by cliques on the edge-classes $C_{\b v}$ for each $\b v\in V$.
	
	% The case of the cube is well-known from several points of view:
	% on the one side, it is an immediate consequence of \Cref{thm:products} (as a cube is a product of segments); on another hand, in \cite{CDGRY2020}, the authors proved that any polytope which is \emph{combinatorially equivalent} to a product of simplices has a simplicial deformation cone.
	% \end{remark}

% % \begin{remark}
% 	More generally, the case of graphical zonotopes is also known, since \cite[Cor. 2.10]{PPP2023gZono} states that:
% 	``The deformation cone $\EDefoCone[\ZG]$ is simplicial and all the deformations of $\ZG$ are (graphical) zonotopes if and only if $G$ is triangle-free.''
% 	Note that $G$ is triangle-free if and only if its graphical zonotope $\ZG$ 
% 	is parallelogramic.
% 	Hence \Cref{cor:ParallelogramicSumIndecomposablePolytopes} directly proves the ``if'' part of this corollary.
% 	The ``only if'' falls from the fact that if $G$ is not triangle-free, then the regular hexagon $\pol[H]$ (\aka the graphical zonotope of a triangle) is a deformation of $\ZG$, and we have seen in \Cref{fig:DChexagon} that $\DefoCone[{\pol[H]}]$ is not a simplicial cone.
\end{example}

\section{Additional constructions for indecomposable and uniquely decomposable polytopes}\label{sec:constructions}

In this section, we use the graph of (implicit) edge dependencies to analyze two constructions of polytopes with controlled deformation cones. On the one hand, in \Cref{ssec:DeepTruncationsOfParallelogramicZonotopes}, we extend the constructions of \Cref{ssec:TruncatedGraphicalZonotope}
	using deep truncations to create indecomposable polytopes from arbitrary parallelogramic zonotopes. On the other hand, in \Cref{ssec:parallelogramicMinkowskisums}, we describe how to construct indecomposable polytopes by stacking on top of facets of parallelogramic Minkowski sums.

%
%
%\section{Additional applications to deformation cones}\label{sec:Application}
%
%The graph of (implicit) edge dependencies has applications beyond indecomposability criteria.
%% For example, it is easy to see that the number of connected components of $\ELD$ is an upper bound for the dimension of its deformation cone.
%We saw one application when expressing deformation cones of products (as products of deformation cones of the factors), in \Cref{thm:products}, and an upper bound on the dimension of deformation cones in \Cref{thm:dim1,thm:DCdimBoundedByNumberOfDependentSets,thm:DCdimBoundedByNumberOfDependentSetsCovering}.
%We showcase three more involved applications of these ideas:
%\begin{compactenum}
%    \item in \Cref{ssec:DeepTruncationsOfParallelogramicZonotopes}, we extend the constructions of \cref{ssec:TruncatedGraphicalZonotope}
%	using deep truncations to create indecomposable polytopes from zonotopes whose 2-faces are parallelograms, 
%	\item in \Cref{ssec:PolytopalConnectedComponent}, we study uniquely decomposable polytopes,
%	% we study the simpliciality of $\DefoCone[\fw]$ and some of its rays,
%	\item in \Cref{ssec:parallelogramicMinkowskisums}, we introduce \defn{parallelogramic Minkowski sums}, which share certain properties with Cartesian products, and describe how to construct indecomposable polytopes by stacking on top of their facets.
%\end{compactenum}

\subsection{Deep truncations on parallelogramic zonotopes}\label{ssec:DeepTruncationsOfParallelogramicZonotopes}

To extend the results of \Cref{thm:PandQareIndecomposable}, we explore (deep) truncations of other zonotopes.
We refer the reader to \cite[Chapter 7.3]{Ziegler-polytopes} for an excellent introduction to zonotopes, and restrict ourselves to a presentation without proofs of the key facts we need.

\begin{definition}
A \defn{zonotope} is a Minkowski sum of segments.
Up to translation, all zonotopes can be written as the Minkowski sum $\mathdefn{\polZ(V)} \coloneqq \sum_{\b v\in V} [\b 0, \b v]$ for some set of vectors $V = (\b v_1, \dots, \b v_r)$.

In $\polZ(V)$, the \defn{edge-class $C_{\b v}$} of $\b v\in V$ is the set of edges which are parallel to $[\b 0, \b v]$, see \Cref{fig:DeepTruncations}: as all the edges of $\polZ(V)$ are parallel to some $[\b 0, \b v]$ for $\b v\in V$, the edge-classes form a partition of $E(\polZ(V))$.
The \defn{zone} of $\b v\in V$ is the collection of all the faces $\polF$ of $\polZ$ which contain and edge parallel to  $\b v$, \ie $C_{\b v} \cap E(\polF) \ne \emptyset$.

A zonotope is \defn{parallelogramic} if all its 2-faces are parallelograms.
In particular, $\polZ(V)$ is parallelogramic if and only if no three vectors in $V$ lie in a common plane (in particular, no two of them have the same direction, and $\b0\notin V$).
\end{definition}

\begin{definition}
	For a $d$-polytope $\pol$ with $d\geq 3$ and $\b x\in V(\pol)$, the vertex $\b x$ admits a \defn{deep truncation} if all the adjacent vertices belong to a common hyperplane $H_{\b x}$, see \Cref{fig:DeepTruncations,fig:SmallDimPandQ}.
	In this case, the deep truncation of $\pol$ at $\b x$ is defined as 
	\[\mathdefn{\pol\ssm \b x} \coloneqq \conv\bigl(\b u ~;~ \b u \in V(\pol),~ \b u\ne\b x\bigr)=\pol\cap H^-_{\b x}\] where $H^+_{\b x}$ is the closed half-space bounded by $H_{\b x}$ containing $\b x$, and $H^-_{\b x}$ its opposite closed half-space (the proof that these two descriptions above are equivalent is analogous to that of \Cref{lem:VerticesFacesOfPandQ}).
	We denote by \defn{$\pol[X]$} the distinguished facet of $\pol\ssm\b x$ that is supported by $H_{\b x}$.	
%	Equivalently, one has $\pol\ssm \b x := \conv\bigl(\b u ~;~ \b u \in V(\pol),~ \b u\ne\b x\bigr)$.
\end{definition}

\begin{figure}[h]
	\centering
	\includegraphics[width=0.31\linewidth]{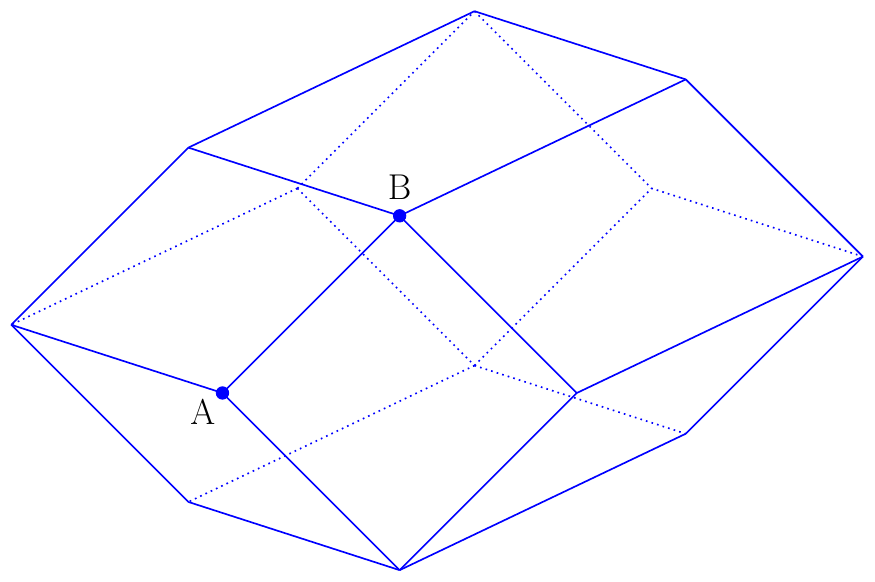}
	\includegraphics[width=0.22\linewidth]{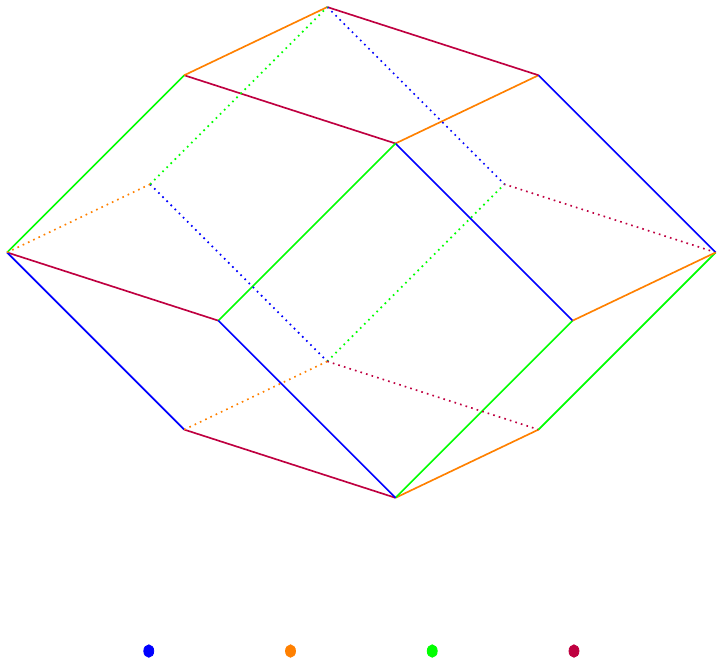}
	\includegraphics[width=0.22\linewidth]{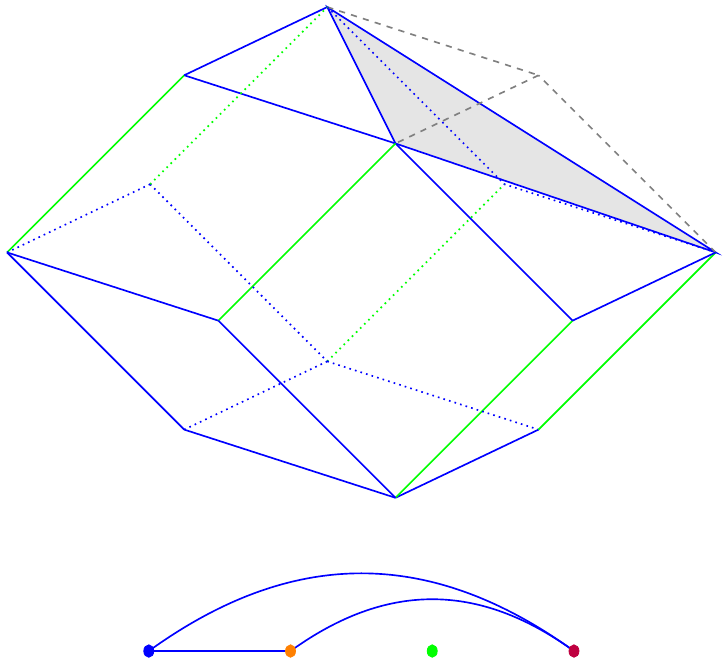}
	\includegraphics[width=0.22\linewidth]{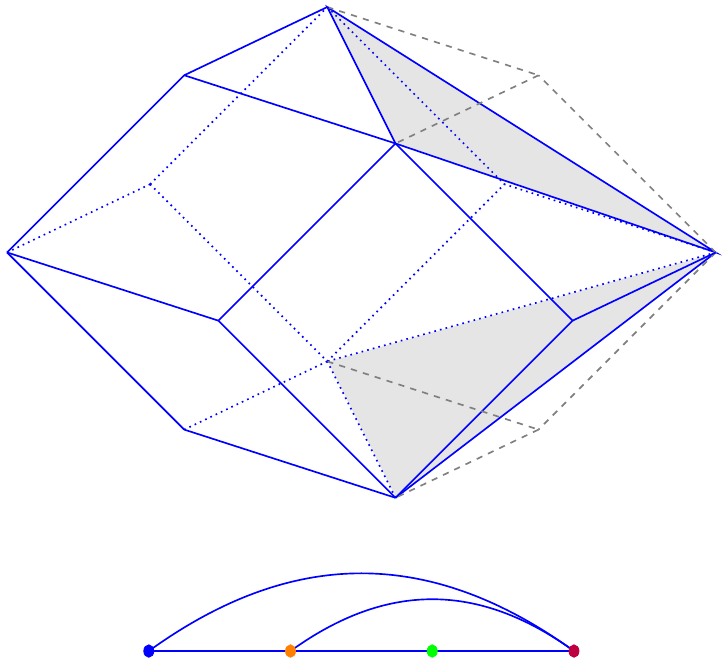}
	\caption[Truncating facets of a zonotope to improve its indecomposability]{(Leftmost) A zonotope where vertex $\b A$ admits a deep truncation, while vertex $\b B$ does not (one of its adjacent edges is too long). (Others) A zonotope with 4 edge-classes and 0, 1 and 2 vertices truncated: (Top) the polytope $\polZ(V)\ssm\c X$; (Bottom) the graph $\Omega_{V, \c X}$.
		Edges drawn in the same color are dependent.
		The number of connected components of $\Omega_{V, \c X}$ is the dimension of the deformation cone $\DefoCone[{\polZ(V) \ssm \c X}]$, hence the rightmost example is indecomposable.}
	\label{fig:DeepTruncations}
\end{figure}

The proof of the following lemma is analogous to that of \Cref{lem:OtherFacesOfPandQ}.
\begin{lemma}
	If $\pol$ admits a deep truncation at $\b x$, then the proper faces of $\pol\ssm \b x$ are either
	\begin{compactenum}[(i)]\item the faces of $\pol[X]$, or \item of the form $\pol[F]\ssm \b x$ for $\pol[F]$ a face of $\pol$ (with the convention $\pol[F]\ssm \b x = \pol[F]$ if $\b x\notin\pol[F]$).\end{compactenum}
\end{lemma}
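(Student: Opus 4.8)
The plan is to mimic the proof of \cref{lem:OtherFacesOfPandQ} essentially verbatim, transporting it from the special setting of truncated bipartite graphical zonotopes to an arbitrary polytope $\pol$ admitting a deep truncation at $\b x$. The key structural input is the two equivalent descriptions of $\pol\ssm\b x$ already recorded in the definition: as $\conv(\b u \,;\, \b u\in V(\pol),\ \b u\neq\b x)$ and as $\pol\cap H^-_{\b x}$, where $H_{\b x}$ passes through all neighbors of $\b x$ and $H^-_{\b x}$ is the closed halfspace not containing $\b x$. From the halfspace description, every proper face $\pol[G]$ of $\pol\ssm\b x = \pol\cap H^-_{\b x}$ is exposed by some linear functional $\b c$, and we split into two cases according to whether $\b c$ is (a positive multiple of) the normal of $H_{\b x}$ or not.

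First I would treat the case where $\b c$ is the inward normal to $H_{\b x}$: then $\pol[G] = (\pol\ssm\b x)\cap H_{\b x} = \pol[X]$ itself, and faces of $\pol[X]$ are exactly the faces of $\pol\ssm\b x$ contained in $H_{\b x}$; these give the faces of type (i). In the remaining case, $\pol[G]$ is exposed by a functional $\b c$ that is not the $H_{\b x}$-normal, so $\pol^{\b c}$ is a genuine face of $\pol$. Here I would argue that $\pol[G] = \pol^{\b c}\cap H^-_{\b x} = \pol^{\b c}\ssm\b x$, using that on $\pol\cap H^-_{\b x}$ the functional $\b c$ is maximized on the same vertex set as on $\pol$, minus possibly $\b x$ (which by construction cannot be a $\b c$-maximizer of $\pol\ssm\b x$ unless $\b x$ were retained, a contradiction). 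Concretely: since $H_{\b x}$ passes through all neighbors of $\b x$ in $\pol$ and $\pol$ is at least $3$-dimensional, the vertex set of $\pol\ssm\b x$ is exactly $V(\pol)\setminus\{\b x\}$ — this is the analogue of \cref{lem:VerticesFacesOfPandQ} and is the sole place where the "deep" hypothesis (all neighbors coplanar, no edge of $\pol$ crossing $H_{\b x}$ in its interior) is used. Given that, $\pol[G] = \conv(V(\pol[G])) $ where $V(\pol[G]) = V(\pol^{\b c})\setminus\{\b x\}$, which is precisely $\pol^{\b c}\ssm\b x$ with the stated convention; and when $\b x\notin\pol^{\b c}$ this is simply $\pol^{\b c}$. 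Conversely, every face of $\pol[X]$ and every $\pol[F]\ssm\b x$ for $\pol[F]$ a face of $\pol$ arises this way, so the list is complete.

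I expect the main (and only real) obstacle to be the bookkeeping around the convention $\pol[F]\ssm\b x = \pol[F]$ when $\b x\notin\pol[F]$, and making airtight the claim that $V(\pol\ssm\b x) = V(\pol)\setminus\{\b x\}$: one must check that no vertex of $\pol$ other than $\b x$ is lost (they all lie in $H^-_{\b x}$, being in $H_{\b x}$ or on the far side) and no new vertex is created (this is exactly where deepness enters — the cutting hyperplane does not slice through the relative interior of any edge of $\pol$, so $\pol\cap H_{\b x}$ introduces no vertices beyond the neighbors of $\b x$, which were already vertices of $\pol$). Everything else is a routine transcription of the argument in \cref{lem:OtherFacesOfPandQ}, as the statement of the lemma already signals by saying "the proof is analogous". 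I would keep the write-up short, pointing explicitly to \cref{lem:VerticesFacesOfPandQ,lem:OtherFacesOfPandQ} for the template.

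\begin{proof}
By definition of deep truncation, $\pol\ssm\b x = \pol\cap H^-_{\b x}$, and $H_{\b x}$ contains every vertex of $\pol$ adjacent to $\b x$. Since $\dim\pol\geq 3$ and $H_{\b x}$ separates $\b x$ from all other vertices of $\pol$ while meeting $\pol$ exactly in the convex hull of the neighbors of $\b x$, no edge of $\pol$ crosses $H_{\b x}$ in its relative interior; hence $\pol\cap H_{\b x}$ creates no new vertices, and $V(\pol\ssm\b x) = V(\pol)\setminus\{\b x\}$ (this is the analogue of \cref{lem:VerticesFacesOfPandQ}).

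Let $\pol[G]$ be a proper face of $\pol\ssm\b x$, exposed by a linear functional $\b c$. If $\b c$ is a positive multiple of the inward normal of $H_{\b x}$, then $\pol[G] = (\pol\ssm\b x)\cap H_{\b x} = \pol[X]$, and conversely every face of $\pol\ssm\b x$ contained in $H_{\b x}$ is a face of $\pol[X]$; these are the faces of type (i). Otherwise, $\b c$ is maximized on $\pol\ssm\b x$ at the same subset of vertices as on $\pol$ with $\b x$ removed, so $V(\pol[G]) = V(\pol^{\b c})\setminus\{\b x\}$; thus $\pol[G] = \conv\bigl(V(\pol^{\b c})\setminus\{\b x\}\bigr) = \pol^{\b c}\ssm\b x$, which equals $\pol^{\b c}$ when $\b x\notin\pol^{\b c}$. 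Conversely, for any proper face $\pol[F]$ of $\pol$ exposed by $\b c$, the face of $\pol\ssm\b x$ exposed by $\b c$ is $\pol[F]\ssm\b x$. This yields the faces of type (ii), and the two families together exhaust the proper faces of $\pol\ssm\b x$. The argument is entirely parallel to that of \cref{lem:OtherFacesOfPandQ}.
\end{proof}
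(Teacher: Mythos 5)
Your route is the one the paper intends (it just points to \cref{lem:OtherFacesOfPandQ} as the template), but your case split is based on the wrong criterion, and the second branch contains a false step. You split according to whether the exposing functional $\b c$ is a positive multiple of the normal of $H_{\b x}$; however, the normal cone of the vertex $\b x$ in $\pol$ is full-dimensional, so there are many functionals $\b c$, not proportional to that normal, with $\pol^{\b c}=\{\b x\}$. Such a $\b c$ falls into your ``otherwise'' branch, where you assert that $\b c$ is maximized on $\pol\ssm\b x$ exactly at $\verts(\pol^{\b c})\setminus\{\b x\}$ — here the empty set — whereas the face of $\pol\ssm\b x$ exposed by $\b c$ is a nonempty face of $\pol[X]$ (for instance, the new edges of $\pol[X]$ coming from non-triangular $2$-faces of $\pol$ through $\b x$ are exposed \emph{only} by such functionals). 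So the identity $\verts(\pol[G])=\verts(\pol^{\b c})\setminus\{\b x\}$ fails precisely on the faces your branch (i) does not reach (your functional-based case (i) only produces $\pol[X]$ itself), and as written the two cases do not exhaust the proper faces of $\pol\ssm\b x$; the dichotomy ``$\b c$ parallel to the normal of $H_{\b x}$ or not'' is simply not the same as ``$\pol[G]$ contained in $H_{\b x}$ or not''.

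The repair is local and is exactly the dichotomy of \cref{lem:OtherFacesOfPandQ}: split on whether the face $\pol[G]$ of $\pol\ssm\b x=\pol\cap H^-_{\b x}$ is contained in $H_{\b x}$. If $\pol[G]\subseteq H_{\b x}$, then $\pol[G]$ is a face of the face $\pol[X]=(\pol\ssm\b x)\cap H_{\b x}$, giving type (i). If not, then for any $\b c$ exposing $\pol[G]$ one has $\max_{\pol\ssm\b x}\b c=\max_{\pol}\b c$ (otherwise every maximizer of $\b c$ on $\pol$ lies strictly on the $\b x$-side of $H_{\b x}$, forcing $\pol^{\b c}=\{\b x\}$, and a segment argument then puts $\pol[G]$ inside $H_{\b x}$, a contradiction); hence $\pol[G]=\pol^{\b c}\cap H^-_{\b x}$, which by the vertex description $\verts(\pol\ssm\b x)=\verts(\pol)\setminus\{\b x\}$ equals $\pol^{\b c}\ssm\b x$, giving type (ii). A minor additional point: your justification of $\verts(\pol\ssm\b x)=\verts(\pol)\setminus\{\b x\}$ reads circularly, since ``$H_{\b x}$ separates $\b x$ from all other vertices and meets $\pol$ in the convex hull of the neighbors'' is itself a consequence of the coplanarity hypothesis rather than part of it; it is cleaner to invoke the equivalence $\conv\bigl(\verts(\pol)\setminus\{\b x\}\bigr)=\pol\cap H^-_{\b x}$ already recorded in the definition (proved as in \cref{lem:VerticesFacesOfPandQ}). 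With these two adjustments the proof is correct and matches the paper's intended argument.
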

%
%\begin{proof}
%	As $H_{\b x}$ goes through all the neighbors of $\b x$, the description of the faces follows directly.
%\end{proof}

In particular:
\begin{compactitem}
	\item the vertices of $\pol\ssm\b x$ are the vertices of $\pol$ except $\b x$;
	\item the edges $\pol\ssm \b x$ are the edges of $\pol$ that do not contain $\b x$, and the edges $[\b u, \b u']$ where $\b u, \b u'$ are vertices of $\pol$ adjacent to $\b x$ for which there is a 2-face of $\pol$ containing $\b u, \b u', \b x$;
	\item the 2-faces of $\pol\ssm\b x$ are the 2-faces of $\pol$ not containing $\b x$, the 2-faces of $\pol[X]$, and the 2-faces obtained by removing $\b x$ from a (non-triangular) 2-face of $\pol$ which contains it.
\end{compactitem}

%Hence, the cycle equations of $\pol\ssm\b x$ are a bit more difficult to handle than the one of a stacking of vertex (see \Cref{ssec:StackingOnParallelogramicZonotopes}), because some cycles of $\pol$ disappear when we truncate a vertex, some appear, and some are modified, whereas for a stacking, we only had cycles appearing.
Concerning the cycle equations of $\pol\ssm\b x$, note that some cycles of $\pol$ disappear when we truncate a vertex, some appear, and some are modified.
%However, in some cases the modified cycles give rise to new interesting dependencies between sets of edges.
In particular, note that if $\pol$ has a 2-face $\pol[F]$ with $\b x\in\pol[F]$ which is a quadrangle with vertices $\b x$, $\b u$, $\b w$, $\b u'$ in cyclic order, then $\pol[F]\ssm \b x$ is a triangle, so the three edges $[\b u, \b u']$, $[\b u, \b w]$ and $[\b u', \b w]$ are dependent in $\pol\ssm \b x$.
We will apply that to the case of parallelogramic zonotopes, but we first need to recall some property on the edge-classes of such zonotopes.

\begin{definition}
	For a collection of vectors $V$ and $\b v\in V$, we denote by \defn{$V^{\downarrow\b v}$} be the vector configuration obtained by projecting the vectors $\b u\in V$ with $\b u\ne \b v$ orthogonally along the direction spanned by $\b v$.
	% For a zonotope $\polZ(V)$, the \defn{zone} of $\b v\in V$ is the collection of all the faces $\polF$ of $\polZ$ which contain and edge parallel to  $\b v$, \ie $C_{\b v} \cap E(\polF) \ne \emptyset$.
	% Besides, let \defn{$ED_{\b v}$} be the graph whose node set is $C_{\b v}$ and where two edges are linked if they are opposite in a parallelogram of $\polZ(V)$.
	\end{definition}

We denote by \defn{$ED_{\b v}$} the graph whose node set is $C_{\b v}$ and where two edges are linked if they are opposite in a parallelogram of $\polZ(V)$. Recall that a graph $G$ is $k$-connected if the graph $G$ cannot be disconnected by the deletion of any set of $k-1$ vertices or less.

\begin{theorem}\label{thm:ConnectivityOfEdgeClasses}
For a parallelogramic zonotope $\polZ(V)$ and any vector $\b v\in V$, the graph $ED_{\b v}$ is a \linebreak$(\dim\polZ(V) - 1)$-connected graph.
\end{theorem}

\begin{proof}
	By \cite[Cor. 7.17]{Ziegler-polytopes}, the face lattice of the zone associated to $\b v$ is isomorphic to the face lattice of $\polZ(V^{\downarrow\b u})$.
	In particular, $ED_{\b v}$ is the 1-incidence graph of the zone, which is isomorphic to the 1-skeleton of $\polZ(V^{\downarrow \b u})$.
	By Balinski's theorem (see \cite[Section 3.5]{Ziegler-polytopes}), the latter is $(\dim\polZ(V^{\downarrow \b u}))$-connected.
\end{proof}

\begin{lemma}\label{lem:deepmultitruncation}
	If $\c X$ is a collection of vertices of $\pol$ such that no two vertices are adjacent and every $\b x\in \c X$ admits a deep truncation, then $(\dots((\pol\ssm\b x_1)\ssm \b x_2)\ssm\dots\ssm \b x_{r-1})\ssm\b x_r$ does not depend on the ordering $\c X = \{\b x_1, \dots, \b x_r\}$, and we denote \defn{$\pol\ssm\c X$} this deep multi-truncation.
\end{lemma}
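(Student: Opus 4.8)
The statement is a commutativity/confluence fact: performing deep truncations at a collection $\c X$ of pairwise non-adjacent vertices yields the same polytope regardless of the order. The natural strategy is to first reduce to the case $r=2$ and then argue that the general case follows by a standard diamond-lemma style induction, so the only real content is showing that for two non-adjacent vertices $\b x, \b y$ each admitting a deep truncation, we have $(\pol\ssm\b x)\ssm\b y = (\pol\ssm\b y)\ssm\b x$, \emph{and} that $\b y$ still admits a deep truncation in $\pol\ssm\b x$ (so that the expression even makes sense). The key tool I would use is the hyperplane description $\pol\ssm\b x = \pol\cap H^-_{\b x}$ together with the description of faces of a deep truncation recorded just before the lemma.

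\textbf{Step 1: the two-vertex case via hyperplane descriptions.} Since $\b x$ and $\b y$ are not adjacent, $\b y$ is a vertex of $\pol\ssm\b x$ (the vertices of $\pol\ssm\b x$ are exactly the vertices of $\pol$ other than $\b x$), and none of the neighbors of $\b y$ in $\pol$ is equal to $\b x$ or is destroyed by the truncation at $\b x$; I would check that the neighbors of $\b y$ in $\pol\ssm\b x$ are precisely the neighbors of $\b y$ in $\pol$. Indeed, truncating at $\b x$ can only add edges incident to the new vertices $\b u$ adjacent to $\b x$, and can remove the edge $\b x\b u$; since $\b y\ne\b x$ and $\b y$ is not adjacent to $\b x$, the star of $\b y$ is untouched. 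Hence all neighbors of $\b y$ still lie on the same hyperplane $H_{\b y}$, so $\b y$ admits a deep truncation in $\pol\ssm\b x$ with the \emph{same} separating hyperplane $H_{\b y}$. Then
\[
(\pol\ssm\b x)\ssm\b y = (\pol\cap H^-_{\b x})\cap H^-_{\b y} = \pol\cap H^-_{\b x}\cap H^-_{\b y},
\]
which is visibly symmetric in $\b x$ and $\b y$. (One small point to verify: that $H_{\b x}$ is still the separating hyperplane for $\b x$ inside $\pol$, i.e.\ that $H_{\b x}$ and $H_{\b y}$ genuinely ``see'' $\b x$ resp.\ $\b y$ on their positive side and all other vertices weakly on the negative side; this is immediate from the definition of deep truncation since $\b x,\b y$ and their neighbors are all vertices of $\pol$, hence lie in $H^-_{\b x}\cap H^-_{\b y}$ except for $\b x\in H^+_{\b x}\setminus H_{\b x}$ and $\b y\in H^+_{\b y}\setminus H_{\b y}$, using that $\b x\notin H_{\b y}$ because $\b x$ is not a neighbor of $\b y$, and symmetrically.)

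\textbf{Step 2: the general case.} With Step 1 in hand, I would induct on $r=|\c X|$. Given two orderings of $\c X$, it suffices (by the usual fact that the symmetric group is generated by adjacent transpositions, applied to sequences of truncations) to swap two consecutive truncations $\ssm\b x_i\ssm\b x_{i+1}$; by Step 1 applied inside the polytope $\pol' := (\cdots(\pol\ssm\b x_1)\cdots)\ssm\b x_{i-1}$ — in which $\b x_i$ and $\b x_{i+1}$ are still non-adjacent vertices admitting deep truncations by the same star-is-untouched argument (each $\b x_j$ is non-adjacent to $\b x_i,\b x_{i+1}$ in $\pol$, hence its truncation does not alter their stars) — the swap leaves the result unchanged. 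One must also check that the total list of separating hyperplanes used is the same $\{H_{\b x} : \b x\in\c X\}$ regardless of order, which again follows from Step 1. Hence all orderings give $\pol\cap\bigcap_{\b x\in\c X}H^-_{\b x}$, proving independence of the ordering and defining $\pol\ssm\c X$.

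\textbf{Main obstacle.} The only delicate point is the bookkeeping in Step 1 showing that after truncating at $\b x$, the vertex $\b y$ still has exactly its old neighborhood and that the associated hyperplane $H_{\b y}$ is unchanged — this is what makes the iterated truncation well-defined at each stage and is exactly where the non-adjacency hypothesis is used. Once this local invariance of stars (away from $\c X$) is established, everything reduces to the trivially symmetric intersection $\pol\cap\bigcap_{\b x\in\c X}H^-_{\b x}$, and the confluence argument is routine.
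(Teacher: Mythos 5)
Your proof is correct and takes essentially the same route as the paper: you establish the two-vertex swap by noting that non-adjacency leaves the star of $\b y$ (and hence its hyperplane $H_{\b y}$) untouched by the truncation at $\b x$, so both orders yield $\pol\cap H^-_{\b x}\cap H^-_{\b y}$, and then conclude by transpositions. The paper's proof is exactly this two-vertex argument, with the reduction of the general case to adjacent swaps left implicit, which you merely spell out more explicitly.
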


\begin{proof}
	Fix $\pol$ with two vertices $\b x$, $\b y$ that admit a deep truncation.
	If $\b x$ and $\b y$ are not adjacent, then the neighbors of $\b y$ in $\pol\ssm \b x$ are the neighbors of $\b y$ in $\pol$.
	Hence, $\b y$ admits a deep truncation in $\pol\ssm\b x$, and $(\pol\ssm\b x)\ssm\b y = (\pol\ssm\b x)\cap H_{\b y}^- = \pol\cap H_{\b x}^- \cap H_{\b y}^-$.
	The same holds by exchanging the roles of $\b x$ and $\b y$, yielding $(\pol\ssm\b x)\ssm\b y = (\pol\ssm\b y)\ssm\b x$.
	This gives the claim.
\end{proof}

\begin{definition}
For a zonotope $\polZ(V)$ and a vertex $\b x$, an edge-class $C_{\b v}$ is \defn{incident around $\b x$} if there exists a vertex $\b w$ of $\polZ$ adjacent to $\b x$ such that $\b x\b w\in C_{\b v}$.

For a subset $\c X$ of vertices of $\polZ(V)$, we call \defn{$\Omega_{V, \c X}$} the graph whose nodes are the edge-classes of $\polZ(V)$, and where two edge-classes are linked by an arc if there exists $\b x\in \c X$ such that both edge-classes are incident around $\b x$, see \Cref{fig:DeepTruncations} and \Cref{fig:SmallDimPandQ}.
% (where the reader is invited to draw $\Omega_{V, \c X}$ himself or herself).

For an edge-class $C_{\b v}$, we denote \defn{$\c X_{\b v}$} the set of vertices $\b x\in \c X$ which are adjacent to some edge $\pol[e]\in C_{\b v}$.
\end{definition}

We will use \Cref{thm:DCdimBoundedByNumberOfDependentSets} to control the dimension of $\DefoCone[\polZ(V) \ssm \c X]$, under suitable conditions on $\c X$.

% \begin{definition}
	% A polytope is \defn{equilateral} is all its edges have the same length.
	% In particular, for a collection of vectors $V$ where no two vectors have the same direction, if $||\b v|| = ||\b v'||$ for all $\b v, \b v'\in V$, then $\polZ(V)$ is equilateral.\germain{Do we want to make it a clear lemma with proof?}
	% \end{definition}

\begin{theorem}\label{thm:ZonotopeWithDistinguishedSetOfVertices}
	Let $\polZ(V)$ be a parallelogramic zonotope, and $\c X$ a subset of its vertices, such that each of them admits a deep truncation.
	If $\c X$ is a stable in $\c G(\polZ)$ (\ie no two vertices of $\c X$ are linked by an edge of $\polZ(V)$), and $|\c X_{\b v}| \leq \dim\polZ(V) - 2$ for all $\b v\in V$, then $\dim\DefoCone[{\polZ(V)\ssm\c X}]$ is smaller than the number of connected components of $\Omega_{V, \c X}$.
	
	In particular, if $\Omega_{V, \c X}$ is connected, then $\polZ(V)\ssm \c X$ is an indecomposable polytope.
\end{theorem}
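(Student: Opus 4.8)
The plan is to apply \Cref{thm:DCdimBoundedByNumberOfDependentSets} with one dependent set of implicit edges per connected component of $\Omega_{V, \c X}$. First I would record the combinatorial structure of $\polZ(V)\ssm\c X$ coming from the lemma on faces of a deep multi-truncation (using \Cref{lem:deepmultitruncation} to make sense of $\polZ(V)\ssm\c X$): its edges are the edges of $\polZ(V)$ avoiding $\c X$, together with the new ``diagonal'' edges $[\b u, \b u']$ created whenever a parallelogram $2$-face of $\polZ(V)$ with vertices $\b x, \b u, \b w, \b u'$ (in cyclic order) has $\b x\in \c X$; and its $2$-faces are the surviving $2$-faces, the faces of the distinguished facets $\pol[X]$ for $\b x\in\c X$, and the triangles obtained by removing a truncated vertex from one of the aforementioned parallelograms.

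The core of the argument is to upgrade the connectivity of the graphs $ED_{\b v}$ to connectivity \emph{inside} $\ELD[\polZ(V)\ssm\c X]$. For a fixed $\b v\in V$, the edge-class $C_{\b v}$ of $\polZ(V)$ loses the edges incident to vertices in $\c X_{\b v}$; but the graph $ED_{\b v}$ is $(\dim\polZ(V)-1)$-connected by \Cref{thm:ConnectivityOfEdgeClasses}, so removing the at most $|\c X_{\b v}| \le \dim\polZ(V)-2$ nodes corresponding to those edges leaves $ED_{\b v}$ connected (this is exactly where the hypothesis $|\c X_{\b v}|\le \dim\polZ(V)-2$ is used; each truncated vertex is incident to at most one edge of $C_{\b v}$, so it deletes at most one node). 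Since each arc of $ED_{\b v}$ records two edges of $C_{\b v}$ opposite in a parallelogram $2$-face of $\polZ(V)$ — and such a $2$-face either survives intact in $\polZ(V)\ssm\c X$ (still a parallelogram, still giving a dependency by \Cref{ex:parallelogram}) or has one vertex truncated, in which case the two surviving $C_{\b v}$-edges together with the new diagonal edge form a triangle $2$-face, which is indecomposable by \Cref{ex:triangle}, so those edges are again dependent by \Cref{cor:indecsubframeclique} — every arc of the restricted $ED_{\b v}$ yields a dependency in $\ELD[\polZ(V)\ssm\c X]$. Hence all the surviving edges in $C_{\b v}$ form a dependent set $X_{\b v}$ of (implicit) edges.

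Next I would merge these dependent sets along the arcs of $\Omega_{V, \c X}$: if edge-classes $C_{\b v}$ and $C_{\b w}$ are both incident around some $\b x\in\c X$, then picking adjacent edges $\b x\b u\in C_{\b v}$ and $\b x\b u'\in C_{\b w}$, the parallelogram $2$-face of $\polZ(V)$ through $\b x, \b u, \b u'$ becomes a triangle in $\polZ(V)\ssm\c X$, so the new diagonal edge $[\b u,\b u']$ is dependent with one edge of $X_{\b v}$ and with one edge of $X_{\b w}$; this glues the two dependent classes (together with all such diagonals) into a single dependent set of implicit edges. Carrying this out for one connected component of $\Omega_{V, \c X}$ at a time produces dependent sets $Y_1,\dots,Y_s$ of implicit edges, where $s$ is the number of connected components of $\Omega_{V, \c X}$; I then check that every two vertices of $\polZ(V)\ssm\c X$ are joined by a path of edges in $\bigcup_j Y_j$ — every vertex is an endpoint of some surviving $C_{\b v}$-edge (a vertex of $\polZ(V)$ has positive in- and out-degree in an orientation for every represented class, and $\c X$ is stable so at least one incident edge of some class survives), and the $1$-skeleton connectivity of $\polZ(V)\ssm\c X$ (Balinski), combined with the fact that each surviving $2$-face relates its edges within the $Y_j$'s, propagates this to a connected spanning subgraph. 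Then \Cref{thm:DCdimBoundedByNumberOfDependentSets} gives $\dim\DefoCone[\polZ(V)\ssm\c X]\le s$, and when $\Omega_{V,\c X}$ is connected, $s=1$, so $\polZ(V)\ssm\c X$ is indecomposable.

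The main obstacle I anticipate is the bookkeeping in the gluing step: ensuring that the diagonal edges genuinely land in the same dependent class as the two $C_{\b v}$- and $C_{\b w}$-edges it touches, and more delicately, verifying that the resulting $Y_1,\dots, Y_s$ are \emph{disjoint} (a diagonal edge created at $\b x$ could a priori also be a diagonal for another truncated vertex, or be parallel to yet another class), which is what \Cref{thm:DCdimBoundedByNumberOfDependentSets} requires. This should follow from the parallelogramic hypothesis (no three vectors of $V$ coplanar forces each $2$-face through a truncated $\b x$ to be a genuine parallelogram with a well-defined ``opposite'' structure, and pins down the direction of each diagonal), but it needs to be argued carefully, together with the spanning/connectivity claim for the union $\bigcup_j Y_j$.
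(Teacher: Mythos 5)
Your overall strategy is the paper's: keep the surviving parallelogram dependencies within each edge-class $C_{\b v}$, use \Cref{thm:ConnectivityOfEdgeClasses} plus $|\c X_{\b v}|\leq \dim\polZ(V)-2$ to keep each class connected after deleting the nodes at truncated vertices, glue classes across a truncated vertex via the triangles created by the truncation, and conclude with \Cref{thm:DCdimBoundedByNumberOfDependentSets}. However, your gluing step has a genuine gap: you pick $\b x\b u\in C_{\b v}$ and $\b x\b u'\in C_{\b w}$ and invoke ``the parallelogram $2$-face of $\polZ(V)$ through $\b x, \b u, \b u'$'', but two edges incident to a common vertex of a zonotope need not span a $2$-face. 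For instance, in $\ZGKnm[2][2]$ the vertex $\b v_{2\to 2}$ has four incident edges but its vertex figure is a square, so two of the six pairs of incident edges lie in no common $2$-face; the arcs of $\Omega_{V,\c X}$ are defined for \emph{any} two classes incident around the same $\b x\in\c X$, so your argument only establishes a strict subgraph of the dependencies you need. The paper closes exactly this gap: it first glues $C_{\b v}\ssm\c X_{\b v}$ and $C_{\b v'}\ssm\c X_{\b v'}$ only when the corresponding neighbors $\b w,\b w'$ of $\b x$ span an edge of the distinguished facet $\pol[X]$ (i.e.\ genuinely share a parallelogram with $\b x$, which truncation turns into a triangle), and then removes that restriction by chaining through intermediate classes, using that the graph of $\pol[X]$, hence its line graph, is connected.

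A second, smaller point: by putting the new diagonal edges $[\b u,\b u']$ into your dependent sets $Y_1,\dots,Y_s$ you create the disjointness issue you flag at the end, since \Cref{thm:DCdimBoundedByNumberOfDependentSets} asks for disjoint sets. The paper avoids this entirely: its dependent sets are just the unions of the $C_{\b v}\ssm\c X_{\b v}$ over the classes in each connected component of $\Omega_{V,\c X}$ (the gluing triangles are used only to certify dependency between the two surviving parallelogram sides, which already belong to the two classes), and these unions are automatically disjoint because the edge-classes partition the surviving edges by direction. With the gluing step repaired as above and the diagonals dropped from the $Y_j$'s, your argument matches the paper's proof; your spanning-path claim for $\bigcup_{\b v} (C_{\b v}\ssm\c X_{\b v})$ is also what the paper asserts (every edge of $\polZ(V)\ssm\c X$ avoiding the new facets lies in some class, and every vertex is met), so that part is fine.
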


\begin{proof}
	As $\c X$ is a stable in $\c G(\polZ(V))$ and each $\b v\in \c X$ admits a deep truncation, the deep multi-truncation $\polZ(V)\ssm\c X$ is well-defined by \Cref{lem:deepmultitruncation}.
	
	For every edge-class $C_{\b v}$, if $\pol[e], \pol[f]\in C_{\b v}$ are opposite in a parallelogram and $\pol[e], \pol[f]$ do not contain any $\b x\in \c X$, then $\pol[e], \pol[f]$ are still edges in $\polZ(V)\ssm\c X$ and they are dependent because this parallelogram is still a 2-face of $\polZ(V)\ssm\c X$.
	Hence, the graph obtained from $ED_{\b v}$ by removing $\c X_{\b v}$ is a subgraph of $\ELD[{\polZ(V)\ssm\c X}]$.
	By \Cref{thm:ConnectivityOfEdgeClasses}, this deleted graph is connected: $ED_{\b v}$ is $(\dim\polZ(V)-1)$-connected and $|\c X_{\b v}| < \dim\polZ(V) - 1$.
	
	Besides, as all 2-faces of $\polZ(V)$ are parallelograms, the deep multi-truncation turns certain 2-faces into triangles.
	Let $\b w$, $\b w'$ be two vertices adjacent to some $\b x\in \c X$.
	If $\b x$, $\b w$ and $\b w'$ lie in a common 2-face (\ie if $\b w\b w'$ is an edge of the facet $\pol[X]$ of $\polZ(V)\ssm\c X$), let $\b y$ be the last vertex of this parallelogram: the edges $\b y\b w$ and $\b y\b w'$ are dependent in $\polZ(V)\ssm \c X$ because they form a triangle together with the edge $\b w\b w'$.
	Hence, if two edge-classes $C_{\b v}$ and $C_{\b v'}$ satisfy that there exist $\b w$ and $\b w'$ adjacent to $\b x$ with $\b x\b w\in C_{\b v}$, $\b x\b w'\in C_{\b v'}$, and $\b w\b w'$ is an edge of $\pol[X]$, then $(C_{\b v} \ssm \c X_{\b v})\cup (C_{\b v'} \ssm \c X_{\b v'})$ is dependent.
	Furthermore, as $\c G(\pol[X])$ is connected, so is its line graph, consequently, we can remove the condition ``and $\b w\b w'$ is an edge of $\pol[X]$'' from the previous sentence: if $C_{\b v}$ and $C_{\b v'}$ are incident around some $\b x\in \c X$, then $(C_{\b v} \ssm \c X_{\b v})\cup (C_{\b v'} \ssm \c X_{\b v'})$ is dependent.
	
	Finally, any 2 vertices of $\polZ(V)\ssm\c X$ are connected in $\c G(\polZ(V)\ssm\c X)$ by a path of edges in the union of all $C_{\b v}\ssm\c X_{\b v}$.
	By \Cref{thm:DCdimBoundedByNumberOfDependentSets}, $\dim\DefoCone[{\polZ\ssm\c X}]$ is less than the number of connected components of $\Omega_{V, \c X}$.
\end{proof}

\begin{corollary}\label{cor:ZonotopeWithDistinguishedVertex}
	Let $\polZ(V)$ be a parallelogramic zonotope.
	If there exists a vertex $\b x_\circ$ of $\polZ$ such that all its neighbors are on a common hyperplane, and all the edges-classes of $\polZ(V)$ are incident\footnote{Equivalently, if all its neighbors are on a common hyperplane, and $\polZ(V)$ is a translation of $\sum_{\b x_\circ\b w\in E(\polZ(V))} \,[\b x_\circ, \b w]$.} around $\b x_\circ$, then $\polZ(V)\ssm \b x_\circ$ is indecomposable.
	
	Moreover, $\polZ(V)\ssm\c X$ is indecomposable for any subset $\c X$ of vertices of $\polZ(V)$ with $\b x_\circ\in \c X$, and $|\c X|\leq \dim\polZ(V) - 2$ which is a stable set in $\c G(\polZ(V))$ of vertices admitting deep truncations.
\end{corollary}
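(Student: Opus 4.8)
The plan is to deduce this corollary directly from \Cref{thm:ZonotopeWithDistinguishedSetOfVertices} by verifying that its hypotheses are met. First I would treat the case $\c X = \{\b x_\circ\}$. The vertex $\b x_\circ$ admits a deep truncation by assumption (all its neighbors lie on a common hyperplane $H_{\b x_\circ}$). A single vertex is trivially a stable set in $\c G(\polZ(V))$. The remaining condition to check is that $|\c X_{\b v}| \leq \dim\polZ(V) - 2$ for all $\b v\in V$; since $\c X_{\b v} \subseteq \c X = \{\b x_\circ\}$, we have $|\c X_{\b v}| \leq 1$, which is $\leq \dim\polZ(V) - 2$ as soon as $\dim\polZ(V)\geq 3$ (for $\dim\polZ(V)\leq 2$ the statement is vacuous or trivial, since a parallelogramic zonotope of dimension $\leq 2$ is a segment or a parallelogram, and we should dispatch those small cases separately). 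The hypothesis that \emph{all} edge-classes of $\polZ(V)$ are incident around $\b x_\circ$ means precisely that in the graph $\Omega_{V, \{\b x_\circ\}}$ every node (edge-class) is joined to every other, since any two edge-classes are both incident around the single truncated vertex $\b x_\circ$; hence $\Omega_{V, \{\b x_\circ\}}$ is connected (in fact complete). By the last sentence of \Cref{thm:ZonotopeWithDistinguishedSetOfVertices}, $\polZ(V)\ssm\b x_\circ$ is indecomposable. The footnote equivalence — that all edge-classes being incident around $\b x_\circ$ is the same as $\polZ(V)$ being a translate of $\sum_{\b x_\circ\b w\in E(\polZ(V))}[\b x_\circ,\b w]$ — follows because the edges incident to a vertex of a zonotope represent each zone exactly once and their segments sum (up to translation) to the whole zonotope; I would either cite \cite[Chapter 7.3]{Ziegler-polytopes} or note it follows from \cite[Cor.~7.17]{Ziegler-polytopes} already invoked in the proof of \Cref{thm:ConnectivityOfEdgeClasses}.

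For the ``moreover'' part, let $\c X$ be any subset of vertices of $\polZ(V)$ with $\b x_\circ\in\c X$, with $|\c X|\leq \dim\polZ(V)-2$, consisting of vertices admitting deep truncations and forming a stable set in $\c G(\polZ(V))$. Again I check the hypotheses of \Cref{thm:ZonotopeWithDistinguishedSetOfVertices}: the stability and deep-truncability are assumed, and $|\c X_{\b v}|\leq |\c X|\leq \dim\polZ(V)-2$ for every $\b v\in V$. The only remaining point is the connectedness of $\Omega_{V,\c X}$. But $\Omega_{V,\{\b x_\circ\}}$ is a subgraph of $\Omega_{V,\c X}$ (adding more truncated vertices only adds arcs), and we already argued $\Omega_{V,\{\b x_\circ\}}$ is the complete graph on all edge-classes; hence $\Omega_{V,\c X}$ is connected. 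Applying the last sentence of \Cref{thm:ZonotopeWithDistinguishedSetOfVertices} gives that $\polZ(V)\ssm\c X$ is indecomposable.

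The main obstacle — really the only non-bookkeeping point — is the footnote equivalence and, relatedly, making precise why ``all edge-classes incident around $\b x_\circ$'' forces $\Omega_{V,\{\b x_\circ\}}$ to be complete rather than merely connected. This rests on the structural fact that the edges of a zonotope incident to any fixed vertex hit every zone exactly once (each zone contributes exactly one edge at that vertex), which is standard from the theory of zonotopes and fan of hyperplane arrangements; I would state it cleanly and reference Ziegler. Everything else is a direct verification of the hypotheses of the already-proved \Cref{thm:ZonotopeWithDistinguishedSetOfVertices}, plus a short separate remark handling $\dim\polZ(V)\leq 2$.
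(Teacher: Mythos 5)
Your proposal is correct and follows essentially the same route as the paper: both proofs simply verify the hypotheses of \cref{thm:ZonotopeWithDistinguishedSetOfVertices} (deep truncability, stability, the bound $|\c X_{\b v}|\leq\dim\polZ(V)-2$, and connectedness of $\Omega_{V,\c X}$ via the fact that all edge-classes are incident around $\b x_\circ$). Your additional remarks on the completeness of $\Omega_{V,\{\b x_\circ\}}$, the footnote equivalence, and low dimensions are fine but not needed beyond what the paper's two-line argument already does.
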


\begin{proof}
	% We will apply \Cref{thm:ZonotopeWithDistinguishedSetOfVertices} with $\c X = \{\b x_\circ\}$.
	As all the neighbors of $\b x_\circ$ lie on a common hyperplane, $\b x_\circ$ admits a deep truncation (and $\{\b x_\circ\}$ is stable in $\c G(\polZ(V))$, with $|\{\b x_\circ\}| \leq \dim\polZ(V) - 2$).
	As all the edge-classes are incident around $\b x_\circ$, the graph $\Omega_{V, \{\b x_\circ\}}$ is connected.
	% As $\polZ$ is a translation of $\sum_{\b v\circ\b w\in E(\polZ)} [\b x_\circ, \b w]$, for each edge-class $C_{\b u}$ of $\polZ$, there is a neighbor $\b w$ of $\b x_\circ$ such that $\b x_\circ\b w\in C_{\b u}$.
	% Two edge-classes are adjacent around $\{\b x_\circ\}$ if and only if the corresponding neighbors of $\b x_\circ$ share a common 2-face.
	% Hence, $\Gamma_{\polZ, \{\b x_\circ\}}$ contains the graph of adjacency of the 2-faces of $\polZ$ which contains $\b x_\circ$.
	% This graph is connected (it is the line graph of $\c G(\pol[X])$).
	Hence, $\polZ\ssm\b x_\circ$ is indecomposable according to \Cref{thm:ZonotopeWithDistinguishedSetOfVertices}.
	Moreover, if $\b x_\circ\in \c X$, and $\c X$ is a stable in $\c G(\polZ(V))$ with $|\c X|\leq \dim\polZ(V) - 2$, then \Cref{thm:ZonotopeWithDistinguishedSetOfVertices} still applies.
\end{proof}

This corollary directly implies \Cref{thm:PandQareIndecomposable}.
Moreover, note that $\Qnm[2][2]$ is decomposable: this illustrates that the above \Cref{thm:ZonotopeWithDistinguishedSetOfVertices,cor:ZonotopeWithDistinguishedVertex} are tight, as we truncate 2 vertices of $\ZGKnm[2][2]$ to obtain $\Qnm[2][2]$, which is strictly more than $\dim\ZGKnm[2][2] - 2 = 1$.

We close this section with an example showing that it is easy to construct a zonotope satisfying the hypotheses of the previous \Cref{cor:ZonotopeWithDistinguishedVertex}.

\begin{example}
For a $d$-polytope $\pol\subset\R^d$, let $V_{\pol} = \left\{\binom{1}{\b v} ~;~ \b v\in V(\pol)\right\}$. 
%and $\polZ(V_{\pol}) = \sum_{\b v\in V(\pol)} \left[\b 0, \binom{1}{\b v}\right] \subset\R^{d+1}$.
The zonotope $\polZ(V_{\pol})$ satisfies the hypotheses of \Cref{cor:ZonotopeWithDistinguishedVertex}.
Indeed, let $\b x_\circ = \b 0\in\R^{d+1}$.
Then the neighbors of $\b 0$ are $\binom{1}{\b v}$ for $\b v\in V(\pol)$.
They all lie in a common hyperplane, $\{\b y\in\R^{d+1} ~;~ y_{d+1} = 1\}$, and, by construction, they generate $\polZ(V_{\pol})$. % up to translation.
In addition, the 2-faces of $\polZ(V_{\pol})$ are parallelograms because no three vectors of $V_{\pol}$ lie on a common plane (this would contradict the convexity of $\pol$).

Consequently, by \Cref{cor:ZonotopeWithDistinguishedVertex}, the polytope $\polZ(V_{\pol})\ssm \b 0$ is indecomposable, and so is $\polZ(V_{\pol})\ssm\c X$ for any subset $\c X$ of vertices of $\polZ(V_{\pol})$ with $\b 0\in \c X$ and $|\c X|\leq d - 2$ such that $\c X$ is stable in $\c G(\polZ(V_{\pol}))$.
\end{example}

\subsection{Stacking vertices on parallelogramic sums of indecomposable polytopes}\label{ssec:StackingOnParallelogramicZonotopes}

We now apply similar ideas in order to the study of stacking over sums of indecomposable polytopes in parallelogramic position.
As in \Cref{ssec:DeepTruncationsOfParallelogramicZonotopes}, this will again allow us to construct large families of polytopes which are indecomposable even though they closely resemble $\pol_1 + \dots + \pol_r$, such as parallelogramic zonotopes (in particular, they will have ``few'' triangles, in proportion to their total number of 2-faces).
We want to use the fact that, for such a sum, the graph of edge-dependencies is easy to handle. %, in order to create la large family of indecomposable polytopes.
We focus on stacking vertices of the rest of this section.%, while deleting vertices (\ie deep truncations) has been addressed in \Cref{ssec:DeepTruncationsOfParallelogramicZonotopes}.

\begin{figure}
	\centering
	\includegraphics[width=0.24\linewidth]{Figures/ZonotopeC4.pdf}
	\includegraphics[width=0.24\linewidth]{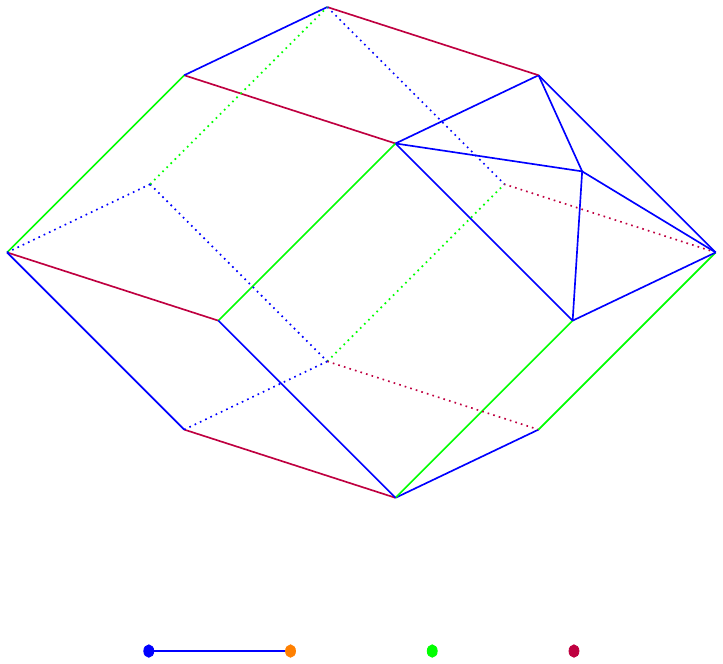}
	\includegraphics[width=0.24\linewidth]{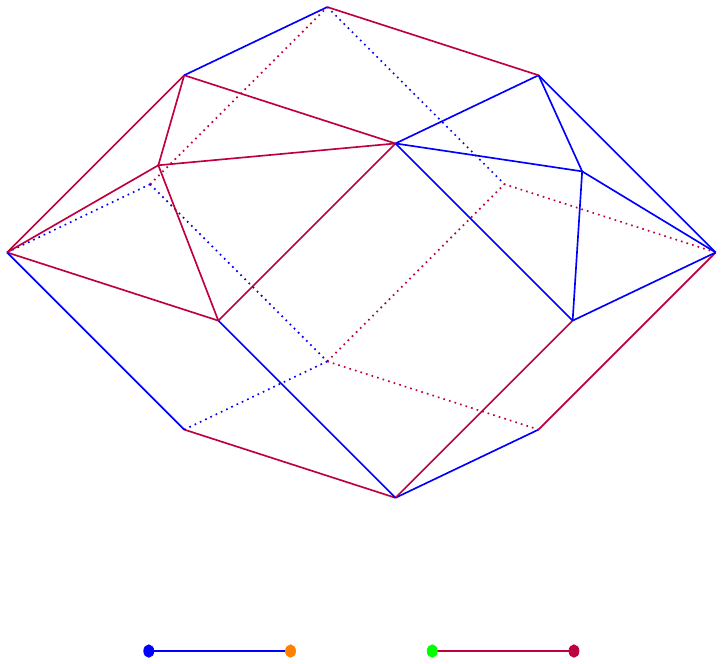}
	\includegraphics[width=0.24\linewidth]{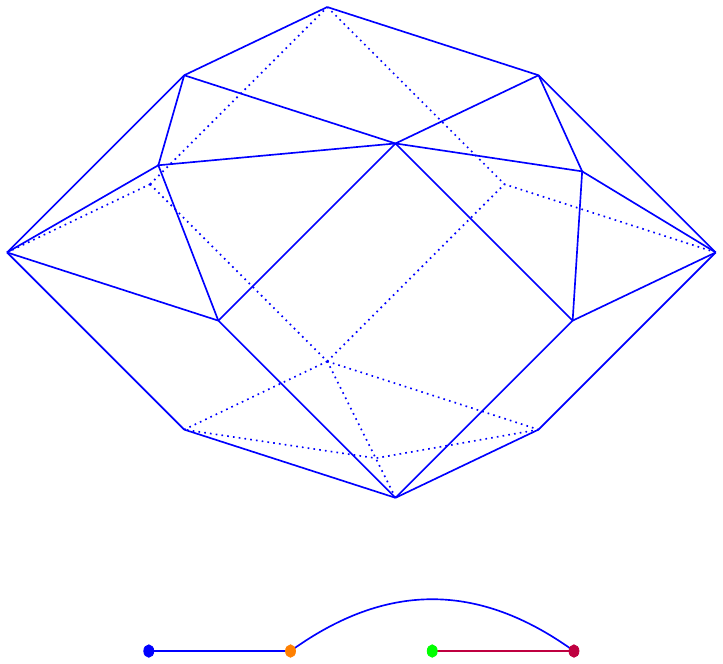}
	\caption[Stacking vertices on a zonotope to improve its indecomposability]{(Left to right) A zonotope with 4 edge-classes and 0, 1, 2 and 3 vertices stacked on its facets.
		(Top) the polytope $\polZ(V)_{\scr F}$;
        (bottom) its graph of stacking $\Gamma_{V}(\scr F)$.
		Edges in the same color are dependent.
		The exact locations of the stacked vertices do not matter: the (in)decomposability is only driven by the combinatorics of edge classes linked to these stacked vertices.
		The number of connected components of $\Gamma_{V}(\scr F)$ is the dimension of the deformation cone $\DefoCone[{\polZ(V)_{\scr F}}]$: the right-most example is indecomposable.}
	\label{fig:ZonotopeStacked}
\end{figure}

\begin{definition}
	Fix a $d$-polytope $\pol$ with $d\geq 3$.
	For a facet $\pol[G]$ of $\pol$, we denote \defn{$H_{\pol[G]}^-$} the closed half-space bounded by $\pol[G]$ and containing $\pol$, and by \defn{$H_{\pol[G]}^+$} the opposite closed half-space.
	
	% $\pol = H^-_{\pol[F]} \cap \bigcap_{\pol[G]\ne\pol[F]} H^-_{\pol[G]}$ where the intersection is taken over the facets of $\pol$ distinct from $\pol[F]$, and $H^-_{\pol[G]}$ is a half-space bounded by $\pol[G]$
	
	A \defn{vertex stacking} on a facet $\pol[F]$ of a polytope $\pol$ is a polytope defined as $\pol_{\pol[F], \b q} = \conv(\pol \cup\{\b q\})$ where $\b q$ is a point chosen ``close to the facet $\pol[F]$ but outside of $\pol$'': \ie $\b q \in \interior\Bigl(H^+_{\pol[F]} \cap \bigcap_{\pol[G]\ne\pol[F]} H^-_{\pol[G]}\Bigr)$.
	
	% write $\pol = H^-_{\pol[F]} \cap \bigcap_{\pol[G]\ne\pol[F]} H^-_{\pol[G]}$ where the intersection is taken over the facets of $\pol$ distinct from $\pol[F]$, and $H^-_{\pol[G]}$ is a half-space bounded by $\pol[G]$; then pick any $\b q$ in the interior of $H^+_{\pol[F]} \cap \bigcap_{\pol[G]\ne\pol[F]} H^-_{\pol[G]}$ where $H^+_{\pol[F]}$ is the other half-space bounded by $\pol[F]$.
	The vertex stacking $\pol_{\pol[F], \b q}$ is combinatorially independent from the choice of $\b q$.
\end{definition}

As we will see now, the choice of $\b q$ does not affect the (in)decomposability of the vertex stacking $\pol_{\pol[F], \b q}$.

\begin{theorem}\label{thm:DecomposabilityVertexStacking}
	For a polytope $\pol$ with $\dim\pol\geq 3$,
	we have $\DefoCone[{\pol_{\polF, \b q}}] \simeq \DefoCone \cap \bigl\{\b\lambda_{\pol[e]} \simeq \b\lambda_{\pol[f]} ~;~ \pol[e], \pol[f] \text{ edges of }\polF\bigr\}$.
	
	Moreover, for $\pol_{\polF, \b q}$, the collection of edges $E(\pol[F])\cup \bigl([\b x, \b q] ~;~ \b x\in V(\pol[F])\bigr)$ is dependent.
\end{theorem}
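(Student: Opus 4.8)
The plan is to describe the vertex stacking $\pol_{\polF,\b q}$ combinatorially via the beyond/beneath theorem, to prove the ``moreover'' clause first since it is the engine of the argument, and then to read the deformation cone directly off the cycle equations. First I would record that, since $\b q$ lies beyond the unique facet $\polF$ of $\pol$ and beneath all other facets, the face lattice of $\pol_{\polF,\b q}=\conv(\pol\cup\{\b q\})$ is obtained from that of $\pol$ by deleting $\polF$, keeping every other face of $\pol$, and adjoining the new faces $\conv(\pol[H]\cup\{\b q\})$ for each proper face $\pol[H]$ of $\polF$. In particular $V(\pol_{\polF,\b q})=V(\pol)\cup\{\b q\}$; the edges are those of $\pol$ together with the new edges $[\b x,\b q]$ for $\b x\in V(\polF)$; and the new $2$-faces are exactly the triangles $\conv(\pol[e]\cup\{\b q\})$ with $\pol[e]$ an edge of $\polF$. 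Crucially, every $2$-face of $\pol$ (including the ones inside $\polF$ when $\dim\polF\ge 3$) survives as a $2$-face of $\pol_{\polF,\b q}$; the only $2$-face of $\pol$ that can be destroyed is $\polF$ itself, and this happens exactly when $\dim\pol=3$.

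For the ``moreover'' clause, fix an edge $\pol[e]=[\b x,\b y]$ of $\polF$. The triangle $\conv(\pol[e]\cup\{\b q\})$ is an indecomposable subframework of $\fw(\pol_{\polF,\b q})$ by \cref{ex:triangle}, so by \cref{cor:indecsubframeclique} the edges $\pol[e]$, $[\b x,\b q]$ and $[\b y,\b q]$ form a clique in $\ELD[\pol_{\polF,\b q}]$. Since $\dim\polF\ge 2$, the $1$-skeleton of $\polF$ is connected, so any two of its edges are joined by a path of edges of $\polF$ whose consecutive members share a vertex $\b x$ and are therefore both dependent with $[\b x,\b q]$; by transitivity, all edges of $E(\polF)$ are pairwise dependent, and each $[\b x,\b q]$ is dependent with them. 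Hence $E(\polF)\cup\{[\b x,\b q]:\b x\in V(\polF)\}$ is a dependent set, and in every deformation of $\pol_{\polF,\b q}$ all these edges carry one common edge-deformation coefficient.

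Now let $\pi\colon\R^{E(\pol_{\polF,\b q})}\to\R^{E(\pol)}$ forget the coordinates of the new edges. If $\b\lambda\in\DefoCone[\pol_{\polF,\b q}]$, then $\pi(\b\lambda)$ satisfies every cycle equation of $\pol$: those of the $2$-faces of $\pol$ other than $\polF$ are literally among the cycle equations of $\pol_{\polF,\b q}$, and, when $\dim\pol=3$, the cycle equation of the polygon $\polF$ follows because $\pi(\b\lambda)$ is constant (say equal to $c$) on $E(\polF)$, so $\sum_i c(\b x_{i+1}-\b x_i)=\b 0$ telescopes. Thus $\pi(\b\lambda)\in\DefoCone[\pol]$, and by the previous paragraph it also satisfies $\lambda_{\pol[e]}=\lambda_{\pol[f]}$ for all $\pol[e],\pol[f]\in E(\polF)$. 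Conversely, let $\b\lambda\in\DefoCone[\pol]$ have constant value $c$ on $E(\polF)$ and extend it to $E(\pol_{\polF,\b q})$ by assigning $c$ to every new edge $[\b x,\b q]$; the only cycle equations of $\pol_{\polF,\b q}$ not already among those of $\pol$ are the triangular ones $c(\b y-\b x)+c(\b q-\b y)+c(\b x-\b q)=\b 0$, which hold trivially, and nonnegativity is preserved since $c\ge 0$. As each $\b x\in V(\polF)$ lies on an edge of $\polF$, \cref{ex:triangle} applied to the triangle over that edge forces the $[\b x,\b q]$-coordinate of any $\pi$-preimage to be $c$, so this extension is the unique preimage. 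Hence $\pi$ restricts to a linear isomorphism from $\DefoCone[\pol_{\polF,\b q}]$ onto $\DefoCone[\pol]\cap\{\lambda_{\pol[e]}=\lambda_{\pol[f]}:\pol[e],\pol[f]\in E(\polF)\}$, which is the claim.

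I expect the only delicate step to be the combinatorial bookkeeping in the first paragraph: one must make sure that the stacking destroys no $2$-face of $\pol$ other than $\polF$ — so that all the relevant cycle equations of $\pol$ remain available in $\pol_{\polF,\b q}$ — and that it introduces no edges or $2$-faces beyond the new edges $[\b x,\b q]$ and the triangles over the edges of $\polF$. Once this is secured via the beyond/beneath description, both halves of the statement follow from the short computations above, the second half being exactly the mechanism that powers the first.
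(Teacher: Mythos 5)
Your proof is correct and follows essentially the same route as the paper's: identify the edges and $2$-faces of $\pol_{\polF,\b q}$, use the new triangles over the edges of $\polF$ plus connectivity of the $1$-skeleton of $\polF$ to get the dependent set, and then compare cycle equations to identify $\DefoCone[{\pol_{\polF,\b q}}]$ with $\DefoCone\cap\bigl\{\lambda_{\pol[e]}=\lambda_{\pol[f]}\bigr\}$. Your extra care with the beyond/beneath bookkeeping and the $\dim\pol=3$ case (where $\polF$ disappears as a $2$-face) is sound, though the forward direction is even more immediate since every cycle of the graph of $\pol$ is still a cycle of the graph of the stacking.
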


\begin{proof}
	The edges of $\pol_{\pol[F], \b q}$ are the edges of $\pol$ and the segments $[\b x, \b q]$ for $\b x$ a vertex of $\pol[F]$.
	The cycles of $\pol_{\pol[F], \b q}$ are the cycles of $\pol$, and the triangles $\conv(\pol[e]\cup\{\b q\})$ for $\pol[e]$ and edge of $\pol[F]$.
	Hence the cycle equations of $\pol_{\pol[F], \b q}$ are the one of $\pol$, along with $\b\lambda_{\b u\b q} = \b\lambda_{\b x\b q} = \b\lambda_{\b u\b x}$ for the edges $[\b u, \b x]$ of $\pol[F]$.
	
	Note that the following graph is connected: its nodes are the edges $\pol[e]$ of $\pol[F]$ and $[\b x, \b q]$ for $\b x$ a vertex of $\pol[F]$, and two nodes are linked by an arc if they belong to a triangular face $\conv(\pol[e]\cup\{\b q\})$ for $\pol[e]$ and edge of $\pol[F]$ (it is the 1-incidence graph of the pyramid over $\pol[F]$, without the arcs coming from $\pol[F]$).
	Consequently, the collection of edges $E(\pol[F])\cup \bigl([\b x, \b q] ~;~ \b x\in V(\pol[F])\bigr)$ is dependent.
	
	Hence, we have $\b\lambda_{\pol[e]} = \b\lambda_{\pol[f]}$ for any $\pol[e], \pol[f]$ edges of $\polF$.
	By \Cref{lem:edge-deformations-frameworks}, this gives the claim on $\DefoCone[{\pol_{\polF, \b q}}]$.
\end{proof}

\begin{definition}
For $\pol_1, \dots, \pol_r$ indecomposable polytopes in parallelogramic position and $\scr F$ a collection of facets of $\pol_1 + \dots + \pol_r$, let \defn{$\Gamma_{\pol_1, \dots, \pol_r}(\scr F)$} be the \defn{graph of stacking} whose set of nodes is $[r]$, and where there is an arc between $i$ and $j$ if there is some facet $\polF\in \scr F$ containing an edge parallel to an edge of $\pol_i$ and an edge parallel to an edge of $\pol_j$, \ie there exist edges $\pol[e]\subseteq \pol_i$ and $\pol[f]\subseteq \pol_j$ such that $\polF \cap C_{\pol[e]} \ne\emptyset$ and $\polF \cap C_{\pol[f]} \ne\emptyset$.

We denote \defn{$(\pol_1 + \dots + \pol_r)_{\scr F}$} the (combinatorial) polytope obtained stacking vertices on the facet $\polF$ of the polytope $\pol_1 + \dots + \pol_r$ for each $\polF\in \scr F$.
\end{definition}

\begin{theorem}\label{thm:StackingParallelogramicPosition}
Let $\pol_1, \dots, \pol_r$ be indecomposable polytopes in parallelogramic position.
Let $\scr F$ be a subset of its facets.
The deformation cone $\DefoCone[{(\pol_1+\dots+\pol_r)_{\scr F}}]$ is a simplicial cone whose dimension is the number of connected components of $\Gamma_{\pol_1, \dots, \pol_r}(\scr F)$.
\end{theorem}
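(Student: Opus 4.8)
The plan is to combine the two pillars we have already built: Theorem~\ref{thm:GeneralParallelogramicPositionDC} (the deformation cone of a parallelogramic sum splits as a product, and its edge-dependency graph is the lift of $\ELD[\pol_1]\cup\dots\cup\ELD[\pol_r]$ along the edge-class partition) and Theorem~\ref{thm:DecomposabilityVertexStacking} together with \Cref{thm:DCdimBoundedByNumberOfDependentSets} / \Cref{cor:NiceConnectedComponentImplySimplicialCone} (stacking a vertex on a facet $\polF$ forces $E(\polF)\cup\{[\b x,\b q]:\b x\in V(\polF)\}$ to be a dependent set, and hence collapses $\ELD$). The strategy is to identify the connected components of $\ELD[{(\pol_1+\dots+\pol_r)_{\scr F}}]$ with the connected components of $\Gamma_{\pol_1,\dots,\pol_r}(\scr F)$, and then show the inclusion of \Cref{prop:DCasSubCone} is an equality by exhibiting a partition of the edge set into autonomous dependent sets indexed by those components.

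First I would set $\pol = \pol_1+\dots+\pol_r$ and describe the edges of $\pol_{\scr F}$ exactly as in the proof of \Cref{thm:DecomposabilityVertexStacking}: they are the edges of $\pol$ (which, since the $\pol_i$ are in parallelogramic position, fall into the edge classes $C_{\pol[e]}$ for $\pol[e]$ an edge of some $\pol_i$, by \Cref{thm:GeneralParallelogramicPositionDC}), together with the stacking edges $[\b x,\b q_{\polF}]$ for each $\polF\in\scr F$ and each vertex $\b x\in V(\polF)$. By \Cref{thm:GeneralParallelogramicPositionDC}, inside $\pol$ itself two edge classes $C_{\pol[e]}$ and $C_{\pol[f]}$ with $\pol[e]\subseteq\pol_i$, $\pol[f]\subseteq\pol_j$ are dependent iff $i=j$ (because each $\pol_i$ is indecomposable, so $\ELD[\pol_i]$ is complete). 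Now stacking on $\polF\in\scr F$ contributes, via \Cref{thm:DecomposabilityVertexStacking}, a dependent set containing all of $E(\polF)$ and all stacking edges over $\polF$; in particular, if $\polF$ meets edge classes coming from $\pol_i$ and from $\pol_j$, then those two classes become dependent in $\ELD[\pol_{\scr F}]$. This is exactly an arc of $\Gamma_{\pol_1,\dots,\pol_r}(\scr F)$, and conversely every arc of $\Gamma$ arises this way. Since the stacking edges over $\polF$ are all dependent with edges in $E(\polF)$, they never form their own component. Transitivity of dependency (\Cref{lem:WclusterGraph}) then gives a bijection between components of $\ELD[\pol_{\scr F}]$ and components of $\Gamma_{\pol_1,\dots,\pol_r}(\scr F)$: the component of $\Gamma$ containing indices $I\subseteq[r]$ corresponds to the $\ELD$-component $X_I$ which is the union of $\bigcup_{i\in I}\bigcup_{\pol[e]\subseteq\pol_i}C_{\pol[e]}$ together with all stacking edges over facets in $\scr F$ whose incident edge classes lie in this union.

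The remaining work is to upgrade "number of components of $\ELD$" to "$\dim\DefoCone$" and to get simpliciality, i.e.\ to show the inclusion of \Cref{prop:DCasSubCone} is tight. For this I would check that each $X_I$ is an \emph{autonomous} set of edges, i.e.\ that $\b\ell^{X_I}\in\DefoCone[\pol_{\scr F}]$: geometrically, $\b\ell^{X_I}$ is the edge-deformation vector of the polytope obtained by replacing $\pol$ by $\sum_{i\in I}\pol_i$ (which is itself a deformation of $\pol$, hence a parallelogramic sum of exactly the $\pol_i$ with $i\in I$) and then stacking the appropriate lower-dimensional ``flat'' vertices on the images of the facets $\polF\in\scr F$ meeting this piece — the stacking edges over such $\polF$ survive with unit length, and all other edges degenerate. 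One must verify this assignment of lengths satisfies every cycle equation of $\pol_{\scr F}$: the cycle equations of $\pol$ are satisfied because they reduce to cycle equations of $\sum_{i\in I}\pol_i$ (using that $\sum_{i\in I}\pol_i$ is a face summand, as in \Cref{thm:GeneralParallelogramicPositionDC}), and the new triangular cycles $\conv(\pol[e]\cup\{\b q_\polF\})$ are satisfied since all three edges get the same length ($1$ or $0$). Then the $X_I$ are disjoint autonomous dependent sets partitioning $E(\pol_{\scr F})$, and \Cref{cor:NiceConnectedComponentImplySimplicialCone} gives that $\DefoCone[\pol_{\scr F}]$ is simplicial of dimension equal to the number of the $X_I$, which is the number of connected components of $\Gamma_{\pol_1,\dots,\pol_r}(\scr F)$.

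The main obstacle I anticipate is the bookkeeping in the autonomy verification: one must be sure that when a facet $\polF\in\scr F$ straddles pieces from several components — which cannot happen once we have passed to components, but one must argue carefully that $\scr F$-facets are consistently assigned — and more delicately, that stacking on $\polF$ when only \emph{part} of its edge classes survive still yields a genuine deformation (the stacked vertex may need to be taken on the affine hull of the surviving edges, so the ``triangle'' $\conv(\pol[e]\cup\{\b q\})$ degenerates to a segment for the killed edges $\pol[e]$; one should phrase this in the framework language of \Cref{ssec:DegeneracyAndProjection} where degenerate edges are allowed, so that the cycle equations are literally satisfied). Alternatively, and perhaps more cleanly, one can bypass autonomy by using \Cref{thm:DCdimBoundedByNumberOfDependentSets} for the upper bound $\dim\DefoCone[\pol_{\scr F}]\le\#\{\text{components of }\Gamma\}$ (the $X_I$ are dependent sets whose union spans the $1$-skeleton of $\pol_{\scr F}$), and a matching lower bound by exhibiting $\#\{\text{components of }\Gamma\}$ linearly independent deformations of the form ``dilate one of the $X_I$-pieces independently'', which is what autonomy provides; the simpliciality then follows from \Cref{cor:NiceConnectedComponentImplySimplicialCone} once both bounds agree.
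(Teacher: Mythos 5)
Your proposal is correct and follows essentially the same route as the paper's proof: both identify, for each connected component of $\Gamma_{\pol_1,\dots,\pol_r}(\scr F)$, the set consisting of the corresponding edge classes together with the stacking edges over the facets of $\scr F$ touching them, show it is dependent via \Cref{thm:DecomposabilityVertexStacking}, \Cref{thm:GeneralParallelogramicPositionDC}/\Cref{cor:ParallelogramicSumIndecomposablePolytopes} and transitivity, and show it is autonomous by checking the cycle equations (the paper phrases this by recognizing the characteristic vector as the edge-deformation vector of the stacked partial sum $(\sum_{j\in D}\pol_j)_{\scr G}$), before concluding with \Cref{cor:NiceConnectedComponentImplySimplicialCone}. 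The worry you raise about a facet of $\scr F$ straddling several components is indeed vacuous, since by the definition of $\Gamma$ all summands whose edge classes meet a given facet lie in one component, so your direct verification of the cycle equations matches the paper's argument.
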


\begin{proof}
Fix stacking vertices $\b q_{\polF}$ for each $\polF\in \scr F$.
Note that the space of cycle equations of $\left(\sum_i \pol_i\right)_{\scr F}$ is generated by the cycle equations of $\sum_i \pol_i$ together with the ones coming from the triangles $\conv(\pol[e]\cup\b q_{\polF})$ for each $\polF\in \scr F$ and each edge $\pol[e]\in E(\polF)$.

Pick $\polF\in \scr F$.
Consider the edge-classes $C_1, \dots, C_k$ of $\sum_i \pol_i$ which are touched by $\pol[F]$, \ie $C_i\cap E(\pol[F])\ne\emptyset$:
by \Cref{thm:DecomposabilityVertexStacking},
the edges in $E(\pol[F])\cup \bigl([\b x, \b q_{\polF}] ~;~ \b x\in V(\pol[F])\bigr)$ are dependent.
Moreover, as $\sum_i \pol_i$ is a sum of polytopes in parallelogramic position, by \Cref{cor:ParallelogramicSumIndecomposablePolytopes}, for each $j$, the edges in $C_j$ are dependent.
Hence, the set of edges $C_1\cup\dots\cup C_k\cup \bigl([\b x, \b q_{\polF}] ~;~ \b x\in V(\pol[F])\bigr)$ is dependent.
Furthermore, as each $\pol_i$ is indecomposable, the edges classes $C_{\pol[e]}$ and $C_{\pol[f]}$ are dependent for any two edges $\pol[e], \pol[f]$ of some $\pol_i$.

Let $D$ be a connected component of $\Gamma_{\pol_1, \dots, \pol_r}(\scr F)$, and let:
$$\c D := \bigcup_{j\in D} \bigcup_{\pol[e]\in E(\pol_j)} C_{\pol[e]} ~\cup \bigcup_{\substack{\pol[F]\in\scr F \\ \exists j\in D,\, \exists\, \pol[e]\in E(\pol_j),~  E(\pol[F])\cap C_{\pol[e]}\ne\emptyset}} \bigl([\b x, \b q_{\polF}] ~;~ \b x\in V(\pol[F])\bigr)$$
% $$\c D := \bigcup_{\b v\in D} C_{\b v} ~\cup \bigcup_{\substack{\pol[F]\in\scr F \\ \exists \b v\in D,~  E(\pol[F])\cap C_{\b v}\ne\emptyset}} \bigl([\b x, \b q_{\polF}] ~;~ \b x\in V(\pol[F])\bigr)$$

The vector $\b\ell^{\c D}$ satisfies all the cycle equations of $\left(\sum_i \pol_i\right)_{\scr F}$:
indeed, these equations are generated by the ones of $\left(\sum_i \pol_i\right)$ and the ones coming from the added triangles, hence $\b\ell^{\c D}$ is the edge-deformation vector of $(\sum_{j\in D} \pol_j)_{\scr G}$ where $\scr G = \Bigl\{\polF \in \scr F ~;~ \exists j\in D,\, \exists\, \pol[e]\in E(\pol_j),~  E(\pol[F])\cap C_{\pol[e]}\ne\emptyset\Bigr\}$.
Hence, for each connected component $D$ of $\Gamma_{\pol_1, \dots, \pol_r}(\scr F)$, the set of edges $\c D$ is dependent and contractible.
By \Cref{thm:NiceConnectedComponentImplySimplicialCone}, the cone $\DefoCone[{\left(\sum_i \pol_i\right)_{\scr F}}]$ is simplicial, its dimension is the number of connected components of $\Gamma_{\pol_1, \dots, \pol_r}(\scr F)$.
\end{proof}

\begin{corollary}\label{cor:StackingParallelogramicPositionIndecomposability}
Let $\pol_1, \dots, \pol_r$ be indecomposable polytopes in parallelogramic position.
Let $\scr F$ be a collection of its facets of $(\pol_1+\dots+\pol_r)_{\scr F}$.
The polytope $(\pol_1+\dots+\pol_r)_{\scr F}$ is a indecomposable if and only if $\Gamma_{\pol_1, \dots, \pol_r}(\scr F)$ is connected.
\end{corollary}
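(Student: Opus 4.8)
The plan is to derive \Cref{cor:StackingParallelogramicPositionIndecomposability} directly from \Cref{thm:StackingParallelogramicPosition}. That theorem tells us that $\DefoCone[{(\pol_1+\dots+\pol_r)_{\scr F}}]$ is a simplicial cone whose dimension equals the number of connected components of $\Gamma_{\pol_1,\dots,\pol_r}(\scr F)$. Recall that a polytope is indecomposable if and only if its deformation cone is one-dimensional (this is the equivalence recorded right after \Cref{cor:WconnectedPMinkowskiIndecomposable} and in the preliminaries on polytope deformation cones). So $(\pol_1+\dots+\pol_r)_{\scr F}$ is indecomposable iff $\dim\DefoCone[{(\pol_1+\dots+\pol_r)_{\scr F}}] = 1$ iff $\Gamma_{\pol_1,\dots,\pol_r}(\scr F)$ has exactly one connected component, i.e.\ iff $\Gamma_{\pol_1,\dots,\pol_r}(\scr F)$ is connected.

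The only subtlety worth spelling out is that $\Gamma_{\pol_1,\dots,\pol_r}(\scr F)$ always has at least one connected component, since it has $r \geq 1$ nodes (here $r$ is at least $1$ because there is at least one indecomposable summand $\pol_1$); so "one connected component" and "connected" coincide, and there is no degenerate empty-graph case to worry about. Thus the argument is: by \Cref{thm:StackingParallelogramicPosition} the dimension of the deformation cone is the number of connected components of the graph of stacking; a pointed polyhedral cone has dimension $1$ precisely when it is a ray, which by definition means the polytope is indecomposable; and the number of connected components of a nonempty graph is $1$ exactly when the graph is connected. Chaining these three equivalences gives the statement.

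I do not anticipate any real obstacle, since all the work has been done in \Cref{thm:StackingParallelogramicPosition}; the proof is a one-line deduction. The only thing to be careful about is to invoke the right characterisation of indecomposability in terms of $\dim\DefoCone = 1$ (rather than, say, connectedness of $\ELD$), so that the chain of equivalences is clean. One could alternatively phrase it via $\ELD$ being connected using \Cref{cor:WconnectedPMinkowskiIndecomposable}, but going through the dimension of the deformation cone is the most direct route given the form of \Cref{thm:StackingParallelogramicPosition}.

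\begin{proof}
By \Cref{thm:StackingParallelogramicPosition}, the cone $\DefoCone[{(\pol_1+\dots+\pol_r)_{\scr F}}]$ is simplicial of dimension equal to the number of connected components of $\Gamma_{\pol_1,\dots,\pol_r}(\scr F)$. A polytope is indecomposable if and only if its deformation cone is one-dimensional. Since $\Gamma_{\pol_1,\dots,\pol_r}(\scr F)$ has $r\geq 1$ nodes, it has exactly one connected component if and only if it is connected. Combining these, $(\pol_1+\dots+\pol_r)_{\scr F}$ is indecomposable if and only if $\Gamma_{\pol_1,\dots,\pol_r}(\scr F)$ is connected.
\end{proof}
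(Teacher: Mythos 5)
Your proof is correct and matches the paper's approach: the paper also proves this corollary as an immediate application of \Cref{thm:StackingParallelogramicPosition}, using exactly the equivalence between indecomposability and a one-dimensional deformation cone. Your spelled-out chain of equivalences (including the remark that the graph is nonempty) is a faithful elaboration of that one-line deduction.
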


\begin{proof}
Immediate application of \Cref{thm:StackingParallelogramicPosition}.
\end{proof}

\begin{example}
A parallelogramic zonotope $\polZ(V)$ is a sum in parallelogramic position of segments $[\b 0, \b v]$ for $\b v\in V$.
As segments are indecomposable polytopes, we can apply \Cref{thm:StackingParallelogramicPosition,cor:StackingParallelogramicPositionIndecomposability}.

Recall that every facet of a zonotope $\polZ(V)$ is a translation of a zonotope $\polZ(W)$ for some $W\subsetneq V$.
Consequently, as depicted on \Cref{fig:ZonotopeStacked}, the graph $\Gamma_{V}(\scr F)$ can be easily understood as the graph on $V$ where we link by an arc two vectors $\b v$ and $\b v'$ if they there is some facet $\polF\in \scr F$ which is a translation of some $\polZ(W)$ satisfying $\b v\in W$ and $\b v'\in W$.
This allows to explicitly construct families of facets $\scr F$, or equivalently families of $W\subsetneq V$, that make the graph $\Gamma_V(\scr F)$ connected.
By \Cref{cor:StackingParallelogramicPositionIndecomposability}, such a family yields an indecomposable stacked parallelogramic zonotope $\polZ(V)_{\scr F}$, as in \Cref{fig:ZonotopeStacked} (right).
\end{example}

\begin{example}
Remember that a product of polytopes is an example of a sum in parallelogramic position.
Let $\pol \coloneqq \pol_1\times\dots\times\pol_r$ be a product of indecomposable polytopes (\eg a product of simplices).
% Then the facets of $\pol$ are of the form $\pol_1\times\dots\times\polF_i\times\dots\times\pol_r$ where $\polF_i$ is a facet of $\pol_i$ for some $i\in [r]$.

Suppose that $\dim\pol_j\geq 2$ for some $j\in [r]$.
Let $\polF \coloneqq \pol_1\times\dots\times\pol_{j-1}\times\polF_j\times\pol_{j+1}\times\dots\times\pol_r$ be a facet of $\pol$, where $\polF_j$ a facet of $\pol_j$.
Then, \Cref{thm:StackingParallelogramicPosition} implies that any stacking $\pol_{\polF, \b q}$ yields an indecomposable polytope.
For instance, in \Cref{sfig:Prism}, stacking on a quadrilateral facet yields an indecomposable polytope.

Conversely, suppose that $\dim\pol_j = 1$ for all $j\in [r]$ (with $r \geq 3)$, then $\pol$ is a product of segments, \ie a $r$-cube, see \Cref{sfig:Cube} for $r = 3$.
If we stack a vertex on the facet $\pol_1\times\dots\times\polF_j\times\dots\times\pol_r$, the resulting polytope admits a segment $\pol_j$ as a deformation, hence it is decomposable.
On the other hand, pick any $j, k\in [r]$ with $j\ne k$.
By \Cref{thm:StackingParallelogramicPosition}, stacking on the facets $\pol_1\times\dots\times\polF_j\times\dots\times\pol_r$ and $\pol_1\times\dots\times\polF_k\times\dots\times\pol_r$ where $\polF_j$ (respectively $\polF_k$) is a facet of $\pol_j$ (respectively of $\pol_k$), \ie a point, yields an indecomposable polytope. 
\end{example}

\begin{question}
Even if we did not develop it here, it is clear that deep truncations of sums of (indecomposable) polytopes in parallelogramic position are worth studying.
There are two problems in generalizing
\Cref{ssec:DeepTruncationsOfParallelogramicZonotopes}:
firstly, the criterion $|\c X_v|\leq \dim \polZ(V) - 2$ of \Cref{thm:ZonotopeWithDistinguishedSetOfVertices} does not seem to have a straightforward extension; and secondly deep truncating a vertex can affect some summand itself, and deep truncating an indecomposable polytope may result into a decomposable one.
The first problem concerns the dependency of the distinct copies of an edge of one of the summands, whereas the second problem concerns the dependency of the edges arising from each of the summands.

% Indeed, this latter criterion ensures that, after deleting the edges adjacent to the deeply truncated vertices, the edge class remains connected via the parallelograms of $\polZ(V)$.
% For a sum $\polQ = \pol_1 + \dots + \pol_r$ in parallelogramic position, if all the $\pol_i$ are indecomposable, then all the edges in $\bigcup_{\pol[e]\in E(\pol_i)} C_{\pol[e]}$ for some $i\in [r]$ are dependent in $\polQ$, which tends to indicate a high connectivity of the dependencies between these edges; yet, deeply truncating a vertex may delete several edges of $\bigcup_{\pol[e]\in E(\pol_i)} C_{\pol[e]}$, see \Cref{fig:DiminishedTrapezohedronAndGyrobifastigium} (for instance, the leftmost vertex of the diminished trapezohedron admits a deep truncation, but truncating it would delete two \textcolor{blue}{blue} edges and two \textcolor{red}{red} edges).

For example, a product of standard simplices $\simplex[d_1]\times\dots\times\simplex[d_r]$ with $\simplex[d_i] = \conv(\b e_j ~;~ j\in [d_i])$, is a sum in parallelogramic position.
If $d_i\geq 3$ for all $i\in[r]$, then every vertex admits a deep truncation, and one can show that deeply truncating any vertex of $\simplex[d_1]\times\dots\times\simplex[d_r]$ yields an indecomposable polytope. In fact, Federico Castillo and Spencer Backman have proven in \cite{BackmanCastillo} that such a statement stays valid when certain other faces, not only vertices, are truncated.
\end{question}

\section*{Acknowledgments}
This project started at the \emph{Santander Workshop on Geometric and Algebraic Combinatorics} in January~2024.
We want to thank Vincent Pilaud for advising GP, in December 2023, to write a short note on Strawberry \& Persimmon (this note was never written but instead transfigured into the present article and in the article written with Georg Loho~\cite{LohoPadrolPoullot2025RaysSubmodularCone}).
We also thank Georg Loho for many interesting conversations on the topic, as well as many pointers to the bibliography; 
and to Winfried Bruns for his help on the computation of the cone of deformed permutahedra of dimension 4 (and partially 5 \& 6).
Besides, thanks go to Frédéric Meunier for clarifying the algorithmic complexity of checking if a cone is 1-dimensional or not.
Finally, we want to thank Spencer Backman and Federico Castillo for sharing results from their work in progress~\cite{BackmanCastillo} with us, and in particular to Federico Castillo for encouraging us to add \Cref{ex:hyperorder,thm:ConnectedMatroidPolytopesAreIndecomposable}, and sharing his thoughts on how one might prove them.
An extended abstract of a previous version of this work was presented at the conference FPSAC2025~\cite{FPSAC2025Version}.

%% if you use biblatex then this generates the bibliography
%% if you use some other method then remove this and do it your own way
% \printbibliography

\bibliographystyle{alpha}
\bibliography{Biblio.bib}
\label{sec:biblio}

\end{document}